\newtheorem{theorem}{Theorem}[section]
\newtheorem{claim}[theorem]{Claim}
\newtheorem{lemma}[theorem]{Lemma}
\newtheorem{proposition}[theorem]{Proposition}
\newtheorem{corollary}[theorem]{Corollary}
\theoremstyle{definition}
\newtheorem{definition}[theorem]{Definition}
\newtheorem{question}[theorem]{Question}
\theoremstyle{remark}
\newtheorem{remark}[theorem]{Remark}
\newtheorem{notation}[theorem]{Notation}
\newtheorem{fact}[theorem]{Fact}
\def\l{{\langle}}
\def\r{{\rangle}}
\def\mathunderaccent#1#2 {\let\theaccent#1\skewfactor#2
\mathpalette\putaccentunder}
\def\putaccentunder#1#2{\oalign{$#1#2$\crcr\hidewidth
\vbox to.2ex{\hbox{$#1\skew\skewfactor\theaccent{}$}\vss}\hidewidth}}
\def\smallbox#1{\leavevmode\thinspace\hbox{\vrule\vtop{\vbox
   {\hrule\kern1pt\hbox{\vphantom{\tt/}\thinspace{\tt#1}\thinspace}}
   \kern1pt\hrule}\vrule}\thinspace}
\DeclareMathOperator{\dom}{dom}
\DeclareMathOperator{\supp}{Supp}
\DeclareMathOperator{\rng}{rng}
\newcommand{\cf}{{\rm cf}}
\title{Stationary reflection and the failure of SCH at $\aleph_{\omega_1}$}
\date{\today}
\author{Tom Benhamou}
\address{Rutgers University, New Brunswick, NJ, USA}
\email{tom.benhamou@rutgers.edu}
\thanks{The research of the first author was supported by the National Science Foundation under Grant
No. DMS-2346680}
\author{Dima Sinapova}
\thanks{The research of the second author was supported by the National Science Foundation under Grant
No. DMS-2246781}
\address{Rutgers University, New Brunswick, NJ, USA}
\email{dima.sinapova@rutgers.edu}
\subjclass[2010]{03E02, 03E35, 03E55}
\keywords{Stationary reflection, non-normal ultrafilters, Intermediate models}
\begin{document}
\begin{abstract}
    Combining stationary reflection (a compactness property) with the failure of SCH (an instance of non-compactness) has been a long-standing theme. We obtain this at $\aleph_{\omega_1}$, answering a question of Ben-Neria, Hayut, and Unger: We prove from the existence of uncountably many supercompact cardinals the consistency of $\aleph_{\omega_1}$ is strong limit together with $2^{\aleph_{\omega_1}}>\aleph_{\omega_1+1}$ and every stationary set of $\aleph_{\omega_1+1}$ reflects.
\end{abstract}
\maketitle
\let\labeloriginal\label
\let\reforiginal\ref
\def\ref#1{\reforiginal{#1}}
\def\label#1{\labeloriginal{#1}}

\section{Introduction}
The modern study of cardinal arithmetic dates back to Cohen's invention of forcing which was pivotal in solving Hilbert's first problem - the independence of the Continuum Hypothesis (CH). Building on Cohen's method, Easton showed that the value of the powerset function for \textit{regular cardinals} is only constrained by  K\"{o}nig's Lemma. In other words, one can force any ``reasonable" behavior of $\kappa\mapsto 2^\kappa$. However, for singular cardinals, the situation is more intricate. A cardinal $\kappa$ is regular if the union of less than $\kappa$-many sets all smaller than $\kappa$ has size less than $\kappa$. For example, $\aleph_0$ is regular as a finite union of finite sets is finite; the first uncountable cardinal $\aleph_1$ is regular as a countable union of countable sets is countable. {\it Singular cardinals}, e.g. $\aleph_\omega$, $\aleph_{\omega_1}$, are where this pattern breaks down. 

The Singular Cardinal Hypothesis (SCH) is a parallel of CH for singular cardinals. Early work by of Silver \cite{Siver1974} and Prikry \cite{Prikry} established that while it is possible to force the failure of SCH, this requires the existence of large cardinals. The difficulty in producing models where SCH fails is rooted in deeper ZFC-constraints on the powerset value of singular cardinals. In the early 1980s,  Shelah \cite{ShelahCard} found bounds for the power set of  $\aleph_{\omega}$, when $\aleph_{\omega}$ is a strong limit (i.e. for all $n$, $2^{\aleph_n}<\aleph_\omega$).  
That is in stark contrast to the fact by Cohen that the powerset of $\omega$ can consistently be arbitrarily high. 

Another key result by Silver \cite{Siver1974} says that SCH cannot fail for the first time at a singular cardinal of uncountable cofinality. This is in contrast to the case of singular cardinals of countable cofinality since by Magidor \cite{MagidorAnaals}, GCH {\it can} first fail at such cardinals, and even at  $\aleph_\omega$.  

In this paper, we are motivated by exploring the constraints on the failure of SCH at $\aleph_{\omega_1}$, the first singular cardinal of uncountable cofinality.  We demonstrate that SCH at $\aleph_{\omega_1}$ is not a consequence of \textit{stationary reflection}-- a central reflection type principle-- by constructing a forcing extension where SCH fails at $\aleph_{\omega_1}$ and stationary reflection holds at $\aleph_{\omega_1+1}$.

Informally, stationary reflection says that ``large enough" or positive sets have ``large enough" i.e. positive initial segments. Precisely, a stationary $S\subset \kappa$ reflects if there is $\alpha<\kappa$, such that $S\cap\alpha$ is stationary in $\alpha$. 
It is well-known that stationary reflection fails at the successor of a regular cardinal $\kappa$ as the stationary set $E^{\kappa^+}_\kappa$ of ordinals below $\kappa^+$ of  \textit{critical cofinality} (i.e. cofinality $\kappa$) does not reflect. So the possible cardinals where stationary reflection holds are limit cardinals (and more interestingly inaccessible cardinals), and successors of singular cardinals.

Stationary reflection at inaccessible cardinals follows from weak compactness and is equivalent to it within the constructible universe $L$. However, as proven by Mekler and Shelah \cite{MeklerShelah}, it is strictly weaker than weak compactness in terms of consistency strength i.e. the consistency of theory ZCF+ ``there is an inaccessible cardinal where stationary reflection holds" does not imply the consistency of ZFC+``there is a weakly compact cardinal". Further results on this line can be found in \cite{BagariaMagidorMancilla,BagariaMagidorSakai,TomJing}.

As we previously noted, our focus lies in stationary reflection at successors of singular cardinals. Unlike other combinatorics involving successors of singular, the method of singularizing a measurable cardinal via Prikry-type forcings does not quite work as the problematic set of ordinals of critical (ground model) cofinality usually remains stationary in the extension. However, this non-reflecting stationary set is the only obstacle in the sense that any stationary set of points of non-critical cofinality reflects. In the first part of this paper, we will review this aspect for Magidor forcing \cite{ChangeCofinality} and some of its relevant variants.  

Hence, to achieve full stationary reflection at a successor of a singular it is more feasible to work directly with successors of singulars i.e. without singularizing cardinals. Indeed, Magidor \cite{Magidor1982} first showed that stationary reflection holds at the successor of a singular limit of supercompact cardinals,  a result we will frequently use throughout this paper. Recently, Hayut and Unger \cite{HAYUT_UNGER_2020} improved the initial large cardinal assumption of infinitely many supercompact cardinals to just one $\kappa^+$-supercompact cardinal. All the models mentioned thus far satisfy SCH. Obtaining models of stationary reflection at the successor of a singular cardinal alongside the failure of SCH is more challenging, as the failure of SCH entails instances of non-reflection. (see for example- Foreman and Todorcevic \cite{ForemanTodorcevic}, Poveda, Rinot and Sinapova \cite{SigmaPrikry1}). 

The first construction of stationary reflection at the successor of a singular together with the failure of SCH is due to Assaf Sharon in his thesis \cite{SharonThesis}. The approach that Sharon took is to blow up the powerset of a singular cardinal using the long-extender diagonal Prikry forcing, and then, as Mekler and Shelah did, iterate (in a Prikry-type style) the forcing which destroys non-reflecting stationary sets that may rise along the iteration. Laying the foundations to iterate such forcings, Sharon's method was recently generalized to an abstract iteration scheme by Poveda, Rinot and Sinapova \cite{SigmaPrikry1,SigmaPrikry2,SigmaPrikry3}.

Interestingly, more recently Gitik found a way to force stationary reflection with the failure of SCH outright \cite{GitikReflection} i.e. without iterating  to kill non-reflective stationary sets. This was done in his overlapping extender forcing from \cite{GITIK_2019}. Independently,  Ben-Neria, Hayut and Unger \cite{BEN-NERIA_HAYUT_UNGER_2024} proved the same result by employing the novel technique of the generalized Bukovski-Dehornoy phenomena of iterated ultrapowers. 
However, bringing the result down to $\aleph_\omega$ was first obtained by Poveda, Rinot and Sinapova \cite{SigmaPrikry3} and independently by Ben-Neria, Hayut and Unger \cite{BEN-NERIA_HAYUT_UNGER_2024}.  

As for uncountable cofinalities, it was pointed out by Gitik that his proof generalizes to the uncountable version of the overlapping extender, and so does the iterated ultrapower argument from \cite{BEN-NERIA_HAYUT_UNGER_2024}. This established the consistency of the failure of SCH at a singular cardinal of uncountable cofinality together with stationary reflection at his successor. However, it remained open how to bring this down to small cardinals, prompting the following question from Ben-Neria, Hayut, and Unger:
\begin{question}[{\cite[Question 1.5]{BEN-NERIA_HAYUT_UNGER_2024}}]
    Is it consistent that SCH fails $\aleph_{\omega_1}$
and every stationary
subset of $\aleph_{\omega_1+1}$ reflects?
\end{question}
In this paper we provide a positive answer to this question. Our approach reverts back to Sharon's original approach, putting the collapses on as in the $\Sigma$-Prikry framework. The forcing notion which replaces the \textit{diagonal extender based Prikry forcing with interleaved collapses} is the uncountable version of \textit{the overlapping extender forcing with collapses} which is due to Sittinon Jirattikansakul \cite{Sittinon}. Although several ideas from \cite{SigmaPrikry3} are used in this paper, we present here a concrete construction of the iteration we use. We do not intend to develop an abstract iteration scheme for Magidor-Radin types of forcings, leaving that as a possible avenue for future research. 

One might hope that as in Gitik's argument, already Sittinon's forcing suffices to obtain the failure of SCH at $\aleph_{\omega_1}$ with stationary reflection at $\aleph_{\omega_1+1}$. However, this is not the case: The argument for reflection in \cite{GitikReflection} and in many of the papers cited above relies on the fact that given a stationary set in the full Prikry extension, one can define its ``traces" by only looking at the Prikry poset restricted to the direct extension order. Then, one arranges that reflection holds when the Prikry order is restricted  to direct extensions. This leads to an argument that stationary sets with stationary traces reflect.

Gitik shows that in his construction the traces of stationary sets are always stationary, and so the above strategy gives reflection.  However, when we add collapses, even in the countable case, traces of stationary sets may no longer be stationary. We call such sets {\it fragile}, this terminology was coined in \cite{SigmaPrikry3}. And this is the reason we will define an iteration to kill all fragile  stationary sets. 

This paper is organized as follows:
\begin{itemize}
    \item In Section~\S\ref{Section: magidor Case} we review stationary reflection in the Magidor and the Magidor forcing with collapses extensions.
    \item In Section~\S\ref{Section: Sittinons forcing}, we present Sittinon's forcing, develop the relevant notations for this paper, and prove several results regarding nonreflecting stationary sets in the extension.
    \item In Section~\S\ref{Section: iteratio} we define the iteration that kills non-reflecting stationary sets.
    \item In Section~\S\ref{section: projections} we prove several properties of some factors and quotients of the iteration.
    \item In Section~\S\ref{Section: Killing stationary sets} we prove that successor steps of the iteration kill the stationarity of the intended non-reflecting stationary sets.
    \item In Section~\S\ref{section: Prikry} we prove the Prikry property for the iteration.
    \item In Section~\S\ref{Section: reflection} we prove that every non-reflecting stationary set that is generated by the iteration must be fragile.
    \item In Section~\S\ref{Section: end game} we present the bookkeeping argument and the proof of our main result.
    
\end{itemize}
\subsection{Notations}
$[\lambda]^{<\mu}$ denotes subsets of $\lambda$ of size less than $\mu$. For $\vec{\alpha}=\l \alpha_1,...,\alpha_n\r$, denote by $|\vec{\alpha}|=n$ and $\vec{\alpha}(i)=\alpha_i$. If $I\subseteq\{1,..,n\}$ then $\vec{\alpha}\restriction I=\l \vec{\alpha}(i_1),...,\vec{\alpha}(i_k)\r$ where $\{i_1,i_2,...,i_k\}$ is the increasing enumeration of $I$. For $Y\subseteq\omega$, $\vec{\alpha}\restriction Y=\vec{\alpha}\restriction (Y\cap\{1,..,n\})$. We will usually identify $\vec{\alpha}$ with the set $\{\alpha_1,..,\alpha_n\}$.
We denote by $E^\lambda_\mu$ the subset of $\lambda$ of ordinal of cofinality $\mu$. $\text{Refl}(A,B)$ denoted the statement that for every stationary set $S\subseteq A$ there is $\alpha\in B$ such that $S\cap\alpha$ is stationary. Our forcing direction is mostly standard, where $p\leq q$ means that $p$ is stronger, namely, $p\Vdash q\in\dot{G}$.
\section{Reflection in Magidor forcing generic extension}\label{Section: magidor Case}
Before diving into the proof of our main result, we go over reflection properties after forcing with the well-known  Magidor forcing. These results will illuminate the necessity of the key notion of {\it fragile sets} which will come up later.

Recall that after forcing with Prikry forcing there is a non-reflecting stationary set at $\kappa^+$ which consists of all the ordinals $\alpha<\kappa^+$ whose $V$-cofinality is $\kappa$. Moreover, if $\text{Refl}(E^{\kappa^+}_{<\kappa})$ in the ground model, then $\text{Refl}(E^{\kappa^+}_{<\kappa})$ holds in the Prikry extension.   

We follow the presentation of Magidor forcing due to Mitchell \cite{MITCHELLHowWeak} which uses \textit{coherent sequences} which are also due to Mitchell \cite{MitCoherent}.
 A \textit{coherent sequence} is a sequence  \( \vec{U}=\langle U(\alpha,\beta) \mid \beta<o^{\vec{U}}(\alpha)\ ,\alpha\leq\kappa \rangle \) such that \((U(\alpha,\beta)\) is a normal ultrafilter over \(\alpha\) and
letting \(j:V\rightarrow Ult(U(\alpha,\beta),V)\) be the corresponding elementary embedding, $j(\vec{U})\restriction\alpha=\vec{U}\restriction\langle\alpha,\beta\rangle$,
where 
$$\vec{U}\restriction\alpha=\langle U(\gamma,\delta) \mid \delta<o^{\vec{U}}(\gamma) \ ,\gamma\leq\alpha \rangle$$ 
$$\vec{U}\restriction\langle\alpha,\beta\rangle=\langle U(\gamma,\delta) \mid (\delta<o^{\vec{U}}(\gamma) ,\ \gamma<\alpha) \vee (\delta<\beta ,\ \gamma=\alpha)\rangle$$
  For every $\alpha\leq\kappa$, denote $\cap\vec{U}(\alpha)=\underset{i<o^{\vec{U}}(\alpha)}{\bigcap}U(\alpha,i)$.
\begin{definition}\label{Magidor-conditions}
$\mathbb{M}[\vec{U}]$ consist of elements $p$ of the form
$p=\langle t_1,...,t_n,\langle\kappa,B\rangle\rangle$.
 For every  $1\leq i\leq n $, $t_i$ is a pair $\langle\kappa_i,B_i\rangle$ and $B_i=\emptyset$ if $o^{\vec{U}}(\kappa)=0$. 
\begin{enumerate}
\item $\langle\kappa_1,...,\kappa_n\rangle\in [\kappa]^{<\omega}$.
\item $B\in\cap\vec{U}(\kappa)$, \  $\min(B)>\kappa_n$.
    \item  For every  $1<i\leq n$, if \ $o^{\vec{U}}(\kappa_i)>0$, $B_i\in \cap\vec{U}(\kappa_i)$, $\min(B_i)>\kappa_{i-1}$.
\end{enumerate}

\end{definition}
For a condition $p=\langle t_1,...,t_n,\langle \kappa,B\rangle\rangle\in\mathbb{M}[\vec{U}]$ we denote
$n=l(p)$, $p_i=t_i$, $B_i(p)=B(t_i)$ and $\kappa_i(p)=\kappa(t_i)$ for any $1\leq i\leq l(p)$, $t_{l(p)+1}=\langle\kappa,B\rangle$, $t_0=0$. 
The \textit{lower part} of a condition $\l t_1,...,t_n,\l A,\vec{U}\r\r\in \mathbb{M}[\vec{U}]$ is  $\l t_1,...,t_n\r$.
\begin{definition}\label{Magidor-order}
 For $p=\langle t_1,t_2,...,t_n,\langle\kappa,B\rangle\rangle,q=\langle s_1,...,s_m,\langle\kappa,C\rangle\rangle\in \mathbb{M}[\vec{U}]$ , define  $q \leq p$ ($q$ extends $p$) iff $\exists 1 \leq i_1 <...<i_n \leq m\leq i_{n+1}=m+1$ such that for every $1 \leq j \leq m+1$:
\begin{enumerate} 
        \item If  $i_r=j$ then $\kappa(t_r)=\kappa( s_{i_r})$ and $C(s_{i_r})\subseteq B(t_r)$.
        \item If $ i_{r-1}<j<i_{r}$ then $\kappa(s_j) \in B(t_r)$ and $B(s_j)\subseteq B(t_r)\cap \kappa(s_j)$.
\end{enumerate}
We also use ``q directly extends p", $q \leq^{*} p$ if $q\leq p$ and $l(p)=l(q)$.
\end{definition}
The Magidor forcing is intended to change turn a measurable cardinal of high Mithcell order to be a singular cardinal of uncountable cofinality.

  Let $p\in\mathbb{M}[\vec{U}]$. For every $ i\leq l(p)+1$, $\alpha\in B_{i}(p)$, define
$$p^{\frown}\l\alpha\r=\langle p_1,...,p_{i-1},\langle\alpha,B_{i}(p)\cap \alpha\rangle,\langle\kappa_{i}(p),B_{i}(p)\setminus(\alpha+1)\rangle,p_{i+1},...,p_{l(p)+1}\rangle.$$
 For $\langle\alpha_1,...,\alpha_n\rangle\in[\kappa]^{<\omega}$ define recursively,
 
$$p^{\frown}\langle\alpha_1,...,\alpha_n\rangle=(p^{\frown}\langle\alpha_1,...,\alpha_{n-1}\rangle)^{\frown}\langle\alpha_n\rangle$$

The condition $p^\smallfrown{\vec{\alpha}}$ form minimal extensions of $p$ in the sense that if $q\leq p$ and the ordinals appearing in $q$ include the ordinal of $\vec{\alpha}$ then $q\leq p^\smallfrown \vec{\alpha}$. We will see several generalization of this 
and also of the following straightforward decomposition.
\begin{proposition}\label{dec} Let $p\in\mathbb{M}[\vec{U}]$ and $\langle\lambda,B\rangle$ a pair in $p$. Then
$$\mathbb{M}[\vec{U}]/p\simeq \Big(\mathbb{M}[\vec{U}]\restriction \lambda\Big)/\Big(p\restriction\lambda\Big)\times\Big(\mathbb{M}[\vec{U}]\restriction(\lambda,\kappa)\Big)/\Big(p\restriction(\lambda,\kappa)\Big)$$
where 
 $$\mathbb{M}[\vec{U}]\restriction\lambda=\Big\{p\restriction\lambda\mid p\in\mathbb{M}[\vec{U}] \ and \ \lambda \ appears \ in \ p\Big\}$$
 $$\mathbb{M}[\vec{U}]\restriction(\lambda,\kappa)=\{p\restriction (\lambda,\kappa)\mid p\in\mathbb{M}[\vec{U}] \ and \ \lambda \ appears \ in \ p\}$$

\end{proposition}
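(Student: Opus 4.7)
The strategy is to realise the decomposition as an explicit order-isomorphism $\Phi:\mathbb{M}[\vec{U}]/p\to (\mathbb{M}[\vec{U}]\restriction \lambda)/(p\restriction \lambda)\times(\mathbb{M}[\vec{U}]\restriction(\lambda,\kappa))/(p\restriction(\lambda,\kappa))$ obtained by splitting each condition $q\leq p$ at the unique position where the ordinal $\lambda$ appears, and to check that its inverse $\Psi$, defined by concatenation, is also order-preserving.

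First I would set up notation. Let $i_0$ be the index of the pair $\langle\lambda,B\rangle$ in $p$, so that $\kappa_{i_0}(p)=\lambda$ and $B_{i_0}(p)=B$; by Definition~\ref{Magidor-conditions}(3) we have $B\subseteq\lambda$. Then $p\restriction\lambda=\langle p_1,\ldots,p_{i_0-1},\langle\lambda,B\rangle\rangle$ is a legitimate condition in $\mathbb{M}[\vec{U}]\restriction\lambda$ with $\langle\lambda,B\rangle$ now playing the role of the top pair, and $p\restriction(\lambda,\kappa)=\langle p_{i_0+1},\ldots,p_{l(p)},\langle\kappa,B(p)\rangle\rangle$ is a legitimate condition in $\mathbb{M}[\vec{U}]\restriction(\lambda,\kappa)$. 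Given $q\leq p$, Definition~\ref{Magidor-order}(1) supplies indices $i_1<\cdots<i_{l(p)}\leq l(q)$ with $\kappa(q_{i_r})=\kappa_r(p)$; in particular $\lambda=\kappa_{j_0}(q)$ for $j_0:=i_{i_0}$. Define $\Phi(q)=(q_{\leq j_0},q_{>j_0})$, where $q_{\leq j_0}=\langle q_1,\ldots,q_{j_0}\rangle$ (with $q_{j_0}$ as top pair) and $q_{>j_0}=\langle q_{j_0+1},\ldots,q_{l(q)},\langle\kappa,B(q)\rangle\rangle$. The initial segment $i_1,\ldots,i_{i_0}$ of the index sequence witnesses $q_{\leq j_0}\leq p\restriction\lambda$, while the tail $i_{i_0+1},\ldots,i_{l(p)}$ (reindexed so that $\lambda$ becomes the new ``zero'') witnesses $q_{>j_0}\leq p\restriction(\lambda,\kappa)$.

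For the inverse, given $r_1\leq p\restriction\lambda$ with top pair $\langle\lambda,C\rangle$ and $r_2\leq p\restriction(\lambda,\kappa)$, let $\Psi(r_1,r_2)$ concatenate them, reinterpreting $\langle\lambda,C\rangle$ as an interior pair. This is a valid condition in $\mathbb{M}[\vec{U}]/p$ because every ordinal appearing in $r_1$ is $\leq\lambda$, every ordinal appearing in $r_2$ is $>\lambda$, and the measure-one containments required by Definition~\ref{Magidor-conditions} on either side are inherited from $r_1$ and $r_2$ respectively. The identities $\Phi\circ\Psi=\mathrm{id}$ and $\Psi\circ\Phi=\mathrm{id}$ are immediate, and order-preservation of $\Psi$ follows by the same factoring of the index sequence as for $\Phi$. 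The only subtlety, and the main (minor) obstacle, is the bookkeeping of the indices witnessing $\leq$ in Definition~\ref{Magidor-order} and checking that no measure-one set from one side constrains ordinals placed on the other; this is guaranteed by the fact that $B_r(p)\subseteq \kappa_r(p)$ for every pair $\langle\kappa_r(p),B_r(p)\rangle$ in $p$, and in particular by $B\subseteq\lambda$, so all ordinals added below position $j_0$ necessarily land in $\lambda$ and all those added above necessarily exceed $\lambda$.
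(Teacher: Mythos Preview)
The paper does not actually supply a proof for this proposition; it is introduced as a ``straightforward decomposition'' and left without argument. Your approach---splitting each $q\leq p$ at the unique occurrence of $\lambda$ and defining the inverse by concatenation, then tracking the witnessing index sequence from Definition~\ref{Magidor-order} on each side---is precisely the natural verification, and it is correct.
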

 \begin{lemma}
$\mathbb{M}[\vec{U}]$ is $\kappa^+$-Knaster and therefore $\kappa^+$-c.c.
\end{lemma}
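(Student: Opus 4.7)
The plan is to run the standard pigeonhole argument for Prikry-type forcings, showing that among any $\kappa^+$ conditions, $\kappa^+$ share the same lower part, and any two such conditions are compatible via intersection of their top measure-one sets.

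First I would fix an arbitrary family $\{p_\alpha : \alpha < \kappa^+\}\subseteq \mathbb{M}[\vec{U}]$, and write each as $p_\alpha=\langle t_1^\alpha,\ldots,t_{n_\alpha}^\alpha,\langle\kappa,B^\alpha\rangle\rangle$ with lower part $s_\alpha=\langle t_1^\alpha,\ldots,t_{n_\alpha}^\alpha\rangle$, where $t_i^\alpha=\langle\kappa_i^\alpha,B_i^\alpha\rangle$ and $B_i^\alpha\subseteq\kappa_i^\alpha$. I would then bound the number of possible lower parts. Since $\kappa$ is measurable and hence inaccessible, for each $\mu<\kappa$ we have $2^\mu<\kappa$; the set of finite sequences $\langle\kappa_1,\ldots,\kappa_n\rangle\in[\kappa]^{<\omega}$ has size $\kappa$, and for a fixed such sequence the number of legal choices of $\langle B_1,\ldots,B_n\rangle$ is at most $\prod_{i\leq n}2^{\kappa_i}\leq 2^{\kappa_n}<\kappa$. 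Summing, there are at most $\kappa$ lower parts in total.

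Next, by the pigeonhole principle applied to the map $\alpha\mapsto s_\alpha$, I would extract a set $I\subseteq\kappa^+$ of size $\kappa^+$ and a common lower part $s$ such that $s_\alpha=s$ for all $\alpha\in I$. For any $\alpha,\beta\in I$, the conditions $p_\alpha$ and $p_\beta$ differ only in their top measure-one set; setting $r=\langle t_1,\ldots,t_n,\langle\kappa,B^\alpha\cap B^\beta\rangle\rangle$ produces a valid condition since $B^\alpha\cap B^\beta\in\cap\vec{U}(\kappa)$ (each $U(\kappa,i)$ is an ultrafilter and thus closed under finite intersection), and by Definition~\ref{Magidor-order} one directly reads off $r\leq^\ast p_\alpha$ and $r\leq^\ast p_\beta$. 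Hence $\{p_\alpha:\alpha\in I\}$ is a pairwise compatible subfamily of size $\kappa^+$, which establishes the Knaster property, and $\kappa^+$-c.c.\ follows immediately.

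I do not expect a real obstacle here: the argument is entirely routine and nearly identical to the classical proof for Prikry forcing. The only minor point to be careful about is justifying the bound on the number of lower parts, which uses the inaccessibility of $\kappa$ to get $2^{<\kappa}=\kappa$; everything else is bookkeeping with the notation for conditions introduced in Definitions~\ref{Magidor-conditions} and~\ref{Magidor-order}.
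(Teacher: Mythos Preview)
Your argument is correct and is exactly the standard proof of this fact. The paper actually states this lemma without proof, treating it as a well-known property of Magidor forcing; your pigeonhole-on-lower-parts argument is the canonical one.
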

\begin{lemma}
    $\mathbb{M}[\vec{U}]$ satisfies the Prikry condition i.e. for any statement in the forcing language $\sigma$ and any $p\in\mathbb{M}[\vec{U}]$ there is $p^*\leq^*p$ such that $p^*||\sigma$ i.e. either $p^*\Vdash\sigma$ or $p\Vdash\neg\sigma$.
\end{lemma}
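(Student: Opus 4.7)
The plan is to run the standard Prikry condition argument, with the combinatorics of the coherent sequence $\vec U$ replacing that of a single normal ultrafilter. Fix $p=\l t_1,\ldots,t_n,\l\kappa,B\r\r$ and a statement $\sigma$. A direct extension $p^{*}\leq^{*}p$ keeps the lower part intact and only shrinks the measure-one sets $B,B_1,\ldots,B_n$; every $q\leq p$ is, up to further direct extension, of the form $p^{\frown}\vec\alpha$ for some finite sequence $\vec\alpha$ inserted inside $B$ (together with fresh measure-one sets $B_{\alpha}\in\cap\vec U(\alpha)$ whenever $o^{\vec U}(\alpha)>0$), so deciding $\sigma$ amounts to controlling these minimal extensions uniformly.

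For each potential $\vec\alpha$, color it by (i) some direct extension of $p^{\frown}\vec\alpha$ forces $\sigma$, (ii) some forces $\neg\sigma$, or (iii) neither. I would find $p^{*}\leq^{*}p$ on which this coloring is coherent with $\vec U$: the color of $\vec\alpha$ depends only on the Mitchell-rank profile $\l o^{\vec U}(\alpha_1),\ldots,o^{\vec U}(\alpha_k)\r$, not on the particular $\alpha_i$. The shrinking proceeds by induction on $o^{\vec U}(\kappa)$. For each fixed $\beta<o^{\vec U}(\kappa)$, normality of $U(\kappa,\beta)$ together with the coherence identity $j(\vec U)\restriction\alpha=\vec U\restriction\l\alpha,\beta\r$ gives a reduction to smaller Mitchell order: a point $\alpha\in B$ with $o^{\vec U}(\alpha)=\beta$ carries the truncated sequence $\vec U\restriction\l\alpha,\beta\r$, so $\mathbb{M}[\vec U\restriction\l\alpha,\beta\r]$ falls under the inductive hypothesis and yields a single $B_\alpha^{*}$ realising the coloring uniformly at $\alpha$. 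The assignment $\alpha\mapsto B_\alpha^{*}$ lifts through the $U(\kappa,\beta)$-ultrapower to a measure-one set $B^\beta$ of such $\alpha$; intersecting over $\beta<o^{\vec U}(\kappa)$ yields $B^{*}\in\cap\vec U(\kappa)$, and Proposition \ref{dec} is used to repeat the analogous shrinking below each $t_i$.

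Let $p^{*}$ be the resulting direct extension. If some $q\leq p^{*}$ decides $\sigma$, then $q\leq^{*}(p^{*})^{\frown}\vec\alpha$ for some $\vec\alpha$, so by construction $(p^{*})^{\frown}\vec\alpha$ itself decides $\sigma$ in a way depending only on its profile. A rigidity argument then rules out different profiles deciding $\sigma$ differently: extending a color-(i) profile by one more ordinal produces a condition still forcing $\sigma$, so the extended profile cannot have color (ii); iterating and invoking density of $\{q:q\parallel\sigma\}$ gives a uniform color across all profiles, whence $p^{*}\parallel\sigma$.

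The main technical obstacle is the inductive step: arranging the assignment $\alpha\mapsto B_\alpha^{*}$ to be $U(\kappa,\beta)$-measure-one while still carrying the correct shrinking information at each $\alpha$. This is exactly what the coherence identity $j(\vec U)\restriction\alpha=\vec U\restriction\l\alpha,\beta\r$ is designed to afford, via a standard ultrapower computation.
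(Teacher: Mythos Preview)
The paper does not supply its own proof of this lemma; it is stated as a known fact about Magidor forcing, with a pointer to the literature (Gitik's handbook chapter and the references following the strong Prikry Lemma). Your sketch is the standard argument found there and is correct in outline: homogenize by induction on Mitchell order, using coherence to reduce to shorter sequences, and then run a minimal-profile argument.

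The one place worth tightening is the final paragraph. Two things are compressed. First, for ``by construction $(p^{*})^{\frown}\vec\alpha$ itself decides $\sigma$'' you need to have folded the witnessing direct extensions back into $p^{*}$ via a diagonal intersection during the shrinking stage; you allude to this but do not state it. Second, the conclusion is not that all profiles receive a uniform color, but that the \emph{empty} profile already decides. The argument is: take a deciding profile of minimal length; if nonempty, drop its last coordinate. By minimality the shorter profile does not decide, so some extension of the corresponding condition forces the opposite verdict. But that extension is a direct extension of $(p^{*})^{\frown}\vec\beta$ for some $\vec\beta$ whose profile extends the shorter one, and by the upward-closure you note, any such profile extending a deciding profile must force the same verdict --- one then compares with a common refinement of the two profiles to reach a contradiction. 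This is routine, but your phrase ``gives a uniform color across all profiles'' obscures where the contradiction actually lands.
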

\begin{lemma}[The strong Prikry Lemma]
Assume that $o^{\vec{U}}(\kappa)$ is below the first measurable. Then for every $p\in\mathbb{M}[\vec{U}]$ and dense open set $D\subseteq \mathbb{M}[\vec{U}]$ there is $p^*\leq^* p$ and $\vec{\xi}\in [o^{\vec{U}}(\kappa)]^{<\omega}$ such that for every $\vec{\mu}\in \prod_{i=0}^{|\vec{\xi}|-1} B_{\vec{\mu}(i)}(p)$ $p^{*\smallfrown}\vec{\mu}\in D$. 
\end{lemma}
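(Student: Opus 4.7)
The plan is to prove the lemma by induction on $o^{\vec{U}}(\kappa)$. Since $o^{\vec{U}}(\kappa)$ is below the first measurable, this induction is well-founded: for any $\alpha<\kappa$ with $o^{\vec{U}}(\alpha)<o^{\vec{U}}(\kappa)$, the strong Prikry lemma may be invoked for $\mathbb{M}[\vec{U}\restriction\alpha]$. The base case is $o^{\vec{U}}(\kappa)=1$, where $\mathbb{M}[\vec{U}]$ reduces essentially to ordinary Prikry forcing at $\kappa$. Here the statement follows from the classical strong Prikry lemma: apply the Prikry property (the preceding lemma) to each lower part length separately, and then use $\kappa$-completeness and normality of the unique top ultrafilter $U(\kappa,0)$ to stabilize a single length $n$ and a measure-one set $B^{*}$ such that every $n$-tuple from $B^{*}$ yields a minimal extension lying in $D$.

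For the inductive step, first apply the Prikry property to obtain $p^{**}\leq^{*}p$ that decides, for every lower part $\vec{\alpha}$, whether $p^{**\smallfrown}\vec{\alpha}$ admits a direct extension in $D$; by density of $D$, such $\vec{\alpha}$ exist. Next, for each $\alpha\in B_{l(p)+1}(p^{**})$, decompose via Proposition \ref{dec} the quotient $\mathbb{M}[\vec{U}]/p^{**\smallfrown}\langle\alpha\rangle$ into a forcing at $\alpha$ (to which the inductive hypothesis applies, since $o^{\vec{U}}(\alpha)<o^{\vec{U}}(\kappa)$) and an upper forcing above $\alpha$ (to which the Prikry property applies). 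This produces, for each such $\alpha$, a shape $\vec{\xi}_\alpha\in[o^{\vec{U}}(\alpha)]^{<\omega}$, a direct extension $q_\alpha$ below $\alpha$, and a direct extension above $\alpha$ together witnessing the strong Prikry property relative to the dense set inherited from $D$. The desired shape $\vec{\xi}$ at level $\kappa$ will then be obtained by prepending to $\vec{\xi}_\alpha$ the Mitchell order $j<o^{\vec{U}}(\kappa)$ at which $\alpha$ was drawn.

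The heart of the proof is to stabilize these data uniformly in $\alpha$, so that the patched object is an actual direct extension $p^{*}\leq^{*}p^{**}$. We use normality and $\kappa$-completeness of each $U(\kappa,j)$ for $j<o^{\vec{U}}(\kappa)$: first shrink the measure-one sets so that $\alpha\mapsto\vec{\xi}_\alpha$ is constant with value $\vec{\xi}'$ (possible because $|[o^{\vec{U}}(\kappa)]^{<\omega}|<\kappa$ and $\cap\vec{U}(\kappa)$ is $\kappa$-complete), and then take a diagonal intersection using normality at each $j$ to make the measure-one sets attached to the $q_\alpha$ and to the upper extensions coherent. The resulting measure-one sets replace those in $p^{**}$, giving $p^{*}$. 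The main obstacle, and the place where the ``below the first measurable'' hypothesis is indispensable, is precisely this simultaneous coherence across all Mitchell levels: the assumption forces $o^{\vec{U}}(\kappa)$ to be a fixed ordinal strictly below $\kappa$, so the diagonalization across the $o^{\vec{U}}(\kappa)$ many ultrafilters remains within the scope of $\kappa$-completeness and does not destroy the measure-one property along the way.
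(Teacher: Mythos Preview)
The paper does not give its own proof of this lemma; it is stated as background and the reader is referred to \cite{Gitik2010,TomMoti,partOne,Parttwo}. So there is no in-paper argument to compare against, and I evaluate your sketch on its own merits.

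Your overall architecture---induction on $o^{\vec{U}}(\kappa)$, decomposition via Proposition~\ref{dec}, stabilising the shapes $\vec{\xi}_\alpha$ by $\kappa$-completeness, and then diagonalising by normality---is the right shape. The base case and the stabilisation step are fine. However, the inductive step as you describe it does not close. After you add a single point $\alpha$ between $\kappa_{l(p)}$ and $\kappa$ and split the cone below $p^{**\smallfrown}\langle\alpha\rangle$ into a lower and an upper factor, the \emph{upper} factor $\mathbb{M}[\vec{U}]\restriction(\alpha,\kappa)$ still has $\kappa$ at the top with Mitchell order $o^{\vec{U}}(\kappa)$. Entering $D$ may well require adding further points strictly between $\alpha$ and $\kappa$, so a mere direct extension of the upper part (which is all the ordinary Prikry property gives you) is not enough to witness the strong Prikry conclusion. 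You need the strong Prikry property for the upper factor, but that is exactly the lemma you are proving, and the inductive hypothesis does not apply because the Mitchell order has not dropped. The standard remedies are either a secondary induction on the number of points added in the top block, or a fusion over all finite shapes simultaneously; your sketch does not indicate either. Relatedly, the phrase ``the dense set inherited from $D$'' hides the real work: you must define a dense open set in $\mathbb{M}[\vec{U}\restriction\alpha]$ by first absorbing all possible upper behaviour, and that absorption is precisely where the missing secondary induction (or fusion) lives.

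A smaller point: your preliminary step ``apply the Prikry property to obtain $p^{**}\leq^*p$ that decides, for every lower part $\vec{\alpha}$, whether $p^{**\smallfrown}\vec{\alpha}$ admits a direct extension in $D$'' is not something the Prikry property delivers directly. The Prikry property decides a single statement; iterating it over all lower parts requires closure of $\leq^*$ that you do not have in general at the bottom of the stem. In the standard proofs this step is replaced by the fusion argument alluded to above.
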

Lemmas of similar flavor will be proven in this paper. For more details regarding the Magidor forcing see \cite{Gitik2010,TomMoti,partOne,Parttwo}.

We are now ready to analyze the reflection properties of $\mathbb{M}[\vec{U}]$.
\begin{theorem}\label{Thm: reflection in magidor}
 Let $\mathbb{M}[\vec{U}]$ be the Magidor forcing, such that $\Vdash_{\mathbb{M}[\vec{U}]} cf(\check{\kappa})=\check{\gamma}$ for some $\gamma<\kappa$. Then in any generic extension of $\mathbb{M}[\vec{U}]$, $(E^{\kappa^+}_\kappa)^V$ does not reflect. In particular $\neg \text{Refl}((E^{\kappa^+}_\gamma)^{V[G]})$. 
 
 On the other hand, if in $V$ we have $$\text{Refl}((E^{\kappa^+}_{<\kappa})^V,\{\alpha<\kappa^+\mid cf^V(\alpha)\text{ is a successor cardinal}\}),$$
 we have $V[G]\models\text{Refl}((E^{\kappa^+}_{<\kappa})^V)$.
\end{theorem}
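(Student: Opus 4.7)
The plan is to prove the two parts separately. For Part 1 (non-reflection of $(E^{\kappa^+}_\kappa)^V$), set $S:=(E^{\kappa^+}_\kappa)^V$. By $\kappa^+$-cc, $V$-clubs in $\kappa^+$ remain $V[G]$-clubs (closure of sets of ordinals is absolute; unboundedness is preserved since cofinalities $\geq\kappa^+$ are unchanged), so $S$ remains stationary in $V[G]$. To show non-reflection at any $\alpha<\kappa^+$, I would pick in $V$ a continuous increasing enumeration $\langle\alpha_\xi:\xi<\cf^V(\alpha)\rangle$ cofinal in $\alpha$. By the same absoluteness this is a $V[G]$-club in $\alpha$. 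Since $\cf^V(\alpha)\leq\kappa$, each $\alpha_\xi$ has $V$-cofinality $\cf^V(\xi)<\cf^V(\alpha)\leq\kappa$, so $\alpha_\xi\notin S$. Hence $S\cap\alpha$ is non-stationary in $V[G]$. The ``in particular'' clause follows from $S\subseteq (E^{\kappa^+}_\gamma)^{V[G]}$, since $V$-cofinality $\kappa$ becomes $V[G]$-cofinality $\gamma$.

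For Part 2, I would start with $S\subseteq(E^{\kappa^+}_{<\kappa})^V$ $V[G]$-stationary with name $\dot S$, and fix $q\in G$ forcing $\dot S$ stationary. Define in $V$
\[T_q:=\{\alpha<\kappa^+:\exists r\leq q,\ r\Vdash\check\alpha\in\dot S\}.\]
Then $T_q\subseteq(E^{\kappa^+}_{<\kappa})^V$; moreover $T_q$ is $V$-stationary, for if a $V$-club $C$ were disjoint from $T_q$, then in $V[G]$ one has $S\subseteq T_q$ and (using preservation of $V$-clubs at $\kappa^+$) $C$ would be a $V[G]$-club disjoint from $S$, contradicting stationarity. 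Applying the reflection hypothesis to $T_q$ produces $\alpha^*<\kappa^+$ with $T_q\cap\alpha^*$ $V$-stationary in $\alpha^*$ and $\cf^V(\alpha^*)=\nu^+$ for some cardinal $\nu<\kappa$.

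The crucial step, and the \textbf{main obstacle}, is transferring this $V$-reflection of $T_q$ into $V[G]$-reflection of $S$ at $\alpha^*$. The key input is a \emph{preservation of the club filter at $\alpha^*$}: every $V[G]$-club in $\alpha^*$ contains a $V$-club. I would prove this by factoring $\mathbb{M}[\vec U]$ via Proposition~\ref{dec} around a level $\lambda>\alpha^*$ in the Magidor generic; the upper factor adds no clubs to $\alpha^*$ by closure of its direct extension order combined with the Prikry property, and the lower factor is a Magidor forcing at $\lambda<\kappa$ to which an analogous analysis applies by induction on $\kappa$. The successor-cardinal hypothesis $\cf^V(\alpha^*)=\nu^+$ is essential here since $\mathbb{M}[\vec U]$ does not singularize $\nu^+$, keeping the requisite closure arguments intact---for cofinality $\kappa$ this fails, consistent with Part 1. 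Finally, a density argument---exploiting the $V$-stationarity of $T_r$ for each $r\leq q$ and repeated application of the reflection hypothesis to the $T_r$'s---shows that for every $V$-club $D'\subseteq\alpha^*$ the set $\{r'\leq q:\exists\beta\in\check D'\cap\dot S\}$ is dense below $q$. Since every $V[G]$-club $D$ in $\alpha^*$ contains a $V$-sub-club $D'$, genericity yields $D\cap S\supseteq D'\cap S\neq\emptyset$, establishing reflection of $S$ at $\alpha^*$.
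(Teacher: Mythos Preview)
Your Part 1 is fine and matches the paper's argument (which just says this non-reflection is clear in $V$ and persists upwards).

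Part 2 has a genuine gap in the density argument. You fix a single reflection point $\alpha^*$ for $T_q$ and then claim that for every $V$-club $D'\subseteq\alpha^*$ the set $\{r'\leq q:\ r'\Vdash \check D'\cap\dot S\neq\emptyset\}$ is dense below $q$. To verify density at a given $r\leq q$, you would need $T_r\cap\alpha^*$ to be $V$-stationary in $\alpha^*$: only then can you pick $\beta\in T_r\cap D'$ and get a witness $r'\leq r$. But you have no control over where $T_r$ reflects; applying the reflection hypothesis to $T_r$ only yields \emph{some} reflection point, not the fixed $\alpha^*$. The phrase ``repeated application of the reflection hypothesis to the $T_r$'s'' does not remedy this, since different $r$'s may produce different reflection points and there is no mechanism (with $\kappa^+$-many conditions) to align them. (There is also a minor slip: when factoring you should pick $\lambda>\cf^V(\alpha^*)=\nu^+$, not $\lambda>\alpha^*$, since $\alpha^*>\kappa$ while $\lambda<\kappa$.)

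The missing idea is exactly what the paper uses: do \emph{not} index the trace by arbitrary extensions of $q$, but by \emph{lower parts} $p_0$ alone, setting
\[
T_{p_0}=\{\alpha<\kappa^+:\ \exists B\in\textstyle\bigcap\vec U\ \ p_0^{\smallfrown}\langle\kappa,B\rangle\Vdash\alpha\in\dot T\}.
\]
There are only $\kappa$-many lower parts, and in $V[G]$ one has $T\subseteq\bigcup_{p_0\in G_{<\kappa}}T_{p_0}$, so some $T_{p_0}$ is stationary in $V[G]$ and hence in $V$; reflect it at $\delta$ with $\cf^V(\delta)=\tau^+$. Now the crucial gain: the ``upper parts'' (the measure-one sets $B$) form a $\kappa^+$-closed filter, so after picking a club $C\subseteq\delta$ of order type $\tau^+$ and setting $T^*=T_{p_0}\cap C$ (a set of size ${<}\kappa^+$), one can intersect the witnessing $B$'s to a single $B^*$ with $p_0^{\smallfrown}\langle\kappa,B^*\rangle\Vdash T^*\subseteq\dot T$. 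This produces one condition forcing a fixed $V$-stationary subset of $\delta$ into $\dot T$; stationarity of $T^*$ is then preserved in $V[G]$ by the factoring of $\mathbb{M}[\vec U]$ into a $\tau^+$-c.c.\ part and a part that is ${>}\tau^{++}$-closed with respect to $\leq^*$. Your trace $T_q$, defined via \emph{all} extensions, gives up exactly the closure leverage needed to pass from ``each $\alpha$ is forced in by some $r_\alpha$'' to ``a single condition forces stationarily many $\alpha$'s in''.
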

\begin{proof}
 For the first part, it is clear that in $V$ the set of $(E^{\kappa^+}_\kappa)^V$ does not reflect. Hence it will stay non-reflecting in the generic extension. Since $(E^{\kappa^+}_\kappa)^{V}\subseteq (E^{\kappa^+}_\gamma)^{V[G]}$, we also get $\neg \text{Refl}( (E^{\kappa^+}_\gamma)^{V[G]})$.
 
 Now to see that $$V[G]\models \text{Refl}((E^{\kappa^+}_{<\kappa})^V),$$  suppose that $T\subseteq \kappa^+$ is stationary such that for every $\alpha\in T$, $cf^V(\alpha)<\kappa$. By F\"{o}dor's Lemma,  we can assume that $cf^V(\alpha)=\theta$ for some fixed $\theta<\kappa$. For every possible lower part $p_0$ of a condition $p\in\mathbb{M}[\vec{U}]$, let $$T_{p_0}=\{\alpha<\kappa^+\mid \exists B\in\bigcap\vec{U},\ \ p_0^{\smallfrown}\l \kappa,B\r\Vdash_{\mathbb{M}[\vec{U}]}\alpha\in \dot{T}\}.$$
 Denote $G_{<\kappa}=\{p_0\mid \exists p\in G, \ p_0\text{ is the lower part of }p\}$. Since there are only $\kappa$-many lower parts, and $T\subseteq \bigcup_{p_0\in G_{<\kappa}}T_{p_0}$,  there is a lower part $p_0$ of a condition $p\in G$ such that $T_{p_0}$ is stationary in $V[G]$ and therefore stationary in $V$. By our assumption, there is $\delta<\kappa^+$, $cf(\delta)$ is a successor cardinal $\tau^+$ in $V$ such that $T_{p_0}\cap \delta$ is stationary.
 By extending $p_0$ if necessary, we may assume that $\max(p_0)>cf(\delta)$, (note that if $q\leq p_0$ then $T_{p_0}\subseteq T_q$ and therefore $T_q\cap \delta$ would also be stationary). Find a club $C\in V$ in $\delta$ of order type $cf(\delta)$ and let $T^*=T_{p_0}\cap C$.
 Upper parts for $p_0$ have sufficient closure, so by the definition of $T_{p_0}$,  there is a set $B^*\in\bigcap\vec{U}$ for which $p_0^{\smallfrown}\l \kappa,B^*\r$ forces that $T^*\subseteq T$. 
 Since $\mathbb{M}[\vec{U}]$ preserves successor cardinals $cf^{V[G]}(\delta)=cf^V(\delta)$. Finally, we note that stationary sets of $\tau^+$ are preserved. This is true by factoring $\mathbb{M}[\vec{U}]$ to the part with $\tau^+$-c.c, and a part which is more than $\tau^{++}$-closed with respect to $\leq^*$.
\end{proof}
\begin{corollary}
   Suppose that $\kappa$ is $\kappa^+$-supercompact and $\vec{U}$ is a coherent sequence. Then in any generic extension of $\mathbb{M}[\vec{U}]$,   $E^{\kappa^+}_{<\kappa}$ reflects. 
\end{corollary}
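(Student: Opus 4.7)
The plan is to reduce the statement to the second part of Theorem~\ref{Thm: reflection in magidor}. Since $\mathbb{M}[\vec U]$ singularizes $\kappa$ to some cofinality $\gamma<\kappa$ in the extension, that theorem applies, so it suffices to verify in the ground model $V$ the stronger reflection hypothesis
$$\text{Refl}\bigl((E^{\kappa^+}_{<\kappa})^V,\{\alpha<\kappa^+:cf^V(\alpha)\text{ is a successor cardinal}\}\bigr).$$
The rest of the work is to show that $\kappa^+$-supercompactness of $\kappa$ yields this form of ground-model reflection.

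To this end, I would fix a $\kappa^+$-supercompactness embedding $j:V\to M$ with critical point $\kappa$, $j(\kappa)>\kappa^+$, and ${}^{\kappa^+}M\subseteq M$. Set $\delta=\sup j[\kappa^+]$. Since $j(\kappa^+)$ is regular in $M$ while $j[\kappa^+]\in M$ has order type only $\kappa^+$, one has $\delta<j(\kappa^+)$ and $cf^M(\delta)=\kappa^+$, which is a successor cardinal in $M$.

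Given a stationary $S\subseteq E^{\kappa^+}_{<\kappa}$ in $V$, the key claim is that $j(S)\cap\delta$ is stationary in $\delta$ in $M$. Once this is established, elementarity of $j$ applied to the statement \emph{``there exists $\beta<j(\kappa^+)$ of successor cofinality at which $j(S)$ has a stationary trace''} (true in $M$, witnessed by $\beta=\delta$) pulls back to give that $S$ reflects at a point of successor cofinality in $V$, exactly what we need.

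To prove the key claim, let $C\in M$ be any club of $\delta$ and set $C'=\{\alpha<\kappa^+:j(\alpha)\in C\}$, which lies in $V$ since $C\in M\subseteq V$ and $j\restriction\kappa^+\in M\subseteq V$. The main obstacle is that $j[\kappa^+]$ itself is \emph{not} closed in $\delta$: it fails to be closed at points $j(\alpha)$ with $cf^V(\alpha)=\kappa$, so one cannot blindly pull back $C$ to a club of $\kappa^+$. The crucial observation is that $j$ is continuous at every ordinal of $V$-cofinality below $\kappa$, which is exactly where $S$ lives. A direct calculation then shows that $C'$ is unbounded in $\kappa^+$ (by building cofinal $\omega$-sequences alternating between points of $C$ and of $j[\kappa^+]$, and using continuity of $j$ at the resulting $\omega$-limit to land in $C'$), and that any supremum of cofinality $<\kappa$ of points from $C'$ again lies in $C'$. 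Consequently, the closure of $C'\cap E^{\kappa^+}_{<\kappa}$ is a club in $\kappa^+$ whose intersection with $E^{\kappa^+}_{<\kappa}$ is contained in $C'$. Intersecting with the stationary $S$ produces $\alpha\in S\cap C'$, so $j(\alpha)\in j(S)\cap\delta\cap C$, as desired.
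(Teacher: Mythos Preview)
Your proposal is correct and follows essentially the same approach as the paper: both reduce to the hypothesis of Theorem~\ref{Thm: reflection in magidor} by using the $\kappa^+$-supercompact embedding $j$ and identifying $\delta=\sup j[\kappa^+]$ as a reflection point of $j(S)$ of successor cofinality. Your version is simply more detailed, spelling out the continuity of $j$ at ordinals of cofinality ${<}\kappa$ and the pullback argument that the paper leaves implicit in the single sentence ``the point $\sup j''\kappa^+$ is a reflection point of $j(S)$.''
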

\begin{proof}
 Note that if $j$ is the $\kappa^+$-supercompact embedding, then for every stationary set $S\subseteq E^{\kappa^+}_{<\kappa}$, the point $\sup j''\kappa^+$ is a reflection point of $j(S)$. Hence there would be many reflection points of $S$ of successor cofinality. This implies that the hypotheses of the previous theorem are satisfied and therefore $\text{Refl}((E_{<\kappa}^{\kappa^+})^V)$ holds in the generic extension.
 
\end{proof}



Given a name of a stationary set $\dot{T}$ we can define its ``traces" as follows, for each lower part $p_0$ of a condition in $\mathbb{M}[\vec{U}]$, we define $T_{p_0}$ as in the proof of \ref{Thm: reflection in magidor}. Then we use that some such trace is stationary and we can apply reflection in $V$. 

In the subsequent section, we will also employ similar traces, except that now, they will be defined not in the ground model but in some directed closed extension, where a suitable instance of reflection holds. (Those will be constructed by looking at the direct extension of our Prikry forcing). Then, following the basic idea above, we will show that if a set has stationary traces, it must reflect. Sets without stationary traces will be called ``fragile" and we will kill them.

\section{Reflection in the overlapping extender-based forcing with collapses}\label{Section: Sittinons forcing}
We follow S. Jirattikansakul \cite{Sittinon} who developed the uncountable version of the overlapping extenders forcing which is based on Gitik's overlapping extender forcing \cite{GITIK_2019}. We start with a model of $GCH$, and a sequence $\l \kappa_i\mid i<\omega_1\r$ of supercompact cardinals, and let $\kappa=\sup_{i<\omega_1}\kappa_i$ 
.
\begin{notation}
Let $\bar{\kappa}_0=\omega$, and for every $0<\beta<\omega_1$, $\beta\leq\omega_1$ denote by $\bar{\kappa}_\beta=\sup_{\alpha<\beta}\kappa_\alpha$. In particular, $\kappa=\bar{\kappa}_{\omega_1}$, and if $\beta$ is successor then $\bar{\kappa}_{\beta}=\kappa_{\beta-1}$. Note that for all $\beta<\omega_1$, $\bar{\kappa}_{\beta}<\kappa_\beta$.  
 \end{notation}

In the ground model $V$ we assume that the following holds:
\begin{enumerate}
    \item [(I)]GCH.
    \item [(II)]  For each $i<\omega_1$, $\kappa_i$ is indestructible under $\kappa_i$-directed closed forcings.
    \item [(III)] For each $i<\omega_1$, we fix a $(\kappa_i,\kappa^{++})$-extender $E_i$ on $\kappa_i$  such that the extender ultrapower $M_{E_i}$ computes cardinals correctly up to and including $\kappa^{++}$, and $M_{E_i}^{\kappa_i}\subseteq M_{E_i}$.
    \item [(IV)] For each $i<\omega_1$, we have $s_i:\kappa_i\rightarrow \kappa_i$ the function representing $\kappa$ in $M_{E_i}$, namely $j_{E_i}(s_i)(\kappa_i)=\kappa$. We can assume that $s_i(\nu)>\max\{\nu,\bar{\kappa}_i\}$ for every $\nu$.
    \item [(V)] For each $i_1<i_2<\omega$, $j_{E_{i_2}}(E_{i_1})\restriction\kappa^{++}=E_{i_1}$, and in particular $j_{E_{i_2}}(\alpha\mapsto E_{i_1}\restriction s_{i_1}(\alpha)^{++})(\kappa_{i_2})=E_{i_1}\in M_{E_{i_2}}$.
\end{enumerate}

Before defining the forcing, we have the following notation due to Merimovich \cite{Mer,Mer3}:
\begin{definition}
    
\begin{enumerate}
    \item [(a)] For $i<\omega_1$, an \textit{$i$-domain} is a set $d\in [\kappa^{++}]^{\kappa_i}$ such that $\kappa_i+1\subseteq d$. (a set which can be the domain of the Cohen part in the extender-based forcing)
    \item [(b)] Define the $d$\textit{-maximal coordinate} to be $$mc_i(d):=(j_{E_i}\restriction d)^{-1}=\{\l j_{E_i}(x),x\r\mid x\in d\}.$$ 
    \item [(c)] Denote by $E_i(d)$, the measure on $V_{\kappa^{++}}$ generated by the seed $mc_i(d)$, namely $$X\in E_i(d)\Longleftrightarrow mc_i(d)\in j_{E_i}(X).$$
\end{enumerate}

\end{definition}
We define a typical element in a $E_i(d)$-measure-one is called an \textit{object}. It is a sequence which reflect the properties of the maximal coordinate and will provide a ``layer" of points for the Prikry-point to be added below $\kappa_i$. 
\begin{definition}
 An \textit{$(i,d)$-object} is a function $\mu$, such that:
 \begin{enumerate}
     \item $\kappa_i \in\dom(\mu)\subseteq d$ 
     \item $\dom(\mu)\cap \kappa_i=\mu(\kappa_i)$ and $\mu\restriction \mu(\kappa_i)=id$.
     \item
     $\rng(\mu)\subseteq \kappa_i$ 
     \item $\bar{\kappa}_i<|\dom(\mu)|=\mu(\kappa_i)<\kappa_i$ and $\mu(\kappa_i)$ is inaccessible.
     
     \item $\mu$ is order preserving. 
 \end{enumerate}
 
The set \textit{$OB_i(d)$}  is the set of $(i,d)$-objects, and clearly $OB_i(d)\in E_i(d)$.
\end{definition}
We can omit the `$i$' from the ``$(i,d)$-object" and from $OB_i(d)$ since $i$ is determined by $d$ (recall that $|d|=\kappa_i$). 

Merimovich's notation pays off when analyzing the Rudin-Keisler projections between the different ultrafilters: At the price of complicating the notations, the projections between the measures of the extender are plain restrictions:
\begin{definition}
If $d\subseteq d'$ are $i$-domains let $\pi_{d',d}:OB(d')\rightarrow OB(d)$ be the restriction function $\pi_{d',d}(\mu)=\mu\restriction d(=\mu\restriction \dom(\mu)\cap d)$.
\end{definition}
Clearly the generators and the measures are projected using the restriction map, and therefore $\pi_{d',d}$ is a Rudin-Keisler projection of $E_i(d')$ to $E_i(d)$.

Here are two relevant combinatorial lemmas regarding such measures:
\begin{proposition}\label{Prop: Rowbottom for object-measures}
 Let $0\leq i_1,i_2<...i_n<\omega_1$ and $F:\prod_{k=0}^nA_{i_k}\rightarrow X$ is any function such that $d_{i_k}$ is $i_k$-domain, $A_{i_k}\in E_{i_k}(d_{i_k})$ and $|X|<\kappa_0$. Then there is $B_{i_k}\subseteq A_{i_k}$ such that $B_{i_k}\in E_{i_k}(d_{i_k})$ such that $F\restriction \prod_{k=0}^n B_{i_k}$ is constant.
\end{proposition}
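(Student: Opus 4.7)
The plan is to prove this by induction on $n$, using the key fact that for each $k$, the measure $E_{i_k}(d_{i_k})$ is $\kappa_{i_k}$-complete. This holds because $E_{i_k}(d_{i_k})$ is the pushforward under $j_{E_{i_k}}$ of the principal measure concentrated on the seed $mc_{i_k}(d_{i_k})$, and $j_{E_{i_k}}$ has critical point $\kappa_{i_k}$. Since $|X|<\kappa_0 \leq \kappa_{i_k}$ for every $k$, each of these measures is closed under $|X|$-sized partitions.

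For the base case $n=0$, partition $A_{i_0}$ into the fibers $\{F^{-1}(x)\cap A_{i_0}\mid x\in X\}$; by $\kappa_{i_0}$-completeness exactly one fiber is in $E_{i_0}(d_{i_0})$, and we take $B_{i_0}$ to be that fiber. For the inductive step, I would fix a coordinate ordering and handle the last coordinate first. For each tuple $\vec{\mu}\in \prod_{k=0}^{n-1}A_{i_k}$, the slice $\nu\mapsto F(\vec{\mu},\nu)$ is a function from $A_{i_n}$ into $X$, so by $\kappa_{i_n}$-completeness there is $B_{i_n}(\vec{\mu})\in E_{i_n}(d_{i_n})$ on which this slice is constantly some value $g(\vec{\mu})\in X$. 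Now $g:\prod_{k=0}^{n-1}A_{i_k}\to X$ satisfies the inductive hypothesis, yielding $B_{i_k}\subseteq A_{i_k}$ in the respective measures (for $k<n$) on which $g$ is constantly some $x_*\in X$.

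To finish, I would stabilize the top coordinate by setting
\[ B_{i_n}:=\bigcap_{\vec{\mu}\in \prod_{k<n}B_{i_k}}B_{i_n}(\vec{\mu}). \]
The essential cardinality bound is that under GCH (assumption (I)) together with the object constraint $|\dom(\mu)|=\mu(\kappa_{i_k})<\kappa_{i_k}$, we have $|OB_{i_k}(d_{i_k})|\leq \kappa_{i_k}^{<\kappa_{i_k}}=\kappa_{i_k}$, so $\bigl|\prod_{k<n}B_{i_k}\bigr|\leq \kappa_{i_{n-1}}<\kappa_{i_n}$. Hence $B_{i_n}$ is still in $E_{i_n}(d_{i_n})$, and on $\prod_{k=0}^n B_{i_k}$ the function $F$ is constantly equal to $x_*$.

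The main (mild) obstacle is precisely this final intersection step: we need the cardinality of the index set $\prod_{k<n}B_{i_k}$ over which we intersect to stay strictly below $\kappa_{i_n}$. This uses the strict monotonicity of the chain $i_0<i_1<\ldots<i_n$ in an essential way (via $\kappa_{i_{n-1}}<\kappa_{i_n}$), together with GCH and the bound $|OB_{i_k}(d_{i_k})|\leq \kappa_{i_k}$ coming from the object definition; without these the intersection could fall out of the measure and the induction would collapse.
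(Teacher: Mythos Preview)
The paper states this proposition without proof (it is a standard Rowbottom-type fact for products of $\kappa$-complete measures), so there is no ``paper's own proof'' to compare against. Your argument is correct and is the expected one: each $E_{i_k}(d_{i_k})$ is $\kappa_{i_k}$-complete as a seed measure derived from an embedding with critical point $\kappa_{i_k}$, and the induction with a final intersection over $\prod_{k<n}B_{i_k}$ works because GCH and the object bound $|\dom(\mu)|<\kappa_{i_k}$ give $|OB_{i_k}(d_{i_k})|\le\kappa_{i_k}$, hence the intersection is over at most $\kappa_{i_{n-1}}<\kappa_{i_n}$ many sets.
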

\begin{proposition}\label{Prop: Bound the number of same projection to normal}
 For each $i<\omega_1$ and an 
 $i$-domain $d$, there is a set $A_i(d)$ such that
$A_i(d) \in E_i(d)$, and for each $\nu<\kappa_i$, the size of $\{\mu \in A_i(d) \mid \mu(\kappa_i) = \nu\}$ is at
most $s_i(\nu)^{++}$.
\end{proposition}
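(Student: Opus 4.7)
The plan is to apply {\L}o\'s's theorem: since $E_i(d)$ is the measure generated by the seed $mc_i(d)$, we have $A\in E_i(d)$ iff $mc_i(d)\in j_{E_i}(A)$. By elementarity, the desired bound $|\{\mu\in A_i(d):\mu(\kappa_i)=\nu\}|\leq s_i(\nu)^{++}$ for all $\nu<\kappa_i$ translates in $M_{E_i}$ into a fiber bound of $j_{E_i}(s_i)(\nu')^{++}$ at each $\nu'<j_{E_i}(\kappa_i)$; the critical case is $\nu'=\kappa_i$, where by item~(IV) the bound becomes $\kappa^{++}$, consistent with the cardinal agreement of item~(III).

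I will define $A_i(d)$ by imposing smallness on both the domain and the range of $\mu$, controlled by $s_i(\mu(\kappa_i))$. Fix in $V$ an injection $b:\kappa_i\to d\setminus(\kappa_i+1)$ (which exists since $|d\setminus(\kappa_i+1)|\leq\kappa_i$), and set
\[
A_i(d):=\bigl\{\mu\in OB_i(d): \dom(\mu)\setminus(\kappa_i+1)\subseteq b[s_i(\mu(\kappa_i))]\text{ and }\rng(\mu)\subseteq s_i(\mu(\kappa_i))^{++}\bigr\}.
\]

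Next I check $A_i(d)\in E_i(d)$ by verifying $mc_i(d)\in j_{E_i}(A_i(d))$ via the elementary images of the two conditions at $\nu'=\kappa_i$. For the domain condition, the elementary image reads $j_{E_i}[d]\setminus(j_{E_i}(\kappa_i)+1)\subseteq j_{E_i}(b)[j_{E_i}(s_i)(\kappa_i)]=j_{E_i}(b)[\kappa]$: for any $x\in d\setminus(\kappa_i+1)$, the ordinal $b^{-1}(x)$ is below $\kappa_i=\mathrm{crit}(j_{E_i})$, hence $j_{E_i}(b)(b^{-1}(x))=j_{E_i}(x)\in j_{E_i}(b)[\kappa]$. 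For the range condition, $\rng(mc_i(d))=d\subseteq\kappa^{++}=j_{E_i}(s_i)(\kappa_i)^{++}$ by the definition of an $i$-domain.

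Finally, the fiber count in $V$ is a direct GCH computation: for fixed $\nu<\kappa_i$, an element $\mu$ of the $\nu$-fiber of $A_i(d)$ is determined by a subset of $b[s_i(\nu)]$ of cardinality $\leq\nu$ (namely, $\dom(\mu)\setminus(\kappa_i+1)$) together with a subset of $(\nu,s_i(\nu)^{++})$ of cardinality $\leq\nu$ (namely, $\rng(\mu)\setminus(\nu+1)$), via the unique order-preserving bijection between them. Under GCH this produces at most $s_i(\nu)^{++}$ many elements, as required. The main conceptual step is the definition itself: tying the domain restriction to $s_i$ through the fixed bijection $b$ simultaneously delivers the $V$-side fiber bound and, via $\mathrm{crit}(j_{E_i})=\kappa_i<\kappa=j_{E_i}(s_i)(\kappa_i)$, the $M_{E_i}$-side coverage of $mc_i(d)$; everything else is elementarity and cardinal arithmetic.
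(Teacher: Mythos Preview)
The paper states this proposition without proof (it is attributed to Jirattikansakul's construction and treated as known). Your argument is the natural one and is essentially correct, with one slip in the definition of $b$.

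You want $b^{-1}(x)$ to be defined for every $x\in d\setminus(\kappa_i+1)$, so $b$ must be \emph{onto} $d\setminus(\kappa_i+1)$; an injection into that set is not enough (an injection between infinite sets of the same cardinality need not be a surjection). Moreover, the existence of an injection $\kappa_i\to d\setminus(\kappa_i+1)$ would require $|d\setminus(\kappa_i+1)|\geq\kappa_i$, but your justification ``$|d\setminus(\kappa_i+1)|\leq\kappa_i$'' gives the opposite inequality. The fix is immediate: take $b$ to be a bijection from $|d\setminus(\kappa_i+1)|\leq\kappa_i$ onto $d\setminus(\kappa_i+1)$ (or, if you prefer a uniform domain, any surjection $\kappa_i\twoheadrightarrow d\setminus(\kappa_i+1)$, using one fixed value if the target is empty). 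With this correction, your verification that $mc_i(d)\in j_{E_i}(A_i(d))$ goes through exactly as written: $j_{E_i}[d\setminus(\kappa_i+1)]\subseteq j_{E_i}(b)[\kappa_i]\subseteq j_{E_i}(b)[\kappa]$ for the domain clause, and $\rng(mc_i(d))=d\subseteq\kappa^{++}=(\kappa^{++})^{M_{E_i}}$ for the range clause, using item~(III).

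The fiber count is fine. Since $|\dom(\mu)|=\mu(\kappa_i)=\nu$, the set $\dom(\mu)\setminus(\kappa_i+1)$ has size at most $\nu$, and under GCH there are at most $s_i(\nu)^+$ many such subsets of $b[s_i(\nu)]$; likewise there are at most $s_i(\nu)^{++}$ many choices for the range in $(\nu,s_i(\nu)^{++})$ of size at most $\nu$, and the order-preserving map is then unique. The product is $s_i(\nu)^{++}$, as required.
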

We keep the notation of $A_i(d)$. We also need notations for the normal measure:
\begin{itemize}
    \item The normal measure derived from $E_i$ denoted by $E_i(\kappa_i)$ is the set of all $X\subseteq\kappa_i$ such that $\kappa_i\in j_{E_i}(X)$.
    \item If $A\in E_i(d)$ (then recall that $\kappa_i\in d$) define $$A(\kappa_i)=\{\mu(\kappa_i)\mid \mu\in A\}\in E_i(\kappa_i).$$
\end{itemize}
\subsection{The Extender-Based forcing with collapses}
A condition in $\mathbb{P}_{\bar{E}}$ is a sequence $p=\l p_i\mid i<\omega_1\r$ such that there is a finite set $\supp(p)\in[\omega_1]^{<\omega}$, and we have that:
$$p_i=\begin{cases} \l f_i, h^0_i,h^1_i,h^2_i\r & i\in \supp(p)\\
\l f_i,A_i,H^0_i,H^1_i,H^2_i\r & i\notin\supp(p)\end{cases}.$$
We require that for every $i_1<i_2<\omega_1$, $\dom(f_{i_1})\subseteq \dom(f_{i_2})$. 
Denote  $\supp(p)=\{i_1<i_2<...<i_r\}$, then for every $i<\omega_1$:
$$\bar{\kappa}_{i}<\bar{\kappa}_{i}^+<\underset{\text{inac. placeholder for }\mu(\kappa_i)}{f_i(\kappa_i)}<s_i(f_i(\kappa_i))<s_i(f_i(\kappa_i))^+<s_i(f_i(\kappa_i))^{++}<\kappa_{i}$$
and we require that:
\begin{enumerate}
    \item [(i)] If there is $k<r$ such that $i\in [i_k,i_{k+1})$ $f_i$ is a partial function from $s_{i_{k+1}}(f_{i_{k+1}}(\kappa_{i_{k+1}}))^{++}$ to $\kappa_i$ such that $\kappa_i+1\subseteq\dom(f_i)$ and $|f_i|=\kappa_i$ (In particular $\dom(f_i)$ is an $i$-domain).
    \item [(ii)] If $i\in [i_r,\omega_1)$, then $f_i$ is a partial function from $\kappa^{++}$ to $\kappa_i$ such that $\dom(f_i)$ is an $i$-domain.
    \item [(iii)] for $i\in \supp(p)$,   $h^0_i\in Col(\bar{\kappa}_{i}^+,<f_i(\kappa_i))$, $h^1_i\in Col(f_i(\kappa_i), s_i(f_i(\kappa_i))^+)$, $h^2_i\in Col(s_i(f_i(\kappa_i))^{+3},<\kappa_i)$. (So in the generic extension we will have:
        $$\bar{\kappa}_{i}<\bar{\kappa}_{i}^+<\bar{\kappa}_{i}^{++}=f_i(\kappa_i)<\bar{\kappa}_i^{+3}=s_i(f_i(\kappa_i))^{++}<\bar{\kappa}_{i}^{+4}=s_i(f_i(\kappa_i))^{+3}<\bar{\kappa}_i^{+5}=\kappa_i$$
 \item [(iv)] For $i\notin\supp(p)$:
    \begin{enumerate}
        \item $A_i\in E_i(\dom(f_i))$.
        \item $\dom(H^0_i)=\dom(H^1_i)=A_i\in E_i(\dom(f_i))$ and $\dom(H^2_i)=A_i(\kappa_i)\in E_i(\kappa_i)$.
        \item $H^0_i(\mu)\in Col(\bar{\kappa}_{i}^+,<\mu(\kappa_i))$, $H^1_i(\mu)\in Col(\mu(\kappa_i),s_i(\mu(\kappa_i))^+)$ and $H^2_i(\mu(\kappa_i))\in Col(s_i(\mu(\kappa_i))^{+3},<\kappa_i)$.
    \end{enumerate}
    
\end{enumerate}
The direct extension is clear:
\begin{definition}
The direct order is defined by $p\geq^* q$ if $\supp(p)=\supp(q)$, for every $i$, $f^p_i\subset f^q_i$ and:
\begin{enumerate}
\item If $i\in \supp(p)$ then for $h^{r,p}_i\subseteq h^{r,q}_i$ for $r=0,1,2$.
    \item If $i\notin \supp(p)$, $\pi_{\dom(f^q_i),\dom(f^p_i)}[A^q_i]\subseteq A^p_i$. $H^{r,p}_i(\mu\restriction\dom(f^p_i))\subseteq H^{r,q}_i(\mu)$ for $r=0,1,2$.
\end{enumerate}
\end{definition}
Note that in $2$ above, since $\dom(f^p_i)\subseteq \dom(f^q_i)$, it is possible that elements of $A^p_i$ are also elements of $A^q_i$ even if the domain strictly increases.

In the next definition, we adopt the following notations:
    for any two functions $f:A\rightarrow B$, $g:C\rightarrow D$ we denote by $g\circ f=g\circ (f\restriction f^{-1}[\dom(g)])$.
\begin{definition} Let $i\notin \supp(p)$.
$\mu\in A^p_{i}$ is addable to $p$ if:
\begin{enumerate}
    \item [(I)] $\bigcup_{\alpha<i}\dom(f_\alpha)\subseteq \dom(\mu)$.
    \item [(II)] For every $\beta\in [\max(\supp(p)\cap i),i)$:
    \begin{enumerate}
        \item $\mu[\dom(f_\beta)]\subseteq s_i(\mu(\kappa_i))^{++}$ (so the decomposition $f_\beta\circ \mu^{-1}$ will have domain which is a subset of $s_i(\mu(\kappa_i))^{++}$ this is necessary by condition (i) above). 
        \item $A^p_\beta\circ \mu^{-1}:=\{\nu\circ\mu^{-1}\mid \nu\in A^p_\beta\}\in E_\beta(\mu[\dom(f_\beta)])$. 
    \end{enumerate}
    
\end{enumerate}
\end{definition}
Denote by $f\oplus\mu$ the function where $(f\oplus\mu)(x)=\mu(x)$ for $x\in\dom(\mu)$ and $(f\oplus\mu)(x)=f(x)$ otherwise.
\begin{definition}
Let $i\notin \supp(p)$,  $i_*=\max(\supp(p)\cap i)$, and $\mu\in A^p_{i}$, define $p^\smallfrown\mu$ as the condition $q$ such that 
$\supp(q)=\supp(p)\cup\{i\}$, and 
\begin{enumerate}
    \item For $r\in [0,i_*)\cup (i,\omega_1)$, $p_r=q_r$.
    \item For $r=i$, $f^q_i=f^p_i\oplus\mu$, $h^{0,q}_i=H^{0,p}_i(\mu)$, $h^{1,q}_i=H^{1,p}_i(\mu)$ and $h^{2,q}_i=H^{2,p}_i(\mu(\kappa_i))$
    \item For $r\in [i_*,i)$, $f^q_r=f^p_r\circ\mu^{-1}$, so $\dom(f^{q}_r)=\mu[\dom(f^p_r)]$ and $A^q_r=A^p_r\circ \mu^{-1}\in E_r(\dom(f^q_r))$.  For $\nu\in A^q_r$, define $H^{l,q}_r(\nu)=H^{l,p}_r(\nu\circ\mu)$ for $l=0,1$ and $H^{2,q}_r=H^{2,p}_r$ (note that $\kappa_r=\mu(\kappa_r)$ by requirement (2),(4) of an $(i,d)$-object above and since $\kappa_r\leq \bar{\kappa}_i$). 
\end{enumerate}
 \end{definition}
 We refer to the operation of composing $f^p_i$ with $\mu^{-1}$ by saying that $f^p_i$ is ``squished by $\mu$". 
 
 Inductively we define when $\l \mu_1,...,\mu_n\r$ is addable to $p$ and $p^{\smallfrown}\l\mu_1,...\mu_n\r$, if $\l \mu_1,...,\mu_{n-1}\r$ is addable, $\mu_n$ is addable to $p^{\smallfrown}\l\mu_1,...,\mu_{n-1}\r$ and $$p^{\smallfrown}\l\mu_1,...,\mu_{n-1},\mu_n\r=(p^{\smallfrown}\l\mu_1,...,\mu_{n-1}\r)^\smallfrown\l\mu_n\r.$$
 The order is defined by $q\leq p$ iff $q\leq^* p^\smallfrown \vec{\mu}$ for some sequence $\vec{\mu}$ addable to $p$. 
 \begin{remark}\label{Ramark: Commutativity}
     We have some sort of commutativity, namely if $\nu\in A_i^p$ and $\mu\in A_j^{p^\smallfrown\nu}$ where $j<i$, then by definition, there is $\mu'\in A_j^{p}$ such that $\mu'\circ \nu^{-1}=\mu$, and 
$p^{\smallfrown}\l\nu,\mu\r=p^\smallfrown\l\mu',\nu\r$. Moreover, by $(II)$, $\dom(\mu')\subseteq \dom(f^p_j)\subseteq \dom(\nu)$ and therefore   $\mu'$ is unique. Hence, when considering step-extensions, we may always assume that $\l \mu_1,...,\mu_n\r\in A^p_{i_1}\times....\times A^p_{i_n}$ is such that $i_1<i_2<...<i_n$.
 \end{remark}
 \begin{definition}
For a given $p\in\mathbb{P}_{\vec{E}}$ and $\vec{\xi}=\l \xi_1,...,\xi_k\r\in [\omega_1\setminus\supp(p)]^{<\omega}$ (i.e. $\vec{\xi}$ is increasing), we let $Ex_{\vec{\xi}}(p)$ to be the set of all finitely supported functions $\vec{\mu}$, such that $\dom(\vec{\mu})=\vec{\xi}$ and for every $1\leq r\leq k$, $\vec{\mu}_{\xi_r}\in A^p_{\xi_r}$ is addable. 
For a given $\vec{\mu}\in Ex_{\vec{\xi}}(p)$, $p^{\smallfrown}\vec{\mu}$ is called a \textit{$\vec{\xi}$-extension of $p$}. 
 
\end{definition}
By the definition of the order, the set $\{p^{\smallfrown}\vec{\mu}\mid \vec{\mu}\in Ex_{\vec{\xi}}(p)\}$ is pre-dense. We would like to make sure it is an antichain. At this point  there might be distinct $\l\mu_1,...,\mu_k\r,\l\nu_1,...,\nu_k\r$ such that $p^{\smallfrown}\l\nu_1,...,\nu_k\r,p^{\smallfrown}\l\mu_1,...,\mu_k\r$ are compatible. Let us prove that by restricting to a dense subset of $\mathbb{P}_{\vec{E}}$ where we avoid this problem.
     \begin{claim}
         Let $p\in \mathbb{P}_{\vec{E}}$, $i<\omega_1$. Denote by $f^p_i:\lambda_i\rightarrow\kappa_i$ the $i^{\text{th}}$ Cohen function in $p$ (where $\lambda_i$ is either of the form $s_{i^*}(f_{i^*}(\kappa_{i^*}))^{++}$ or $\mu_i=\kappa^{++}$). Then there is $A\subseteq A^p_i$, $A\in E_i(\dom(f^p_i))$ such that for every $\mu\in A$, and every $\alpha\in\dom(\mu)\cap \mu(\kappa_i)$,  $\mu(\alpha)=\alpha$ and for $\alpha\in \dom(\mu)\setminus\mu(\kappa_i)$ then $\mu(\alpha)>f^p_i(\alpha)$.
     \end{claim}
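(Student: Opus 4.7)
The plan is to use the standard characterization of $E_i(d)$ as the measure on $V_{\kappa^{++}}$ generated by the seed $mc_i(d)$, where $d := \dom(f^p_i)$. The first property stated in the claim (identity on $\dom(\mu) \cap \mu(\kappa_i)$) is automatic: clause (2) of the definition of an $(i,d)$-object says $\mu \restriction \mu(\kappa_i) = \mathrm{id}$, so any $\alpha \in \dom(\mu) \cap \mu(\kappa_i)$ satisfies $\mu(\alpha) = \alpha$. The real content is therefore to shrink $A^p_i$ so that the second property holds.

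The natural candidate is
$$X := \{\mu \in OB(d) \mid \forall \alpha \in \dom(\mu) \setminus \mu(\kappa_i),\ \mu(\alpha) > f^p_i(\alpha)\},$$
and the plan is to show $X \in E_i(d)$; then $A := A^p_i \cap X$ works.

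To verify $X \in E_i(d)$, I would test the seed: by definition $X \in E_i(d)$ iff $mc_i(d) \in j_{E_i}(X)$. Recall $mc_i(d) = (j_{E_i} \restriction d)^{-1}$, so $\dom(mc_i(d)) = j_{E_i}[d]$ and $mc_i(d)(j_{E_i}(x)) = x$ for $x \in d$; in particular $mc_i(d)(j_{E_i}(\kappa_i)) = \kappa_i$. By elementarity, membership in $j_{E_i}(X)$ unfolds to: for every $y \in \dom(mc_i(d)) \setminus mc_i(d)(j_{E_i}(\kappa_i)) = j_{E_i}[d] \setminus \kappa_i$, we need $mc_i(d)(y) > j_{E_i}(f^p_i)(y)$. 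Writing $y = j_{E_i}(x)$ for $x \in d$ with $x \geq \kappa_i$ (the latter since $\kappa_i = \mathrm{crit}(j_{E_i})$), the left-hand side is $x$. The right-hand side is $j_{E_i}(f^p_i)(j_{E_i}(x)) = j_{E_i}(f^p_i(x)) = f^p_i(x)$, where the last equality holds because $f^p_i(x) < \kappa_i$ and $\kappa_i$ is the critical point. Since $f^p_i(x) < \kappa_i \leq x$, the required inequality $x > f^p_i(x)$ holds in all cases.

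There is no real obstacle here; the only thing to keep careful track of is that the ``$\mu(\kappa_i)$'' in the definition of $X$ behaves, under the seed evaluation, as $mc_i(d)(j_{E_i}(\kappa_i)) = \kappa_i$, so that $\dom(\mu) \setminus \mu(\kappa_i)$ translates to $j_{E_i}[d] \setminus \kappa_i$ and the verification reduces to the trivial fact that ordinals of $d$ above $\kappa_i$ dominate $f^p_i$'s values (which lie below $\kappa_i$).
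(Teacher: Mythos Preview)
Your proof is correct and follows the same approach as the paper: both verify that the seed $mc_i(d)$ lies in $j_{E_i}$ of the relevant set by computing $j_{E_i}(f^p_i)(j_{E_i}(\alpha))=f^p_i(\alpha)<\kappa_i\leq\alpha$ for $\alpha\geq\kappa_i$. Your observation that the first property is already guaranteed by clause~(2) of the definition of an $(i,d)$-object is a small streamlining over the paper, which re-derives it at the seed level.
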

     \begin{proof}
         Consider $mc_i(\dom(f^p_i))$, the seed of the measure $E_i(\dom(f))$. Then for every $\beta\in \dom(mc_i(\dom(f^p_i)))$, $\beta=j_{E_i}(\alpha)$ for some $\alpha\in \dom(f^p_i)$ and $j_{E_i}(f^p_i)(j_{E_i}(\alpha))=j_{E_i}(f^p_i(\alpha))$. Since the range of $f^p_i$ is $\kappa_i$,  $j_{E_i}(f^p_i)(j_{E_i}(\alpha))=f^p_i(\alpha)$. If $\alpha<\kappa_i$, then $$mc_i(\dom(f^p_i))(\alpha)=mc_i(\dom(f^p_i))(j_{E_i}(\alpha))=\alpha.$$ If $\alpha\geq\kappa_i$, then necessarily $mc_i(\dom(f^p_i))(j_{E_i}(\alpha))=\alpha>f^p_i(\alpha)$.
     \end{proof}
 \begin{corollary}\label{cor: 1-point extension unique}
     Suppose that $\mu,\nu\in A$ ($A$ is the set from the previous claim) and $f^p_i\oplus\mu=f^p_i\oplus\nu$ then $\mu=\nu$.
 \end{corollary}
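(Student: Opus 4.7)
The plan is to exploit the strict domination property $\mu(\alpha)>f^p_i(\alpha)$ for $\alpha\in \dom(\mu)\setminus\mu(\kappa_i)$ established in the preceding claim, which makes the domain of $\mu$ ``visible'' through the operation $f^p_i\oplus\mu$. Once we recover $\dom(\mu)=\dom(\nu)$, the equality of values is automatic.

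First, I would evaluate both sides of $f^p_i\oplus\mu=f^p_i\oplus\nu$ at $\kappa_i$. Since $\kappa_i\in\dom(\mu)\cap\dom(\nu)$ (by condition (1) in the definition of an object) and $\mu(\kappa_i)<\kappa_i$, we get $\mu(\kappa_i)=\nu(\kappa_i)$; call this common value $\delta$. By clause (2) of the definition of an object, $\dom(\mu)\cap\kappa_i=\delta=\dom(\nu)\cap\kappa_i$ and both functions are the identity below $\delta$, so $\mu$ and $\nu$ already agree on $\delta$.

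Next, I would show $\dom(\mu)=\dom(\nu)$ above $\kappa_i$. Suppose for contradiction that some $\alpha\in\dom(\mu)\setminus\dom(\nu)$ exists; then $\alpha\geq\kappa_i$, hence $\alpha\in\dom(\mu)\setminus\delta$, so by the previous claim $\mu(\alpha)>f^p_i(\alpha)$. Thus $(f^p_i\oplus\mu)(\alpha)=\mu(\alpha)>f^p_i(\alpha)=(f^p_i\oplus\nu)(\alpha)$, contradicting the assumed equality. By symmetry $\dom(\mu)=\dom(\nu)$. Finally, for any $\alpha$ in this common domain, $\mu(\alpha)=(f^p_i\oplus\mu)(\alpha)=(f^p_i\oplus\nu)(\alpha)=\nu(\alpha)$, so $\mu=\nu$.

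There is no real obstacle here; the statement is essentially a direct corollary of the preceding claim once one notes that the strict inequality (rather than just an inequality) is what allows the $\oplus$ operation to be ``injective in the second coordinate.'' The only thing to be careful about is that the identity clause in the definition of an object handles ordinals below $\mu(\kappa_i)$, while the domination clause from the claim handles the ordinals $\geq\kappa_i$, and these two ranges together cover all of $\dom(\mu)$.
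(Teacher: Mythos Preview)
Your proof is correct and follows essentially the same approach as the paper's own argument: first read off $\mu(\kappa_i)=\nu(\kappa_i)$ by evaluating at $\kappa_i$, then use the identity clause below this common value and the strict domination $\mu(\alpha)>f^p_i(\alpha)$ above it to recover both the domain and the values of $\mu$ from $f^p_i\oplus\mu$. The paper merges your Steps~3 and~4 into a single step (showing directly that $\alpha\in\dom(\mu)\setminus\mu(\kappa_i)$ forces $\alpha\in\dom(\nu)$ with $\nu(\alpha)=\mu(\alpha)$), but the content is identical.
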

 \begin{proof}
     First note that $\mu(\kappa_i)\neq f^p_i(\kappa_i)$ and $\nu(\kappa_i)\neq f^p_i(\kappa_i)$ and therefore $\mu(\kappa_i)=f^p_i\oplus\mu(\kappa_i)=f^p_i\oplus\nu(\kappa_i)=\nu(\kappa_i)$. If $\alpha<\mu(\kappa_i)$ then by definition $\mu(\alpha)=\alpha=\nu(\alpha)$. If $\alpha\geq\kappa$ and $\alpha\in \dom(\mu)$ then since $\mu(\alpha)\neq f^p_i(\alpha)$ then $f^p_i\oplus\mu(\alpha)=\mu(\alpha)$ and therefore $f^p_i\oplus\nu(\alpha)=\mu(\alpha)$ but then it must be that $\alpha\in\dom(\nu)$ and $\nu(\alpha)=\mu(\alpha)$. The argument in case $\alpha\in\dom(\nu)$ is similar. 
 \end{proof}
 Hence, we can shrink every measure one set and directly extend each condition $p$ to a condition $p^*\leq^*p$ with the property of \ref{cor: 1-point extension unique}. Let us only consider conditions of the form $p^*$ and force with this dense subset of $\mathbb{P}_{\vec{E}}$.
 
\begin{corollary}\label{Cor: properties of step extension}
    Let $p\in \mathbb{P}_{\bar{E}}$.\begin{enumerate}
        \item For every $\{i_1,...,i_n\}\subseteq \omega_1\setminus \supp(p)$ (not necessarily increasing) there is a unique increasing sequence $\l \xi_1,...,\xi_n\r\in [\omega_1]^{<\omega}$ such that for every $\l \nu_1,...,\nu_n\r\in A^p_{i_1}\times ...\times A^p_{i_n}$, there is a unique $\vec{\mu}\in Ex_{\vec{\xi}}(p)$ for which $$p^{\smallfrown}\l \nu_1,...,\nu_n\r=p^{\smallfrown}\l\mu_1,...,\mu_n\r.$$ 
        \item Suppose that $p\leq^* q$ and $\vec{\xi}\subseteq \vec{\zeta}\in[ \omega_1\setminus \supp(p)]^{<\omega}$, then for every $\vec{\mu}\in Ex_{\vec{\zeta}}(p)$ there is a unique $\vec{\nu}\in Ex_{\vec{\xi}}(q)$ and $\vec{\rho}\in Ex_{\vec{\zeta}\setminus \vec{\xi}}(q^\smallfrown\vec{\nu})$ such that $$p^\smallfrown \vec{\mu}\leq^*( q^\smallfrown\vec{\nu})^\smallfrown\vec{\rho}.$$
    \item If $p\leq q$ then for every $\vec{\zeta}\in [\omega_1]^{<\omega}$, and every $\vec{\mu}\in Ex_{\vec{\zeta}}(p)$ there is a unique $\vec{\mu}'\in Ex_{\vec{\zeta}\cup (\supp(p)\setminus \supp(q)}(q)$ such that $p^\smallfrown\vec{\mu}\leq^* q^\smallfrown\vec{\mu}'$.\end{enumerate}
\end{corollary}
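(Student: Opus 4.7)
The proof naturally splits into the three numbered parts, each building on the previous, and in each case uniqueness ultimately rests on Corollary~\ref{cor: 1-point extension unique}.

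For (1), I take $\vec{\xi}$ to be the increasing rearrangement of $\{i_1,\ldots,i_n\}$; uniqueness of $\vec{\xi}$ is immediate since $Ex_{\vec{\xi}}(p)$ is defined only for strictly increasing tuples. To produce $\vec{\mu}$ I would use Remark~\ref{Ramark: Commutativity} as a bubble sort: whenever two consecutive indices appear out of order in the step extension, swap them by replacing the outer object with its composition with the inverse of the inner one. Finitely many swaps reduce the arbitrary sequence $\l\nu_1,\ldots,\nu_n\r$ to an increasing $\vec{\mu}\in Ex_{\vec{\xi}}(p)$ with the same step extension. Uniqueness of $\vec{\mu}$ is then established by downward induction from $\xi_n$: the Cohen function at coordinate $\xi_n$ in $p^\smallfrown\vec{\mu}$ is $f^p_{\xi_n}\oplus\mu_n$, and Corollary~\ref{cor: 1-point extension unique} pins down $\mu_n$, after which one restricts below $\xi_n$ and repeats.

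For (2), given $\vec{\mu}\in Ex_{\vec{\zeta}}(p)$ with $p\leq^* q$, I define $\vec{\nu}(\xi):=\vec{\mu}(\xi)\restriction\dom(f^q_\xi)=\pi_{\dom(f^p_\xi),\dom(f^q_\xi)}(\vec{\mu}(\xi))$ for $\xi\in\vec{\xi}$. Since $p\leq^* q$ forces $\pi_{\dom(f^p_\xi),\dom(f^q_\xi)}[A^p_\xi]\subseteq A^q_\xi$ and addability conditions (I)--(II) are preserved under this projection (they only get weaker as we move to $q$), each $\vec{\nu}(\xi)$ is addable to $q$. For $\vec{\rho}(\zeta)$ with $\zeta\in\vec{\zeta}\setminus\vec{\xi}$, I keep $\vec{\mu}(\zeta)$ itself: after adding $\vec{\nu}$ to $q$, the resulting condition's Cohen and measure-one data at coordinate $\zeta$ contain the corresponding data from $p^\smallfrown(\vec{\mu}\restriction\vec{\xi})$ after squishing, so addability of $\vec{\mu}(\zeta)$ transfers. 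The inequality $p^\smallfrown\vec{\mu}\leq^*(q^\smallfrown\vec{\nu})^\smallfrown\vec{\rho}$ is a direct unpacking of the definition of direct extension together with the relation between $f^p$ and $f^q$; uniqueness of the pair $(\vec{\nu},\vec{\rho})$ comes from Corollary~\ref{cor: 1-point extension unique} applied coordinate-by-coordinate.

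For (3), decompose $p\leq q$ as $p\leq^* q^\smallfrown\vec{\tau}$ where $\vec{\tau}\in Ex_{\vec{i}}(q)$ for $\vec{i}$ the increasing enumeration of $\supp(p)\setminus\supp(q)$ (this is exactly the definition of the non-direct order). Now apply part (2) with $q^\smallfrown\vec{\tau}$ in place of $q$ and $\vec{\xi}=\emptyset$, obtaining a unique $\vec{\rho}\in Ex_{\vec{\zeta}}(q^\smallfrown\vec{\tau})$ with $p^\smallfrown\vec{\mu}\leq^*(q^\smallfrown\vec{\tau})^\smallfrown\vec{\rho}$. The concatenation $\vec{\tau}{\smallfrown}\vec{\rho}$ has index set $\vec{i}\cup\vec{\zeta}$ (disjoint because $\vec{\zeta}\subseteq\omega_1\setminus\supp(p)$), so part (1) collapses $(q^\smallfrown\vec{\tau})^\smallfrown\vec{\rho}$ to $q^\smallfrown\vec{\mu}'$ for a unique increasing $\vec{\mu}'\in Ex_{\vec{\zeta}\cup(\supp(p)\setminus\supp(q))}(q)$, giving exactly the desired conclusion.

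The main obstacle is the bookkeeping in (1) and (2): one must track carefully how the ``squishing'' operation $f\circ\mu^{-1}$ and the projection maps $\pi_{d',d}$ interact with the addability clauses, and verify that each bubble-sort swap produces a genuine object in the relevant measure-one set rather than just a formal composition. Once this is set up cleanly, Corollary~\ref{cor: 1-point extension unique} does all the uniqueness work, since it reduces the identification of each $\mu_k$ to recovering a single object from its contribution to the Cohen function.
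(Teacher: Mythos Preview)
Your arguments for (1) and (3) are correct and essentially match the paper's: for (1) the paper extracts the minimum coordinate first rather than bubble-sorting adjacent pairs, but both reduce to Remark~\ref{Ramark: Commutativity} and Corollary~\ref{cor: 1-point extension unique}; for (3) your reduction to (2) with $\vec{\xi}=\emptyset$ followed by (1) is exactly what the paper does.

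The gap is in (2). Your formula $\vec{\rho}(\zeta)=\vec{\mu}(\zeta)$ for $\zeta\in\vec{\zeta}\setminus\vec{\xi}$ does not give an element of $Ex_{\vec{\zeta}\setminus\vec{\xi}}(q^\smallfrown\vec{\nu})$. The object $\vec{\mu}(\zeta)$ lies in $A^p_\zeta$, with domain contained in $\dom(f^p_\zeta)$; but $\vec{\rho}(\zeta)$ must lie in $A^{q^\smallfrown\vec{\nu}}_\zeta$, whose underlying $\zeta$-domain is $\dom(f^q_\zeta)$ (strictly smaller in general) when no element of $\vec{\xi}$ exceeds $\zeta$, and is $\nu_\xi[\dom(f^q_\zeta)]$ when some $\xi\in\vec{\xi}$ above $\zeta$ has squished coordinate $\zeta$. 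In the first case you need $\vec{\mu}(\zeta)\restriction\dom(f^q_\zeta)$; in the second you need $\bigl(\vec{\mu}(\zeta)\restriction\dom(f^q_\zeta)\bigr)\circ\nu_\xi^{-1}$ for the relevant $\xi$. Neither is $\vec{\mu}(\zeta)$ itself. The paper avoids writing such a closed formula by inducting on $|\vec{\zeta}|$: it peels off $\nu=\mu_i$ at $\zeta_i=\min(\vec{\xi})$, uses $p^\smallfrown\nu\leq^* q^\smallfrown(\nu\restriction\dom(f^q_{\zeta_i}))$, and applies the induction hypothesis to the shorter sequence $\l\mu_1\circ\nu^{-1},\ldots,\mu_{i-1}\circ\nu^{-1},\mu_{i+1},\ldots,\mu_n\r$. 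Your direct approach can be salvaged with the corrected formulas, but as stated it does not produce a legitimate $\vec{\rho}$, and your justification (``the resulting condition's data contain the corresponding data from $p^\smallfrown(\vec{\mu}\restriction\vec{\xi})$'') points in the wrong direction: containment of domains goes from $q$ to $p$, not the reverse.
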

\begin{proof}
    For $(1)$, first we define $\vec{\xi}$ is simply the increasing enumeration of $\{i_1,..,i_n\}$. Clearly $\vec{\xi}$ is unique as it is merely the increasing enumeration of $\supp(p^{\smallfrown}\l\nu_1,...,\nu_n\r)\setminus\supp(p)$.  For the second part of $(1)$ we proceed by induction on $n$, for $n=1$ the existence is trivial (take $\nu=\mu$) and the uniqueness follows by Corollary \ref{cor: 1-point extension unique}. For the induction step, suppose that $i_j=\min\{i_1,...,i_n\}$. By definition, we have that $$\nu_{j}\in A^{p^\smallfrown\nu_1,...,\nu_{j-1}}_{i_j}=\{\nu\circ \nu_1^{-1}\circ\nu_2^{-1}\circ...\circ \nu_{j-1}^{-1}\mid \nu\in A^p_j\}$$ 
    Hence there is  unique $\mu_1\in A^p_j$ such that $\mu_1\circ \nu_1^{-1}\circ\nu_2^{-1}\circ...\circ \nu_{j-1}^{-1}=\nu_j$ and $$p^{\smallfrown}\l\nu_1,...,\nu_j\r=(p^{\smallfrown}\mu_1)^\smallfrown\l\nu_1,...,\nu_{j-1}\r.$$
    It follows that $$p^{\smallfrown}\l\nu_1,...,\nu_n\r=(p^{\smallfrown}\mu_1)^\smallfrown\l\nu_1,...,\nu_{j-1},\nu_{j+1},...\nu_n\r.$$
    By the induction hypothesis applied to the condition $p^{\smallfrown}\mu_1$ and
    $\l \nu_1,...\nu_{j-1},\nu_{j+1},...,\nu_n\r$, we have there is a unique sequence $\l\mu_2,...,\mu_n\r\in Ex_{\vec{\xi}\setminus \{\xi_1\}}$ such that  $(p^{\smallfrown} \mu_1)^\smallfrown\l \mu_2,...,\mu_n\r=p^\smallfrown\l\nu_1,...,\nu_n\r$.  
    
    For $(2)$, by induction on $|\vec{\zeta}|$. In the induction step, we consider $\nu=\mu_i$ where $\zeta_i=\min(\vec{\xi})$. 
    Since $\vec{\mu}$ is increasing, by definition, $\nu\in  A^p_{\zeta_i}$, and 
$$(p^\smallfrown\nu)^\smallfrown \l \mu_1\circ \nu^{-1},...,\mu_{i-1}\circ \nu^{-1}\r=p^\smallfrown \l\mu_1,...,\mu_i\r$$
    Note that since $p\leq^* q$, $p^\smallfrown \nu\leq^* q^\smallfrown (\nu\restriction \dom(f^q_{\zeta_i}))$. 
    Hence we may apply the induction hypothesis to $\vec{\mu'}=\l  \mu_1\circ\nu^{-1},...,\mu_{i-1}\circ\nu^{-1},\mu_{i+1},...\mu_n\r$, to find $\vec{\nu}'$ and $\vec{\rho}$ such that $$(p^\smallfrown \nu)^\smallfrown \vec{\mu}'\leq^* (q^\smallfrown (\nu\restriction \dom(f^q_{\zeta_1})))^\smallfrown \vec{\nu}')^\smallfrown \vec{\rho}$$ 
Noting that $(\nu\restriction \dom(f^q_{\zeta_1})))^\smallfrown \vec{\nu}'$ is increasing, we have that $\vec{\nu}=(\nu\restriction \dom(f^q_{\zeta_1})))^\smallfrown \vec{\nu}'\in Ex_{\vec{\xi}}(q)$ is as wanted.

    For $(3)$, $p\leq^* q^\smallfrown\vec{\nu}$  so we apply $(2)$ with $\vec{\xi}=\emptyset$, to find the unique $\vec{\rho}$ such that $p^\smallfrown\vec{\mu}\leq^*(q^\smallfrown \vec{\nu})^\smallfrown \vec{\rho}$. By $(1)$ there is a unique $\vec{\mu}'\in Ex_{\vec{\eta}\cup \supp(p)\setminus \supp(q)}(q)$ such that $(q^\smallfrown \vec{\nu})^\smallfrown\vec{\rho}=q^\smallfrown\vec{\mu'}$.  
\end{proof}
Similarily to the above, we have that for all increasing $\l\mu_1,...,\mu_n\r,\l\mu'_1,...,\mu'_k\r$, if $$ p^{*\smallfrown}\l\mu_1,...,\mu_n\r,p^{*\smallfrown}\l\mu_1',...,\mu'_{k}\r\geq^* q$$ then $\l\mu_1,...,\mu_n\r=\l\mu_1',...,\mu'_{k}\r$. This enables us to identify each extension $q\leq p$ with a unique (increasing) $\vec{\mu}$ such that $q\leq^*p^{\smallfrown}\vec{\mu}$.

Let summarize the properties of $\mathbb{P}_{\bar{E}}$: \begin{proposition}\label{Properties}
 \begin{enumerate}
     \item $\mathbb{P}_{\bar{E}}$ is $\kappa^{++}$-cc.
     \item Cardinals $\lambda\geq \kappa$ are preserved.
     \item For every $p\in \mathbb{P}_{\bar{E}}$, and for every $\xi\in \supp(p)$ the forcing $\mathbb{P}_{\bar{E}}/p$ can be factored to a product   $$\overset{\mathbb{S}_\xi}{\overbrace{\underset{\mathbb{S}_{<\xi}}{\underbrace{\mathbb{P}\restriction\xi\times Col(\bar{\kappa}_\xi^+,{<}f_\xi(\kappa_\xi))\times Col(f_\xi(\kappa_\xi),\tau^+)}}\times Col(\tau^{+3},{<}\kappa_\xi)}}\times \underset{\mathbb{S}_{>\xi}}{\underbrace{Add(\kappa_\xi^+,\lambda)\times \mathbb{P}_{>\xi}}}$$
     Where,
     \begin{enumerate}
     \item $\tau=s_\xi(f_\xi(\kappa_\xi))$, and
     $\lambda$ is either $\kappa^{++}$ if $\xi=\max(\supp(p))$ or $s_{\xi^*}(f_{\xi^*}(\kappa_{\xi^*}))^{++}$ where $\xi^*=\min(\supp(p)\setminus\xi+1)$.
     \item $\mathbb{P}\restriction \xi=\{p\restriction \xi\mid p\in\mathbb{P}_{\bar{E}}\}$, then $|\mathbb{P}\restriction\xi|\leq s_\xi(f_\xi(\kappa_\xi))^{++}$ and has the $\bar{\kappa}_\xi$-cc. 
     
     \item
     $\mathbb{P}_{>\xi}$ is an $\kappa_\xi^+$-closed forcing  with respect to the $\leq^*$-order.
      \end{enumerate}
      Note that $|\mathbb{S}_{<\xi}|\leq s_\alpha(f_\alpha(\kappa_\alpha))^{++}$ and $\mathbb{S}_{>\xi}$ is $\kappa_\xi^+$-closed with respect to $\leq^*$.
     \item $\mathbb{P}_{\bar{E}}$ satisfies the Prikry property, that is, for every formulate in the forcing language $\sigma$ and every $p\in \mathbb{P}_{\bar{E}}$, there is $p^*\leq^* p$ such that $p^*||\sigma$.
     \item $\mathbb{P}_{\bar{E}}$ satisfies the strong Prikry property, that is, for every dense open set $S\subseteq \mathbb{P}_{\bar{E}}$, and every $p\in \mathbb{P}_{\bar{E}}$, there is $p^*\leq^* p$ and $\vec{\xi}\in [\omega_1\setminus \supp(p)]^{<\omega}$, such that for every $\vec{\mu}\in Ex_{\vec{\xi}}(p^*)$, $p^{*\smallfrown}\vec{\mu}\in D$.
     \item $Card^{V^{\mathbb{P}_{\bar{E}}}}\cap [\bar{\kappa}_\xi,\kappa_\xi)=\{\bar{\kappa}_\xi, \bar{\kappa}_\xi^+,f_\xi(\kappa_\xi),\tau^{++},\tau^{+3}\}$. If $\xi\leq\omega_1$ is limit then $\bar{\kappa}_\xi=(\aleph_\xi)^{V^{\mathbb{P}_{\bar{E}}}}$ and in particular $\kappa=\aleph_{\omega_1}^{V^{\mathbb{P}_{\bar{E}}}}$.
     \item In $V^{\mathbb{P}_{\bar{E}}}$, $2^\kappa=\kappa^{++}$, $2^{\bar{\kappa}_\xi}\leq \kappa_{\xi}$. If $\xi\leq\omega_1$ is limit then $\bar{\kappa}_\xi$ is a strong limit, and moreover if $\xi<\omega_1$, then $2^{\bar{\kappa}_\xi}=\tau^{++}$.
      
 \end{enumerate}
  
 \end{proposition}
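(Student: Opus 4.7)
The plan is to verify the seven items by leveraging standard machinery for extender-based Prikry-type forcings with interleaved collapses, adapted to the uncountable-cofinality setting. I would first set up the factorization (3), since the other items reduce to it. Given $p$ with $\xi\in\supp(p)$, a condition $q\leq^* p$ is unpacked into three blocks: the bounded piece $\mathbb{P}\restriction\xi$ of conditions supported below $\xi$, together with the three interleaved collapses $Col(\bar{\kappa}_\xi^+,{<}f_\xi(\kappa_\xi))$, $Col(f_\xi(\kappa_\xi),\tau^+)$, $Col(\tau^{+3},{<}\kappa_\xi)$ coming from $h^{0,q}_\xi,h^{1,q}_\xi,h^{2,q}_\xi$; the Cohen piece $Add(\kappa_\xi^+,\lambda)$ that records the part of $f_{\xi^*}$ above $\kappa_\xi^+$; and the tail $\mathbb{P}_{>\xi}$. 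The size bound $|\mathbb{P}\restriction\xi|\leq s_\xi(f_\xi(\kappa_\xi))^{++}$ follows from GCH and from the fact that every Cohen piece below $\xi$ has its domain squished (via $f\circ\mu^{-1}$) into $s_\xi(f_\xi(\kappa_\xi))^{++}$; the $\bar{\kappa}_\xi$-cc of $\mathbb{P}\restriction\xi$ is a $\Delta$-system argument on the Cohen parts using GCH. The $\kappa_\xi^+$-directedness of $\mathbb{P}_{>\xi}$ under $\leq^*$ is direct from the definition, since along $\leq^*$ the Cohen parts only grow and each has size $<\kappa$. The $\kappa^{++}$-cc of (1) is then a top-level $\Delta$-system argument on the Cohen parts $\bigcup_i f_i$, whose sizes sum to $\kappa$ and live inside $\kappa^{++}$: under GCH there are exactly $\kappa^{++}$ such roots, and two conditions with a common root are easily glued.

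The technical heart is the Prikry property (4). Given $p$ and a statement $\sigma$, I would produce $p^*\leq^* p$ deciding $\sigma$ by simultaneously shrinking the measure-one sets $A^p_i$ so that whether a step-extension $p^{*\smallfrown}\vec{\mu}$ decides $\sigma$ depends only on $\supp(\vec{\mu})$. For each fixed finite $\vec{\xi}\in[\omega_1]^{<\omega}$ Proposition~\ref{Prop: Rowbottom for object-measures} gives a coherent decision on a measure-one rectangle, and the Rudin–Keisler projections $\pi_{d',d}$ let us patch these decisions together as we increase the Cohen domains. A diagonalization across the countably-many coordinates $i<\omega_1$ bounded in $\sup(\vec{\xi})$ and then across the $[\omega_1]^{<\omega}$-many possible $\vec{\xi}$ — carried out inside the $\kappa_\xi^+$-directed tail at each stage — yields a single $p^*\leq^* p$ for which the decision of $\sigma$ by any step-extension is determined by its support. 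A density argument then forces an actual decision, using Corollary~\ref{Cor: properties of step extension} (1) for uniqueness of the support representation. The strong Prikry property (5) is the usual consequence: apply (4) to the statement ``some step-extension lies in $D$'' and then use (1) of that same corollary together with the pre-density of $\{p^{*\smallfrown}\vec{\mu}:\vec{\mu}\in Ex_{\vec{\xi}}(p^*)\}$ to locate the witnessing $\vec{\xi}$.

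The remaining items then fall out. For (2), cardinals $\geq\kappa^{++}$ are preserved by (1), and $\kappa^+$ is preserved using the factorization (3): $\mathbb{S}_{<\xi}$ has size $<\kappa$ and $\mathbb{S}_{>\xi}$ is $\leq^*$-closed, so the Prikry property shows no cofinal $\omega$-sequence into $\kappa^+$ is added. Item (6) is then read off coordinate-by-coordinate from (3): the three collapses at $\xi$ kill exactly the intervals $(\bar{\kappa}_\xi^+,f_\xi(\kappa_\xi))$, $(f_\xi(\kappa_\xi),\tau^{++})$, $(\tau^{+3},\kappa_\xi)$, leaving the listed five cardinals and (at limit $\xi$) making $\bar{\kappa}_\xi=\aleph_\xi^{V^{\mathbb{P}_{\bar{E}}}}$. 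Item (7) combines the $\kappa^{++}$-many Cohen functions with the bounded arithmetic from (6). The main obstacle I anticipate is the Prikry proof in step two: one must handle uncountably many coordinates while keeping track of how the squishing $f\circ\mu^{-1}$ and the interleaved collapses interact under $\leq^*$. The commutativity of step-extensions from Remark~\ref{Ramark: Commutativity} and the unique-representation from Corollary~\ref{cor: 1-point extension unique} are the essential lubricants that make the diagonalization work without losing measure-one sets at cofinally many indices.
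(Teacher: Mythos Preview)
The paper does not actually prove this proposition: it is presented as a summary of the properties of Jirattikansakul's forcing, with the proofs deferred to \cite{Sittinon}. So there is no ``paper's own proof'' to compare against; your sketch is an attempt to supply what the paper omits.

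Your outline is broadly reasonable --- the factorization in (3) is indeed the organizing principle, the $\Delta$-system argument for (1) is standard, and items (2), (6), (7) do fall out of (3), (4), (5) as you describe. One point is backwards, however. You propose to derive the strong Prikry property (5) from the ordinary Prikry property (4) by ``apply[ing] (4) to the statement `some step-extension lies in $D$'\,''. This does not work: for a dense open $D$ that sentence is trivially forced by every condition, so the Prikry property tells you nothing. The usual route --- and the one the paper itself takes later for the iteration (see Lemma~\ref{Lemma: pre-Prikry Lemma} and the corollary labelled ``The Prikry Lemma'') --- is the reverse: one proves a pre-strong-Prikry statement directly via the diagonalization/fusion argument you sketch for (4), obtaining for each finite $\vec{\xi}$ a direct extension whose $\vec{\xi}$-step extensions are either all in $D$ or all out of $D$; density then supplies the witnessing $\vec{\xi}$, and the ordinary Prikry property follows by taking $D=\{q: q\parallel\sigma\}$ and using the Rowbottom-type homogenization of Proposition~\ref{Prop: Rowbottom for object-measures} to see the minimal $\vec{\xi}$ must be empty. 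Your argument for (4) in fact already contains the ingredients for (5); you should just reorder the presentation so (5) is the primary target.
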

 \subsection{Reflection and fragile stationary sets in $\mathbb{P}_{\bar{E}}$}
 \begin{definition}

For $p\in\mathbb{P}_\xi$, define $\xi(p)=\max(\supp(p))$. Let $\xi<\omega_1$, define 
$$\mathbb{P}_\xi=\{p\in\mathbb{P}\mid \xi(p)=\xi\}$$ with the induced order from $\mathbb{P}$.
 \end{definition}
 \begin{remark}
Any condition $p$ in  $\l \mathbb{P}_{\xi},\leq\r$ has a component $H^p_{\xi+1}\colon A_{\xi+1}(\dom(f_{\xi+1}))\to V$ such that 
$H^p_{\xi+1}(\mu)$ is in $$ Col(\kappa_\xi^+,<\mu(\kappa_{\xi+1}))\times Col(\mu(\kappa_{\xi+1}),s_{\xi+1}(\mu(\kappa_{\xi+1}))^+),$$ 
note that these are the same forcings as computed in $M_{E_{\xi+1}}$ since these forcings consist of sequences of size $\leq\kappa_{\xi+1}$ which are available in $M_{E_{\xi+1}}$ and $\kappa^+$ is computed correctly in $M_{E_{\xi+1}}$.
Recall that $s_{\xi+1}$ represents $\kappa$ in the extender ultrapower by $E_{\xi+1}$, it follows that $j_{E_{\xi+1}}(H^p_{\xi+1})(mc_{\xi+1}(\dom(f^p_{\xi+1})))$ is in 
$$ Col(\kappa_\xi^+,<\kappa_{\xi+1})\times Col(\kappa_{\xi+1},\kappa^+).$$ Let us denote these conditions by $H^{p,0}_{\xi+1}$ and $H^{p,1}_{\xi+1}$. 
 Suppose that $G$ is $V$-generic for $\l\mathbb{P}_\xi,\leq\r$ and let $h^0_G=\bigcup_{p\in G} H^{p,0}_{\xi+1}$ and $h^{1}_G=\bigcup_{p\in G}H^{p,1}_{\xi+1}$. 
 \begin{claim}
     $h^{0}_G\times h^{1}_G$ is $V$ generic for $Col(\kappa_\xi^+,<\kappa_{\xi+1})\times Col(\kappa_{\xi+1},\kappa^+)$
 \end{claim}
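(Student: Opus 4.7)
The plan is to verify that the pullback set
$$D^\ast := \{p \in \mathbb{P}_\xi : \langle H^{p,0}_{\xi+1}, H^{p,1}_{\xi+1}\rangle \in D\}$$
is dense in $\mathbb{P}_\xi$ for every $V$-dense open $D \subseteq Col(\kappa_\xi^+, <\kappa_{\xi+1}) \times Col(\kappa_{\xi+1}, \kappa^+)$. Once this is established, $G \cap D^\ast \ne \emptyset$ yields $h^0_G \times h^1_G \cap D \ne \emptyset$, which is product genericity. Note that because $M_{E_{\xi+1}}$ is $\kappa_{\xi+1}$-closed and computes cardinals correctly through $\kappa^{++}$, the product collapse and its dense open subsets coincide in $V$ and in $M_{E_{\xi+1}}$, so one may move freely between the two.

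Fix $p_0 \in \mathbb{P}_\xi$ and $D$. I first use density of $D$ to choose $\langle q^0, q^1\rangle \in D$ extending the condition $\langle H^{p_0,0}_{\xi+1}, H^{p_0,1}_{\xi+1}\rangle$. The heart of the argument is pulling $q^0, q^1$ back through $j_{E_{\xi+1}}$: using the standard representation of elements of the extender ultrapower and the shape of the seed $mc := mc_{\xi+1}(\dom(f^{p_0}_{\xi+1}))$, and after possibly enlarging $\dom(f^{p_0}_{\xi+1})$ (permissible under direct extension) to cover the ordinals $<\kappa^+$ in the range of $q^1$, I produce functions $F^0, F^1 \in V$ defined on an $E_{\xi+1}$-measure one set $A^\ast \subseteq A^{p_0}_{\xi+1}$ with the following features: $F^0(\mu) \in Col(\kappa_\xi^+, <\mu(\kappa_{\xi+1}))$ extends $H^{p_0,0}_{\xi+1}(\mu)$; $F^1(\mu) \in Col(\mu(\kappa_{\xi+1}), s_{\xi+1}(\mu(\kappa_{\xi+1}))^+)$ extends $H^{p_0,1}_{\xi+1}(\mu)$; and $j_{E_{\xi+1}}(F^r)(mc) = q^r$ for $r = 0,1$.

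The desired direct extension $p \leq^\ast p_0$ is then obtained by leaving $p_0$ untouched away from the $(\xi+1)$-st coordinate and, at that coordinate, shrinking the measure-one set to $A^\ast$ and replacing $H^{p_0,r}_{\xi+1}$ by $F^r$ for $r = 0,1$ (while keeping $H^{p_0,2}_{\xi+1}$ unchanged, as the claim concerns only $h^0$ and $h^1$). By construction $p \leq^\ast p_0$, and \L o\'s's theorem gives $\langle H^{p,0}_{\xi+1}, H^{p,1}_{\xi+1}\rangle = \langle q^0, q^1\rangle \in D$, placing $p$ in $D^\ast$. The principal technical obstacle is the pullback representation of $q^1$: whereas $q^0$ is essentially fixed by $j_{E_{\xi+1}}$ since its entries lie below $\kappa_{\xi+1}$, the range of $q^1$ extends into $\kappa^+$ and must be decoded using Merimovich's object encoding together with the key feature $j_{E_{\xi+1}}(s_{\xi+1})(\kappa_{\xi+1}) = \kappa$, which lets ordinals below $\kappa^+$ be named as values $F^1(\mu) \in Col(\mu(\kappa_{\xi+1}), s_{\xi+1}(\mu(\kappa_{\xi+1}))^+)$ at the seed.
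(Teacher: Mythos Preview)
Your approach is correct and essentially the same as the paper's: both establish that $p \mapsto \langle H^{p,0}_{\xi+1}, H^{p,1}_{\xi+1}\rangle$ is a projection by taking a collapse condition $q$ below the image of $p$, representing it in the extender ultrapower, and pulling it back to redefine the $H$-functions on a suitable measure-one set after enlarging the domain at coordinate $\xi+1$. The paper phrases the representation via a finite generator set $d\in[\kappa^{++}]^{<\omega}$ for $q$ rather than via the range ordinals of $q^1$, but the mechanism is identical.
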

 \begin{proof}
     We will prove that $p\mapsto H^{p,0}_{\xi+1},H^{p,1}_{\xi+1}$ defines a projection from $\mathbb{P}_E$ to the collapses. 
     Let $j_{\xi+1}(H^{p})(mc)\geq q\in Col(\kappa_\xi^+,<\kappa_{\xi+1})\times Col(\kappa_{\xi+1},\kappa^+)$ be any condition, then $q$ is 
     represented by some function $j_{\xi+1}(g)(d)=q$ where $d\in[\kappa^{++}]^{<\omega}$. 
     Without loss of generality, suppose that $\dom(f_{\xi+1})\subseteq d$, and consider $mc_{\xi+1}(d)$. 
     Define a condition $p'\leq^* p$ that defers only at coordinate $\xi+1$, where we arbitrarily extend $f^p_{\xi+1}$ to $f^{p'}_{\xi+1}$ with domain $d$, 
     we take $A^{p'}_{\xi+1}=\pi^{-1}_{d,\dom(f^p_{\xi+1})}$ and for each $\mu$ in the measure one set we let $(H^{p',0}_{\xi+1}(\mu),H^{p',1}_{\xi+1}
     (\mu))=g(Im(\mu))$ (and $H^2$ is lifted naturally as well). The condition $p'\in \mathbb{P}_\xi$ satisfies that $p'\leq^* p$ and  $j_{\xi+1}(H^{p'}_{\xi+1})(mc_{\xi+1}(f^{p'}_{\xi+1})=j_{\xi+1}(g)(d)=q$.
 \end{proof} 
 So $\l\mathbb{P}_{\xi},\leq\r$ collapses $\kappa^+$ to $\kappa_{\xi+1}$, makes $\kappa_{\xi+1}=\kappa_{\xi}^{++}$. To see that $\kappa_{\xi}^+$ and $\kappa_{\xi+1}$ are preserved, we note that in fact $\l\mathbb{P}_{\xi},\leq\r$ is forcing equivalent to: \begin{equation}\label{equation1}
     \mathbb{P}\restriction\xi+1\times   Col(\kappa_\xi^+,<\kappa_{\xi+1})\times  Col(\kappa_{\xi+1},\kappa^+)\times\mathbb{T}
 \end{equation} where $\mathbb{P}\restriction \xi+1$ has the $\kappa_\xi^+$-c.c. and $\mathbb{T}$ is a $\kappa_{\xi+1}^+$-directed closed forcing. 
 From this factorization, it is clear that $\kappa^+_\xi,\kappa_{\xi+1}$ are preserved.
 
 \end{remark}
  \begin{lemma}\label{Lemma: generic lift}
In $V^{\mathbb{P}_\xi}$, $\text{Refl}(E^{(\kappa^+)^V}_{\leq\bar{\kappa}
_{\xi}},E^{(\kappa^+)^V}_{<\kappa_\xi})$ holds.
 \end{lemma}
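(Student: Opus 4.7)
I plan to follow the template of the proof of Theorem~\ref{Thm: reflection in magidor}, combining ground-model reflection with a trace-and-closure argument based on the factorization of $\mathbb{P}_\xi$ from Proposition~\ref{Properties}.

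First I will establish a strong ground-model reflection: in $V$, every stationary $S\subseteq(\kappa^+)^V$ reflects at some $\alpha<(\kappa^+)^V$ with $\cf^V(\alpha)<\kappa_\xi$. Fix any $i<\xi$; since $\kappa_i$ is supercompact in $V$, let $j_i\colon V\to M_i$ be a $(\kappa^+)^V$-supercompact embedding with critical point $\kappa_i$, arising from a normal fine measure on $P_{\kappa_i}(\kappa^+)^V$. The standard argument shows that $\gamma:=\sup j_i''(\kappa^+)^V$ satisfies $\cf^{M_i}(\gamma)=(\kappa^+)^V$ and $j_i(S)\cap\gamma$ is stationary in $\gamma$. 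Since $j_i(\kappa_i)>(\kappa^+)^V$ and $\kappa_i<\kappa_\xi$, we have $j_i(\kappa_\xi)>(\kappa^+)^V=\cf^{M_i}(\gamma)$, so elementarity yields the claim.

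To transfer to $V^{\mathbb{P}_\xi}$: given a $\mathbb{P}_\xi$-name $\dot T$ forced by $p\in G$ to be a stationary subset of $E^{(\kappa^+)^V}_{\leq\bar\kappa_\xi}$, use the factorization $\mathbb{P}_\xi/p\simeq \mathbb{S}_{<\xi}/p\times Col(\tau^{+3},{<}\kappa_\xi)\times\mathbb{S}_{>\xi}/p$ from Proposition~\ref{Properties}, with $|\mathbb{S}_{<\xi}|\leq\tau^{++}<\kappa_\xi$. For each stem $s\in\mathbb{S}_{<\xi}/p$ define in $V$
\[ T_s=\{\alpha<(\kappa^+)^V: \exists q\leq p,\ q\restriction\mathbb{S}_{<\xi}=s,\ q\Vdash\alpha\in\dot T\}. \]
Since $T\subseteq\bigcup_{s\in G\cap\mathbb{S}_{<\xi}}T_s$ and there are fewer than $\kappa_\xi<\kappa_{\xi+1}=\cf^{V^{\mathbb{P}_\xi}}((\kappa^+)^V)$ stems, some $T_s$ (with $s\in G$) is stationary in $V^{\mathbb{P}_\xi}$ and hence in $V$. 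Apply the previous step to $T_s$ to obtain $\alpha$ with $\cf^V(\alpha)<\kappa_\xi$ and $T_s\cap\alpha$ stationary in $V$. Using that $Col(\tau^{+3},{<}\kappa_\xi)$ is $\tau^{+3}$-closed, that $\mathbb{S}_{>\xi}$ is $\kappa_\xi^+$-closed with respect to $\leq^*$, and the strong Prikry property of $\mathbb{P}_\xi$, construct a direct extension $q\leq^* p$ with stem $s$ forcing $T_s\cap\alpha\subseteq\dot T$. Since $\cf^{V^{\mathbb{P}_\xi}}(\alpha)\leq\cf^V(\alpha)<\kappa_\xi$ and the stationarity of $T_s\cap\alpha$ at $\alpha$ is preserved by the relevant factors of $\mathbb{P}_\xi$, this witnesses reflection of $T$ at a point of $E^{(\kappa^+)^V}_{<\kappa_\xi}$.

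The main obstacle is the last step: turning extensions in $\mathbb{P}_\xi/p$ witnessing $\beta\in\dot T$ (for each $\beta\in T_s\cap\alpha$) into a single direct extension with fixed stem $s$. One has to apply the strong Prikry property simultaneously across the fewer than $\kappa_\xi$ many $\beta\in T_s\cap\alpha$, diagonally intersecting the measure-one sets while unioning the collapse pieces, and verify compatibility with the stem $s$. The stationarity preservation of $T_s\cap\alpha$ in $V^{\mathbb{P}_\xi}$ then follows from the $\bar\kappa_\xi$-c.c.\ of $\mathbb{S}_{<\xi}$ combined with the $\tau^{+3}$-closure of the middle factor and the $\leq^*$-closure of $\mathbb{S}_{>\xi}$, applied relative to $\cf^V(\alpha)$.
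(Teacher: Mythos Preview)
Your approach differs fundamentally from the paper's and has a genuine gap. The paper does not use ground-model reflection plus traces for this lemma; instead it exploits the indestructibility of $\kappa_\xi$. After forcing with the $\kappa_\xi$-directed-closed factor $\mathbb{R}$ of $\mathbb{P}_\xi$, there is a $\kappa_{\xi+1}$-supercompact embedding $j\colon V^{\mathbb{R}}\to M$ with critical point $\kappa_\xi$; this is lifted through the small factor and then through $Col(\tau^{+3},{<}\kappa_\xi)$ inside a further $\tau^{+3}$-closed extension. Approachability (Shelah) guarantees that any stationary $S\subseteq E^{\kappa_\xi^{++}}_{\le\bar\kappa_\xi}$ survives this closed step, and then $\sup j''\kappa_{\xi+1}$ witnesses reflection of $j(S)$ at a point of $M$-cofinality below $j(\kappa_\xi)$.

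The obstruction to your template is not mere bookkeeping. In Theorem~\ref{Thm: reflection in magidor} the ``upper part'' above a stem consists solely of measure-one sets in a $\kappa$-complete filter, so witnesses for different $\beta\in T_{p_0}$ are automatically compatible and a single intersection suffices. Here the upper part over $s\in\mathbb{S}_{<\xi}$ includes the Levy collapse $Col(\tau^{+3},{<}\kappa_\xi)$, whose conditions are not pairwise compatible: your witnesses $q_\beta$ for distinct $\beta\in T_s$ may carry incompatible collapse components, so ``unioning the collapse pieces'' is undefined. An inductive construction does not rescue this, because $\beta\in T_s$ merely asserts the existence of \emph{one} witness with stem $s$; it does not guarantee that an arbitrary condition $(s,c,r)$ already reached in the induction can be further extended with stem $s$ to force $\beta\in\dot T$. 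Separately, you only obtain $\cf^V(\alpha)<\kappa_\xi$, while the collapse is only $\tau^{+3}$-closed, so even a compatible descending chain of length $\cf^V(\alpha)$ need not have a lower bound; and $\mathbb{S}_{<\xi}$ is not $\bar\kappa_\xi$-c.c.\ (it contains $Col(\bar\kappa_\xi^+,{<}f_\xi(\kappa_\xi))$), so your stationarity-preservation sketch at $\alpha$ also fails. These are precisely the reasons the paper establishes reflection in $V^{\mathbb{P}_\xi}$ via a lifted embedding first, and only afterwards runs a trace argument (Theorem~\ref{Theore: nonreflecting implies fragile for Q0}) on top of it.
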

 \begin{proof}
First note that in $V^{\mathbb{P}_{\xi}}$, $|(\kappa^{+})^V|=cf((\kappa^+)^V)=\kappa_{\xi+1}=\kappa_{\xi}^{++}$. Thus, it is enough to prove $\text{Refl}(E^{\kappa_\xi^{++}}_{\leq\bar{\kappa}
_{\xi}},E^{\kappa_\xi^{++}}_{<\kappa_\xi})$.
Let $$\mathbb{R}=Col(\kappa_\xi^+,<\kappa_{\xi+1})\times Add(\kappa_{\xi+1}^+,\kappa^{++})\times \mathbb{T}$$ be the $\kappa_\xi^+$-closed part from the factoring of $\mathbb{P}_\xi$ in Equation \ref{equation1}. By the indestructibility of $\kappa_\xi$, there is $j:V^{\mathbb{R}}\rightarrow M$ be a $\kappa_{\xi+1}$ - supercompact embedding with critical point $\kappa_{\xi}$. We lift this embedding to $V^{\mathbb{P}_\xi}$, by first lifting it over the small forcing $\mathbb{S}_{\xi}$, and then over $Col(\tau, <\kappa_{\xi})$ in an generic extension by the $\tau$-closed forcing $Col(\tau, <j(\kappa_{\xi}))/Col(\tau, <\kappa_{\xi})$as in \cite[Theorem 10.5]{CummingsHand}. So we have $j^*:V^{\mathbb{P}_\xi}\rightarrow M^*$, with  $j^*$ being definable in  $V^{\mathbb{P}_\xi}[H]$, where $H$ is generic for a $\tau$-closed forcing for where $\tau>\bar{\kappa}_\xi^+$. 
 
 Let $S$ be a stationary subset of $E^{\kappa_\xi^{++}}_{\leq\bar{\kappa}
_{\xi}}$ in $V^{\mathbb{P}_\xi}$. By Shelah \cite[Lemma 4.4]{Shelah1991}, $S\in I[\kappa_\xi^{++}]$ (here it is important that we look at the double successor), and since $\tau>\bar{\kappa}_\xi^+$, by Shelah again \cite{SHELAH1979357}, its stationarity is preserved under $\tau$-closed forcing and in particular in the generic extension by $H$. Then by standard arguments, we have that $j^*(S)\cap \sup j''\kappa_{\xi+1}$ is stationary. Since $M^*\models\cf(\sup j''\kappa_{\xi+1})=\kappa_{\xi+1}<j(\kappa_\xi)$, by elementarity, $S$ reflects at a point $\gamma$ of cofinality less than $\kappa_{\xi}$.
  \end{proof}
  \begin{definition}
      For a $\mathbb{P}_{\vec{E}}$-name $\dot{S}$, and $\xi<\omega_1$, let
      $$\dot{S}_\xi=\{\l\beta,p\r\mid p\in\mathbb{P}_{\xi}, \ p\Vdash_{\mathbb{P}_{\bar{E}}}\beta\in \dot{S}\}$$

  \end{definition}
  \begin{definition}
       For a condition $r^*\in\mathbb{P}_{\vec{E}}$ and a $\mathbb{P}_{\vec{E}}$-name 
 such that $r^*\Vdash_{\mathbb{P}_{\vec{E}}}``\dot{S}$ is a stationary subset of $\kappa^+$", we say that $\dot{S}$ is $r^*$-fragile, if for all sufficiently large $\xi<\omega_1$, for all $q\leq r^*$, with $\xi(q)\geq\xi$, $q\Vdash_{\mathbb{P}_{\xi(q)}}`` \dot{S}_{\xi(q)}$ is nonstationary". We say that $\dot{S}$ is fragile if it is $1_{\mathbb{P}_{\vec{E}}}$-fragile
  \end{definition}
\begin{theorem}\label{Theore: nonreflecting implies fragile for Q0}

Suppose that $p\Vdash_{\mathbb{P}_{\vec{E}}}``\dot{S}$ is a non-reflecting stationary set", then $\dot{S}$ is $p$-fragile. In particular, if $p=1_{\mathbb{P}_{\vec{E}}}$,  then $\dot{S}$ is fragile.
\end{theorem}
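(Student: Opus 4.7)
The plan is to argue by contradiction. Suppose $\dot{S}$ is not $p$-fragile; negating the definition, for cofinally many $\xi<\omega_1$ I can pick some $q\leq p$ with $\xi(q)\geq\xi$ such that $q$ fails to force $\dot{S}_{\xi(q)}$ nonstationary in $V^{\mathbb{P}_{\xi(q)}}$. I would fix such $q$ with $\xi':=\xi(q)$ sufficiently large, and then strengthen within $\mathbb{P}_{\xi'}$ to $q^*\leq q$ (still with $\xi(q^*)=\xi'$) forcing $\dot{S}_{\xi'}$ to be stationary in $(\kappa^+)^V$ in $V^{\mathbb{P}_{\xi'}}$. The plan is then to extract a reflection point $\gamma$ of $\dot{S}_{\xi'}$ from Lemma \ref{Lemma: generic lift} and transport it back to $V^{\mathbb{P}_{\vec{E}}}$, contradicting the non-reflection of $\dot{S}$.

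To apply Lemma \ref{Lemma: generic lift} I first need a stationary portion of $\dot{S}_{\xi'}$ to lie inside $E^{(\kappa^+)^V}_{\leq\bar{\kappa}_{\xi'}}$. I would arrange this in two steps. First, by applying Fodor in $V^{\mathbb{P}_{\vec{E}}}$ and strengthening $p$, I may assume $\dot{S}$ concentrates on a single regular cofinality $\theta$ in $V^{\mathbb{P}_{\vec{E}}}$; since $\kappa$ is singular there, $\theta<\kappa$, so for $\xi'$ large one has $\theta\leq\bar{\kappa}_{\xi'}$. Second, any cofinal $\theta$-sequence in $\beta\in\dot{S}$ that exists in $V^{\mathbb{P}_{\vec{E}}}$ can be captured inside $V^{\mathbb{P}_{\xi'}}$ by exploiting the factorization $\mathbb{P}_{\vec{E}}/q^*\simeq \mathbb{S}_{<\xi'}\times\mathbb{S}_{\xi'}\times\mathbb{S}_{>\xi'}$ from Proposition \ref{Properties}(3): the tail $\mathbb{S}_{>\xi'}$ is $\kappa_{\xi'}^+$-closed under $\leq^*$, and combining this closure with the Prikry property (Proposition \ref{Properties}(4)) lets a $\leq^*$-chain of length $\theta<\kappa_{\xi'}^+$ below $q^*$ decide the sequence pointwise and converge to a $\leq^*$-lower bound. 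Consequently (possibly after shrinking $q^*$) $\dot{S}_{\xi'}$ is forced stationary inside $E^{(\kappa^+)^V}_{\leq\bar{\kappa}_{\xi'}}$, and Lemma \ref{Lemma: generic lift} delivers some $\gamma<(\kappa^+)^V$ with $\cf^{V^{\mathbb{P}_{\xi'}}}(\gamma)<\kappa_{\xi'}$ such that $\dot{S}_{\xi'}\cap\gamma$ is stationary in $V^{\mathbb{P}_{\xi'}}$.

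To finish, I would transport this reflection to $V^{\mathbb{P}_{\vec{E}}}$. By the very definition of the trace, every $\mathbb{P}_{\xi'}$-condition forcing $\beta\in\dot{S}_{\xi'}$ is itself a $\mathbb{P}_{\vec{E}}$-condition forcing $\beta\in\dot{S}$, so $\dot{S}_{\xi'}\cap\gamma\subseteq\dot{S}$ below $q^*$. It remains to show that the stationarity of $\dot{S}_{\xi'}\cap\gamma$ in $\gamma$ is preserved from $V^{\mathbb{P}_{\xi'}}$ to $V^{\mathbb{P}_{\vec{E}}}$; since $\cf(\gamma)<\kappa_{\xi'}$ and the relevant quotient is governed by the $\leq^*$-closed tail $\mathbb{S}_{>\xi'}$, a standard ``direct extension plus Prikry'' argument should show that no new club in $\gamma$ added by the quotient can avoid $\dot{S}_{\xi'}\cap\gamma$. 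That exhibits $\gamma$ as a reflection point for $\dot{S}$ below $q^*$, the desired contradiction. The hard part will be this final preservation step: $\mathbb{P}_{\xi'}$ is not literally a complete subforcing of $\mathbb{P}_{\vec{E}}$ — the remark preceding Lemma \ref{Lemma: generic lift} shows that a $\mathbb{P}_{\xi'}$-generic packages the initial segment of $\mathbb{P}_{\vec{E}}$ up to $\xi'$ together with the lifts through $j_{E_{\xi'+1}}$ of the $(\xi'+1)$-level collapse components. Carefully reconciling the $\mathbb{P}\restriction\xi'+1\times\text{collapses}\times\mathbb{T}$ picture of $\mathbb{P}_{\xi'}$ with the $\mathbb{S}_{<\xi'}\times\mathbb{S}_{\xi'}\times\mathbb{S}_{>\xi'}$ factorization of $\mathbb{P}_{\vec{E}}/q^*$, and deducing preservation of stationary subsets of ordinals of cofinality $<\kappa_{\xi'}$ from the $\leq^*$-closure of the genuine tail, is the technical heart of the argument.
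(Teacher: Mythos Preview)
Your outline is correct up to and including the application of Lemma~\ref{Lemma: generic lift}, and indeed mirrors the paper's proof. The difficulty you flag at the end is real, and the paper resolves it differently---and more simply---than the route you sketch.

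The gap is in the sentence ``$\dot{S}_{\xi'}\cap\gamma\subseteq\dot{S}$ below $q^*$'' and in the subsequent preservation argument. The trace $\dot{S}_{\xi'}$ is interpreted in a $\mathbb{P}_{\xi'}$-generic extension, while $\dot{S}$ is a $\mathbb{P}_{\vec{E}}$-name; since $\mathbb{P}_{\xi'}$ is not a complete subforcing of $\mathbb{P}_{\vec{E}}$ (as you correctly note), there is no direct inclusion of models in which to compare the two, and no obvious quotient whose closure would give you preservation of stationarity in $\gamma$. Knowing that each individual $\beta\in\dot{S}_{\xi'}$ is put there by some $\mathbb{P}_{\vec{E}}$-condition does not by itself give a single $\mathbb{P}_{\vec{E}}$-condition forcing the whole stationary witness into $\dot{S}$.

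The paper sidesteps this entirely by passing through the common factor $\mathbb{S}_\xi$. Since $\cf(\gamma)<\kappa_\xi$, the small stationary witness $\dot{A}\subseteq\dot{S}_\xi\cap\gamma$ of order type $\cf(\gamma)$ can be taken to be an $\mathbb{S}_\xi$-name (by distributivity of the $\kappa_\xi^+$-closed complement $\mathbb{R}$ in the factorization $\mathbb{P}_\xi\simeq\mathbb{S}_\xi\times\mathbb{R}$). One then uses the closure of $\mathbb{R}$---running over all $i<\cf(\gamma)$ and all $s\in\mathbb{S}_\xi$---to build a single condition $r^*\in\mathbb{R}$ such that $(p_0,r^*)\Vdash_{\mathbb{P}_{\vec{E}}}\dot{A}\subseteq\dot{S}$. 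Finally, stationarity of $\dot{A}$ in $\gamma$ survives into $V^{\mathbb{P}_{\vec{E}}}$ simply because $V^{\mathbb{P}_{\vec{E}}}$ and $V^{\mathbb{S}_\xi}$ have the same subsets of $\cf(\gamma)$. The point is that $\mathbb{S}_\xi$ is a genuine factor of \emph{both} $\mathbb{P}_\xi$ and $\mathbb{P}_{\vec{E}}$ (below the fixed condition), which dissolves the comparison problem you identified.
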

\begin{proof}
Suppose that $p\Vdash_{\mathbb{P}_{\vec{E}}}``\dot{S}$ is a stationary subset of $\kappa^+$" and  non-$p$-fragile. By shrinking if necessary, we may assume that $p\Vdash_{\mathbb{P}_{\vec{E}}}``\dot{S}$ concentrates on some fixed cofinality $\theta$ below $\kappa$". By the non-fragility, we can extend $p$ if necessary so that there is some $\xi<\omega_1$ with $\xi(p)=\xi$ (namely $p\in\mathbb{P}_{\xi}$) such that $p\Vdash_{\mathbb{P}_{\xi}}``\dot{S}_{\xi}$ is stationary". Increasing $\xi$ if necessary, we may assume that $\theta<\bar{\kappa}_\xi$. Note that $p\Vdash_{\mathbb{P}_\xi}``\dot{S}_\xi$ consist of points of cofinality $\theta$". To see this, first note that since $\theta<\bar{\kappa}_\xi$, then for any $\nu$, if $cf^{V^{\mathbb{P}_{\vec{E}}}}(\nu)=\theta$, then $cf^{V^{\mathbb{P}_{\xi}}}(\nu)=\theta$. Suppose toward a contradiction that $q\leq p$, $q\in \mathbb{P}_\xi$ is such that $q\Vdash_{\mathbb{P}_\xi}``\nu\in \dot{S}_\xi\wedge cf(\nu)\neq\theta$". Then for some $q'\leq q$, $\l\nu,q'\r\in \dot{S}_\xi$ and therefore $q'\Vdash_{\mathbb{P}}``\nu\in \dot{S}\wedge cf(\nu)\neq\theta$", contradiction the choice of $\theta$. 

We conclude that $p\Vdash_{\mathbb{P}_\xi}``\dot{S}_\xi$ is a stationary subset of $E^{(\kappa^+)^V}_{\leq\bar{\kappa}_\xi}$". By the previous lemma, it follows that $p\Vdash_{\mathbb{P}_\xi}``\dot{S}_{\xi}$ reflects at a point $\gamma$ of cofinality less than $\kappa_{\xi}$". Hence there is a $\mathbb{P}_\xi$-name  for a stationary set $\dot{A}$ such that $p\Vdash_{\mathbb{P}_\xi}``\dot{A}\subseteq \dot{S}_\xi\cap\gamma$ and $otp(\dot{A})=\cf(\gamma)<\kappa_\xi$".

We decompose as in Equation \ref{equation1}, $\mathbb{P}_\xi=\mathbb{S}_{\xi}\times\mathbb{R}$, where $\tau=s_\xi(f_\xi(\kappa_\xi))^{+3}$. 
Recall that $\mathbb{S}_\xi$ has size less than $\kappa_\xi$ and $\mathbb{R}$ is $\kappa_\xi^+$-closed. 
Therefore, by Easton's Lemma, $1_{\mathbb{S}_\xi}\Vdash_{\mathbb{S}_\xi}``\mathbb{S}\text{ is }<\kappa_\xi^+$-distributive".
Denote by $p=(p_0,p_1)\in \mathbb{S}_{\xi}\times\mathbb{R}$. Since $\cf(\gamma)<\kappa_{\xi}$, by its distributivity, $\mathbb{R}$ cannot have added $\dot{A}$, so we may assume that $\dot{A}$ is a $\mathbb{S}_\xi$-name.  

Next, 
we construct a condition $r^*\in\mathbb{R}$, such that $(p_0,r^*)\leq p$ and $(p_0,r^*)\Vdash_{\mathbb{P}_{\bar{E}}} ``\dot{A}\subset \dot{S}$". We do this as follows.

Suppose that $\l \dot{\beta}_i\mid i<cf(\gamma)\r$ is a sequence of $\mathbb{S}_\xi$-names for ordinals such that $p_0\Vdash_{\mathbb{S}_\xi}``
\dot{A}=\{\dot{\beta}_i\mid i<\cf(\gamma)\}"$. Working in $V$, for all $(s,r)\in\mathbb{P}_\xi=\mathbb{S}_\xi\times\mathbb{R}$ such that $(s,r)\leq p$ and every $i<\cf(\gamma)$, if for some $\beta<\gamma$, $s\Vdash_{\mathbb{S}_\xi} \beta=\dot{\beta}_i$, then, since $p\Vdash_{\mathbb{P}_\xi} ``\dot{A}\subseteq \dot{S}_\xi"$, $(s,r)\Vdash_{\mathbb{P}_\xi} ``\beta\in \dot{S}_\xi"$. By definition of $\dot{S}_\xi$, it follows that there is $(s',r')\leq (s,r)$ such that $\l \beta,(s',r')\r\in\dot{S}_\xi$ which in turn implies (again by the definition of $\dot{S}_\xi$) that $(s',r')\Vdash_{ \mathbb{P}_{\bar{E}}} ``\beta\in \dot{S}"$. Using that the closure of $\mathbb{R}$ is greater than $\max(\cf(\gamma), |\mathbb{S}_\xi|)=\kappa_\xi$,  we can do this inductively for all $i<\cf(\gamma)$, and all conditions in $s\in\mathbb{S}_\xi$ to produce a condition  $r^*\in\mathbb{R}$, such that $r^*\leq p_1$ and for every $i<cf(\gamma)$ and every $s\in\mathbb{S}_\xi$ such that $s\leq p_0$ there is $s'\leq s$ such that  $(s',r^*)\Vdash_{\mathbb{P}_{\bar{E}}}`` \dot{\beta}_i\in\dot{S}"$. It follows that $(p_0,r^*)\Vdash_{\mathbb{P}_{\bar{E}}}``\dot{A}\subseteq \dot{S}"$.

Finally, since $V^{\mathbb{P}_{\bar{E}}}$ has the same subsets of $\cf(\gamma)$, and $V^{\mathbb{S}_\xi}$,  and $\dot{A}$ is forced by $p_0$ to be stationary  in $V^{\mathbb{S}_\xi}$, $(p_0,r^*)\Vdash_{\mathbb{P}_{\bar{E}}}``\dot{A}$ is stationary". We conclude that $(p_0,r^*)\Vdash`` \dot{S}$ reflect at $\gamma$".
\end{proof}
\section{Defining the Iteration}\label{Section: iteratio}
Our goal is to define iterations $\mathbb{P}\subseteq\mathbb{P}^*$ which are Prikry-type and kill non-reflecting stationary sets that arise along the iteration.
Initially, we define  $\mathbb{Q}_1=\mathbb{Q}^*_1=\mathbb{P}_{\vec{E}}$. A general condition in $\mathbb{P}$ or $\mathbb{P}^*$ is a sequence $p=\l p_\alpha\mid \alpha<\kappa^{++}\r$ such each $p_0\in\mathbb{P}_{\vec{E}}$ and each $p_\alpha$ is going to be a $p\restriction \alpha$-strategy or a weak-$p\restriction\alpha$-strategy respectively (see Definition \ref{Def: startegy}). The support of each condition in both iterations is $\leq\kappa$, namely, $$\supp(p)=\{\alpha<\kappa\mid p_\alpha\neq \emptyset\}$$ has cardinality at most $\kappa$. We denote by $\mathbb{Q}_\beta=\{p\restriction\beta\mid p\in\mathbb{P}\}$, similarly $\mathbb{Q}^*_\beta=\{p\restriction\beta\mid p\in\mathbb{P}^*\}$, and define $(\mathbb{Q}_\beta,\leq_\beta),(\mathbb{Q}^*_\beta,\leq_\beta)$ recursively.

\subsection{Definitions at level $1$}
As we already declared, $\mathbb{Q}_1=\mathbb{P}_{\vec{E}}$, but further definitions are needed in order to define future steps of the iteration.
\begin{definition}
    For $p\in\mathbb{P}_{\bar{E}}$, we let $Ex(p)=\bigcup_{\vec{\xi}\in [\omega_1\setminus \supp(p)]^{<\omega}}Ex_{\vec{\xi}}(p)$.  We order $Ex(p)$ by $\subseteq$, namely $\vec{\nu}\leq \vec{\mu}$ if $\dom(\vec{\nu})\subseteq\dom(\vec{\mu})$ and for every $i\in\dom(\vec{\nu})$, $\vec{\nu}_i=\vec{\mu}_i$. 
\end{definition}
We would like to compare the extensions of $p$ and $q$ for $p\leq q$. For that we define a function $w^p_q:Ex(p)\rightarrow Ex(q)$ as follows: first, if $p\leq^*q$ then $\supp(p)=\supp(q)$ and for each $i\in \supp(p)$, $\pi_{\dom(f^p_i),\dom(f^q_i)}[A^p_i]\subseteq A^q_i$, in which case we define for $\l\mu_1,...,\mu_n\r\in A_{\xi_1}\times...\times A_{\xi_n}$, $$w^p_q(\l\mu_1,...,\mu_n\r)=\l\mu_1\restriction \dom(f^q_{\xi_1}),...,\mu_n\restriction \dom(f^q_{\xi_n})\r.$$
    
     Suppose that $p\leq q$, then there is $\vec{\nu}=\l\nu_1,...,\nu_k\r\in A^q_{\xi_1}\times...\times A^q_{\xi_k}$ such that $p\leq^*q^{\smallfrown}\vec{\nu}$.
     Let $\l\mu_1,...,\mu_k\r\in Ex(p)$, by Corollary \ref{Cor: properties of step extension}, there is a unique $\vec{\mu'}\in Ex(q)$ such that  $p^{\smallfrown}\vec{\mu}\leq^* q^\smallfrown\vec{\mu}'$. Define $w^p_q(\vec{\mu})=\vec{\mu}'$. Note that by uniqueness, we have that whenever $p\leq q\leq r$, $w^{p}_r(\vec{\mu})=w^{p}_q(w^{q}_r(\vec{\mu}))$ (as both sides of the equation satisfy the condition $p^\smallfrown\vec{\mu}\leq^* r^\smallfrown\vec{\mu}'$).
     
   Recall that given $r\in\mathbb{P}_{\vec{E}}$, $\xi(r)=\max(\supp(r))$ which is also the unique $\xi<\omega_1$ such that $r\in\mathbb{P}_\xi$.  
Also, recall that $$(\mathbb{P}_\xi,\leq)\simeq \mathbb{S}_{<\xi}\times Col(s_{\xi}(f_\xi(\kappa_\xi))^{+3},<\kappa_{\xi})\times \mathbb{S}_{>\xi}$$
and that: 
\begin{enumerate}
    \item $|\mathbb{S}_{<\xi}|<s_\xi(f_\xi(\kappa_\xi))^{+3}<\kappa_\xi$.
    \item  $\mathbb{S}_{>\xi}$ is $\kappa_\xi^+$-closed.
\end{enumerate}
For $p\in\mathbb{P}_{\vec{E}}$ with $\xi\in\supp(p)$, we denote by $p\restriction\mathbb{S}_\xi,p\restriction\mathbb{S}_{<\xi}$ and in case $\xi=\xi(p)$, $p\restriction\mathbb{S}_{>\xi}$, the restriction of $p$ to the components of each of the respective forcing.

  \begin{definition}
      Let $p\in\mathbb{P}_{\vec{E}}$, the domain of $p$-labeled blocks
      $$Ex^*(p)=\bigcup_{i<\omega_1}\{\l \vec{\mu},c\r\in Ex(p)\times \mathbb{S}_i\mid \xi(p^{\smallfrown}\vec{\mu})=i, \ c\leq(p^{\smallfrown}\vec{\mu})\restriction_{\mathbb{S}_i}\}$$

  \end{definition}

Whenever $p\leq q$, we define $w^{*p}_q:Ex^*(p)\rightarrow Ex^*(q)$ by $w^{*p}_q((\vec{\nu},c))=(w^p_q(\vec{\nu}),c)$. Note that $w^{*p}_q$ is well defined. Indeed, by definition, $\supp(p^\smallfrown\vec{\nu})=\supp(q^\smallfrown w^p_q(\vec{\nu}))$ and therefore $\xi(p^\smallfrown\vec{\nu})=\xi(q^\smallfrown w^p_q(\vec{\nu}))$, moreover, $c\leq(p^\smallfrown\vec{\nu})\restriction_{\mathbb{S}_i}\leq (q^\smallfrown w^p_q(\vec{\nu}))\restriction_{\mathbb{S}_i}$.

On $Ex^*(p)$ we have a natural ordering, $$(\vec{\mu},c)\leq (\vec{\nu},d)\text{ iff  }\vec{\nu}\subseteq \vec{\mu}\text{ and  }c\restriction_{\mathbb{S}_{ \xi(p^{\smallfrown}\vec{\nu})}}\leq d.$$
\begin{remark}
    
    $w^{*p}_q$ is (weakly) order preserving, namely, if $(\vec{\mu},c)\leq(\vec{\nu},d)$ then $w^{*p}_q(\vec{\mu},c)\leq w^{*p}_q(\vec{\nu},d)$.
\end{remark}
\subsection{The recursive steps of the iteration}
Suppose we have defined $(\mathbb{Q}_\delta,\leq_\delta)$ for every $\delta<\beta$, and we have maintained a recursive definition of $q^\smallfrown(\vec{\mu},c)\in\mathbb{Q}_\delta$ for all $q\in\mathbb{Q}_\delta$ and all $(\vec{\mu},c)\in Ex^*(q_0)$. Such that the operation cohere; that is:
$$(\dagger) \text{ For all }0<\delta_1<\delta_2<\beta\text{ and all }q\in\mathbb{Q}_{\delta_2}, \  (q^\smallfrown(\vec{\nu},c))\restriction\delta_1=(q\restriction\delta_1)^\smallfrown(\vec{\nu},c)$$
$$(\pitchfork)  \text{ For all }\delta<\beta\text{ and all }q\in\mathbb{Q}_{\delta}, \ \supp(q^\smallfrown(\vec{\nu},c))=\supp(q)$$
 
For a limit $\beta$, the underlining set of $\mathbb{Q}_\beta$ is determined by $\mathbb{Q}_\delta$ for $\delta<\beta$ as the inverse limit with $\leq\kappa$ support, or explicitly, 
$$\mathbb{Q}_\beta=\{p=\l p_\alpha\mid \alpha<\beta\r\mid  \forall \delta<\beta, \  p\restriction\delta\in\mathbb{Q}_\delta\text{ and } |\supp(p)|\leq\kappa\}$$
The order $\leq_\beta$ is simply the pointwise order, namely, $$p\leq_\beta q\text{ if and only if for every }\delta<\beta,\  p\restriction\delta\le_\delta q\restriction \delta.$$
Finally, for $q\in\mathbb{Q}_\beta$, and $(\vec{\mu},c)\in Ex^*(q_0)$, there is a unique object
$q^{\smallfrown}(\vec{\mu},c)$ which satisfy $(\dagger),(\pitchfork)$. Explicitly, define $$q^{\smallfrown}(\vec{\mu},c)=\bigcup_{\delta<\beta}(q\restriction\delta)^{\smallfrown}(\vec{\mu},c).$$ Note that the union is a well-defined sequence of length $\beta$ by $(\dagger)$ of the induction hypothesis, and clearly $(\dagger)$ is preserved. Also,  $(\pitchfork)$ holds by the induction hypothesis, which is also used to see that $q^\smallfrown(\vec{\mu},c)\in \mathbb{Q}_\beta$. This concludes the definition of $\mathbb{Q}_\beta$ for a limit $\beta$.

Next, suppose that $(\mathbb{Q}_\beta,\leq_\beta)$ has been defined and let us define $\mathbb{Q}_{\beta+1}$. As promised, conditions of $\mathbb{Q}_{\beta+1}$ will have the form $q^\smallfrown p_\beta$, where $q\in\mathbb{Q}_\beta$ and $p_\beta$ is a (weak) $q$-strategy. We will denote $q^\smallfrown p_\beta$ by $\l q,p_\beta\r$. For a condition $q=\l q_\alpha\mid \alpha<\beta\r$ we define $\xi(q)=\xi(q_0)$, and let $$(\mathbb{Q}_\beta)_{\xi}=\{q\in\mathbb{Q}_\beta\mid \xi(q)=\xi\}.$$
  \begin{definition}
      For a $\mathbb{Q}_{\beta}$-name $\dot{S}$, and $\xi<\omega_1$, let
      $$\dot{S}_\xi=\{\l\nu,p\r\in On\times(\mathbb{Q}_\beta)_{\xi} \mid p\Vdash_{\mathbb{Q}_\beta}\nu\in \dot{S}\}$$

  \end{definition}
  \begin{definition}
       For a condition $r^*\in\mathbb{Q}_{\beta}$ and a $\mathbb{Q}_{\beta}$-name 
 such that $r^*\Vdash_{\mathbb{Q}_{\beta}}``\dot{S}$ is a stationary subset of $\kappa^+$", we say that $\dot{S}$ is $r^*$-fragile, if for all sufficiently large $\xi<\omega_1$, for all $q\leq r^*$, with $\xi(q)\geq\xi$, $q\Vdash_{(\mathbb{Q}_\beta)_{\xi(q)}}`` \dot{S}_{\xi(q)}$ is nonstationary". We say that $\dot{S}$ is fragile if it is $1_{\mathbb{Q}_{\beta}}$-fragile.
  \end{definition}
As in Theorem \ref{Theore: nonreflecting implies fragile for Q0}, will eventually show that every name for a non-reflecting stationary set in the extension by $\mathbb{Q}_\beta$ is fragile, but we do not need that at this point. 
Suppose $\dot{T}$ is a $\mathbb{Q}_{\beta}$-name for a fragile stationary set. Let us define $\mathbb{Q}_{\beta+1}=\mathbb{A}(\mathbb{Q}_\beta,\dot{T})$ as follows: By definition of fragility, find $\bar{\xi}<\omega_1$ such that for every $\bar{\xi}<\xi<\omega_1$, $1_{(\mathbb{Q}_\beta)_\xi}\Vdash_{(\mathbb{Q}_\beta)_{\xi}}``\dot{T}_\xi$ is non-stationary", and let $\dot{C}_{\beta,\xi}$ be a name for a club such that $1_{(\mathbb{Q}_\beta)_{\xi}}\Vdash_{(\mathbb{Q}_\beta)_{\xi}} \dot{C}_{\beta,\xi}\cap\dot{T}_{\xi}=\emptyset$. 
Let $$R_\beta=\{\l\nu,q\r\in \kappa^+\times \mathbb{Q}_\beta\mid \forall r\leq q, 
\ r\Vdash_{(\mathbb{Q}_\beta)_{\xi(r)}}\nu\in\dot{C}_{\beta,\xi(r)}\}$$
Since in this section $\beta$ is fixed, we denote $R=R_\beta$ and $\dot{C}_{\beta,\xi}=\dot{C}_{\xi}$.
\begin{lemma}\label{Lemma: some properties of R}
    \begin{enumerate}
        \item If $\l\nu,q\r\in R$ then $q\Vdash_{\mathbb{Q}_\beta} \nu\notin \dot{T}$.
        \item If $\{\l\nu_i,q\r\mid i<\tau\}\subseteq R$, where $\tau<\kappa^+$ then $\l\sup_{i<\tau}\nu_i,q\r\in R$.
    \end{enumerate}
\end{lemma}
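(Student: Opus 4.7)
Both parts will follow directly from how the names $\dot{C}_{\beta,\xi}$ were chosen (as names for clubs disjoint from $\dot{T}_\xi$) together with the translation between forcing over $\mathbb{Q}_\beta$ and forcing over $(\mathbb{Q}_\beta)_{\xi(r)}$ via the projected name $\dot{T}_\xi$.

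For part (1) I would argue by contradiction: suppose some $r\le q$ forces $\nu\in\dot T$ over $\mathbb{Q}_\beta$. Set $\xi=\xi(r)$. Since extensions in $\mathbb{Q}_\beta$ never shrink the support of the $0^{\text{th}}$ coordinate, we have $r\in(\mathbb{Q}_\beta)_\xi$; then by the very definition of $\dot T_\xi$, $\langle \nu,r\rangle\in\dot T_\xi$, so $r\Vdash_{(\mathbb{Q}_\beta)_\xi}\nu\in\dot T_\xi$. On the other hand, from $\langle \nu,q\rangle\in R$ and $r\le q$ one has $r\Vdash_{(\mathbb{Q}_\beta)_\xi}\nu\in\dot C_{\beta,\xi}$. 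This contradicts $1_{(\mathbb{Q}_\beta)_\xi}\Vdash_{(\mathbb{Q}_\beta)_\xi}\dot C_{\beta,\xi}\cap\dot T_\xi=\emptyset$.

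For part (2) I would fix an arbitrary $r\le q$, set $\xi=\xi(r)$, and verify that $r\Vdash_{(\mathbb{Q}_\beta)_\xi}\sup_{i<\tau}\nu_i\in\dot C_{\beta,\xi}$. For each $i<\tau$, the hypothesis $\langle \nu_i,q\rangle\in R$ gives $r\Vdash_{(\mathbb{Q}_\beta)_\xi}\nu_i\in\dot C_{\beta,\xi}$. Because $\tau<\kappa^+$ and $\kappa^+$ is regular in $V$, we have $\nu:=\sup_{i<\tau}\nu_i<\kappa^+$; moreover $\nu$ is either attained as some $\nu_i$ or is a proper limit point of $\{\nu_i:i<\tau\}$. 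In the first case we are done, and in the second case closure of the forced club yields $r\Vdash_{(\mathbb{Q}_\beta)_\xi}\nu\in\dot C_{\beta,\xi}$. As $r\le q$ was arbitrary, $\langle \nu,q\rangle\in R$.

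The only real conceptual point is the translation between ``$r\Vdash_{\mathbb{Q}_\beta}\nu\in\dot T$'' and ``$r\Vdash_{(\mathbb{Q}_\beta)_{\xi(r)}}\nu\in\dot T_{\xi(r)}$'' used in (1); once that is noted, both items are routine applications of the choice of $\dot C_{\beta,\xi}$ and the closure of clubs. A minor bookkeeping caveat is that $\dot C_{\beta,\xi}$ was only named for $\xi>\bar\xi$, so the universal quantifier in the definition of $R$ is implicitly restricted to such $r$ (equivalently, one may first pass to an extension of $q$ with $\xi(q)>\bar\xi$, which is harmless for either part).
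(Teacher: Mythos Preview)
Your proof is correct and follows essentially the same route as the paper: for (1) the paper also argues by contradiction, passing from $r\Vdash_{\mathbb{Q}_\beta}\nu\in\dot T$ to $r\Vdash_{(\mathbb{Q}_\beta)_{\xi(r)}}\nu\in\dot T_{\xi(r)}$ and invoking the disjointness of $\dot C_{\xi(r)}$ and $\dot T_{\xi(r)}$; for (2) the paper simply says ``holds since the $\dot C_\xi$'s are forced to be closed,'' which is exactly the argument you spell out in more detail.
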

\begin{proof}
    For $(1)$, suppose otherwise, then there is some $r\leq q$ such that $r\Vdash_{\mathbb{Q}_\beta} \nu\in\dot{T}$, but then $r\Vdash_{(\mathbb{Q}_\beta)_{\xi(r)}}\nu\in\dot{T}_{\xi(r)}$ and therefore $r\Vdash_{(\mathbb{Q}_\beta)_{\xi(r)}}\nu\notin \dot{C}_{\xi(r)}$, contradicting the definition of $R$.
    $(2)$ holds since the $\dot{C}_{\xi}$'s are forced to be closed. 
\end{proof}
    \begin{definition}
 For $q\in\mathbb{Q}_{\beta}$, a $q$-\textit{labeled block} is a function $S:Ex^*(q_0)\rightarrow [\kappa^+]^{<\kappa^+}$ such that: \begin{enumerate}
    \item $S(\vec{\mu},c)$ is closed bounded in $\kappa^+$.
    \item If $(\vec{\mu},c)\leq(\vec{\nu},d)$, then $S(\vec{\mu},c)\supseteq S(\vec{\nu},d)$.
    \item $ q^\smallfrown (\vec{\mu},c)\Vdash_{\mathbb{Q}_\beta} S(\vec{\mu},c)\cap \dot{T}=\emptyset$.
    \end{enumerate}
    \end{definition}
    
\begin{definition} \label{Def: startegy}   
 A \textit{$q$-strategy} is a sequence $\mathcal{S}=\l S_i\mid i\leq\alpha\r$ such that:
\begin{enumerate}
    \item [($\alpha$)] $S_0((\vec{\mu},c))=\emptyset$ for every $(\vec{\mu},c)$.
    \item [($\beta$)] $0\leq l(\mathcal{S}):=\alpha<\kappa^+$.
    \item [($\gamma$)] $S_i$ is a $q$-labeled block. 
    \item [($\delta$)] for every $(\vec{\mu},c)\in Ex^*(q_0)$, $S_{i}((\vec{\mu},c))\sqsubseteq S_{i+1}((\vec{\mu},c))$.
    
    \item [($\epsilon$)] For each $(\vec{\mu},c)\leq(\vec{\nu},d)$, $S_{i+1}(\vec{\nu},d)\setminus S_{i}(\vec{\nu},d)\sqsubseteq S_{i+1}(\vec{\mu},c)\setminus S_{i}(\vec{\mu},c)$.
    \item [($\zeta$)] For a limit $i\leq\alpha$, $S_i(\vec{\mu},c)=\overline{\bigcup_{j<i}S_j(\vec{\mu},c)}$.
    \item [($\eta$)]   for every $(\vec{\mu},c)\in Ex^*(q)$, $\l\max(S^{l(\mathcal{S})}(\vec{\mu},c)),q\r\in R$ 
    \item [($\theta$)] For every $i\leq l(\mathcal{S})$, there is $\bar{\xi}=\bar{\xi}_{i,\mathcal{S}}<\omega_1$ s.t. for all $r\leq q$  $\xi(r)\geq\bar{\xi}$ and all $(\vec{\mu},c)\in Ex^*(q)$ , with $q^{\smallfrown}(\vec{\mu},c)\leq r\leq q$ then $\l \max(S^i(\vec{\mu},c)),r\r\in R$ 
\end{enumerate}
 A \textit{weak-$q$-strategy} is defined the same way excluding condition $(\eta)$. 
\end{definition}
Let us define two posts $\mathbb{A}(\mathbb{Q}_\beta,\dot{T})\subseteq\mathbb{A}^*(\mathbb{Q}_\beta,\dot{T})$, we will eventually show that $\mathbb{A}(\mathbb{Q}_\beta,\dot{T})$ is dense in $\mathbb{A}^*(\mathbb{Q}_\beta,\dot{T})$.
\begin{definition}
    Let $\mathbb{A}(\mathbb{Q}_\beta,\dot{T})$ consist of pairs $$a=\l q^a,\mathcal{S}^a\r=\l q^a,\l S^a_{i}\mid 0<i\leq l(\mathcal{S}^a)\r\r,$$ where $q\in\mathbb{Q}_{\beta}$, and $\mathcal{S}^a$ is a $q$-strategy. The order is defined by $a\geq b$ if
    \begin{enumerate}
        \item $q^a\geq q^b$ and $l(\mathcal{S}^a)\leq l(\mathcal{S}^b)$.
        \item For every $(\vec{\mu},c)\in Ex^*(b_0)$,  $S^b_i(\vec{\mu},c)=S^a_i(w^{*b_0}_{a_0}(\vec{\mu},c))$.
    \end{enumerate}
    The poset $\mathbb{A}^*(\mathbb{Q}_\beta,\dot{T})$ is defined similarly, replacing $q$-strategies with weak-$q$-strategies.
\end{definition}
\begin{notation}
    The \textit{delay} of a condition $a=(q,\mathcal{S})$ is defined as $\bar{\xi}_{l(\mathcal{S}),q}$.
    
\end{notation}
In particular conditions in $\mathbb{Q}_{\beta+1}$ are exactly those with delay $0$.
\begin{remark}
    Note that by the definition of $R$, condition $(\theta)$ is equivalent to requiring that for every $(\vec{\mu},c)\in Ex^*(r), \   \l\max(S^i(w^r_q(\vec{\mu},c)),r\r\in R$.
    Condition $(\theta)$ says that there is some $\bar{\xi}<\omega_1$ such that all extensions $(q',\mathcal{S}')\leq (q,\mathcal{S})$ with  $\xi(q')\geq \bar{\xi}$ have delay $0$, i.e. satisfy condition $(\eta)$. 
\end{remark}

    Fixing $(\vec{\mu},c)$, the closed bounded sets $S_i(\vec{\mu},c)$ are similar to the conditions of shooting a club through the complement of a non-reflecting stationary set. While this forcing is only $<\kappa^+$-strategically closed and not even $\sigma$-closed, Lemma \ref{Lemma: Closure of successor step of iteration} below and condition $(\eta')$ above ensures that the forcing is $\kappa^+$-closed once the first coordinate is fixed. The proof for the lemma is a straightforward verification.
    \begin{lemma}\label{Lemma: Closure of successor step of iteration}
        Let $\l (q,\mathcal{S}_i)\mid i<\tau\r$, $\tau\leq\kappa$ be a decreasing sequence of conditions in $\mathbb{A}(\mathbb{Q}_\beta,\dot{T})$ or in $\mathbb{A}(\mathbb{Q}_\beta,\dot{T})^*$  and there is some $\bar{\xi}<\omega_1$ bounding the delays of the conditions $(q,\mathcal{S}_i)$.    Then $\l q,\bigcup_{i<\tau}\mathcal{S}_i\cup\{\mathcal{S}_{l}\}\r$ is the greatest lower bound of the conditions in the respective poset where $$\mathcal{S}_l(\vec{\mu},c)=\overline{\bigcup_{i<\tau}\mathcal{S}_i^{l(\mathcal{S}_i)}(\vec{\mu},c)}.$$ 
    \end{lemma}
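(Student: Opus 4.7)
The plan is to verify each clause of Definition~\ref{Def: startegy} for the proposed sequence $\mathcal{S}:=\bigcup_{i<\tau}\mathcal{S}_i\cup\{\mathcal{S}_l\}$, and then to check that $(q,\mathcal{S})$ is the greatest lower bound in the respective poset. Since the first coordinate is the constant $q$, the translation map $w^{*q_0}_{q_0}$ is the identity, so the order relation between the $(q,\mathcal{S}_i)$ forces $S^i_j=S^{i'}_j$ whenever $i\le i'$ and $j\le l(\mathcal{S}_i)$. Hence $\bigcup_{i<\tau}\mathcal{S}_i$ is unambiguously a sequence of $q$-labeled blocks of length $\sigma:=\sup_{i<\tau}l(\mathcal{S}_i)$, and since $\tau\le\kappa$ and each $l(\mathcal{S}_i)<\kappa^+$, we have $\sigma<\kappa^+$, securing $(\beta)$.

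Conditions $(\alpha),(\gamma),(\delta),(\epsilon),(\zeta)$ at levels below $\sigma$ transfer directly from the $\mathcal{S}_i$. At the top level $\sigma$, clause $(\zeta)$ is built into the definition of $\mathcal{S}_l$, and $(\epsilon)$ at this step follows by taking closures of the compatible $\sqsubseteq$-increments. The two substantive points are $(\eta)$ (absent in the weak case $\mathbb{A}^*(\mathbb{Q}_\beta,\dot T)$) and $(\theta)$. For $(\eta)$, note that $\max\mathcal{S}_l(\vec{\mu},c)=\sup_{i<\tau}\max S^{l(\mathcal{S}_i)}_i(\vec{\mu},c)$; each summand is paired with $q$ in $R$ by $(\eta)$ for $\mathcal{S}_i$, and $\tau\le\kappa<\kappa^+$, so Lemma~\ref{Lemma: some properties of R}(2) closes this up. For $(\theta)$, the hypothesis supplies a single $\bar\xi<\omega_1$ above the delays of all $(q,\mathcal{S}_i)$; I would claim the same $\bar\xi$ witnesses $(\theta)$ at the new top level, since for any $r\le q$ with $\xi(r)\ge\bar\xi$ and any $(\vec{\mu},c)\in Ex^*(q_0)$ with $q^\smallfrown(\vec{\mu},c)\le r$ we have $\langle\max S^{l(\mathcal{S}_i)}_i(\vec{\mu},c),r\rangle\in R$ for each $i$, and a further application of Lemma~\ref{Lemma: some properties of R}(2) gives $\langle\max\mathcal{S}_l(\vec{\mu},c),r\rangle\in R$.

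For the greatest-lower-bound property, suppose $(q',\mathcal{S}')\le (q,\mathcal{S}_i)$ for all $i<\tau$. Then $l(\mathcal{S}')\ge\sigma$, and the order forces $\mathcal{S}'$ to agree with $\bigcup_{i<\tau}\mathcal{S}_i$ at every level $<\sigma$ after translating via $w^{*q_0}_{q'_0}$. At level $\sigma$: when $\sigma$ is a limit, clause $(\zeta)$ for $\mathcal{S}'$ forces its $\sigma$-th block to be the closure of the union of the lower blocks, which is precisely the translation of $\mathcal{S}_l$; when $\sigma$ is a successor, the lengths $l(\mathcal{S}_i)$ are eventually constant and $\mathcal{S}_l$ is just the common top block of the tail, so the conclusion is immediate. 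Thus $(q',\mathcal{S}')\le (q,\mathcal{S})$. The main potential obstacle is transporting conditions $(\eta)$ and $(\theta)$ across the limit step uniformly in $r\leq q$ via Lemma~\ref{Lemma: some properties of R}(2); once that is in hand, all remaining clauses are routine bookkeeping.
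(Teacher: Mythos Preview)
Your verification is correct and matches the paper's approach: the paper leaves this as a ``straightforward verification'' and merely remarks that it uses conditions $(\eta)$, $(\theta)$ and Lemma~\ref{Lemma: some properties of R}(2), which is precisely the structure of your argument. One small notational slip: in the greatest-lower-bound paragraph the translation map should be $w^{*q'_0}_{q_0}$ (from $Ex^*(q'_0)$ to $Ex^*(q_0)$, since $q'\le q$), not $w^{*q_0}_{q'_0}$.
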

    \qed
    
The verification essentially uses conditions $(\eta)$, $(\theta)$ and Lemma \ref{Lemma: some properties of R}.


\begin{definition}
Let us define $a^{\smallfrown}(\vec{\mu},c)$ for $a\in \mathbb{A}^*(\mathbb{Q}_\beta,\dot{T})$ and $(\vec{\mu},c)\in Ex^*(a_0)$ as $(q^{a\smallfrown}(\vec{\mu},c),\mathcal{S}')$ where $l(\mathcal{S}')=l(\mathcal{S}^a)$ and for every $0<i\leq \mathcal{S}'$, and every $(\vec{\nu},c)$, $S'_i(\vec{\nu},c)=S^a_i(w^{*a^{\smallfrown}_0(\vec{\mu},c)}_{a_0}(\vec{\nu},c))$. 
\end{definition}
Since $a\restriction\beta=q^\beta$, we have that $(a^{\smallfrown}(\vec{\nu},c))\restriction \beta=a\restriction\beta^{\smallfrown}(\vec{\nu},c)$, hence $(\dagger)$ is satisfied.  Also $\supp(a^\smallfrown (\vec{\nu},c))=\supp(a)$ so $(\pitchfork)$ holds as well.
\begin{claim}\label{Claim: frown yield a condition}
   $a^\smallfrown(\vec{\mu},c)\in\mathbb{A}^*(\mathbb{Q}_\beta,\dot{T})$. If moreover $a\in \mathbb{A}(\mathbb{Q}_\beta,\dot{T})$, then $a^\smallfrown(\vec{\mu},c)\in\mathbb{A}(\mathbb{Q}_\beta,\dot{T})$. 
\end{claim}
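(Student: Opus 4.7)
The plan is to verify, in order, the axioms $(\alpha)$ through $(\theta)$ of a (weak) strategy for the pair $(q^{a\smallfrown}(\vec{\mu},c),\mathcal{S}')$. Write $a'=a^\smallfrown(\vec{\mu},c)$, so $q^{a'}=q^{a\smallfrown}(\vec{\mu},c)$ and $\mathcal{S}'=\langle S'_i\mid i\le l(\mathcal{S}^a)\rangle$ where $S'_i(\vec{\nu},c')=S^a_i(w(\vec{\nu},c'))$ with $w:=w^{*a_0^\smallfrown(\vec{\mu},c)}_{a_0}$. The axioms $(\alpha)$, $(\beta)$, $(\zeta)$ are immediate because they are statements about the $S^a_i$ that are preserved under precomposition by $w$. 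Axiom $(\gamma)$ (each $S'_i$ is a $q^{a'}$-labeled block) has three components: closedness and boundedness of values is inherited from $S^a_i$; monotonicity in $(\vec{\nu},c')$ follows from the monotonicity of $S^a_i$ combined with the fact, recorded in the excerpt, that $w^{*}$ is (weakly) order preserving; and the crucial forcing clause $q^{a'\smallfrown}(\vec{\nu},c')\Vdash S'_i(\vec{\nu},c')\cap\dot T=\emptyset$ is obtained by observing that by property $(\dagger)$ and the definition of $w$, $q^{a'\smallfrown}(\vec{\nu},c')\le^{*} q^{a\smallfrown}w(\vec{\nu},c')$, so we can quote the corresponding clause in the $q^a$-labeled block $S^a_i$. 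The same mechanism yields $(\delta)$ and $(\epsilon)$ from their analogues in $\mathcal{S}^a$.

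The substantive work is in checking $(\theta)$, and, when $a\in\mathbb{A}(\mathbb{Q}_\beta,\dot T)$, the terminal-level requirement $(\eta)$. For $(\theta)$, I would simply reuse the witness $\bar\xi=\bar\xi_{i,\mathcal{S}^a}$ from $a$: given $r\le q^{a'}$ with $\xi(r)\ge\bar\xi$ and $(\vec{\nu},c')\in Ex^{*}(q^{a'})$ satisfying $q^{a'\smallfrown}(\vec{\nu},c')\le r\le q^{a'}$, transitivity of the order gives $r\le q^a$, and $r\ge q^{a'\smallfrown}(\vec{\nu},c')\ge q^{a\smallfrown}w(\vec{\nu},c')$ (in the sense of the quotient). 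Using the characterization of $(\theta)$ that the remark after Definition~\ref{Def: startegy} records (namely, membership in $R$ should be tested through $w^r_q$), and the cocycle identity $w^{r}_{q^a}=w^{q^{a'}}_{q^a}\circ w^{r}_{q^{a'}}$ which itself is read off from Corollary~\ref{Cor: properties of step extension}(3), we reduce to the $(\theta)$-clause for $\mathcal{S}^a$ with this same $\bar\xi$, and conclude $\langle\max(S'^{i}(\vec{\nu},c')),r\rangle\in R$. Clause $(\eta)$ is the special case of this argument where $i=l(\mathcal{S}^a)$ and $r=q^{a'}$: $\langle\max(S^a_{l(\mathcal{S}^a)}(w(\vec{\nu},c'))),q^a\rangle\in R$ by $(\eta)$ for $a$, and since $R$ is downward absolute in its second coordinate (any condition extending $q^{a'}\le q^a$ is also an extension of $q^a$), we transfer membership to $\langle\cdot,q^{a'}\rangle$.

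The only subtle point I expect is bookkeeping the map $w$ correctly, i.e.\ that $w^{q^{a'\smallfrown}(\vec{\nu},c')}_{q^a}$ factors as $w\circ(\text{the tautological injection})$ so that the evaluation $S^a_i(w(\vec{\nu},c'))$ is exactly the value forced by $q^{a'\smallfrown}(\vec{\nu},c')$ to avoid $\dot T$; this is where the uniqueness statements in Corollary~\ref{Cor: properties of step extension} and Remark~\ref{Ramark: Commutativity} do their work. Once the factorization is in hand, the rest of the verification is formal. Since every clause of the strategy definition is preserved and $(\eta)$ is preserved whenever present in $a$, both conclusions of the claim follow simultaneously.
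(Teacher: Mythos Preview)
Your proposal is correct and follows essentially the same approach as the paper's proof. Both arguments verify the labeled-block axioms by transferring them along the order-preserving map $w=w^{*a_0^\smallfrown(\vec{\mu},c)}_{a_0}$, and both handle $(\theta)$ by reusing the witness $\bar\xi$ from $a$ together with the sandwich $q^{a\smallfrown}w(\vec{\nu},c')\le r\le q^a$; the paper in fact asserts the equality $(q^{a\smallfrown}(\vec{\mu},c))^\smallfrown(\vec{\nu},d)=q^{a\smallfrown}w(\vec{\nu},d)$ rather than your $\le^*$, and dispatches $(\eta)$ in a single phrase as ``a particular case of $(\theta)$'' where you spell out the downward persistence of $R$ in its second coordinate, but these are differences of presentation only.
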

\begin{proof}
    By the inductive constrction, $q^{a^\smallfrown(\vec{\mu},c)}=q^{a\smallfrown}(\vec{\mu},c)\in\mathbb{Q}_\beta$.
    So it remains to see that $\mathcal{S}'=\mathcal{S}^{a^{\smallfrown}(\vec{\mu},c)}$ is a $q^{a\smallfrown}(\vec{\mu},c)$-strategy.
    Fix $i\leq l(\mathcal{S}')$, to see that $\mathcal{S}'_i$ is a $q^{a\smallfrown}(\vec{\mu},c)$-block, $(1)-(2)$ 
    are inherited by $\mathcal{S}^a$ and the fact that $w^{*a_0^{\smallfrown}(\vec{\mu},c)}_{a_0}$ is order preserving. $(3)$ follows since $(q^{a\smallfrown}(\vec{\mu},c))^\smallfrown(\vec{\nu},d)= q^{a\smallfrown} w^{*a_0^{\smallfrown}(\vec{\mu},c)}_{a_0}(\vec{\nu},d)$ and  $S'_i(\vec{\nu},d)=S^{q^a}(w^{*a_0^{\smallfrown}(\vec{\mu},c)}_{a_0}(\vec{\nu},d))$.
    Let us verify $(\theta)$, let $\xi\subseteq \omega_1$ witness $(\theta)$ for $S^a_i$, and let $r\leq q$ with $\xi(r)\geq \xi$ and $(\vec{\nu},d)\in Ex(q_0^\smallfrown(\vec{\mu},c))$ be such that 
    $$q^\smallfrown(\vec{\mu},c)^\smallfrown(\vec{\nu},d)\leq r\leq q^\smallfrown(\vec{\mu},c),$$ we need to check that $\l \max(S'_i(\vec{\nu},d)),r)\r\in R$. We have that $\max(S'_i(\vec{\nu},d))=\max(S^a_i(w^{*a_0^\smallfrown(\vec{\mu},c)}_{a_0}(\vec{\nu},d)))$.
    Now $q^\smallfrown w^{*a_0^\smallfrown(\vec{\mu},c)}_{a_0}(\vec{\nu},d))\leq r\leq q$ and by applying $(\theta)$ to $q^a$ we get $\l \max(S^a_i(w^{*a_0^\smallfrown(\vec{\mu},c)}_{a_0}(\vec{\nu},d))),r\r\in R$.
    
    
    So $(\alpha),(\beta),(\gamma)$ hold for $S'$, also $(\delta)-(\zeta)$ are inherited from $\mathcal{S}^a$ and $(\eta)$ is a particular case of $(\theta)$.
\end{proof}
\begin{remark}\label{remark:minimal}
    We will often define strategies by extending a condition $q^a$ to $q^b$ and define $S^b_i(\vec{\mu},c)=S^a_i(w^{*q^b}_{q^a}(\vec{\mu},c))$. In most cases, the verification that this kind of definition is a legitimate $q^b$-strategy is essentially the same as the one of the previous claim, hence in the future we will just refer to the proof of that claim. Moreover, this definition yield a minimal extension in the sense that if $a'\leq a$ is such that $(a')_0\leq b_0$, then $a'\leq b$. The proof for that is essentially in Proposition \ref{Prop: minimal}.

    One delicate part, we will have to address in a few situation, and is the heart of the problem in the proof of the Prikry property, is that when we have a sequence of conditions, then condition $(\theta)$ (and $(\eta)$) do not automatically follow for amalgamations of blocks from those conditions. The reason is that different conditions have different witnessing $\xi$'s, and these might not converge. 
    
    Note however that if the sequence is decreasing, then the witnessing $\xi$'s can only decrease and this situation disappears.
\end{remark}
\begin{proposition}\label{Prop: minimal}
    If $p\leq q$ is such that $p_0\leq q_0^{\smallfrown}(\vec{\nu},c)$, then $p\leq q ^{\smallfrown}(\vec{\nu},c)$.
\end{proposition}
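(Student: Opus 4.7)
The plan is to prove the proposition by induction on the iteration length $\delta$, writing $p, q \in \mathbb{Q}_\delta$. At $\delta = 1$ there is nothing to show since $p = p_0$, $q = q_0$, and the conclusion becomes the hypothesis. At a limit $\delta$, I combine the inductive hypothesis with property $(\dagger)$: for every $\alpha < \delta$ we have $p\restriction\alpha \leq_\alpha q\restriction\alpha$ and $(p\restriction\alpha)_0 = p_0 \leq q_0^\smallfrown(\vec{\nu},c) = (q\restriction\alpha)_0^\smallfrown(\vec{\nu},c)$, so the inductive hypothesis gives $p\restriction\alpha \leq_\alpha (q\restriction\alpha)^\smallfrown(\vec{\nu},c) = (q^\smallfrown(\vec{\nu},c))\restriction\alpha$. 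Since this holds for every $\alpha < \delta$, $p \leq_\delta q^\smallfrown(\vec{\nu},c)$ by the definition of the limit order.

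The successor case $\delta = \beta + 1$ is where the real work is. Write $p = \l p\restriction\beta, \mathcal{S}^p\r$ and $q = \l q\restriction\beta, \mathcal{S}^q\r$. The restriction $p\restriction\beta \leq (q^\smallfrown(\vec{\nu},c))\restriction\beta = (q\restriction\beta)^\smallfrown(\vec{\nu},c)$ holds by applying the inductive hypothesis inside $\mathbb{Q}_\beta$. It remains to check the two conditions on the strategy component: $l(\mathcal{S}^p) \leq l(\mathcal{S}^{q^\smallfrown(\vec{\nu},c)})$, which is immediate since $l(\mathcal{S}^{q^\smallfrown(\vec{\nu},c)}) = l(\mathcal{S}^q) \geq l(\mathcal{S}^p)$ (the step extension does not change the length), and the block-matching condition
\[
S^p_i(\vec{\mu}', d') \;=\; S^{q^\smallfrown(\vec{\nu},c)}_i\bigl(w^{*p_0}_{(q^\smallfrown(\vec{\nu},c))_0}(\vec{\mu}', d')\bigr)
\]
for all $(\vec{\mu}', d') \in Ex^*(p_0)$ and all $i \leq l(\mathcal{S}^p)$.

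Unfolding the definition of the strategy of the step extension, the right-hand side equals $S^q_i\bigl(w^{*q_0^\smallfrown(\vec{\nu},c)}_{q_0}(w^{*p_0}_{q_0^\smallfrown(\vec{\nu},c)}(\vec{\mu}', d'))\bigr)$, while the hypothesis $p \leq q$ gives $S^p_i(\vec{\mu}', d') = S^q_i(w^{*p_0}_{q_0}(\vec{\mu}', d'))$. Thus what I need is the composition identity
\[
w^{*q_0^\smallfrown(\vec{\nu},c)}_{q_0} \circ w^{*p_0}_{q_0^\smallfrown(\vec{\nu},c)} \;=\; w^{*p_0}_{q_0},
\]
valid because $p_0 \leq q_0^\smallfrown(\vec{\nu},c) \leq q_0$ in $\mathbb{P}_{\vec{E}}$. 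Since $w^{*}$ is defined to act as identity on the second coordinate, this reduces to the corresponding identity for $w$, namely $w^{p_0}_{q_0} = w^{q_0^\smallfrown(\vec{\nu},c)}_{q_0} \circ w^{p_0}_{q_0^\smallfrown(\vec{\nu},c)}$, which was already recorded after Corollary \ref{Cor: properties of step extension} as a consequence of the uniqueness of the minimal direct extension (both sides witness the relation $p_0^\smallfrown\vec{\mu} \leq^* q_0^\smallfrown\vec{\mu}''$ for the same $\vec{\mu}''$).

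The argument has no real obstacle; the only subtlety is to be careful that the $w^{*}$-maps in the successor step are genuinely well-defined here, i.e.\ that $p_0 \leq q_0^\smallfrown(\vec{\nu},c)$ (given) and $q_0^\smallfrown(\vec{\nu},c) \leq q_0$ (immediate from the definition of step extension) so that all three maps in the composition are available. Once that is noted, the composition law gives the desired equality of blocks on the nose, and the successor step of the induction is complete.
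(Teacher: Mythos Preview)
Your proof is correct and follows essentially the same approach as the paper's: induction on the iteration length, with the limit case handled via $(\dagger)$ and the successor case reduced to the composition identity $w^{*p_0}_{q_0}=w^{*q_0^{\smallfrown}(\vec{\nu},c)}_{q_0}\circ w^{*p_0}_{q_0^{\smallfrown}(\vec{\nu},c)}$. One cosmetic slip: the length inequality you need is $l(\mathcal{S}^{q^{\smallfrown}(\vec{\nu},c)})\leq l(\mathcal{S}^p)$ (the weaker condition has the shorter strategy), not the reverse, but your justification---that the step extension preserves length and then $p\leq q$ gives the inequality---is correct once the direction is fixed.
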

\begin{proof}
By induction on $\delta$, let us prove the proposition for $p,q\in\mathbb{Q}_\delta$. For $\delta=0$ this is trivial. Suppose this is true for all $\delta<\beta$, and let us prove it for $\beta$. Let $a,b\in \mathbb{Q}_\beta$ and assume that $a_0\leq b_0^{\smallfrown}(\vec{\nu},c)$. We split into cases:
\begin{enumerate}
    \item If $\beta$ is limit, the $a\leq b^\smallfrown(\vec{\nu},c)$ follows from $(\dagger)$ and the induction hypothesis.
    \item For $\beta+1$, since $a\leq b$, we have $q^a\leq q^b$ and since $q^a_0=a_0$, the induction hypothesis implies that $q^{a}\leq q^{b\smallfrown}(\vec{\nu},c)$. Also for every $(\vec{\mu},d)\in Ex^*(a_0)$, $w^{*a_0}_{b_0}(\vec{\mu},d)=w^{*b_0^{\smallfrown}(\vec{\nu},c)}_{b_0}(w^{*a_0}_{b_0^{\smallfrown}(\vec{\nu},c)}(\vec{\mu},d))$,
    $$S^a_i(\vec{\mu},c)=S^b_i(w^{*a_0}_{b_0}(\vec{\mu},d))=S^b_i(w^{*b_0^{\smallfrown}(\vec{\nu},c)}_{b_0}(w^{*a_0}_{b_0^{\smallfrown}(\vec{\nu},c)}(\vec{\mu},d)))$$
    By definition of the strategy of $b^\smallfrown(\vec{\nu},c)$, we conclude that $$S^b_i(w^{*b_0^{\smallfrown}(\vec{\nu},c)}_{b_0}(w^{*a_0}_{b_0^{\smallfrown}(\vec{\nu},c)}(\vec{\mu},d)))=S^{b^{\smallfrown}(\vec{\nu},c)}_i(w^{*a_0}_{b_0^{\smallfrown}(\vec{\nu},c)}(\vec{\mu},d))$$
    and therefore $a\leq b^\smallfrown(\vec{\nu},c)$.

\end{enumerate}
\end{proof}
\section{Projections and factorizations of $\mathbb{Q}_\alpha$.}\label{section: projections} Recall that $$\mathbb{Q}_{\alpha,\xi}=\{a\in\mathbb{Q}_\alpha\mid \xi(a)=\xi\}.$$ The order on $\mathbb{Q}_{\alpha,\xi}$ is defined to be the one induced from $\mathbb{Q}_\alpha$. Note that since inside $\mathbb{Q}_{\alpha,\xi}$ we cannot add elements to the support of $a_0$, in the part above $\xi$ the order is practically the direct extension order. The following claim is a straightforward verification:
\begin{claim}
    The projection to the first coordinate projects $\mathbb{Q}_{\alpha,\xi}$ onto $\mathbb{P}_{\xi}$.
\end{claim}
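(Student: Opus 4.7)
The claim asserts that the map $\pi : \mathbb{Q}_{\alpha,\xi} \to \mathbb{P}_\xi$ sending $a \mapsto a_0$ is a projection of forcing notions, so I would verify order-preservation, surjectivity, and the projection/pullback property. Order-preservation is immediate from the coordinatewise definition of $\leq_\alpha$. For surjectivity, given $p \in \mathbb{P}_\xi$ I set $\tilde p_0 = p$ and assign the trivial length-zero strategy at every other coordinate; then $\supp(\tilde p) = \{0\}$ has size at most $\kappa$ and $\xi(\tilde p) = \xi(p) = \xi$, so $\tilde p \in \mathbb{Q}_{\alpha,\xi}$.

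For the projection property, given $a \in \mathbb{Q}_{\alpha,\xi}$ and $p \leq a_0$ in $\mathbb{P}_\xi$, I construct $b \leq a$ in $\mathbb{Q}_{\alpha,\xi}$ with $b_0 = p$ by recursion on $\delta \leq \alpha$. Start with $b_0 = p$. At a limit $\delta$ take $b \restriction \delta = \bigcup_{\gamma < \delta} b \restriction \gamma$, noting that $\supp(b \restriction \delta) \subseteq \supp(a \restriction \delta)$ has size at most $\kappa$. At a successor step $\delta + 1$ with $a_\delta = \langle q^{a \restriction \delta}, \mathcal{S}^a \rangle$, use the minimal-extension recipe from Remark \ref{remark:minimal}: set $q^{b_\delta} = b \restriction \delta$, $l(\mathcal{S}^b) = l(\mathcal{S}^a)$, and
\[
S^b_i(\vec{\mu}, c) = S^a_i\bigl(w^{*p}_{a_0}(\vec{\mu}, c)\bigr) \qquad \text{for } (\vec{\mu}, c) \in Ex^*(p),\ i \leq l(\mathcal{S}^a).
\]
By construction $b_\delta \leq_\delta a_\delta$ in $\mathbb{A}(\mathbb{Q}_\beta, \dot T)$, so $b \leq a$ once the recursion is complete; since $b_0 = p$, also $\xi(b) = \xi$, placing $b$ in $\mathbb{Q}_{\alpha,\xi}$.

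The technical core is verifying that each $b_\delta$ is a legitimate $(b \restriction \delta)$-strategy, and this follows the argument of Claim \ref{Claim: frown yield a condition}: clauses $(\alpha)$-$(\zeta)$ are inherited from $\mathcal{S}^a$ using the order-preservation and compositionality of the maps $w^*$, so the main obstacle I anticipate is checking clauses $(\eta)$ and $(\theta)$. Here I would use the key observation that the relation $R = R_\beta$ is downward-closed in its second coordinate: if $\langle \nu, q \rangle \in R$ and $q' \leq q$, then every $r \leq q'$ also satisfies $r \leq q$ and hence forces $\nu \in \dot C_{\beta, \xi(r)}$, so $\langle \nu, q' \rangle \in R$. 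Applying this with $\nu = \max S^a_i(w^{*p}_{a_0}(\vec{\mu}, c))$ and $q' = b \restriction \delta \leq a \restriction \delta = q$ transfers $(\eta)$ from $\mathcal{S}^a$ to $\mathcal{S}^b$, and the same delay witness $\bar\xi_{i,\mathcal{S}^a}$ works in $(\theta)$ for $\mathcal{S}^b$ since any $r \leq b \restriction \delta$ with $\xi(r) \geq \bar\xi_{i,\mathcal{S}^a}$ also extends $a \restriction \delta$. This is exactly the kind of routine inheritance argument that Remark \ref{remark:minimal} is designed to absorb.
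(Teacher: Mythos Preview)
Your proof is correct and carries out exactly the ``straightforward verification'' the paper declines to write: the minimal-extension construction from Remark~\ref{remark:minimal} is precisely the right tool, and your check of $(\eta)$ and $(\theta)$ via the downward closure of $R$ in its second coordinate is the intended argument. There is nothing to compare, as the paper gives no alternative proof.
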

\begin{corollary}
    $\mathbb{Q}_{\alpha,\xi}$ collapses $\kappa^+$. 
\end{corollary}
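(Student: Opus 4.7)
The plan is to combine the immediately preceding claim with the analysis of $\mathbb{P}_\xi$ carried out in Section~\ref{Section: Sittinons forcing}. Recall that in the remark preceding Lemma~\ref{Lemma: generic lift} we showed that $\mathbb{P}_\xi$ is forcing equivalent to a product which contains the Levy collapse $Col(\kappa_{\xi+1},\kappa^+)$ as a factor; in particular $\mathbb{P}_\xi$ forces $|\kappa^+|=\kappa_{\xi+1}$. Thus it suffices to pull this collapse back along a projection from $\mathbb{Q}_{\alpha,\xi}$ to $\mathbb{P}_\xi$, which is exactly what the previous claim provides.

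More concretely, let $\pi\colon\mathbb{Q}_{\alpha,\xi}\to\mathbb{P}_\xi$ be the projection given by the preceding claim, sending $a\mapsto a_0$. First I would verify that if $H\subseteq\mathbb{Q}_{\alpha,\xi}$ is $V$-generic, then $\pi[H]$ generates a $V$-generic filter $G\subseteq\mathbb{P}_\xi$; this is the standard fact about projections of posets and uses only that $\pi$ is order-preserving and that every condition in $\mathbb{P}_\xi$ has a preimage below any given condition in $\mathbb{Q}_{\alpha,\xi}$. Next I would invoke the remark just recalled to obtain inside $V[G]\subseteq V[H]$ a surjection from $\kappa_{\xi+1}$ onto $\kappa^+$. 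Since $V[G]\subseteq V[H]$ and cardinals in $V[H]$ are at most cardinals in $V[G]$, this surjection witnesses that $\kappa^+$ is collapsed in $V[H]$ as well.

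There is essentially no obstacle beyond bookkeeping: the previous claim does all the structural work of identifying a projection onto $\mathbb{P}_\xi$, and the collapsing property of $\mathbb{P}_\xi$ was established in the analysis of Sittinon's forcing. The only small point to check, which I would make explicit in the write-up, is that $\pi$ as defined really is a projection in the sense required (not merely a surjective order-preserving map), namely that for every $a\in\mathbb{Q}_{\alpha,\xi}$ and every $p\leq a_0$ in $\mathbb{P}_\xi$ there exists $b\leq a$ in $\mathbb{Q}_{\alpha,\xi}$ with $b_0\leq p$. This follows by replacing the first coordinate of $a$ with $p$ and leaving the remaining coordinates unchanged, using $(\dagger)$ and $(\pitchfork)$ of the recursive construction to see that the resulting sequence is still a condition. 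With this in hand, the corollary is immediate.
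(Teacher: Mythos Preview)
Your approach is correct and is exactly the paper's: the corollary is immediate from the preceding claim (projection onto $\mathbb{P}_\xi$) together with the fact from Section~\ref{Section: Sittinons forcing} that $\mathbb{P}_\xi$ collapses $\kappa^+$.

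One small correction to your last paragraph: ``replacing the first coordinate of $a$ with $p$ and leaving the remaining coordinates unchanged'' does not literally produce a condition, because for $\beta>0$ the coordinate $a_\beta$ is an $(a\restriction\beta)$-strategy whose labeled blocks have domain $Ex^*(a_0)$, not $Ex^*(p)$. The correct construction is to shift each strategy along $w^{*p}_{a_0}$, exactly as the paper does later in Definition~\ref{def: term space forcing projection} (and as anticipated in Remark~\ref{remark:minimal}); the properties $(\dagger)$ and $(\pitchfork)$ concern the ${}^\smallfrown$-operation and are not what is needed here. This is a cosmetic repair and does not affect the overall argument, which the paper itself leaves as a ``straightforward verification.''
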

The forcing $\mathbb{P}_\xi$ collapses $\kappa^+$ to $\kappa_{\xi+1}=(\kappa_\xi^{++})^{V^{P_\xi}}$. Let us prove next that $\mathbb{Q}_{\alpha,\xi}$ does not collapse this cardinal any further. To do this we need to analyze the quotient forcing. Generally, given a projection $\pi:\mathbb{Q}\rightarrow \mathbb{P}$ let us denote by $\mathbb{Q}/\mathbb{P}$ the quotient forcing\footnote{Altough the quotient forcing depends on the projection, we will usually only have one projection in hand between two given forcings. I case there are several projections, we will denote the quotient by $\mathbb{Q}/_{\pi}\mathbb{P}$.}; that is, a $\mathbb{P}$-name for the forcing $\pi^{-1}[G]$, where $G$ is $V$-generic for $\mathbb{P}$. An idea which will come up several times in this paper is that instead of analyzing the quotient forcing, we will define a variation of the term-space forcing $\mathbb{R}$, so that $\mathbb{P}\times\mathbb{R}$ projects onto $\mathbb{Q}$, and which is much easier to analyze.

For our specific purpose, suppose that $\mathbb{P}_{\bar{E}}$ can be factored into $\mathbb{P}\times\mathbb{R}$, and $\pi_{\alpha,\mathbb{P}}:\mathbb{Q}_{\alpha}\rightarrow \mathbb{P}$ is the restriction of the first coordinate to $\mathbb{P}$ i.e. $\pi_{\alpha,\mathbb{P}}(a)=a_0\restriction \mathbb{P}$. For example, recall that $$\mathbb{S}_\xi=\mathbb{P}_{<\xi}\times Col(\bar{\kappa}_\xi^+,<f_\xi(\kappa_\xi))\times Col(f_\xi(\kappa_\xi),s_\xi(f_\xi(\kappa_\xi))^+)\times Col(s_\xi(f_\xi(\kappa_\xi))^{+3},<\kappa_\xi).$$
The restriction of the first coordinate to $\mathbb{S}_\xi$ is denoted by $\pi_{\alpha,\mathbb{S}_\xi}:\mathbb{Q}_{\alpha}\rightarrow \mathbb{S}_\xi$ projects $\mathbb{Q}_{\alpha}$  onto $\mathbb{S}_\xi$.

If $\mathbb{P}$ happened to be a factor of $\mathbb{P}_\xi$, then $\pi_{\alpha,\mathbb{P}}$  also projects $\mathbb{Q}_{\alpha,\xi}$ onto $\mathbb{P}$.
\begin{definition}\label{definition: the term space forcing}
     Let $\mathbb{P}$ be a factor of $\mathbb{P}_{\bar{E}}$, we denote by $\mathbb{Q}^{\mathbb{P}}_{\alpha}=(\mathbb{Q}_\alpha,\leq_{\mathbb{P}})$ defined by  $q_1\leq_{\mathbb{P}} q_2$ iff $q_1\leq q_2$ and $\pi_{\mathbb{P}}(q_1)=\pi_{\mathbb{P}}(q_2)$. If $\mathbb{P}$ factors $\mathbb{P}_\xi$, we also denote $\mathbb{Q}^{\mathbb{P}}_{\alpha,\xi}=(\mathbb{Q}_{\alpha,\xi},\leq_{\mathbb{P}})$.
 \end{definition}
 Three important examples are: \begin{itemize}
     \item $\mathbb{P}=\mathbb{S}_\xi$.  This forcing factors $\mathbb{P}_\xi$, and we denote $\mathbb{Q}^{\mathbb{S}_\xi}_{\alpha,\xi}=\mathbb{Q}^{+}_{\alpha,\xi}$.
     \item $\mathbb{P}=\mathbb{S}_\xi\times Add(\kappa_{\xi}^+,\kappa^{++})\times Col(\kappa_\xi^+,<\kappa_{\xi+1})$, in this case we already mentioned that $\mathbb{P}_\xi\simeq \mathbb{P}\times \mathbb{T}$ here $\mathbb{T}$ is $\kappa_{\xi+1}^+$-directed.  
     \item $\mathbb{P}=\mathbb{P}_{\bar{E}}$. In this case $\mathbb{Q}^{\mathbb{P}}_{\alpha,\xi}$ is undefined for any $\xi<\kappa$, however, $\mathbb{Q}^{\mathbb{P}}_\alpha$ is defined and denoted by $\mathbb{Q}^+_{\alpha}$. Note that in the ordering of $\mathbb{Q}^+_\alpha$ we fix the first coordinate of conditions and only vary the strategies.
 \end{itemize}
\begin{remark} We say that $\mathbb{S}_{<\xi}$ factors $\mathbb{P}$ if $p\restriction\mathbb{P}=p'\restriction \mathbb{P}$ implies that $p\restriction\mathbb{S}_{<\xi}=p'\restriction\mathbb{S}_{<\xi}$. It is crucial to note that if $\mathbb{S}_{<\xi}$ factors $\mathbb{P}$,  then $\leq_{\mathbb{P}}$ on $\mathbb{Q}^{\mathbb{P}}_{0,\xi}$ implies $\leq^*$. 
\end{remark}   
 \begin{lemma}\label{lemma: Q^* degree of directed}
     Suppose that $\mathbb{P}$ factors $\mathbb{P}_{\xi}$, $\mathbb{S}_{<\xi}$ factors $\mathbb{P}$ and suppose that $\mathbb{R}$ is $\rho$-directed-closed such that $\mathbb{P}_{\xi}\simeq\mathbb{P}\times \mathbb{R}$.  Then $(\mathbb{Q}_{\alpha,\xi}^{\mathbb{P}},\leq_{\mathbb{P}})$ is $\rho$-directed closed.
     \end{lemma}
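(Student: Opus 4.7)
The plan is to prove the lemma by transfinite induction on $\alpha$. At $\alpha=1$, $\mathbb{Q}_{1,\xi}^{\mathbb{P}}$ reduces to the fiber of $\mathbb{P}_\xi$ over a fixed $\mathbb{P}$-projection, which is isomorphic to $\mathbb{R}$; the hypothesis closes the base case. For the inductive step, take a $\leq_{\mathbb{P}}$-directed family $\{a^i\mid i\in I\}$ in $\mathbb{Q}_{\alpha,\xi}^{\mathbb{P}}$ with $|I|<\rho$. Because $\leq_{\mathbb{P}}$ fixes the $\mathbb{P}$-projection, directedness forces all $a^i_0$ to share a single projection $p^*$; writing $a^i_0=(p^*,r^i)$ under $\mathbb{P}_\xi\simeq\mathbb{P}\times\mathbb{R}$, the $\rho$-directed closure of $\mathbb{R}$ supplies $r^*\leq r^i$ for every $i$. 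Set $q_0:=(p^*,r^*)$. The remark preceding the lemma, using that $\mathbb{S}_{<\xi}$ factors $\mathbb{P}$, yields $q_0\leq^* a^i_0$.

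I then construct a common lower bound $q=\l q_\beta\mid\beta<\alpha\r$ by recursion on $\beta\le\alpha$, preserving $q\restriction\beta\le_{\mathbb{P}} a^i\restriction\beta$. Limits are handled by inverse limits with $\leq\kappa$-support. At a successor $\beta+1$, set $l:=\sup_i l(\mathcal{S}^{a^i}_{\beta+1})$ and define a strategy $\mathcal{S}_{\beta+1}$ of length $l$ by amalgamation, putting
\[
S^j(\vec{\mu},c):=S^{a^i,j}\bigl(w^{*q_0}_{a^i_0}(\vec{\mu},c)\bigr)
\]
for any $i$ with $l(\mathcal{S}^{a^i}_{\beta+1})\geq j$, where $S^{a^i,j}$ denotes the $j$-th block of $\mathcal{S}^{a^i}_{\beta+1}$. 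Well-definedness is a consequence of directedness: for $a^i,a^{i'}$ pick a $\leq_{\mathbb{P}}$-common extension $a^k$; the cohere clauses of $\mathcal{S}^{a^k}_{\beta+1}\leq\mathcal{S}^{a^i}_{\beta+1}$ and $\mathcal{S}^{a^k}_{\beta+1}\leq\mathcal{S}^{a^{i'}}_{\beta+1}$, combined with the composition identity $w^{*q_0}_{a^i_0}=w^{*a^k_0}_{a^i_0}\circ w^{*q_0}_{a^k_0}$, show both candidate values reduce to $S^{a^k,j}(w^{*q_0}_{a^k_0}(\vec{\mu},c))$. Properties $(\alpha)$--$(\zeta)$ and $(\eta)$ now transfer routinely from the individual $\mathcal{S}^{a^i}_{\beta+1}$, using that $R$ is downward closed in its second coordinate (from $q\restriction\beta\leq a^i\restriction\beta$) and invoking Lemma~\ref{Lemma: some properties of R}(2) for limit top levels.

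The hard step is condition $(\theta)$ at the top level $l$ when $l$ is a limit --- exactly the convergence issue for the witnessing $\bar{\xi}$'s flagged in Remark~\ref{remark:minimal}. To address it, I first pre-process the directed family into a $\leq_{\mathbb{P}}$-decreasing cofinal subsequence $\{b^k\}_{k<\mathrm{cof}(l)}$ with $l(\mathcal{S}^{b^k}_{\beta+1})$ cofinal in $l$; such a chain is built by transfinite recursion using directedness, and at each stage lower bounds for previous $b^{k'}$'s come from the decreasing-sequence (closure) portion of the argument, a strictly weaker property than directed closure. Along a decreasing sequence, Remark~\ref{remark:minimal} guarantees that the witnessing $\bar{\xi}$'s can only decrease, so $\bar{\xi}_{l(\mathcal{S}^{b^0}),\mathcal{S}^{b^0}}<\omega_1$ uniformly bounds them and serves as the $\bar{\xi}$-witness for $(\theta)$ at level $l$ of the amalgamated strategy. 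Working throughout with this thinned sequence in place of the original family yields the desired common lower bound in $\mathbb{Q}_{\alpha,\xi}^{\mathbb{P}}$.
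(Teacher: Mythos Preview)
Your overall approach---induction on $\alpha$, amalgamating the strategies via the maps $w^*$, with well-definedness checked through directedness---is exactly the paper's approach. The paper writes the argument only for decreasing sequences, so your explicit treatment of directed families is in fact an improvement in presentation.

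The preprocessing step you introduce for condition $(\theta)$ at a limit top level $l$ is, however, both unnecessary and flawed as stated. It is unnecessary because every $a^i$ lies in $\mathbb{Q}_\alpha$ (not merely $\mathbb{Q}^*_\alpha$) and hence satisfies $(\eta)$: $\langle\max(S^{a^i}_{l(\mathcal{S}^{a^i})}(\cdot)),\,q^{a^i}\restriction\beta\rangle\in R$. Since $R$ is downward closed in its second coordinate and $q\restriction\beta\leq a^i\restriction\beta$, this gives $\langle\max(S^{a^i}_{l(\mathcal{S}^{a^i})}(w^{*q_0}_{a^i_0}(\vec\mu,c))),\,q\restriction\beta\rangle\in R$ for every $i$, and Lemma~\ref{Lemma: some properties of R}(2) then yields $(\eta)$---hence $(\theta)$ with witness $\bar\xi=0$---for the amalgamated top block directly. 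This is precisely what the paper does. Your appeal to Remark~\ref{remark:minimal} is also off target: that remark concerns the witnessing $\bar\xi$ at a \emph{fixed} level along a decreasing chain, whereas the top levels $l(\mathcal{S}^{b^k})$ of your thinned sequence are increasing.

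The preprocessing is problematic as written: to produce a $\leq_{\mathbb{P}}$-decreasing chain cofinal in the directed family, with limit-stage lower bounds supplied by ``the closure portion,'' you must at the next successor stage find a common $\leq_{\mathbb{P}}$-extension of the limit bound $b^k$ (which need not lie in the original family) and the next $a^{i_{k+1}}$. Directedness of the original family does not provide this---indeed, this is exactly the gap between $\rho$-closed and $\rho$-directed closed. Fortunately, once you drop the preprocessing and argue $(\eta)$ directly as above, the issue disappears and your amalgamation goes through unchanged.
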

      \begin{proof}
   By induction on $\alpha$. For $\alpha=0$, $\mathbb{Q}^{\mathbb{P}}_{0,\xi}$ with $\leq_{\mathbb{P}}$ above any condition is isomorphic to $\mathbb{R}$ above some condition which is $\rho$-closed by our assumption.
   For successor $\alpha=\beta+1$, suppose that $\l a_i\mid i<\kappa_\xi\r$ is $\leq_{\mathbb{P}}$-decreasing. 
   Then by definition, $\l q^{a_i}\mid i<\kappa_\xi\r$ is $\leq_{\mathbb{P}}$-decreasing in $\mathbb{Q}^{\mathbb{P}}_{\beta,\xi}$. By the induction hypothesis, 
   there is $q^*$ a $\leq_{\mathbb{P}}$-lower bound. As for the strategies $\mathcal{S}^{a_i}$, we define $\mathcal{S}^*$ as follows:
$l(\mathcal{S}^*)=\sup_{i<\kappa_\xi}l(\mathcal{S}^{a_i})<\kappa^+$. 
Note that by the remark, $(a_i)_0$ is $\leq^*$-decreasing, and since $q^*_0$ is a $\leq^*$-lower bound for the $(a_i)_0$'s, for $i<j<\kappa_\xi$, 
$$Ex^*(q^*_0)\overset{w^{q^*_0}_{(a_j)_0}}{\to} Ex^*((a_j)_0)\overset{w^{(a_j)_0}_{(a_i)_0}}{\to} Ex^*((a_i)_0).$$ 
Moreover, for all $(\vec{\mu},c)\in Ex^*((a_j)_0)$, and all $\rho\leq l(\mathcal{S}^{a_i})$, $S^{a_j}_\rho(\vec{\mu},c)=S^{a_i}_{\rho}(w^{(a_j)_0}_{(a_i)_0}(\vec{\mu},c))$. 
Also by commutativity, for each $(\vec{m},c)\in Ex^*(q^*_0)$, $$w^{(a_j)_0}_{(a_i)_0}(w^{q^*_0}_{(a_j)_0}(\vec{\mu},c))=w^{q^*_0}_{(a_i)_0}(\vec{\mu},c).$$ 
Hence we can define $S^*_\rho(\vec{\mu},c)=S^{a_i}_\rho(\vec{\mu},c)$ for some $i$ such that $\rho\leq l(\mathcal{S}^{a_i})$, and this is well-defined. 
If $l(\mathcal{S}^*)$ is not yet defined, it is a limit ordinal and there is exactly one way we can define it (by the definition of strategies). The argument of Claim \ref{Claim: frown yield a condition} shows that $\mathcal{S}^*$ is a $q^*$-strategy and therefore
$a^*=(q^*,\mathcal{S}^*)$ is a condition which is not hard to see is a $\leq_{\mathbb{P}}$-lower bound for the sequence of $a_i$'s. As for condition $(\theta)$, 
since each $S^*_\rho$ is just a copy of $S^{a_i}_\rho$ (with shifted domain to $q_0^*$), any $\xi$ that witnesses $(\theta)$ 
for $S^{a_i}_\rho$ will also work for $S^*_\rho$. The reason that $S^*_{l(S^*)}$ satisfy condition $(\rho)$, is that $a_i$ all satisfy condition 
$(\rho)$ and Lemma \ref{Lemma: some properties of R}(2) applied to $\{\max(S^{a_i}_{l(S^{a_i})}(w^{q^*_0}_{a_i}(\vec{\mu},c))\mid i<\kappa_\xi\r$.
 For limit $\alpha$, suppose that $\l a_i\mid i<\kappa_\xi\r$ is $\leq_{\mathbb{P}}$-decreasing. Let us construct $a^*$ by induction of $\beta\leq\alpha$.
    We take $a^*_0$ any $\leq_{\mathbb{P}}$-bound for the sequence $(a_i)_0$ (which is in particular a $\leq^*$-bound). At successor steps, we already know that $a^*\restriction \beta$ is a lower bound for $a_i\restriction \beta$ such that $\supp(a^*\restriction\beta)=\bigcup_{i<\kappa_\xi}\supp(a_i\restriction \beta)$ and we define a lower bound as in the previous argument $a^*\restriction\beta+1$ for $a_i\restriction\beta+1$. Note that we will still preserve $\supp(a^*\restriction\beta+1)=\bigcup_{i<\kappa_\xi}\supp(a_i\restriction \beta+1)$ (which adds at most one more coordinate to the support). For limit $\beta$, we simply take $a^*\restriction\beta=\bigcup_{\delta<\beta}a^*\restriction \delta$. Note that $$\supp(a^*\restriction\beta)=\bigcup_{\delta<\beta}\supp(a^*\restriction \delta)=\bigcup_{\delta<\beta}(\bigcup_{i<\kappa_\xi}\supp(p_i\restriction \delta))=$$
    $$=\bigcup_{i<\kappa_\xi}(\bigcup_{\delta<\beta}\supp(p_i\restriction \delta))=\bigcup_{i<\kappa_\xi}\supp(p_i\restriction \beta)$$
    It is clear that the right-most-hand side is of cardinality at most $\kappa$. Hence $a^*\restriction\beta$ is a legitimate lower bound.
\end{proof}

 \begin{corollary}\label{Cor: specific directed degree of Q*} \begin{enumerate}
     \item $(\mathbb{Q}_{\alpha,\xi}^+,\leq_\xi)$ is $\kappa_\xi^+$-directed closed.
     \item For $\mathbb{P}=\mathbb{S}_\xi\times Add(\kappa_{\xi}^+,\kappa^{++})\times Col(\kappa_\xi^+,<\kappa_{\xi+1})$, $\mathbb{Q}^{\mathbb{P}}_{\alpha,\xi}$ is $\kappa_{\xi+1}^+$-directed closed.
 \end{enumerate}
 \end{corollary}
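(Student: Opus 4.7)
The corollary is a direct application of Lemma \ref{lemma: Q^* degree of directed}, and the plan is simply to identify the appropriate factorization of $\mathbb{P}_\xi$ in each case and check that the complementary factor has the claimed degree of directed closure. The actual combinatorial work has already been packaged into the lemma.

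For (1), the plan is to take $\mathbb{P} = \mathbb{S}_\xi$. By Proposition \ref{Properties}(3), $\mathbb{P}_\xi \simeq \mathbb{S}_\xi \times \mathbb{R}$ where $\mathbb{R} = Add(\kappa_\xi^+, \lambda) \times \mathbb{P}_{>\xi}$. The Cohen factor $Add(\kappa_\xi^+, \lambda)$ is $\kappa_\xi^+$-directed closed because the union of a directed family of fewer than $\kappa_\xi^+$ partial functions of size less than $\kappa_\xi^+$ is again such a partial function. The factor $\mathbb{P}_{>\xi}$ is $\kappa_\xi^+$-directed closed under $\leq^*$: given $<\kappa_\xi^+$-many pairwise $\leq^*$-compatible conditions, their Cohen parts $f_i$ union to a single partial function (this uses that any two $\leq^*$-compatible conditions have Cohen parts that agree where both are defined), and we intersect the measure-one sets $A_i$ and the collapse-matrices $H^r_i$ pointwise. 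So $\mathbb{R}$ is $\kappa_\xi^+$-directed closed. Moreover $\mathbb{S}_{<\xi}$ factors $\mathbb{S}_\xi$ since $\mathbb{S}_\xi = \mathbb{S}_{<\xi} \times Col(\tau^{+3}, <\kappa_\xi)$. Lemma \ref{lemma: Q^* degree of directed} with $\rho = \kappa_\xi^+$ then gives (1).

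For (2), the plan is to take $\mathbb{P} = \mathbb{S}_\xi \times Add(\kappa_\xi^+, \kappa^{++}) \times Col(\kappa_\xi^+, <\kappa_{\xi+1})$. From the discussion preceding Lemma \ref{Lemma: generic lift} (see Equation \ref{equation1}), $\mathbb{P}_\xi$ admits the factorization $\mathbb{P} \times \mathbb{T}$ where $\mathbb{T}$ is $\kappa_{\xi+1}^+$-directed closed (it is built from $\mathbb{P}_{>\xi+1}$ together with the collapses above $\kappa_{\xi+1}$ and the relevant Cohen component, each of which is $\kappa_{\xi+1}^+$-directed closed). Since $\mathbb{S}_\xi$ is a factor of $\mathbb{P}$ and $\mathbb{S}_{<\xi}$ is a factor of $\mathbb{S}_\xi$, $\mathbb{S}_{<\xi}$ factors $\mathbb{P}$. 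Applying Lemma \ref{lemma: Q^* degree of directed} with $\rho = \kappa_{\xi+1}^+$ gives (2).

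There is no genuine obstacle here: the induction across $\alpha$, the handling of the labeled-block amalgamation, and the use of Lemma \ref{Lemma: some properties of R}(2) to secure condition $(\eta)$ at the limiting strategy have all been taken care of in the proof of Lemma \ref{lemma: Q^* degree of directed}. The only thing to double-check is that the decompositions above are genuine product factorizations (so that $\mathbb{P}_\xi \simeq \mathbb{P} \times \mathbb{R}$ with independent factors), and that the complementary factor has greatest lower bounds for directed subfamilies of the appropriate size, both of which are standard for Cohen forcing, Levy collapses above the threshold, and the direct-extension order on the upper part of the overlapping extender forcing.
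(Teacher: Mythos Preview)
Your proposal is correct and follows exactly the approach the paper intends: the corollary is stated without proof immediately after Lemma~\ref{lemma: Q^* degree of directed}, and the two factorizations you invoke are precisely the first two ``important examples'' listed just before the corollary, with the closure properties of the complementary factors already recorded in Proposition~\ref{Properties}(3) and in the discussion around Equation~\ref{equation1}. Your additional verification that the complementary factors are genuinely \emph{directed} closed (not merely closed) is a useful elaboration, since the paper only states $\kappa_\xi^+$-closure of $\mathbb{S}_{>\xi}$ under~$\leq^*$.
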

 A similar argument can be used to prove that 
 \begin{lemma}\label{lemma: closure of quotient}
Suppose that $\mathbb{P}$ factors $\mathbb{P}_{\bar{E}}$, and suppose that $\mathbb{R}$ is $\rho$-directed-closed such that $\mathbb{P}_{\bar{E}}\simeq\mathbb{P}\times \mathbb{R}$.  Then $(\mathbb{Q}_{\alpha}^{\mathbb{P}},\leq_{\mathbb{P}})$ is $\rho$-directed closed.
      In particular,
$(\mathbb{Q}^+_\alpha,\leq)$ is $\kappa^+$-directed closed.
 \end{lemma}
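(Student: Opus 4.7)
The argument runs in parallel with the proof of Lemma \ref{lemma: Q^* degree of directed}, by induction on $\alpha$, with only two substantive modifications: the base case now exploits the full factorization of $\mathbb{P}_{\bar E}$ itself rather than of some $\mathbb{P}_\xi$, and the limit-stage bookkeeping must track supports across all coordinates below $\kappa$ instead of being constrained to a single level $\xi$.

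For $\alpha=0$, I would observe that under the identification $\mathbb{P}_{\bar E}\simeq\mathbb{P}\times\mathbb{R}$, the order $\leq_{\mathbb{P}}$ on $\mathbb{Q}^{\mathbb{P}}_0=\mathbb{P}_{\bar E}$ fixes the $\mathbb{P}$-coordinate; hence below any condition, $\mathbb{Q}^{\mathbb{P}}_0$ is isomorphic to $\mathbb{R}$ below the corresponding condition, so $\rho$-directed closure is inherited from $\mathbb{R}$. At a successor step $\alpha=\beta+1$, given a $\leq_{\mathbb{P}}$-directed family $\{a_i\}_{i<\delta}$ with $|\delta|<\rho$, I would project to first coordinates to get a $\leq_{\mathbb{P}}$-directed family $\{q^{a_i}\}$ in $\mathbb{Q}^{\mathbb{P}}_\beta$, take a $\leq_{\mathbb{P}}$-lower bound $q^*$ by the inductive hypothesis, and then amalgamate the strategies by setting $l(\mathcal{S}^*)=\sup_i l(\mathcal{S}^{a_i})<\kappa^+$ and, for every $\tau\leq l(\mathcal{S}^*)$ and every $(\vec\mu,c)\in Ex^*(q^*_0)$, $S^*_\tau(\vec\mu,c)=S^{a_i}_\tau(w^{*q^*_0}_{(a_i)_0}(\vec\mu,c))$ for any $i$ with $l(\mathcal{S}^{a_i})\geq\tau$. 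Well-definedness comes from the commutativity identity $w^{*(a_j)_0}_{(a_i)_0}\circ w^{*q^*_0}_{(a_j)_0}=w^{*q^*_0}_{(a_i)_0}$, the verification that $\mathcal{S}^*$ is a genuine $q^*$-strategy follows the template of Claim \ref{Claim: frown yield a condition}, and for condition $(\eta)$ at the terminal block I would invoke Lemma \ref{Lemma: some properties of R}(2) applied to the suprema of the sets $\{\max(S^{a_i}_{l(\mathcal{S}^{a_i})}(w^{*q^*_0}_{(a_i)_0}(\vec\mu,c)))\mid i<\delta\}$.

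For limit $\alpha$, the lower bound $a^*$ is built by a secondary recursion on $\beta\leq\alpha$, combining the two previous cases at successor stages and taking coordinatewise unions at limit stages. The only new verification is that $\supp(a^*)=\bigcup_{i<\delta}\supp(a_i)$ has cardinality at most $\kappa$, which is immediate since $|\delta|<\rho\leq\kappa^+$ forces $|\delta|\leq\kappa$ and each $\supp(a_i)$ has size $\leq\kappa$. The ``in particular'' clause follows by specializing to $\mathbb{P}=\mathbb{P}_{\bar E}$, for which $\mathbb{R}$ is trivial and hence vacuously $\kappa^+$-directed closed.

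The hard part, as flagged in Remark \ref{remark:minimal}, is preserving condition $(\theta)$ under amalgamation, since distinct $a_i$ carry distinct witnesses $\bar\xi$ that need not converge. Here, however, each block $S^*_\tau$ is a pure $w^*$-translate of the block $S^{a_i}_\tau$ from a single member of the family (for any $i$ with $\tau\leq l(\mathcal{S}^{a_i})$), so the witness $\bar\xi_{\tau,\mathcal{S}^{a_i}}$ transfers directly to $\mathcal{S}^*$, and the pathology noted in the remark does not arise. This structural feature is precisely what makes the inductive combination go through cleanly.
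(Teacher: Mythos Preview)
Your proposal is correct and follows precisely the approach the paper has in mind: the paper gives no separate argument for this lemma, merely stating that ``a similar argument'' to the proof of Lemma~\ref{lemma: Q^* degree of directed} works, and your write-up is exactly that transplanted argument with the two adjustments you flag (base case via the full factorization of $\mathbb{P}_{\bar E}$, and the support bound $|\bigcup_i\supp(a_i)|\leq\kappa$ at limit stages). Your handling of condition $(\theta)$ and the ``in particular'' clause via trivial $\mathbb{R}$ are both on point.
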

Next we would like to show that 
if $\mathbb{P}$ factors $\mathbb{P}_\xi$, then $\mathbb{P}\times \mathbb{Q}^{\mathbb{P}}_{\alpha,\xi}$ projects onto $\mathbb{Q}_{\alpha,\xi}$.
\begin{definition}\label{def: term space forcing projection}
   Suppose that $\mathbb{P}$ factors $\mathbb{P}_\xi$, and fix a condition $a_*\in \mathbb{Q}_{\alpha,\xi}$. For $(s,a)\in \mathbb{P}/\pi_{\mathbb{P}}(a_*)\times \mathbb{Q}^{\mathbb{P}}_{\alpha,\xi}/a_*$, define $a+s$ by induction of $\alpha$:
   \begin{enumerate}
       \item For $\alpha=0$, by assumption $\mathbb{P}_{\xi}\simeq \mathbb{P}\times \mathbb{R}$, and we define
       $$a+s=(a\restriction\mathbb{P},a\restriction \mathbb{R})+s=(s,a\restriction \mathbb{R}).$$
       \item At successor steps 
    $a=(q,\mathcal{S})$, define $a+s=(q+a,\mathcal{S}')$, where $\mathcal{S}'$ is defined as follows:  $l(\mathcal{S}')=l(\mathcal{S})$,  embed\footnote{We will have to prove inductively that $q+s\leq q$.} $Ex^*(q+s)$  into $Ex^*(q)$ using $w^{*(q+s)_0}_{q_0}$, and define $S'_i(\vec{\mu},c)=S_i(w^{*(q+s)_0}_{q_0}(\vec{\mu},c))$.
       \item At limit steps $\alpha$, let $a+s=\bigcup_{\beta<\alpha}(a\restriction \beta)+s$.
   \end{enumerate}
\end{definition}  
\begin{remark}
    the definition of $a+s$ is the same as if we took the minimal extension (in the sense of Remark \ref{remark:minimal}) of $a$ after extending $a_0$ to $a_0+s$.
\end{remark}

Let us provide some properties of this function: 
\begin{proposition} Suppose that $\mathbb{P}$ factors $\mathbb{P}_\xi$, and let $\alpha\leq \kappa^{++}$, and $a_*\in \mathbb{Q}_{\alpha,\xi}$. Then:
\begin{enumerate}
    \item For every $(a,s)\in\mathbb{P}/\pi_{\mathbb{P}}(a_*)\times \mathbb{Q}^{\mathbb{P}}_{\alpha,\xi}/a_*$, $a+s\in \mathbb{Q}_{\alpha,\xi}/a_*$.
    \item $\pi_{\mathbb{P}}(a+s)=s$. 
    \item For any $a\in\mathbb{Q}_{\alpha,\xi}$ and any $s\leq \pi_{\mathbb{P}}(a)$, we have that $a+s$ is the greatest element of the set $\{b\in\mathbb{Q}_{\alpha}\mid b\leq a\wedge \pi_{\mathbb{P}}(b)\leq s\}$.
    \item For every $b\leq a+s$, there is $a'\leq a$ such that $\pi_{\mathbb{P}}(a')=\pi_{\mathbb{P}}(a)$ (namely $a'\leq_\xi a$) and $b=a'+\pi_{\mathbb{P}}(b)$
\end{enumerate}
\end{proposition}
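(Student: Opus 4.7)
The plan is to prove all four items simultaneously by induction on $\alpha$, mirroring the three-case recursion in Definition~\ref{def: term space forcing projection}. The base case $\alpha=0$ reduces to a product computation in $\mathbb{P}_\xi\simeq\mathbb{P}\times\mathbb{R}$: writing $a=(a{\restriction}\mathbb{P}, a{\restriction}\mathbb{R})$ and $a+s=(s, a{\restriction}\mathbb{R})$, items (1) and (2) are immediate, (3) follows because any $b\le a$ with $\pi_\mathbb{P}(b)\le s$ automatically has $b{\restriction}\mathbb{R}\le a{\restriction}\mathbb{R}$, and (4) is witnessed by the canonical choice $a':=(\pi_\mathbb{P}(a), b{\restriction}\mathbb{R})$.

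At a successor step $\alpha=\beta+1$ I would write $a=(q,\mathcal{S})$ and invoke the induction hypothesis at level $\beta$ to get $q+s\in\mathbb{Q}_{\beta,\xi}/a_*{\restriction}\beta$ with $q+s\le q$ and $\pi_\mathbb{P}(q+s)=s$. The induced strategy $\mathcal{S}'$ on $q+s$ is transported through the order-preserving map $w^{*(q+s)_0}_{q_0}$, and the check that $(q+s,\mathcal{S}')$ is a legitimate condition below $a_*$ proceeds exactly as in Claim~\ref{Claim: frown yield a condition}, together with the commutativity identity $w^{*q_0}_{(q_*)_0}\circ w^{*(q+s)_0}_{q_0}=w^{*(q+s)_0}_{(q_*)_0}$ supplied by the uniqueness clause of Corollary~\ref{Cor: properties of step extension}. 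This settles (1); item (2) is immediate from the IH, and for (3), given $b\le a$ with $\pi_\mathbb{P}(b)\le s$, IH(3) at level $\beta$ forces $q^b\le q+s$ and the identities $S^b_i=S^a_i\circ w^{*q^b_0}_{q_0}=S'_i\circ w^{*q^b_0}_{(q+s)_0}$ then give $b\le a+s$.

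The main obstacle is (4) at the successor step. Applying IH(4) to $q^b\le q+s$ yields $q'\le q$ with $\pi_\mathbb{P}(q')=\pi_\mathbb{P}(q)$ and $q^b=q'+\pi_\mathbb{P}(b)$, and one must build a $q'$-strategy $\mathcal{S}^{a'}$ of length $l(\mathcal{S}^b)$ satisfying both $S^{a'}_i\circ w^{*q'_0}_{q_0}=S^a_i$ for $i\le l(\mathcal{S})$ (needed for $a'\le a$) and $S^b_i=S^{a'}_i\circ w^{*q^b_0}_{q'_0}$ for $i\le l(\mathcal{S}^b)$ (needed for $a'+\pi_\mathbb{P}(b)=b$). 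The first constraint fully determines $\mathcal{S}^{a'}$ up to level $l(\mathcal{S})$ and is consistent with the second there by $b\le a$ together with commutativity. The hard part is above level $l(\mathcal{S})$, where the identity $q^b=q'+\pi_\mathbb{P}(b)$ combined with $\pi_\mathbb{P}(q')=\pi_\mathbb{P}(q)$ forces $q^b\le^* q'$ to be a same-support direct extension whose $\mathbb{R}$-component is inherited verbatim from $q'$, so that after restricting to the dense subset of Corollary~\ref{cor: 1-point extension unique} the map $w^{*q^b_0}_{q'_0}$ becomes a bijection onto its image on the Prikry-extension coordinates and the strategy extends canonically; the remaining verification of axioms $(\alpha)$--$(\theta)$ for $\mathcal{S}^{a'}$ is then formal, paralleling Claim~\ref{Claim: frown yield a condition}. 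The limit case is pure bookkeeping: each of (1)--(4) transfers pointwise to $a+s=\bigcup_{\beta<\alpha}(a{\restriction}\beta)+s$ using that supports stay bounded by $\kappa$.
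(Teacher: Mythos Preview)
Your overall inductive scheme and your treatment of items (1)--(3) match the paper's argument essentially verbatim: induction on $\alpha$, the base case unwound in the product $\mathbb{P}\times\mathbb{R}$, the successor step handled via the minimal-extension machinery of Claim~\ref{Claim: frown yield a condition} and the commutativity $w^{*q_0}_{(q_*)_0}\circ w^{*(q+s)_0}_{q_0}=w^{*(q+s)_0}_{(q_*)_0}$, and limits taken pointwise.  There is nothing to add on that part.

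Item (4) is where your write-up diverges from the paper and contains a genuine inaccuracy.  At the successor step you assert that $q^b=q'+\pi_{\mathbb{P}}(b)$ together with $\pi_{\mathbb{P}}(q')=\pi_{\mathbb{P}}(q)$ ``forces $q^b\le^* q'$ to be a same-support direct extension.''  This is not correct for general $\mathbb{P}$ factoring $\mathbb{P}_\xi$: when $\mathbb{P}$ contains $\mathbb{S}_{<\xi}$ (the main case $\mathbb{P}=\mathbb{S}_\xi$), the $\mathbb{P}$-component of a condition carries the support information below $\xi$, so if $\supp(b_0)\supsetneq\supp(a_0)$ then $\supp(q^b_0)=\supp(b_0)\supsetneq\supp(a_0)=\supp(q'_0)$ and $q^b_0\le q'_0$ is \emph{not} a direct extension.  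Consequently your appeal to Corollary~\ref{cor: 1-point extension unique} to get injectivity of $w^{*q^b_0}_{q'_0}$ does not go through as stated.  The paper's fix is not to argue injectivity at all but simply to \emph{define} $\mathcal{S}^{a'}_i$ by copying from $b$ on the image of $w^{*q^b_0}_{q'_0}$ and completing arbitrarily elsewhere, then observing that the arbitrary part is discarded when one forms $a'+\pi_{\mathbb{P}}(b)$ (so the equality $a'+\pi_{\mathbb{P}}(b)=b$ holds regardless of those choices).

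You also invert the paper's emphasis on the limit case of (4).  Calling it ``pure bookkeeping'' hides the one real point the paper spells out there: the conditions $a'_\delta$ produced by the inductive hypothesis at each $\delta<\alpha$ need not cohere with one another, precisely because of the arbitrary completion just described.  The paper resolves this by noting that one may pick the strategies of $a'$ from \emph{any} single $a'_\delta$ without affecting either $a'\le a$ or $a'+\pi_{\mathbb{P}}(b)=b$, since the non-cohering parts live outside the image of $w^{*b_0}_{(a'_\delta)_0}$ and are therefore erased by the $+$ operation.  That observation is what makes the limit step work, and it deserves more than a sentence.
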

\begin{proof}
    (1) is by induction on $\alpha$. For $\alpha=0$, this is automatic from the factorization. For successor $\alpha$, by the argument of \ref{Claim: frown yield a condition}, $\mathcal{S}'$ is a $(q+s)$-strategy and therefore $a+s\in\mathbb{Q}_{\alpha,\xi}$. At limit steps $\alpha$, we note that for $\beta_1<\beta_2$, $(a\restriction\beta_1)+s$ cohere with $(a\restriction \beta_2)+s$ and therefore  $a+s=\bigcup_{\beta<\alpha}s+(a\restriction \beta)$ is well-defined. Note that for every $\beta$, $\supp((a\restriction \beta)+s)\subset \supp (a\restriction \beta)\cup\{0\}\subseteq \supp(a)\cup\{0\}$, and thus $a+s$ has $\leq\kappa$-support.
     (2) is trivial. For (3), let $b\in\mathbb{Q}_{\alpha}$ such that $b\leq a$ and  $\pi_{\mathbb{P}}(b)\leq s$. We prove that $b\leq a+s$ by induction on $\alpha$. For $\alpha=0$, since $b\leq a$, we have in particular that $b\restriction \mathbb{R}\leq a\leq \mathbb{R}$, where $\mathbb{P}_\xi\simeq\mathbb{P}\times \mathbb{R}$. Since $b\restriction \mathbb{P}=\pi_{\mathbb{P}}(b)\leq s$, we conclude that $b=(b\restriction \mathbb{P},b\restriction \mathbb{R})\leq (s, a\restriction \mathbb{R})=a+s$. At successor steps, we have by the induction hypothesis that $q^b\leq q^a+s=q^{a+s}$, moreover, for every $(\vec{\mu},c)\in Ex^*(b_0)$, we have that  $w^{*b_0}_{a_0}(\vec{\mu},c)=w^{*(a+s)_0}_{a_0}(w^{*b_0}_{(a+s)_0}(\vec{\mu},c))$. Hence, for every $\rho\leq l(\mathcal{S}^a)=l(\mathcal{S}^{a+s})$, $$S^b_\rho(\vec{\mu},c)=S^a(w^{*b_0}_{a_0}(\vec{\mu},c))=S^a(w^{*(a+s)_0}_{a_0}(w^{*b_0}_{(a+s)_0}(\vec{\mu},c)))=S^{a+s}_\rho(w^{*b_0}_{(a+s)_0}(\vec{\mu},c)).$$
     We conclude that $b\leq a+s$. At limit steps $\alpha$, since $b\leq a$ then for every 
     $\delta<\alpha$, $b\restriction \delta\leq a\restriction \delta$ and by the induction hypothesis $b\restriction \delta\leq a\restriction \delta+s=(a+s)\restriction \delta$.
     Hence $b\leq a+s$. $(4)$ is again by induction on $\alpha$. For $\alpha=0$, this is easy. At limit step $\alpha$, suppose that $b\leq a+s$, then for every $\delta<\alpha$, $b\restriction \delta\leq (a+s)\restriction \delta=a\restriction \delta+s$.
     By the induction hypothesis, there is $a'_\delta\leq a\restriction \delta$ such that $\pi_{\mathbb{P}}(a'_\delta)=\pi_{\mathbb{P}}(a)$ and $b\restriction\delta= a'_\delta+\pi_{\mathbb{P}}(b)$.
     Note that for every $\delta<\alpha$, $(a'_\delta)_0=(\pi_{\mathbb{P}}(a),b_0\restriction \mathbb{R})$, and therefore the domain of the block forming the strategies are identical. Since $b\restriction\delta=a'_\delta+\pi_{\mathbb{P}}(b)$, 
     the length of each strategy is determined by the strategies of $b$, below the length of the strategies of $a$, the blocks are determined uniquely by $a'_\delta\leq a\restriction \delta$, but for $\rho$ above it (but still below the length of the strategy of $b$), for $(\vec{\mu},c)$ in the image of $w^{*b_0}_{(a'_\delta)_0}$, 
     the  clubs in each block are determined by $b$, but there is some degree of freedom for other $(\vec{\mu},c)$. So when we define $a'$, we can just copy the strategy from some (any) $a'_\delta$, this will not effect the order. We will still have that 
     $a'\restriction \delta\leq a\restriction \delta$ (as this part is constant for all $\delta$), we will have that $a'\restriction\delta+\pi_{\mathbb{P}}(b)=a'_\delta+\pi_{\mathbb{P}}(b)=b\restriction\delta$,  again, since in this condition we remove all the part of $a'$ which is chosen arbitrary.
     Hence $a'+\pi_{\mathbb{P}}(b)=b$. In the successor step the same ideas repeat themselves in copying the information from $b$ and $a$ wherever we can and completing arbitrarily. The arbitrary part will anyway be removed when passing to $a'+\pi_{\mathbb{P}}(b)$. 
\end{proof}
It follows from the above proposition that $(s,a)\mapsto a+s$ is a projection of $\mathbb{P}/\pi_{\mathbb{P}}(a_*)\times \mathbb{Q}^{\mathbb{P}}_{\alpha,\xi}/a_*$ onto $\mathbb{Q}_{\alpha,\xi}$.

     \begin{corollary}
     $ V^{\mathbb{Q}_{\alpha,\xi}}\models |(\kappa^+)^V|=\kappa_\xi^{++}=\kappa_{\xi+1}$.
 \end{corollary}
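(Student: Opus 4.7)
The plan is to combine the two projection results of this section. First, the projection of $\mathbb{Q}_{\alpha,\xi}$ onto $\mathbb{P}_\xi$ given by the first-coordinate map yields $V^{\mathbb{P}_\xi}\subseteq V^{\mathbb{Q}_{\alpha,\xi}}$. By Proposition \ref{Properties}(6), already in $V^{\mathbb{P}_\xi}$ the ordinal $(\kappa^+)^V$ is collapsed to the cardinal $\kappa_{\xi+1}$ and $\kappa_{\xi+1}=(\kappa_\xi^{++})^{V^{\mathbb{P}_\xi}}$. Both statements transfer upward, so in $V^{\mathbb{Q}_{\alpha,\xi}}$ we immediately obtain $|(\kappa^+)^V|\leq\kappa_{\xi+1}$, together with the collapse of every $V$-cardinal strictly between $\kappa_\xi^+$ and $\kappa_{\xi+1}$.

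What remains is to verify that $\kappa_{\xi+1}$ itself is preserved as a cardinal in $V^{\mathbb{Q}_{\alpha,\xi}}$. For this I will apply the second projection, with $\mathbb{P}:=\mathbb{S}_\xi\times Add(\kappa_\xi^+,\kappa^{++})\times Col(\kappa_\xi^+,<\kappa_{\xi+1})$, the factor of $\mathbb{P}_\xi$ listed as the second example after Definition \ref{definition: the term space forcing}. By the preceding proposition, the map $(s,a)\mapsto a+s$ projects $\mathbb{P}\times \mathbb{Q}^{\mathbb{P}}_{\alpha,\xi}$ onto $\mathbb{Q}_{\alpha,\xi}$, so it suffices to check that the two-step product preserves $\kappa_{\xi+1}$.

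By Corollary \ref{Cor: specific directed degree of Q*}(2) the factor $\mathbb{Q}^{\mathbb{P}}_{\alpha,\xi}$ is $\kappa_{\xi+1}^+$-directed closed, while $\mathbb{P}$ has the $\kappa_{\xi+1}$-chain condition in $V$: under GCH, $\mathbb{S}_\xi$ has size at most $\kappa_\xi$, the Cohen part $Add(\kappa_\xi^+,\kappa^{++})$ has the $\kappa_\xi^{++}$-cc, and the Levy collapse $Col(\kappa_\xi^+,<\kappa_{\xi+1})$ has the $\kappa_{\xi+1}$-cc, and these combine to give the $\kappa_{\xi+1}$-cc for $\mathbb{P}$. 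Easton's lemma then tells us that forcing first with the $\kappa_{\xi+1}^+$-closed $\mathbb{Q}^{\mathbb{P}}_{\alpha,\xi}$ and then with the $\kappa_{\xi+1}$-cc $\mathbb{P}$ preserves $\kappa_{\xi+1}$ as a cardinal; hence so does the projection $\mathbb{Q}_{\alpha,\xi}$.

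I do not anticipate any real obstacle; the corollary is essentially a cardinal-arithmetic bookkeeping consequence of the projections and closure computations already performed in the section. The only point requiring some care is the verification of the $\kappa_{\xi+1}$-cc of $\mathbb{P}$, which uses GCH in the ground model to pass from the chain conditions of the individual Cohen and Levy factors to that of their product.
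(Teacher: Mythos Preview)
Your approach is essentially the paper's: sandwich $V^{\mathbb{Q}_{\alpha,\xi}}$ between $V^{\mathbb{P}_\xi}$ (which already collapses $(\kappa^+)^V$ to $\kappa_{\xi+1}$) and $V^{\mathbb{P}\times\mathbb{Q}^{\mathbb{P}}_{\alpha,\xi}}$, then use Corollary~\ref{Cor: specific directed degree of Q*}(2) together with Easton's lemma to preserve the relevant cardinals in the outer model.

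There is one small omission. You verify only that $\kappa_{\xi+1}$ survives in $V^{\mathbb{Q}_{\alpha,\xi}}$, but the conclusion $\kappa_{\xi+1}=\kappa_\xi^{++}$ there also requires that $\kappa_\xi$ and $\kappa_\xi^+$ remain cardinals; the statement ``$\kappa_{\xi+1}=(\kappa_\xi^{++})^{V^{\mathbb{P}_\xi}}$'' does not transfer upward for free, since a further collapse of $\kappa_\xi^+$ would change what $\kappa_\xi^{++}$ means. The paper explicitly lists $\kappa_\xi,\kappa_\xi^+,\kappa_{\xi+1}$ as the cardinals to be preserved and handles them by ``properly factoring $\mathbb{P}$'': writing $\mathbb{P}=\mathbb{S}_\xi\times\bigl(Add(\kappa_\xi^+,\kappa^{++})\times Col(\kappa_\xi^+,<\kappa_{\xi+1})\bigr)$ with $|\mathbb{S}_\xi|\leq\kappa_\xi$ and the second factor $\kappa_\xi^+$-closed, a second application of Easton gives the preservation of $\kappa_\xi,\kappa_\xi^+$. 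Your argument needs this extra line.

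A minor citation issue: Proposition~\ref{Properties}(6) is about $\mathbb{P}_{\bar{E}}$, not $\mathbb{P}_\xi$. The fact that $\mathbb{P}_\xi$ collapses $(\kappa^+)^V$ to $\kappa_{\xi+1}=\kappa_\xi^{++}$ is established in the remark surrounding Equation~(\ref{equation1}).
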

 \begin{proof}
    Let $\mathbb{P}=\mathbb{S}_\xi\times Add(\kappa_{\xi}^+,\kappa^{++})\times Col(\kappa_\xi^+,<\kappa_{\xi+1})$. We have seen that $\mathbb{P}\times \mathbb{Q}_{\alpha,\xi}^{\mathbb{P}}$ projects onto $\mathbb{Q}_{\alpha,\xi}$ which in turn projects onto $(\mathbb{P}_{\vec{E}})_\xi$. Thus we have that $V^{(\mathbb{P}_{\vec{E}})_\xi}\subseteq V^{\mathbb{Q}_{\alpha,\xi}}\subseteq V^{\mathbb{P}\times \mathbb{Q}^{\mathbb{P}}_{\alpha,\xi}}$
    . Since $V^{(\mathbb{P}_{\vec{E}})_\xi}\models |(\kappa^+)^V|=\kappa_\xi^{++}=\kappa_{\xi+1}$, we just have to argue that $\mathbb{Q}_{\alpha,\xi}$ preserves $\kappa_\xi,\kappa_\xi^+$ and $\kappa_{\xi+1}$, but this we be true once we prove that these cardinals are preserved by $\mathbb{P}\times \mathbb{Q}^{\mathbb{P}}_{\alpha,\xi}$. Indeed, by corollary \ref{Cor: specific directed degree of Q*} $\mathbb{Q}^{\mathbb{P}}_{\alpha,\xi}$ is $\kappa^+_{\xi+1}$-directed. Finally, by properly factoring $\mathbb{P}$, and the Easton lemma we see that $\kappa_\xi,\kappa_\xi^+$ and $\kappa_{\xi+1}$ are preserved.
\end{proof}
\section{Killing non-reflecting stationary sets}\label{Section: Killing stationary sets}
In this subsection, we will prove that the successor steps of the iteration kills the intended fragile stationary set.
Let us check some properties of $\mathbb{Q}_{\beta+1}=\mathbb{A}(\mathbb{Q}_\beta,\dot{T})$.
Suppose that $G$ is generic for $\mathbb{Q}_{\beta+1}$, and let $G_0=\{(a)_0\mid a\in G\}$. Then clearly $G_0$ is generic for $\mathbb{P}_{\vec{E}}$. For any $a\in G$ and every $i\leq l(\mathcal{S}^a)$, let
$$\mathcal{C}_i(a)=\overline{\bigcup_{(\vec{\mu},c)\in Ex^*(a), (a)_0^\smallfrown(\vec{\mu},c)\in G_0}\mathcal{S}^a_i(\vec{\mu},c)}.$$
$$\mathcal{C}(a)=\mathcal{C}_{l(\mathcal{S}^a)}(a)$$
$$\mathcal{C}=\bigcup_{a\in G}\mathcal{C}(a)$$
   \begin{claim}
       Suppose that $i\leq j\leq l(\mathcal{S}^a)$ then $\mathcal{C}_i(a)\sqsubseteq \mathcal{C}_j(a)$. 
   \end{claim}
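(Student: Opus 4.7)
The plan is to argue by induction on $j \geq i$, reducing to the successor step $\mathcal{C}_i(a) \sqsubseteq \mathcal{C}_{i+1}(a)$ and handling limits via clause $(\zeta)$. The groundwork is an observation about the generic: the set $F_a = \{(\vec{\mu}, c) \in Ex^*(a_0) : a_0^{\smallfrown}(\vec{\mu}, c) \in G_0\}$ is downward directed in the natural order on $Ex^*(a_0)$. Indeed, any two step-extensions of $a_0$ in $G_0$ have a common lower bound in $G_0$, and density of step-extensions together with Corollary \ref{Cor: properties of step extension} lets one refine it to the form $a_0^{\smallfrown}(\vec{\rho}, e)$ with $(\vec{\rho}, e) \leq (\vec{\mu}_1, c_1), (\vec{\mu}_2, c_2)$ in $Ex^*(a_0)$ and $(\vec{\rho},e) \in F_a$.

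For the successor step, the inclusion $\mathcal{C}_i(a) \subseteq \mathcal{C}_{i+1}(a)$ will follow at once from $(\delta)$ and monotonicity of topological closure. For the initial-segment half, I would take $\gamma \in \mathcal{C}_{i+1}(a)$ and $\alpha \in \mathcal{C}_i(a)$ with $\gamma < \alpha$, and aim to show $\gamma \in \mathcal{C}_i(a)$. By closedness of $\mathcal{C}_i(a)$ and approximating $\alpha$ strictly from below if necessary, I may assume $\alpha \in S^a_i(\vec{\mu}, c)$ for some $(\vec{\mu},c) \in F_a$; and by first approximating $\gamma$ from below by points of $\bigcup_{(\vec{\nu},d)\in F_a} S^a_{i+1}(\vec{\nu},d)$, then invoking closedness of $\mathcal{C}_i(a)$ at the end, I may assume $\gamma \in S^a_{i+1}(\vec{\nu}, d)$ for some $(\vec{\nu}, d) \in F_a$. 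Directedness of $F_a$ now yields $(\vec{\rho}, e) \in F_a$ below both, and clause~(2) of labeled blocks hauls both $\alpha$ and $\gamma$ into the single block over $(\vec{\rho},e)$: $\alpha \in S^a_i(\vec{\rho}, e)$ and $\gamma \in S^a_{i+1}(\vec{\rho}, e)$. Applying $(\delta)$ at $(\vec{\rho},e)$ together with $\gamma < \alpha \leq \max S^a_i(\vec{\rho}, e)$ places $\gamma$ in $S^a_i(\vec{\rho}, e) \subseteq \mathcal{C}_i(a)$, as required.

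For a limit $j$, the inductive hypothesis makes $\langle \mathcal{C}_k(a) : i \leq k < j \rangle$ a $\sqsubseteq$-chain of closed sets. Clause $(\zeta)$ gives $S^a_j(\vec{\mu}, c) = \overline{\bigcup_{k<j} S^a_k(\vec{\mu}, c)}$ for each $(\vec{\mu},c) \in F_a$, and a short unwinding of the two nested closures (using that closure commutes with monotone unions in $\kappa^+$) yields $\mathcal{C}_j(a) = \overline{\bigcup_{k<j} \mathcal{C}_k(a)}$. A closed union of a $\sqsubseteq$-chain of closed sets is $\sqsubseteq$-extended by every member of the chain, so $\mathcal{C}_i(a) \sqsubseteq \mathcal{C}_j(a)$.

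The only real content is the zooming-in step in the successor case, where comparing two differently indexed blocks requires passing to a common refinement inside $F_a$; this is the single place where genericity enters. Once that is in hand, only $(\delta)$ and monotonicity of labeled blocks are invoked; clause $(\epsilon)$ turns out not to be needed for this claim, though it plays the analogous role when comparing $\mathcal{C}(a)$ across distinct conditions $a$ further along.
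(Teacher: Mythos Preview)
Your proof is correct and follows essentially the same route as the paper's: induction on $j$, a reduction of the successor step to a single block via a common refinement in the generic, clause~$(\delta)$ to pull $\gamma$ down from level $i{+}1$ to level $i$, and clause~$(\zeta)$ to handle limits by unwinding the nested closures. The only cosmetic difference is that the paper first reduces to showing the initial-segment relation for the pre-closure unions (which absorbs your approximation arguments for $\alpha$ and $\gamma$ in one move), and then finds the refinement $(\vec{\nu},d) \leq (\vec{\mu},c)$ in $F_a$ with $\gamma < \max(S^a_i(\vec{\nu},d))$ directly, whereas you make the directedness of $F_a$ explicit and pass to a common lower bound of two witnesses; the underlying idea is identical.
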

   \begin{proof}
        By induction on 
 $j$ we prove that for all $j\geq i$, $\mathcal{C}_i(a)\sqsubseteq \mathcal{C}_j(a)$. At successor step, suppose that $j=i+1$, by the induction hypothesis it suffices to prove that $\mathcal{C}_i(a)\sqsubseteq \mathcal{C}_j(a)$.
       Towards this, it suffices to prove that $$\bigcup_{(\vec{\mu},c)\in Ex^*(a), \ a_0^\smallfrown(\vec{\mu},c)\in G_0}\mathcal{S}^a_i(\vec{\mu},c)\sqsubseteq\bigcup_{(\vec{\mu},c)\in Ex^*(a), \ a_0^\smallfrown(\vec{\mu},c)\in G_0}\mathcal{S}^a_j(\vec{\mu},c).$$  Let\footnote{The inclusion $\mathcal{C}_i(a)\subseteq \mathcal{C}_j(a)$ follows from the definition of strategies.} $$\gamma\in \max(\mathcal{C}_i(a))\cap\Big( \bigcup_{(\vec{\mu},c)\in Ex^*(a), \ a_0^\smallfrown(\vec{\mu},c)\in G_0}\mathcal{S}^a_j(\vec{\mu},c)\Big).$$ In particular $\gamma\in \mathcal{S}^a_{i+1}(\vec{\mu},c)\cap\max(\mathcal{C}_i(a))$ for some $(\vec{\mu},c)$ such that $a_0^\smallfrown(\vec{\mu},c)\in G_0$. Let $(\vec{\mu},c)\geq (\vec{\nu},d)$ be such that $a_0^\smallfrown(\vec{\nu},d)\in G_0$ and $\gamma<\max(\mathcal{S}^a_i(\vec{\nu},d))$, then $\gamma\in S_{i+1}(\vec{\nu},d)$ (since we have inclusion as we increase the pair). But then $\gamma\in \mathcal{S}^a_{i+1}(\vec{\nu},d)\cap\max(\mathcal{S}^a_i(\vec{\nu},d))=\mathcal{S}^a_i(\vec{\nu},d)$. It follows that $$\gamma\in\bigcup_{(\vec{\mu},c)\in Ex^*(a), \ a_0^\smallfrown(\vec{\mu},c)\in G_0}\mathcal{S}^a_i(\vec{\mu},c)$$
       as desired. At limit steps $j$, we have $$\mathcal{C}_j(a)=\overline{\bigcup_{(\vec{\mu},c)\in Ex^*(a), \ a_0^\smallfrown(\vec{\mu},c)\in G_0} \mathcal{S}^a_j(\vec{\mu},c)}=\overline{\bigcup_{(\vec{\mu},c)\in Ex^*(a), \ a_0^\smallfrown(\vec{\mu},c)\in G_0}(\overline{\bigcup_{i<j}\mathcal{S}^a_i(\vec{\mu},c)})}$$
       $$=\overline{\bigcup_{i<j}\overline{\bigcup_{(\vec{\mu},c)\in Ex^*(a), \ a_0^\smallfrown(\vec{\mu},c)\in G_0}\mathcal{S}_i(\vec{\mu},c)}}=\overline{\bigcup_{i<j}\mathcal{C}_i(a)}$$ 
       By the induction hypothesis, the union on the righthand side is increasing with respect to end-extension. Hence for each $i<j$, $\mathcal{C}_i(a)\sqsubseteq \mathcal{C}_j(a)$. The ``in particular" part follows easily from the first part. 
   \end{proof}
   \begin{proposition}
       Suppose that $a\leq b$, $a\in G$. Then for each $i\leq l(\mathcal{S}^b)$, $\mathcal{C}_i(a)=\mathcal{C}_i(b)$. 
   \end{proposition}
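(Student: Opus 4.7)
The plan is to prove the two inclusions $\mathcal{C}_i(a) \subseteq \mathcal{C}_i(b)$ and $\mathcal{C}_i(b) \subseteq \mathcal{C}_i(a)$ separately, using the defining compatibility identity $S^a_i(\vec{\nu},d) = S^b_i(w^{*a_0}_{b_0}(\vec{\nu},d))$ (valid for $(\vec{\nu},d) \in Ex^*(a_0)$ and $i \leq l(\mathcal{S}^b)$, which is the hypothesis), together with the fact that $G_0$ is an upward-closed generic filter on $\mathbb{P}_{\vec{E}}$. Note that by definition the map $w^{*a_0}_{b_0}$ satisfies $a_0^\smallfrown(\vec{\nu},d) \leq^* b_0^\smallfrown w^{*a_0}_{b_0}(\vec{\nu},d)$, which is what couples the two index sets in the unions defining $\mathcal{C}_i(a)$ and $\mathcal{C}_i(b)$.

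For the inclusion $\mathcal{C}_i(a) \subseteq \mathcal{C}_i(b)$, I will fix $(\vec{\nu}, d) \in Ex^*(a_0)$ with $a_0^\smallfrown(\vec{\nu},d) \in G_0$ and set $(\vec{\mu}', c') := w^{*a_0}_{b_0}(\vec{\nu}, d) \in Ex^*(b_0)$. Upward closure of $G_0$ together with $a_0^\smallfrown(\vec{\nu},d) \leq^* b_0^\smallfrown(\vec{\mu}', c')$ gives $b_0^\smallfrown(\vec{\mu}', c') \in G_0$, and then $S^a_i(\vec{\nu},d) = S^b_i(\vec{\mu}', c')$ is exactly one of the terms in the union defining $\mathcal{C}_i(b)$. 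Taking closures yields the inclusion.

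The reverse inclusion is the substantive direction. Given $(\vec{\mu}, c) \in Ex^*(b_0)$ with $b_0^\smallfrown(\vec{\mu}, c) \in G_0$, I want to produce $(\vec{\nu}, d) \in Ex^*(a_0)$ with $a_0^\smallfrown(\vec{\nu}, d) \in G_0$ and such that $w^{*a_0}_{b_0}(\vec{\nu}, d) \leq (\vec{\mu}, c)$ in $Ex^*(b_0)$ (i.e., the former is a $\leq$-strengthening of the latter). Both $a_0$ and $b_0^\smallfrown(\vec{\mu}, c)$ belong to $G_0$, so they are compatible; by genericity pick a common extension $p \in G_0$ with $p \leq a_0$ and $p \leq b_0^\smallfrown(\vec{\mu}, c)$. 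The standard decomposition of $\leq$ on $\mathbb{P}_{\vec{E}}$ lets me write $p \leq^* a_0^\smallfrown(\vec{\nu}, d)$ for a (unique) $(\vec{\nu}, d) \in Ex^*(a_0)$, whence $a_0^\smallfrown(\vec{\nu}, d) \in G_0$. By definition of $w$ this also gives $p \leq^* b_0^\smallfrown w^{*a_0}_{b_0}(\vec{\nu}, d)$; on the other hand $p \leq b_0^\smallfrown(\vec{\mu},c)$ decomposes as $p \leq^* (b_0^\smallfrown(\vec{\mu},c))^\smallfrown(\vec{\rho},f)$ for some $(\vec{\rho},f)$. Applying the uniqueness of $\leq^*$-step-extension representations from Corollary~\ref{Cor: properties of step extension} to these two direct-extension descriptions of $p$ above $b_0$, I get that $w^{*a_0}_{b_0}(\vec{\nu}, d)$ equals the concatenation of $(\vec{\mu},c)$ with $(\vec{\rho},f)$ inside $Ex^*(b_0)$, and this concatenation is by definition $\leq (\vec{\mu},c)$ in $Ex^*(b_0)$.

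With such $(\vec{\nu}, d)$ in hand, the monotonicity clause (2) in the definition of a labeled block gives $S^b_i(w^{*a_0}_{b_0}(\vec{\nu}, d)) \supseteq S^b_i(\vec{\mu}, c)$, and combining with $S^a_i(\vec{\nu}, d) = S^b_i(w^{*a_0}_{b_0}(\vec{\nu}, d))$ yields $S^b_i(\vec{\mu}, c) \subseteq S^a_i(\vec{\nu}, d) \subseteq \mathcal{C}_i(a)$. Taking closures produces $\mathcal{C}_i(b) \subseteq \mathcal{C}_i(a)$, finishing the proof. The main obstacle I expect is precisely the step-extension matching argument: one must verify that $w^{*a_0}_{b_0}(\vec{\nu}, d)$ is a genuine strengthening of $(\vec{\mu}, c)$ in $Ex^*(b_0)$ rather than some incomparable compatible element, and this is where the uniqueness of $\leq^*$-decompositions recorded in Corollary~\ref{Cor: properties of step extension} is indispensable.
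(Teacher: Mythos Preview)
Your argument is correct and follows essentially the same approach as the paper's proof: both directions proceed by matching terms in the two unions via $w^{*a_0}_{b_0}$, with the forward inclusion using upward closure of $G_0$ and the reverse inclusion using compatibility of $a_0$ and $b_0^\smallfrown(\vec{\mu},c)$ in $G_0$ together with the uniqueness of step-extension decompositions from Corollary~\ref{Cor: properties of step extension}, then invoking the monotonicity clause of labeled blocks. Your write-up is in fact more explicit about the step-extension matching than the paper, which simply cites the corollary without unpacking how $(\vec{\mu},c)$ ends up weaker than $w^{*a_0}_{b_0}(\vec{\nu},d)$.
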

   \begin{proof}
       Again it suffices to prove the equality of the unions before taking the closure. First we note that if $a_0^{\smallfrown}(c,\vec{\mu})\in G_0$, then since $$a_0^\smallfrown (\vec{\mu},c)\leq b_0^\smallfrown w^{*a_0}_{b_0}(\vec{\mu},c),$$ we also have $b_0^\smallfrown w^{*a_0}_{b_0}(\vec{\mu},c)\in G_0$. Hence  $w^{*a_0}_{b_0}(\vec{\mu},c)$ participates in the union on the $b$-side. By definition of the order, we have that $\mathcal{S}^a_i(\vec{\mu},c)=\mathcal{S}^b_i(w^{*a_0}_{b_0}(\vec{\mu},c))$, hence the union on the $a$-side is included in the union on the $b$-side. As for the other inclusion, each $(\vec{\mu},c)\in Ex^*(b)$ such that $b_0^\smallfrown(\vec{\mu},c)\in G_0$, is weaker than an element of the form $w^{*a_0}_{b_0}(\vec{\nu},d)$ for some $(\vec{\nu},d)\in Ex^*(a)$ (since both $a_0,b_0^\smallfrown(\vec{\mu},c)\in G_0$ then we can use Corollary \ref{Cor: properties of step extension}). By definition this implies that $$\mathcal{S}^b(\vec{\mu},c)\subseteq \mathcal{S}^b(w^{*a_0}_{b_0}(\vec{\nu},d))=\mathcal{S}^a(\vec{\nu},d).$$
       We conclude that the union on the $b$-side is included in the union on the $a$-side.
   \end{proof}
   It follows that:
   \begin{corollary}\label{Cor: C has union}
    Let $I=\{i<\kappa^+\mid \exists a\in G, \ l(\mathcal{S}^a)=i\}$ and for each $i\in I$ let $a_i\in G$ witness this. Then $\mathcal{C}=\bigcup_{i\in I}\mathcal{C}(a_i)$.  
   \end{corollary}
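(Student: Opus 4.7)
The inclusion $\bigcup_{i\in I}\mathcal{C}(a_i)\subseteq\mathcal{C}$ is immediate since each $a_i\in G$. The work lies in the reverse inclusion: the plan is to show that for every $a\in G$, there is some $i\in I$ with $\mathcal{C}(a)=\mathcal{C}(a_i)$, which will give $\mathcal{C}=\bigcup_{a\in G}\mathcal{C}(a)\subseteq\bigcup_{i\in I}\mathcal{C}(a_i)$.

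Fix $a\in G$ and set $j=l(\mathcal{S}^a)\in I$. By definition of $I$ there is some $a_j\in G$ with $l(\mathcal{S}^{a_j})=j$. Since $a,a_j\in G$, they are compatible in $\mathbb{Q}_{\beta+1}$, so by genericity we may pick a common extension $b\in G$ with $b\leq a$ and $b\leq a_j$. Applying the previous proposition to the pair $b\leq a$ (so $b$ plays the role of ``$a$'' and $a$ the role of ``$b$'' in the statement of that proposition), we obtain $\mathcal{C}_i(b)=\mathcal{C}_i(a)$ for all $i\leq l(\mathcal{S}^a)=j$. Applying it analogously to $b\leq a_j$, we obtain $\mathcal{C}_i(b)=\mathcal{C}_i(a_j)$ for all $i\leq l(\mathcal{S}^{a_j})=j$.

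Evaluating at $i=j$ yields $\mathcal{C}(a)=\mathcal{C}_j(a)=\mathcal{C}_j(b)=\mathcal{C}_j(a_j)=\mathcal{C}(a_j)$, which lies in $\bigcup_{i\in I}\mathcal{C}(a_i)$, as desired. I do not foresee any obstacle here; the argument is just a matter of correctly selecting the witness $a_j$ with the same strategy length as $a$ and applying the preceding proposition twice through a common extension in $G$.
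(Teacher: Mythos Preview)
Your argument is correct and is exactly the intended proof: the paper states this corollary without proof, as an immediate consequence of the preceding proposition, and you have spelled out precisely that consequence by passing through a common extension $b\in G$ of $a$ and $a_j$ and applying the proposition twice at level $j=l(\mathcal{S}^a)=l(\mathcal{S}^{a_j})$.
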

   To see that $\mathcal{C}$ is unbounded in $\kappa^+$, we will need the following lemma which is an analog of \cite[Claim 6.3.1]{SigmaPrikry3}. It is conditioned on the pre-strong Prikry Lemma (see definition \ref{dif:Pre-strong Prikry lemma}) which is proven in the next section:
   \begin{lemma}\label{Lemma: lemma involved in induction with pre-strong Prikry} Assume the pre-Prikry Lemma for $\mathbb{Q}_\beta$.
      For any $a\in\mathbb{Q}_{\beta}$ and any $\gamma<\kappa^+$ there is $a^*\leq^* a$ such that $a^*\restriction \mathbb{S}_{\xi(a)}=a\restriction \mathbb{S}_{\xi(a)}$ and $\gamma<\gamma'<\kappa^+$ such that for any $b\leq a^*$, $b\Vdash_{(\mathbb{Q}_{\beta})_{\xi(b)}}\dot{C}_{\xi(b)}\cap (\gamma,\gamma')\neq\emptyset$.
   \end{lemma}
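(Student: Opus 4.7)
The plan is to apply the assumed pre-strong Prikry Lemma to reduce the unboundedly-many potential decisions $\{\dot{\delta}_{\xi(b)}\}$ to a finite family, and then take the supremum for $\gamma'$. For each $\xi\ge\xi(a)$, let $\dot{\delta}_\xi := \min(\dot{C}_{\beta,\xi}\setminus(\gamma+1))$: this is a $(\mathbb{Q}_\beta)_\xi$-name for an ordinal strictly above $\gamma$, which exists because $\dot{C}_{\beta,\xi}$ is forced by $1_{(\mathbb{Q}_\beta)_\xi}$ to be an unbounded club in $\kappa^+$. The goal is a single $\gamma'<\kappa^+$ uniformly bounding the values forced for $\dot{\delta}_{\xi(b)}$ over $b\leq a^*$.

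Let $D\subseteq\mathbb{Q}_\beta$ be the (dense) set of conditions $b$ that decide $\dot{\delta}_{\xi(b)}$ to equal some specific ordinal $\delta_b$. Applying the pre-strong Prikry Lemma to $a$ and $D$, I obtain $a^*\leq^* a$ with $a^*\restriction\mathbb{S}_{\xi(a)}=a\restriction\mathbb{S}_{\xi(a)}$ and a finite $\vec{\xi}^*\in[\omega_1\setminus\supp(a_0)]^{<\omega}$ such that for every $\vec{\mu}\in Ex_{\vec{\xi}^*}(a^*)$, the condition $a^{*\smallfrown}\vec{\mu}$ lies in $D$ and decides $\dot{\delta}_{\xi(a^{*\smallfrown}\vec{\mu})}$ to be some $\delta_{\vec{\mu}}<\kappa^+$. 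Since $|Ex_{\vec{\xi}^*}(a^*)|\leq\kappa$, the ordinal $\gamma':=(\sup_{\vec{\mu}}\delta_{\vec{\mu}})+1$ satisfies $\gamma<\gamma'<\kappa^+$.

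To verify the conclusion, fix any $b\leq a^*$ in $\mathbb{Q}_\beta$ and any $b'\leq b$ in $(\mathbb{Q}_\beta)_{\xi(b)}$. Using Corollary \ref{Cor: properties of step extension}, write $b'\leq^* a^{*\smallfrown}\vec{\nu}$ for a unique $\vec{\nu}$, and after a further direct extension (not changing $\xi(b')=\xi(b)$) incorporating the $\vec{\xi}^*$-coordinates, arrange that $b'$ extends $a^{*\smallfrown}\vec{\mu}$ for some $\vec{\mu}\in Ex_{\vec{\xi}^*}(a^*)$. Then $b'$ inherits the forced decision $\delta_{\vec{\mu}}\in\dot{C}_{\xi(b)}\cap(\gamma,\gamma')$, establishing the density needed for $b\Vdash_{(\mathbb{Q}_\beta)_{\xi(b)}}\dot{C}_{\xi(b)}\cap(\gamma,\gamma')\neq\emptyset$.

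The main technical obstacle is the matching step in the verification: the decisions $\delta_{\vec{\mu}}$ are originally made against $\dot{C}_{\xi(a^{*\smallfrown}\vec{\mu})}$, but $b$'s support can extend $\supp(a_0)\cup\vec{\xi}^*$ by coordinates which change $\xi(b)$ and hence swap the club's name. The resolution relies on the precise formulation of the pre-strong Prikry Lemma in Definition \ref{dif:Pre-strong Prikry lemma}, designed so that the decision $\delta_{\vec{\mu}}$ persists under further direct extensions of $a^{*\smallfrown}\vec{\mu}$ adding coordinates above $\vec{\xi}^*$ — these extensions live in the $\kappa_{\xi(b)}^+$-closed tail of the factorization of $\mathbb{P}_{\bar{E}}$, which cannot destroy a decision about an ordinal already fixed. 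This mirrors the corresponding step in \cite[Claim 6.3.1]{SigmaPrikry3}.
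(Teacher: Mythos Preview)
There are two genuine gaps in your argument.

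First, you misapply the pre-strong Prikry Lemma. In Definition~\ref{dif:Pre-strong Prikry lemma}, the finite set $\vec{\xi}$ is an \emph{input}, not an output: for each fixed $\vec{\xi}$ you get a direct extension $a^*$ for which all $\vec{\xi}$-step extensions are in $D$ or none are. A single application does not hand you a distinguished $\vec{\xi}^*$; that would be the full strong Prikry Lemma, which is only proven later and which in any case requires $D$ to be open under $\leq$, not merely under $\leq^*$. Your set $D$ is not $\leq$-open, since extending $b$ by a non-direct extension can change $\xi(b)$ and hence the name $\dot{\delta}_{\xi(b)}$ being decided. Also, the pre-strong Prikry Lemma as stated does not preserve $a\restriction\mathbb{S}_{\xi(a)}$.

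Second, and more fundamentally, even granting a single $\vec{\xi}^*$: every condition $a^{*\smallfrown}\vec{\mu}$ with $\vec{\mu}\in Ex_{\vec{\xi}^*}(a^*)$ has the \emph{same} $\xi(a^{*\smallfrown}\vec{\mu})=\xi^*:=\max(\xi(a)\cup\vec{\xi}^*)$, so the values $\delta_{\vec{\mu}}$ you collect are all decisions about the \emph{single} name $\dot{C}_{\xi^*}$. For $b\leq a^*$ with $\xi(b)\neq\xi^*$ they say nothing about $\dot{C}_{\xi(b)}$. Your last paragraph's ``resolution'' does not work: adding coordinates above $\vec{\xi}^*$ changes $\xi$ and hence the club name; closure of the tail only preserves decisions about a fixed name, not across different names $\dot{C}_\eta$.

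The paper's proof addresses exactly these points by iterating the pre-strong Prikry Lemma over \emph{all} pairs $(s,l)$ with $s\in[\omega_1]^{<\omega}$ and $l\in\mathbb{S}_\xi$ a lower part below $a\restriction\mathbb{S}_\xi$, obtaining conditions $a_{s,l}$, and then using the $\kappa_\xi^+$-closure of $\mathbb{S}_{>\xi}$ to take a single $\leq^*$-lower bound $a^*$ (with lower part unchanged). The supremum $\gamma'$ is taken over the witnesses $\gamma_{\vec{\mu},l}$ for all $(s,l)$ in case~(1). Then, given $b\leq a^*$ and a hypothetical $r'\leq_{\xi(b)}b$ in $D$, one chooses $s$ so that $\supp(a)\cup s$ matches $\supp(r'_0)$ and $l=r'\restriction\mathbb{S}_\xi$, forcing case~(1) for $a_{s,l}$ and producing the contradiction at the correct level $\xi(b)$.
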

   \begin{proof} 
Let
$$D=\{q\in\mathbb{Q}_{\beta}\mid \exists \gamma'<\kappa^+\  q\Vdash_{\mathbb{Q}_{\beta,\xi(q)}}\dot{C}_{\xi(q)}\cap (\gamma,\gamma')\neq\emptyset\}$$
Clearly, it is $\leq^*$-open and for all $\xi\geq\xi(a)$, $D\cap( \mathbb{Q}_{\beta})_{\xi}$ is dense in $\mathbb{Q}_{\beta,\xi}$.

Let $\xi=\xi(a)$. For every $l\leq_{\mathbb{S}_{\xi}}a\restriction \mathbb{S}_\xi$, and $s\in [\omega_1]^{<\omega}$ we apply the pre-strong-Prikry lemma to find a direct extension $a_{l,s}\leq^* l^\smallfrown a\restriction \mathbb{S}_{>\xi}$ (here $l^\smallfrown a\restriction \mathbb{S}_{>\xi}$ is just $a+l$ from the previous section)
such that:
\begin{enumerate}
    \item Either for every $\vec{\mu}$ with $s= \dom(\vec{\mu})$, $a_{s,l}^\smallfrown\vec{\mu}\in D$; or
    \item For every $b\leq a_{s,l}$ with $\supp(b)=\supp(a)\cup s$, $b\notin D$.
\end{enumerate}
If $s,l$ are as in case $(1)$, for each $\vec{\mu}$ pick $\gamma_{\vec{\mu},l}$ witnessing that $a_{s,l}^\smallfrown\vec{\mu}\in D$.
Now since there are only $\kappa_\xi$-many pair $(s,l)$, there is $(a^*)_{>\xi}\leq^* a_{s,l}\restriction \mathbb{S}_{>\xi}$ for every $s,l$ (here we use the fact that $GCH$ holds in the ground model and that $\mathbb{S}_{\xi>}$ is $\kappa_\xi^+$-closed with respect to $\leq^*$ appealing to Lemma \ref{lemma: closure of quotient}). Let $a^*=(a\restriction \mathbb{S}_\xi)^\smallfrown (a^*)_{>\xi}$, and take $\gamma^*=\sup_{\vec{\mu}}\gamma_{\vec{\mu}}<\kappa^+$.  Let us prove that $a^*$ and $\gamma^*$ are as desired. Otherwise, let $b\leq a^*$, if 
$b\not\Vdash_{\mathbb{Q}_{\beta,\xi(b)}}\dot{C}_{\xi(b)}\cap (\gamma,\gamma^*)\neq\emptyset$, there is $r\leq_{\xi(b)}b$ such that $r\Vdash_{\mathbb{Q}_{\beta,\xi(b)}}\dot{C}_{\xi(b)}\cap (\gamma,\gamma^*)=\emptyset$. There is some $r'\leq_{\xi(b)} r$ such that $r'\in D$.
Denote by $l=r'\restriction \mathbb{S}_\xi$. Since $r'\leq a^*$, we have that $r',a_{s,l}$ are compatible for some $s$. 
Thus $(1)$ must hold for $a_{s,l}$. By the definition of $\gamma_{l,\vec{\mu}}$, there will be an $r''\leq r'$ such that $$r''\Vdash_{(\mathbb{Q}_{\beta})_{\xi(b)}} \emptyset\neq\dot{C}_{\xi(b)}\cap(\gamma,\gamma_{l,\vec{\mu}})\subseteq  \dot{C}_{\xi(b)}\cap(\gamma,\gamma^*)=\emptyset.$$
Contradiction.
\end{proof}
\begin{corollary}\label{Cor: Entering F above any condition}
    Assume the pre-strong-Prikry lemma for $\mathbb{Q}_\beta$. Then for any $a\in\mathbb{Q}_\beta$ and $\gamma<\kappa^+$ there is $a^*\leq^* a$ with $a\restriction\mathbb{S}_{\xi(a)}=a^*\restriction \mathbb{S}_{\xi(a)}$ and $\gamma\leq \gamma^*<\kappa^+$ such that $\l a^*,\gamma^*\r\in R$. 
\end{corollary}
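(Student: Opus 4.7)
The plan is to iterate Lemma~\ref{Lemma: lemma involved in induction with pre-strong Prikry} $\omega$ many times, producing a $\leq^*$-decreasing chain that stabilizes the $\mathbb{S}_{\xi(a)}$-part, together with an increasing sequence of ordinals below $\kappa^+$ whose supremum is the desired $\gamma^*$. The payoff is that since each $\dot{C}_{\xi(b)}$ is forced to be \emph{closed}, being unbounded below the chosen supremum is enough to force the supremum itself into the club.

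More precisely, set $\gamma_0=\gamma$ and $a_0=a$. Suppose $a_n\leq^* a$ with $a_n\restriction \mathbb{S}_{\xi(a)}=a\restriction \mathbb{S}_{\xi(a)}$ and $\gamma_n<\kappa^+$ have been defined. Apply Lemma~\ref{Lemma: lemma involved in induction with pre-strong Prikry} to $a_n$ and $\gamma_n$ to obtain $a_{n+1}\leq^* a_n$ with $a_{n+1}\restriction\mathbb{S}_{\xi(a)}=a_n\restriction\mathbb{S}_{\xi(a)}=a\restriction\mathbb{S}_{\xi(a)}$ and $\gamma_n<\gamma_{n+1}<\kappa^+$ so that
\[
\forall b\leq a_{n+1},\quad b\Vdash_{(\mathbb{Q}_\beta)_{\xi(b)}}\dot{C}_{\beta,\xi(b)}\cap(\gamma_n,\gamma_{n+1})\neq\emptyset.
\]
Here we use that $\leq^*$ preserves $\xi$, so $\xi(a_n)=\xi(a)$ throughout and the same instance of the lemma applies at every stage.

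Now, by Lemma~\ref{lemma: Q^* degree of directed} applied with $\mathbb{P}=\mathbb{S}_{\xi(a)}$, the order $\leq_{\mathbb{S}_{\xi(a)}}$ on $\mathbb{Q}^{\mathbb{S}_{\xi(a)}}_{\beta,\xi(a)}$ is $\kappa_{\xi(a)}^+$-directed closed, and it refines $\leq^*$ since $\mathbb{S}_{<\xi(a)}$ factors $\mathbb{S}_{\xi(a)}$. In particular we may choose a common $\leq^*$-lower bound $a^*$ of $\langle a_n\mid n<\omega\rangle$ with $a^*\restriction\mathbb{S}_{\xi(a)}=a\restriction\mathbb{S}_{\xi(a)}$. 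Set $\gamma^*=\sup_{n<\omega}\gamma_n$, which is still below $\kappa^+$.

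To conclude $\langle a^*,\gamma^*\rangle\in R$, let $b\leq a^*$ be arbitrary. For every $n<\omega$, $b\leq a^*\leq^* a_{n+1}$, so
\[
b\Vdash_{(\mathbb{Q}_\beta)_{\xi(b)}}\dot{C}_{\beta,\xi(b)}\cap(\gamma_n,\gamma_{n+1})\neq\emptyset.
\]
Hence $b$ forces $\dot{C}_{\beta,\xi(b)}$ to contain ordinals cofinal in $\gamma^*$; since $\dot{C}_{\beta,\xi(b)}$ is forced to be a club, closure yields $b\Vdash_{(\mathbb{Q}_\beta)_{\xi(b)}}\gamma^*\in\dot{C}_{\beta,\xi(b)}$. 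By the definition of $R_\beta$, this means $\langle a^*,\gamma^*\rangle\in R$, as required. The only delicate point is ensuring that every iterate keeps the $\mathbb{S}_{\xi(a)}$-part fixed so that the closure of $\leq_{\mathbb{S}_{\xi(a)}}$ applies at the limit; this is handed to us by the conclusion of Lemma~\ref{Lemma: lemma involved in induction with pre-strong Prikry}.
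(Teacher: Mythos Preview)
Your proof is correct and follows essentially the same approach as the paper's: iterate Lemma~\ref{Lemma: lemma involved in induction with pre-strong Prikry} $\omega$ times to produce a $\leq^*$-decreasing chain with fixed $\mathbb{S}_{\xi(a)}$-part and an increasing sequence of ordinals, take a lower bound $a^*$ and the supremum $\gamma^*$, then use closedness of the $\dot{C}_{\beta,\xi(b)}$'s to conclude $\langle a^*,\gamma^*\rangle\in R$. Your explicit appeal to Lemma~\ref{lemma: Q^* degree of directed} (equivalently Corollary~\ref{Cor: specific directed degree of Q*}) for the existence of the lower bound is a detail the paper leaves implicit.
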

\begin{proof}
    We use the previous lemma to construct inductively an  increasing sequence of ordinals $\delta_n<\kappa^+$ and a $\leq^*$-decreasing sequence of conditions $a_n$ such that $\delta_0=\max\{\delta,\gamma\}$, $a_0=a$, and for all $n>0$, $a_n\restriction \xi(a)=a\restriction \xi(a)$ and 
   $$\forall b\leq a_n, \ b\Vdash_{\mathbb{Q}_{\beta,\xi(b)}}\dot{C}_{\xi(b)}\cap (\delta_{n-1},\delta_n)\neq\emptyset$$
   Take $a^*\leq^* a_n$ for every $n$ such that $a^*\restriction \xi(a)=a\restriction \xi(a)$. Finally let  $\delta^*=\sup_{n<\omega}\delta_n<\kappa^+$. To see that $\l a^*,\delta^*\r\in R$, let $b\leq a^*$. Then for every $n$, $b\Vdash_{\mathbb{Q}_{\beta,\xi(b)}}\dot{C}_{\xi(b)}\cap (\delta_{n-1},\delta_n)\neq\emptyset$
Hence $b$ forces that $\delta^*$ is a limit point of $\dot{C}_{\xi(b)}$ and therefore $b\Vdash_{\mathbb{Q}_{\beta,\xi(b)}}\delta^*\in\dot{C}_{\xi(b)}$. It follows that $\l a^*,\delta^*\r\in R$.
\end{proof}

   \begin{proposition}\label{Prop: Unbounded} Assume the pre-strong Prikry Lemma for $\mathbb{Q}_\beta$. For every $a=(q^a,\mathcal{S}^a)\in\mathbb{Q}^*_{\beta+1}$ and every $\delta<\kappa^+$ there is $a^*=(q^{a^*},\mathcal{S}^{a^*})\leq^* a$ such that $a^*\restriction \mathbb{S}_{\xi(a)}=a\restriction \mathbb{S}_{\xi(a)}$ and for every $(\vec{\mu},c)\in Ex^*((a^*)_0)$, $\max(S^{a^*}_{l(\mathcal{S}^{a^*})}(\vec{\mu},c))\geq \delta$ and $a^*\in \mathbb{Q}_{\beta+1}$. In particular,
       $\mathcal{C}$ is unbounded.
   \end{proposition}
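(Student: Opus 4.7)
The plan is to reduce the problem to a single application of Corollary \ref{Cor: Entering F above any condition}, and then extend $\mathcal{S}^a$ by one new top block whose maximum is a common value $\gamma^*$ shared across all pairs $(\vec{\mu},c)$. The key observation is that although the fragility data is indexed by $(\vec{\mu},c)$, the set $Ex^*(q^a_0)$ has size at most $\kappa$, so by regularity of $\kappa^+$ we can pick a single $\gamma^*$ that handles every $(\vec{\mu},c)$ simultaneously.

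First I would set $\eta := \sup\bigl(\{\max(S^a_{l(\mathcal{S}^a)}(\vec{\mu},c)) : (\vec{\mu},c) \in Ex^*(q^a_0)\} \cup \{0\}\bigr)+1$ and $\gamma := \max\{\delta, \eta\}$. Since $|Ex^*(q^a_0)| \leq \kappa < \kappa^+ = \cf(\kappa^+)$, we have $\gamma < \kappa^+$. Then apply Corollary \ref{Cor: Entering F above any condition} to $q^a \in \mathbb{Q}_\beta$ with parameter $\gamma$ to obtain $q^* \leq^* q^a$ agreeing with $q^a$ on $\mathbb{S}_{\xi(a)}$, together with some $\gamma^* \geq \gamma$ satisfying $\langle q^*, \gamma^*\rangle \in R$.

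Next I would define $\mathcal{S}^{a^*}$ of length $l(\mathcal{S}^a) + 1$: for $i \leq l(\mathcal{S}^a)$ copy the blocks across the projection via $S^{a^*}_i(\vec{\mu},c) := S^a_i(w^{*q^*_0}_{q^a_0}(\vec{\mu},c))$, and set $S^{a^*}_{l(\mathcal{S}^a)+1}(\vec{\mu},c) := S^{a^*}_{l(\mathcal{S}^a)}(\vec{\mu},c) \cup \{\gamma^*\}$. The lower blocks inherit conditions $(\alpha)$--$(\zeta)$ and $(\theta)$ from $\mathcal{S}^a$ by the argument of Claim \ref{Claim: frown yield a condition}. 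For the new top block: it is closed bounded; $(\delta)$ holds because $\gamma^* \geq \gamma > \max S^{a^*}_{l(\mathcal{S}^a)}(\vec{\mu},c)$; monotonicity in $(\vec{\mu},c)$ is inherited from the lower block since the same element $\gamma^*$ is adjoined uniformly; $(\epsilon)$ is trivial because $S^{a^*}_{l+1}(\vec{\mu},c) \setminus S^{a^*}_l(\vec{\mu},c) = \{\gamma^*\}$ for every pair; and disjointness from $\dot{T}$ follows from Lemma \ref{Lemma: some properties of R}(1) applied to $\langle q^*, \gamma^*\rangle \in R$, which yields $q^* \Vdash \gamma^* \notin \dot{T}$ and hence the same for every $q^{a^*}{}^\smallfrown(\vec{\mu},c)$. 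Crucially, $(\eta)$ is immediate since $\max S^{a^*}_{l+1}(\vec{\mu},c) = \gamma^*$ and $\langle q^*, \gamma^*\rangle \in R$; and $(\theta)$ at the new top level then follows with $\bar{\xi} = 0$, so $a^* \in \mathbb{Q}_{\beta+1}$.

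For the "in particular" conclusion, I would argue by density in $V[G]$: the set of $b \in \mathbb{Q}_{\beta+1}$ with $\max S^b_{l(\mathcal{S}^b)}(\vec{\mu},c) \geq \delta$ for every $(\vec{\mu},c) \in Ex^*(b_0)$ is dense below every condition by the construction above, so by genericity some such $b \in G$ exists; combined with the fact that some $b_0^\smallfrown(\vec{\mu},c) \in G_0$, we conclude that $\mathcal{C}(b) \subseteq \mathcal{C}$ contains an ordinal $\geq \delta$, so $\mathcal{C}$ is unbounded. The main technical point---and the reason a single application of the corollary suffices rather than one per $(\vec{\mu},c)$---is precisely the existence of a uniform $\gamma^*$ working for every pair simultaneously, which is possible thanks to the bound $|Ex^*(q^a_0)| \leq \kappa < \cf(\kappa^+)$.
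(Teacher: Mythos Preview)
Your proof is correct and follows essentially the same approach as the paper's: apply Corollary~\ref{Cor: Entering F above any condition} once to obtain a single ordinal $\gamma^*$ above both $\delta$ and all current maxima, then append $\{\gamma^*\}$ uniformly as a new top block. You are in fact slightly more careful than the paper, which neglects to fold $\delta$ into the threshold $\gamma$ before invoking the corollary, and you spell out the cardinality bound $|Ex^*(q^a_0)|\le\kappa$ justifying why the supremum stays below $\kappa^+$.
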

   \begin{proof}
   Let us use the previous Corollary. Given $a\in\mathbb{Q}_{\beta+1}$, and $\delta<\kappa^+$ let $\gamma=\sup(\bigcup_{(\vec{\mu},c)\in Ex^*(a)}S^a_{l(\mathcal{S}^a)}(\vec{\mu},c))<\kappa^+$.  By the previous corollary, find $\delta^*>\gamma$ and $q^*\leq^* q$ with $q^*\restriction \mathbb{S}_{\xi(q^*)}=q\restriction \mathbb{S}_{\xi(q)}$ such that $\l q^*,\delta^*\r\in R$. Extend $a=(q^a,\mathcal{S}^a)$ to $a^*=(q^*,\mathcal{S}^*)$ where $l(\mathcal{S}^*)=l(\mathcal{S})+1$, for every $i\leq l(\mathcal{S})$, $\mathcal{S}^*_i$ is determined from $\mathcal{S}^{a^*}_i$ and the fact that $a^*_0\leq^* a_0$ and $\mathcal{S}^*_{l(\mathcal{S}^a)+1}(\vec{\mu},c)=\mathcal{S}^*_{l(\mathcal{S}^a)}(\vec{\mu},c)\cup\{\delta^*\}$. Then $(q^*,\mathcal{S}^*)$ is as desired. Indeed, $\delta^*\in \mathcal{S}^*_{l(\mathcal{S}^*)}(\emptyset)$ and by the choice of $\gamma$, it is way above all the elements of $\mathcal{S}^*_{l(S^a)}(\vec{\mu},c)$ (for all possible $(\vec{\mu},c)$). Hence $a^*\Vdash \delta^*\in \dot{C}(a^*)$.

\end{proof}
\begin{remark}\label{remark: density in successor step}
    The above proof shows that if $\mathbb{Q}_\beta$ satisfies the pre-strong-Prikry Lemma, and $(\mathbb{Q}_{\beta},\leq^*)$ is dense in $(\mathbb{Q}^*_{\beta},\leq^*)$, then $(\mathbb{Q}_{\beta+1},\leq^*)$ is dense in $(\mathbb{Q}^*_{\beta+1},\leq^*)$. This will be used later in our inductive argument to show that also $(\mathbb{Q}_{\beta+1},\leq^*)$ satisfies the Prikry-lemma.
\end{remark}

Finally, let us check that $\mathcal{C}$ is disjoint from the fragile stationary set $(\dot{T})_{G_\beta}$.
\begin{proposition}
    For every $\xi\leq l(S^a)$, $$\mathcal{C}_\xi(a)=\bigcup_{(\vec{\mu},c)\in Ex^*(a), \ a_0^\smallfrown(\vec{\mu},c)\in G_0}\mathcal{S}^a_\xi(\vec{\mu},c)\cup\{\sup_{(\vec{\mu},c)\in Ex^*(a), \ a_0^\smallfrown(\vec{\mu},c)\in G_0}(\max(\mathcal{S}^a_\eta(\vec{\mu},c))\mid \eta\leq\xi\}$$
\end{proposition}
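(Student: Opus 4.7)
The plan is to prove both inclusions by decomposing $U_\xi := \bigcup_{(\vec{\mu},c)} \mathcal{S}^a_\xi(\vec{\mu},c)$ (where the union is taken over $(\vec{\mu},c)$ with $a_0^\smallfrown(\vec{\mu},c)\in G_0$) into ``level pieces'', and by exploiting the end-extension property $(\epsilon)$ together with the closure property $(\zeta)$. Write $m_\eta := \sup\{\max \mathcal{S}^a_\eta(\vec{\mu},c) : a_0^\smallfrown(\vec{\mu},c)\in G_0\}$ (with $m_0=0$), and for successor $\eta$ set $V_\eta := \bigcup (\mathcal{S}^a_\eta(\vec{\mu},c)\setminus \mathcal{S}^a_{\eta-1}(\vec{\mu},c))$, so that $U_\xi = \bigcup_{\eta\le\xi} V_\eta$.

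The reverse inclusion $\supseteq$ is easy: $U_\xi\subseteq \mathcal{C}_\xi(a)$ by definition, and for each $\eta\le\xi$ the values $\max\mathcal{S}^a_\eta(\vec{\mu},c)$ lie in $\mathcal{S}^a_\eta(\vec{\mu},c)\subseteq \mathcal{S}^a_\xi(\vec{\mu},c)\subseteq U_\xi$ by $(\delta)$, so $m_\eta$ is a limit of elements of $U_\xi$, hence belongs to $\mathcal{C}_\xi(a)=\overline{U_\xi}$.

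For the hard inclusion $\subseteq$, I will first show that each $V_\eta$ is contained in the interval $(m_{\eta-1},m_\eta]$. If $\gamma\in \mathcal{S}^a_\eta(\vec{\mu},c)\setminus \mathcal{S}^a_{\eta-1}(\vec{\mu},c)$ and $(\vec{\mu}',c')$ is any other allowed pair, use directedness of $G_0$ together with the uniqueness of step-extensions from Corollary~\ref{Cor: properties of step extension} to produce a common strengthening $(\vec{\nu},d)\le(\vec{\mu},c),(\vec{\mu}',c')$ in the allowed family. Then $(\epsilon)$ gives $\mathcal{S}^a_\eta(\vec{\mu},c)\setminus \mathcal{S}^a_{\eta-1}(\vec{\mu},c)\sqsubseteq \mathcal{S}^a_\eta(\vec{\nu},d)\setminus \mathcal{S}^a_{\eta-1}(\vec{\nu},d)$, so $\gamma>\max\mathcal{S}^a_{\eta-1}(\vec{\nu},d)\ge \max\mathcal{S}^a_{\eta-1}(\vec{\mu}',c')$ by $(2)$ of the block definition, yielding $\gamma>m_{\eta-1}$. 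Combined with $\gamma\le m_\eta$, this proves $V_\eta\subseteq(m_{\eta-1},m_\eta]$. Next, observe that by the same directedness plus $(\epsilon)$, the family $\{\mathcal{S}^a_\eta(\vec{\mu},c)\setminus\mathcal{S}^a_{\eta-1}(\vec{\mu},c)\}$ is a nested collection of closed bounded subsets of $(m_{\eta-1},m_\eta]$ under end-extension, so $\overline{V_\eta}\subseteq V_\eta\cup\{m_\eta\}$.

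Finally, for a limit level $\eta$, property $(\zeta)$ gives $\mathcal{S}^a_\eta(\vec{\mu},c)=\overline{\bigcup_{\eta'<\eta}\mathcal{S}^a_{\eta'}(\vec{\mu},c)}$, so $\max\mathcal{S}^a_\eta(\vec{\mu},c)=\sup_{\eta'<\eta}\max\mathcal{S}^a_{\eta'}(\vec{\mu},c)$, and taking sup over allowed $(\vec{\mu},c)$ yields $m_\eta=\sup_{\eta'<\eta}m_{\eta'}$. Now take any $\gamma\in \overline{U_\xi}\setminus U_\xi$, and pick $\gamma_n\in U_\xi$ with $\gamma_n\nearrow \gamma$, say $\gamma_n\in V_{\eta_n}$. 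If the $\eta_n$'s are eventually constant, $\gamma\in\overline{V_\eta}\subseteq V_\eta\cup\{m_\eta\}$ and we are done; otherwise, passing to a subsequence, $\eta_n\to\eta^*\le\xi$ with $\eta^*$ a limit ordinal, and from $m_{\eta_n-1}<\gamma_n\le m_{\eta_n}$ and the continuity of $\eta\mapsto m_\eta$ at limits we conclude $\gamma=m_{\eta^*}$. The main delicate point is verifying the containment $V_\eta\subseteq(m_{\eta-1},m_\eta]$, which requires the interaction of directedness of the family of allowed $(\vec{\mu},c)$ (from the filter $G_0$ and uniqueness of step-extensions) with the end-extension property $(\epsilon)$; once this is in hand, the rest is a direct bookkeeping.
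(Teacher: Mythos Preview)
Your proof is correct, and the underlying mechanism---property $(\epsilon)$ making the level-differences $\{\mathcal{S}^a_\eta(\vec{\mu},c)\setminus\mathcal{S}^a_{\eta-1}(\vec{\mu},c)\}$ directed under end-extension, so that closure of their union adds only the supremum $m_\eta$---is exactly what the paper uses. The packaging is different, though. The paper proceeds by induction on $\xi$: the base case $\xi=1$ is your argument for a single level; the successor step invokes the already-proven Claim $\mathcal{C}_\xi(a)\sqsubseteq\mathcal{C}_{\xi+1}(a)$, so only the top block needs analysis; and the limit step is immediate. You instead decompose $U_\xi$ into all its level pieces $V_\eta$ at once and handle the closure by a squeeze argument, using the continuity of $\eta\mapsto m_\eta$ at limits. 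Your route is more self-contained (it does not cite the preceding Claim), at the cost of the extra bookkeeping with the sequence $(\eta_n)$; the paper's inductive version is terser but leans on the earlier result.

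Two small points worth tightening. First, the identity $U_\xi=\bigcup_{\eta\le\xi}V_\eta$ with $V_\eta$ defined only at successor $\eta$ omits the points $\max\mathcal{S}^a_\lambda(\vec{\mu},c)$ at limit $\lambda$ that are not already in $\bigcup_{\eta<\lambda}\mathcal{S}^a_\eta(\vec{\mu},c)$; by $(\zeta)$ each such point is a limit of successor-level maxima, so in the approximation step you can always arrange $\gamma_n\in V_{\eta_n}$ after passing to further approximants---but this should be said. Second, ``nested collection'' overstates it: the family is directed under $\sqsubseteq$, not linearly ordered, and your closure computation $\overline{V_\eta}\subseteq V_\eta\cup\{m_\eta\}$ really uses that a $\sqsubseteq$-directed union of closed bounded sets agrees with any member below that member's maximum (so the only possible new limit point is the overall supremum).
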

\begin{proof}
    By induction of $\xi$, for $\xi=1$ we have that the $\mathcal{S}^a_1(\vec{\mu},c)$'s form a directed system with respect to $\sqsubseteq$. Therefore the closure of the union can only add the maximal element, as each of the sets $\mathcal{S}^a_1(\vec{\mu},c)$ is itself closed. At successor steps, we use the fact that $C_\xi(a)\sqsubseteq C_{\xi+1}(a)$. At limit $\xi$, this is clear.
\end{proof}
   \begin{lemma}
       Let $G$ be generic for $\mathbb{Q}_{\beta+1}$ and $G_\beta$ be the induced generic for $\mathbb{Q}_\beta$. Then, for every $a\in G$, $\mathcal{C}(a)\cap (\dot{T})_{ G_\beta}=\emptyset$.  
   \end{lemma}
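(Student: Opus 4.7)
The plan is to use the preceding proposition to decompose $\mathcal{C}(a) = \mathcal{C}_{l(\mathcal{S}^a)}(a)$ into two kinds of elements: the \emph{body}
$$U = \bigcup_{(\vec{\mu},c)\in F} S^a_{l(\mathcal{S}^a)}(\vec{\mu},c)$$
and the \emph{limit sups}
$$\sigma_\eta = \sup_{(\vec{\mu},c)\in F}\max S^a_\eta(\vec{\mu},c) \qquad (\eta\le l(\mathcal{S}^a)),$$
where $F=\{(\vec{\mu},c)\in Ex^*(a_0) : a_0^{\smallfrown}(\vec{\mu},c)\in G_0\}$. I would show that each kind of element is forced out of $(\dot{T})_{G_\beta}$ by some condition in $G_\beta$.

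For the body, I invoke Proposition \ref{Prop: minimal}: if $a\in G$ and $a_0^\smallfrown(\vec{\mu},c)\in G_0$, picking any $p\in G$ with $p_0\le a_0^\smallfrown(\vec{\mu},c)$ gives $p\le a^\smallfrown(\vec{\mu},c)$, whence $a^\smallfrown(\vec{\mu},c)\in G$ and $q^a\smallfrown(\vec{\mu},c)\in G_\beta$. Property $(3)$ of a $q$-labeled block then forces $S^a_{l(\mathcal{S}^a)}(\vec{\mu},c)\cap\dot{T}=\emptyset$, disposing of every body element.

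For the top sup $\sigma_{l(\mathcal{S}^a)}$, property $(\eta)$ asserts $\langle\max S^a_{l(\mathcal{S}^a)}(\vec{\mu},c),q^a\rangle\in R$ for all $(\vec{\mu},c)\in Ex^*(a_0)$. Since $|F|\le\kappa<\kappa^+$, Lemma \ref{Lemma: some properties of R}(2) lifts these to $\langle\sigma_{l(\mathcal{S}^a)},q^a\rangle\in R$, and Lemma \ref{Lemma: some properties of R}(1) combined with $q^a\in G_\beta$ yields $\sigma_{l(\mathcal{S}^a)}\notin(\dot{T})_{G_\beta}$. For intermediate $\sigma_\eta$ with $\eta<l(\mathcal{S}^a)$, I would play the same game with property $(\theta)$ in place of $(\eta)$: by genericity, $G_\beta$ contains $r\le q^a$ with $\xi(r)\ge\bar{\xi}_{\eta,\mathcal{S}^a}$, and $(\theta)$ then gives $\langle\max S^a_\eta(\vec{\mu},c),r\rangle\in R$ for every $(\vec{\mu},c)$ weaker than the $(\vec{\mu}_r,c_r)$ determined by $r$.

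The main obstacle is the intermediate-sup case: a single $r$ witnesses only the maxes for $(\vec{\mu},c)\le(\vec{\mu}_r,c_r)$, while $\sigma_\eta$ is the sup over the entire filter $F$. I would close this gap by using the $\sqsubseteq$-structure among the $\mathcal{C}_\xi(a)$ to note $\sigma_\eta=\max\mathcal{C}_\eta\in\mathcal{C}_\eta\subseteq\mathcal{C}_{l(\mathcal{S}^a)}$ and the monotonicity $(\vec{\mu},c)\mapsto S^a_\eta(\vec{\mu},c)$ of labeled blocks (property $(2)$): as $r$ decreases through $G_\beta$ the approximations $\nu_r=\max S^a_\eta(\vec{\mu}_r,c_r)$ cofinally approach $\sigma_\eta$, and each is forced out of $\dot{T}$ by $r\in G_\beta$. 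A common refinement $r^*\in G_\beta$ of a $<\kappa^+$-cofinal subfamily of such $r$'s (available via the directed-closure of $\mathbb{Q}^+_\beta$ granted by Lemma \ref{lemma: closure of quotient}) then allows one final application of Lemma \ref{Lemma: some properties of R}(2) to transfer $R$-membership from the $\nu_r$'s to $\sigma_\eta$ itself, completing the argument.
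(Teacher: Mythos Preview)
Your decomposition into body elements and limit-sup elements is exactly the paper's approach, and your treatment of the body and of the top sup $\sigma_{l(\mathcal{S}^a)}$ (via property $(\eta)$ and Lemma~\ref{Lemma: some properties of R}) is correct.

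The gap is in your handling of the intermediate sups $\sigma_\eta$. You have misread the strength of condition $(\theta)$: by the remark immediately following Definition~\ref{Def: startegy}, $(\theta)$ is \emph{equivalent} to saying that for any $r\le q^a$ with $\xi(r)\ge\bar{\xi}_{\eta,\mathcal{S}^a}$ and \emph{every} $(\vec{\mu},c)\in Ex^*(r_0)$, one has $\langle\max(S^a_\eta(w^{r_0}_{(q^a)_0}(\vec{\mu},c))),r\rangle\in R$ --- not merely for the single pair $(\vec{\mu}_r,c_r)$ witnessing $r$. Once you pick \emph{one} $q\in G_\beta$ past the delay, you already have $R$-membership (with second coordinate $q$) for all the relevant maxima. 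The only remaining point is that $\sigma_\eta$ can be recomputed as the sup over the generic pairs from $Ex^*(q_0)$ rather than $Ex^*((q^a)_0)$; this is exactly the content of the earlier proposition that $\mathcal{C}_i(a)=\mathcal{C}_i(b)$ when $a\le b$ and $a\in G$, applied with $q$ in place of $q^a$. Then a single application of Lemma~\ref{Lemma: some properties of R}(2) with the fixed condition $q$ finishes the argument.

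Your proposed fix --- finding a common refinement $r^*\in G_\beta$ of $\kappa$-many conditions via the $\kappa^+$-directed closure of $\mathbb{Q}^+_\beta$ --- does not work. The closure of $\mathbb{Q}^+_\beta$ concerns the poset, not the generic filter: there is no reason a lower bound of $\kappa$-many elements of $G_\beta$ should itself lie in $G_\beta$ (generic filters are only finitely directed in general). The paper avoids this entirely by observing that a single condition past the delay already carries all the information needed.
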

   \begin{proof}
       Let $G_0$ be the induced generic for $\mathbb{P}_{\vec{E}}$. First we note that since for each $(\vec{\mu},c)\in Ex^*(a)$, if $a_0^\smallfrown(\vec{\mu},c) \in G_0$, then $q^{a\smallfrown}(\vec{\mu},c)\in G_\beta$. By condition $(3)$ of a $q^a$-labeled block.  $\mathcal{S}^a_{l(\mathcal{S}^a)}(\vec{\mu},c)$ is disjoint from $(\dot{T})_{G_\beta}$, as forced by $q^{a\smallfrown}(\vec{\mu},c)$. By the previous proposition, the only possible intersection of $\mathcal{C}_{l(\mathcal{S}^a)}(a)$ with $\dot{T}_{G_\beta}$ is a point of the form $$\gamma^*=\sup_{(\vec{\mu},c)\in Ex^*(a), \ a_0^\smallfrown(\vec{\mu},c)\in G_0}(\max(\mathcal{S}^a_\eta(\vec{\mu},c))$$ for some $\eta\leq l(\mathcal{S}^a)$.  
       Suppose towards a contradiction that $q\leq q^a$, $q\in G_\beta$ is a condition such that $q\Vdash_{\mathbb{Q}_\beta} \gamma^*\in \dot{T}$. Let $\xi<\omega_1$ be the delay of $\mathcal{S}_\eta^a$ and without loss of generality suppose that $q$ is past the delay $\xi$.  Hence $q\Vdash_{(\mathbb{Q}_{\beta})_{\xi(q)}}\gamma^*\in \dot{T}_{\xi(q)}$. 
       However, for each $(\vec{\mu},c)\in Ex^*(q)$, by the definition of delay,  $\l\max(\mathcal{S}^a_{\eta}(w^q_{q^a}(\vec{\mu},c)),q\r\in R$. Note that since $q\in G_\beta$, also $$\gamma^*=\sup_{(\vec{\mu},c)\in Ex^*(q), \ q^\smallfrown(\vec{\mu},c)\in G_0}(\max(\mathcal{S}^a_{\eta}(w^q_{q^a}(\vec{\mu},c)))).$$
       Therefore also $\l \gamma^*,q\r\in R$.
       In particular $q\Vdash_{(\mathbb{Q}_\beta)_{\xi(q)}}\gamma^*\in \dot{C}_{\xi(q)}$ 
        contradicting the choice of $\dot{C}_{\xi(q)}$.

      \end{proof}\begin{corollary}
          $\mathcal{C}\cap (\dot{T})_{G_\beta}=\emptyset$.
      \end{corollary}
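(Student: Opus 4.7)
The plan is essentially to reduce the corollary to the preceding lemma via the decomposition of $\mathcal{C}$ given in Corollary~\ref{Cor: C has union}. Recall that the latter gives us $\mathcal{C} = \bigcup_{i \in I} \mathcal{C}(a_i)$, where $a_i \in G$ witnesses $l(\mathcal{S}^{a_i}) = i$ for each $i \in I$. Since $\mathcal{C}$ is expressible as a union over conditions in $G$ of sets of the form $\mathcal{C}(a)$, it suffices to show that each such $\mathcal{C}(a)$ is disjoint from $(\dot{T})_{G_\beta}$.

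First I would observe that the previous lemma gives exactly this: for every $a \in G$, $\mathcal{C}(a) \cap (\dot{T})_{G_\beta} = \emptyset$. Applying this to each $a_i$ with $i \in I$, we conclude
\[
\mathcal{C} \cap (\dot{T})_{G_\beta} = \Big(\bigcup_{i \in I} \mathcal{C}(a_i)\Big) \cap (\dot{T})_{G_\beta} = \bigcup_{i \in I} \big(\mathcal{C}(a_i) \cap (\dot{T})_{G_\beta}\big) = \emptyset.
\]
There is no real obstacle here, as all the work has been done in the lemma; this corollary is just the bookkeeping step that glues together the $\mathcal{C}(a)$'s appearing in the generic filter. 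The key ingredient already established is condition~$(3)$ of a $q^a$-labeled block together with the definition of $R$ and the delay witness in condition~$(\theta)$, which was used in the lemma to rule out that the only new points of $\mathcal{C}(a)$ beyond $\bigcup \mathcal{S}^a_{l(\mathcal{S}^a)}(\vec{\mu},c)$ (i.e.\ the suprema created by taking closures) land in $\dot{T}_{G_\beta}$.
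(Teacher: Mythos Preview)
Your proposal is correct and follows essentially the same approach as the paper: the paper's proof simply cites the preceding lemma (that $\mathcal{C}(a)\cap(\dot{T})_{G_\beta}=\emptyset$ for every $a\in G$) together with Corollary~\ref{Cor: C has union}, which is exactly the decomposition and bookkeeping you carry out explicitly.
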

      \begin{proof} Follows from the previous corollary and Corollary \ref{Cor: C has union}.
      \end{proof}
      \begin{corollary}\label{Cor: killing stationary sets}
    In the extension by $\mathbb{Q}_{\beta+1}=\mathbb{A}(\mathbb{Q}_\beta,\dot{T})$, $\dot{T}$ is forced to be a non-stationary set.
\end{corollary}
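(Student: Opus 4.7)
The plan is to witness non-stationarity of $\dot{T}$ in $V[G]$ by showing that the set $\mathcal{C}$ constructed above contains a club disjoint from $(\dot{T})_{G_\beta}$. Two of the three needed ingredients are already in hand: unboundedness of $\mathcal{C}$ in $\kappa^+$ was established by Proposition \ref{Prop: Unbounded}, and disjointness $\mathcal{C}\cap(\dot{T})_{G_\beta}=\emptyset$ was verified by the preceding corollary. The remaining task is therefore to see that $\mathcal{C}$ (or some subset of it) is closed below $\kappa^+$.

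For closedness, I would use the end-extension description supplied by Corollary \ref{Cor: C has union}. Writing $\mathcal{C}=\bigcup_{i\in I}\mathcal{C}(a_i)$ with $a_i\in G$ witnessing $l(\mathcal{S}^{a_i})=i$, the proposition preceding Corollary \ref{Cor: C has union} gives $\mathcal{C}(b)\sqsubseteq \mathcal{C}(a)$ whenever $b\leq a$ are both in $G$, since then $l(\mathcal{S}^b)\geq l(\mathcal{S}^a)$ and one applies the equality $\mathcal{C}_i(a)=\mathcal{C}_i(b)$ for $i\leq l(\mathcal{S}^a)$. Each $\mathcal{C}(a_i)$ is closed bounded in $\kappa^+$ by construction (the strategies take closures at limit levels), so the only limit points of $\bigcup_i \mathcal{C}(a_i)$ not automatically contained in the union are suprema of the strictly increasing maxima $\max\mathcal{C}(a_i)$.

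For such a sup point $\gamma$, I would invoke genericity: given any condition $a\in\mathbb{Q}_{\beta+1}$ and any $\gamma<\kappa^+$, the set of conditions $b\leq a$ forcing $\gamma\in\mathcal{C}$ (whenever the current maxima below $\gamma$ accumulate at $\gamma$) is dense. The verification is a small variant of the argument of Proposition \ref{Prop: Unbounded}: apply Corollary \ref{Cor: Entering F above any condition} to find $a^*\leq^* a$ and $\gamma^*\geq\gamma$ with $\langle a^*,\gamma^*\rangle\in R$, then append a new block $\mathcal{S}^*_{l(\mathcal{S}^a)+1}(\vec{\mu},c)=\mathcal{S}^*_{l(\mathcal{S}^a)}(\vec{\mu},c)\cup\{\gamma\}$ (or $\cup\{\gamma,\gamma^*\}$ if needed to satisfy $(\eta)$); by Lemma \ref{Lemma: some properties of R}(2), $\langle\gamma,a^*\rangle\in R$ whenever $\gamma$ is a limit of points in $R$ sharing the condition $a^*$, so $(\eta)$ and $(\theta)$ are preserved.

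The main obstacle, as in Proposition \ref{Prop: Unbounded}, is simultaneously satisfying conditions $(\eta)$ and $(\theta)$ for the appended block while uniformly handling all labels $(\vec{\mu},c)\in Ex^*(a^*_0)$; but this is exactly the content of Corollary \ref{Cor: Entering F above any condition}, which produces the required ordinal past an arbitrary bound with $a^*$ as a common witness. Combining these ingredients, $\mathcal{C}$ is forced to be a club in $\kappa^+$ disjoint from $(\dot{T})_{G_\beta}$, so $\dot{T}$ is forced non-stationary by $\mathbb{Q}_{\beta+1}$.
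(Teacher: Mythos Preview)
Your overall strategy is correct and matches the paper's (implicit) proof: exhibit $\mathcal{C}$ as a club disjoint from $(\dot{T})_{G_\beta}$, using Proposition~\ref{Prop: Unbounded} for unboundedness and the preceding corollary for disjointness. The paper leaves the corollary without a separate proof because closedness is immediate from what has already been shown.

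Your closedness argument, however, takes an unnecessary detour and has a gap. You worry about limit points $\gamma$ arising as suprema of the maxima $\max\mathcal{C}(a_i)$ and propose a density argument to force such $\gamma$ into $\mathcal{C}$. This is not needed: unboundedness already disposes of these points. If $\gamma<\kappa^+$ is any limit point of $\mathcal{C}$, then by Proposition~\ref{Prop: Unbounded} there is $a_j\in G$ with $\max(\mathcal{C}(a_j))>\gamma$; since the $\mathcal{C}(a_i)$ are $\sqsubseteq$-increasing (by the claim $\mathcal{C}_i(a)\sqsubseteq\mathcal{C}_j(a)$ together with the proposition giving $\mathcal{C}_i(a)=\mathcal{C}_i(b)$ for $a\leq b$ in $G$), we have $\mathcal{C}\cap\gamma\subseteq\mathcal{C}(a_j)$, so $\gamma$ is a limit point of the closed set $\mathcal{C}(a_j)$ and hence $\gamma\in\mathcal{C}(a_j)\subseteq\mathcal{C}$.

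Moreover, your density argument as written does not go through. To append $\gamma$ to each $\mathcal{S}^*_{l(\mathcal{S}^a)+1}(\vec{\mu},c)$ you must verify condition~(3) of a labeled block, i.e.\ that $q^{a^*\smallfrown}(\vec{\mu},c)\Vdash\gamma\notin\dot{T}$. You appeal to Lemma~\ref{Lemma: some properties of R}(2) to conclude $\langle\gamma,a^*\rangle\in R$, but that lemma requires a family $\{\langle\nu_i,q\rangle\}\subseteq R$ with a \emph{common} second coordinate $q$, and you have not arranged this: the ordinals accumulating at $\gamma$ come from various conditions in $G$, not from a single $a^*$. So the density claim is unsupported. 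Fortunately, as noted above, the whole detour is avoidable.
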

\section{The Prikry property and the $\kappa^{++}$-chain condition}\label{section: Prikry}
\begin{proposition}
    For every $\alpha\leq\kappa^{++}$ $\mathbb{Q}_\alpha$ is $\kappa^{++}$-cc. In fact, it is $\kappa^{++}$-Knaster
\end{proposition}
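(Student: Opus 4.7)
The plan is induction on $\alpha \le \kappa^{++}$, using the $\Delta$-system lemma and pigeonhole at every stage, and relying on GCH in $V$ (so $\kappa^\kappa = \kappa^+$ and $2^{\kappa^+} = \kappa^{++}$) together with the regularity of $\kappa^{++}$. The base case $\alpha = 0$ is $\mathbb{P}_{\vec E}$, which is already $\kappa^{++}$-c.c.\ by Proposition~\ref{Properties}. The standard proof in fact yields Knaster: given $\{p^\gamma\}_{\gamma < \kappa^{++}}$, apply the $\Delta$-system lemma to $\bigcup_{i<\omega_1}\dom(f^{p^\gamma}_i) \in [\kappa^{++}]^{\le \kappa}$ with root $d^*$; then pigeonhole so that the finite support $\supp(p^\gamma) \in [\omega_1]^{<\omega}$, the small-collapse data at supported coordinates (each lying in a poset of size $<\kappa$), and the root restrictions $f^{p^\gamma}_i \restriction d^*$ are all fixed. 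Any two conditions from the refined family admit a canonical $\leq^*$-common extension: merge the Cohen parts (which agree on $d^*$) and intersect the measure-one sets along the Rudin--Keisler projections $\pi_{\dom(f^{q})_i, \dom(f^{p^\gamma})_i}$.

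At a limit $\alpha$, apply the $\Delta$-system lemma to $\{\supp(a^\gamma)\} \subseteq [\alpha]^{\le\kappa}$, getting root $r$. When $\cf(\alpha) > \kappa$ (in particular $\alpha = \kappa^{++}$), $r$ is bounded by some $\beta^* < \alpha$; the inductive hypothesis on $\mathbb{Q}_{\beta^*}$ thins further to $\kappa^{++}$-many pairwise-compatible restrictions $a^\gamma \restriction \beta^*$, and the tails $\supp(a^\gamma) \setminus \beta^*$ are pairwise disjoint and extend compatibly. The $\cf(\alpha) \leq \kappa$ case reduces to the bounded-root case by further pigeonhole on the trace of $\supp(a^\gamma)$ with a cofinal sequence. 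At the successor stage $\alpha = \beta + 1$, given $\{a^\gamma = (q^\gamma,\mathcal{S}^\gamma)\}$, apply induction on $\mathbb{Q}_\beta$ to thin to $\kappa^{++}$ pairwise-compatible $q^\gamma$; pigeonhole further so that $l(\mathcal{S}^\gamma) = l^* < \kappa^+$, $\xi(q^\gamma) = \xi^*$, and the level-$0$ Cohen-part $\Delta$-system has root $d^*$. For each $\gamma < \gamma'$ let $q^*$ be the canonical merge at level $0$ (unchanged at higher levels by the pigeonhole), and define a strategy $\mathcal{S}^*$ of length $l^*$ by setting $S^*_i$ to agree with $S^\gamma_i$ (resp.\ $S^{\gamma'}_i$) via the transport map $w^{*q^\gamma_0}_{q^*_0}$ (resp.\ $w^{*q^{\gamma'}_0}_{q^*_0}$) on its image; consistency on the overlap of these images requires one final pigeonhole, on the \emph{root-profile} of each strategy (the restriction of each $S^\gamma_i$ to $(\vec\mu,c) \in Ex^*(q^\gamma_0)$ whose $\vec\mu$-domains lie in $d^*$), of which there are at most $(\kappa^+)^{\kappa \cdot l^*} = \kappa^+$ classes under GCH.

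The main obstacle is this last pigeonhole step at the successor stage and the verification that matching root-profiles suffices to make the merged strategy $\mathcal{S}^*$ a legitimate $q^*$-strategy. Conditions $(\alpha)$--$(\zeta)$ for $\mathcal{S}^*$ follow blockwise from the corresponding properties of $\mathcal{S}^\gamma, \mathcal{S}^{\gamma'}$ via the transport, using the monotonicity clauses $(1)$--$(2)$ of a labeled block and the fact that $w^{*q^\gamma_0}_{q^*_0}, w^{*q^{\gamma'}_0}_{q^*_0}$ are order-preserving. Conditions $(\eta)$ and $(\theta)$ hold by taking $\bar\xi_{i,\mathcal{S}^*} = \max(\bar\xi_{i,\mathcal{S}^\gamma}, \bar\xi_{i,\mathcal{S}^{\gamma'}})$, using Lemma~\ref{Lemma: some properties of R}(2) to amalgamate the $R$-membership witnesses. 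The crucial consistency on the overlap---that whenever $(\vec\mu,c) \in Ex^*(q^*_0)$ satisfies $w^{*q^\gamma_0}_{q^*_0}(\vec\mu_\gamma, c) = w^{*q^{\gamma'}_0}_{q^*_0}(\vec\mu_{\gamma'}, c)$ for some preimages, then $S^\gamma_i(\vec\mu_\gamma, c) = S^{\gamma'}_i(\vec\mu_{\gamma'}, c)$---reduces to agreement on $\vec\mu \restriction d^*$, which is precisely what root-profile pigeonholing forces. Once this is verified, $(q^*, \mathcal{S}^*)$ is a common extension of $a^\gamma$ and $a^{\gamma'}$, and the induction is complete.
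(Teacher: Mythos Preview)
Your approach has a genuine gap at the successor stage. The order on $\mathbb{A}(\mathbb{Q}_\beta,\dot T)$ is rigid: for $(q^*,\mathcal S^*)$ to extend both $a^\gamma=(q^\gamma,\mathcal S^\gamma)$ and $a^{\gamma'}=(q^{\gamma'},\mathcal S^{\gamma'})$ one must have, for \emph{every} $(\vec\mu,c)\in Ex^*(q^*_0)$ and every $i\le l^*$,
\[
S^\gamma_i\bigl(w^{*q^*_0}_{q^\gamma_0}(\vec\mu,c)\bigr)\;=\;S^*_i(\vec\mu,c)\;=\;S^{\gamma'}_i\bigl(w^{*q^*_0}_{q^{\gamma'}_0}(\vec\mu,c)\bigr).
\]
(Note the direction: $w^{*p}_q$ goes from $Ex^*(p)$ to $Ex^*(q)$ when $p\le q$, so your notation is reversed.) Both transport maps are defined on all of $Ex^*(q^*_0)$, so there is no partial ``overlap'' to reconcile---the displayed identity must hold globally. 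The transported element $w^{*q^*_0}_{q^\gamma_0}(\vec\mu,c)$ is obtained by restricting each $\mu_\xi$ to $\dom(f^{q^\gamma}_\xi)$, which properly contains the $\Delta$-system root $d^*$; it is \emph{not} a root object, and the value $S^\gamma_i$ takes there can depend on the non-root part of $\mu_\xi\restriction\dom(f^{q^\gamma}_\xi)$. Pigeonholing on the root-profile therefore does not force the two sides of the display to agree, and your amalgamated $\mathcal S^*$ need not exist. Your limit case for $\cf(\alpha)\le\kappa$ is also underspecified: thinning once for each member of a cofinal sequence of length $\le\kappa$ and intersecting need not leave $\kappa^{++}$ many indices.

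The paper avoids both problems by proving a stronger inductive statement: for each $\alpha$ there is a function $c_\alpha:\mathbb{Q}_\alpha\to H(\kappa^+)$ such that $c_\alpha(p)=c_\alpha(q)$ implies $p,q$ are $\le^*$-compatible. For the base $\mathbb{P}_{\vec E}$, the key device is the Engelking--Kar\l owicz theorem, which yields $\kappa^+$ many functions $e_i:\kappa^{++}\to\kappa$ such that every partial function of size $\le\kappa$ is contained in some $e_i$; this encodes the \emph{entire} Cohen part $f^p_\xi$ (not merely a root restriction) by a single index $i(f^p_\xi)<\kappa^+$. The successor and limit $c$-functions are then constructed following the $\Sigma$-Prikry framework of \cite{SigmaPrikry2}. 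This coloring approach is uniform across stages, which is exactly what a bare $\Delta$-system induction lacks.
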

\begin{proof} 

We will actually show a stronger property: that for each $\alpha\leq\kappa^{++}$, there is function  $c:\mathbb{P}_\alpha\rightarrow H(\kappa^+)$ such that if $c(p)=c(q)$, then $p,q$ are $\leq^*$ compatible.

Let us define such a function for $\mathbb{P}_{\bar{E}}$, the first step of the iteration.
By Engelking-Kar\'lowicz theorem,  fix a sequence of functions $\l e_i\mid i < \kappa^{+}\r$ each $e_i$ is from  from $\kappa^{++}$ to $\kappa$ such that, for all $x\in [\kappa^{++}]^\kappa$ and
every function $e:x\to\kappa$, there exists $i<\kappa^+$ with $e \subseteq e_i$. For a condition $p\in\mathbb{P}_{\bar{E}}$ we define $c(p)$ to be a lift of the following pieces of information:
\begin{enumerate}
    \item $\supp(p)$ in $[\omega_1]^{<\omega}$.
    \item $\l h^{0,p}_\xi,h^{1,p}_\xi,h^{2,p}_\xi\mid \xi\in\supp(p)\r$ in $$\prod_{\xi\in\supp(p)}Col(\bar{\kappa}^+_\xi,<f^p_\xi(\kappa_\xi))\times Col(f^p_\xi(\kappa\xi),s_\xi(f^p_\xi(\kappa_\xi))^+)\times Col(s_\xi(f^p_\xi(\kappa_\xi))^{+3},<\kappa_\xi)$$
    \item $\l j_{E_\xi}(H^{0,p}_\xi)(mc_{\xi}(\dom(f^p_\xi))),j_{E_\xi}(H^{1,p}_\xi)(mc_{\xi}(\dom(f^p_\xi)))\mid \xi\notin\supp(p)\r$ in $\prod_{\xi\notin\supp(p)}Col(\bar{\kappa}_\xi^+,<\kappa_\xi)\times Col(\kappa_\xi,\kappa^+)$.
    \item $\l j_{E_\xi}(H^{3,p}_\xi)(\kappa_\xi)\mid \xi\notin\supp(p)\r$ in $$\prod_{\xi\notin\supp(p)}Col^{E_\xi(\kappa_\xi)}(j_{E_\xi(\kappa_\xi)}(s_\xi)(\kappa_\xi)^{+3},<j_{E_\xi(\kappa_\xi)}(\kappa_\xi)).$$
    \item $\l f^p_\xi\mid \xi<\max(\supp(p))\r$ in $Add(\kappa_{\max(\supp(p))}^{++},\kappa_{\max(\supp(p))}^+)^{\max(\supp(p))}$.
    \item $\l i(f^p_\xi)\mid \max(\supp(p))\leq\xi<\omega_1\r\r$ in $(\kappa^+)^{\omega_1}$
\end{enumerate} where $i(f^p_\xi)=\min\{i\mid f^p_\xi\subseteq e_i\}$. Note that since $j_{E_\xi(\kappa_\xi)}$ is the ultrapower by a normal ultrafilter on $\kappa_\xi$,  $j_{E_\xi(\kappa_\xi)}(\kappa_\xi)$ is much smaller than $\kappa$. Hence $c(p)$ can indeed be coded as an element of $H(\kappa^+)$. We claim that if $c(p)=c(q)$ then $p,q$ are $\leq^*$-compatible. To see this, by $(1),(2),(5)$, for every $\xi\in\supp(p)\cap\max(\supp(p))$, $p_\xi=q_\xi$, and for $\xi=\max(\supp(p))$, we use $(2)$ and $(6)$ to see that $\l e_{i(f^p_\xi)}\restriction(\dom(f^p_\xi)\cup\dom(f^q_\xi)), h^{0,p}_\xi, h^{1,p}_\xi,h^{2,p}_\xi\r$ is stronger than both $p_\xi$ and $q_\xi$. As for $\xi\notin \supp(p)$,   Once again, by $(1),(5),(6)$ we have that $e_{i(f^p_\xi)}\restriction(\dom(f^p_\xi)\cup\dom(f^q_\xi))$ is stronger than $f^p_\xi$ and $f^q_\xi$. By $(3),(4)$, we can find a measure one set $A^*_\xi\in E_{\xi}(d)$, where $d=\dom(f^p_\xi)\cup\dom(f^q_\xi)$ such that for every $\vec{\mu}\in A^*_\xi$, and $H^{i,*}_p$ for $i=1,2,3$ such that $H^{i,p}_{\xi}(\vec{\mu}\restriction \dom(f^p_\xi))=H^{i,*}_\xi(\vec{\mu})=H^{i,q}_{\xi}(\vec{\mu}\restriction \dom(f^q_\xi))$. Hence $\l e_{i(f^p_\xi)}\restriction(\dom(f^p_\xi)\cup\dom(f^q_\xi)), H^{1,*}_\xi,H^{2,*}_\xi,H^{3,*}_\xi, A^*_\xi\cap \pi^{-1}_{d,\dom(f^p_\xi)}\cap \pi^{-1}_{d,\dom(f^q_\xi)}\r$ is stronger than both $p_\xi$, $q_\xi$, as wanted.

So, we have the existence of the desired function for the first step of the iteration. For the rest, it follows by induction. For each successor $\alpha$, we define the function $c$ for $\mathbb{Q}_\alpha$ as in definition 4.13 in \cite{SigmaPrikry2}. For limit $\alpha$'s, we define the $c$-function as in subsection 3.1 \cite{SigmaPrikry2}.  Then the same arguments as in \cite{SigmaPrikry2}, Lemma 3.14, more precisely Claim 3.14.1, show that this function is as desired.

\end{proof}
\begin{corollary}
    Cardinals $\theta\geq\kappa^{++}$ are preserved.
\end{corollary}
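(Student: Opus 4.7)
The plan is to invoke the standard fact that any forcing notion satisfying the $\kappa^{++}$-chain condition preserves all cardinals $\theta \geq \kappa^{++}$, together with cofinalities in that range. The preceding proposition has established that $\mathbb{Q}_\alpha$ is $\kappa^{++}$-Knaster, which in particular implies the $\kappa^{++}$-c.c., so the corollary should follow essentially immediately.

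More concretely, I would argue as follows. Suppose toward contradiction that some cardinal $\theta \geq \kappa^{++}$ is collapsed in $V^{\mathbb{Q}_\alpha}$. Then there is a regular cardinal $\theta \geq \kappa^{++}$ in $V$ and a $\mathbb{Q}_\alpha$-name $\dot{f}$ forced by some condition $p$ to be a surjection from some $\mu < \theta$ onto $\theta$ (or a cofinal function from some smaller ordinal). For each $\alpha < \theta$, the set of conditions deciding $\dot{f}(\check{\alpha})$ is dense below $p$. By the $\kappa^{++}$-c.c., for each $\alpha$ the number of possible values of $\dot{f}(\check{\alpha})$ is at most $\kappa^{+}$, so the range of $\dot{f}$ is forced to lie in a set of size at most $\mu \cdot \kappa^{+} < \theta$, contradicting the assumption that $\dot{f}$ is forced to be a surjection onto $\theta$. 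The same argument (applied to the cofinality function) also shows that cofinalities at or above $\kappa^{++}$ are preserved.

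There is essentially no obstacle here beyond citing the previous proposition; all the real work is in the proof of the $\kappa^{++}$-Knaster property. I would simply write something like ``This is immediate from the previous proposition, since any forcing with the $\kappa^{++}$-c.c. preserves all cardinals $\geq \kappa^{++}$.''
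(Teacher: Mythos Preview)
Your proposal is correct and matches the paper's approach exactly: the paper states this corollary with no proof, treating it as immediate from the $\kappa^{++}$-chain condition established in the preceding proposition. Your suggested one-line justification is precisely what is intended.
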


Next, let us prove the Prikry property for $\mathbb{Q}_\alpha$. First we will show it holds for $\mathbb{Q}^*_\alpha$ and then prove that $\mathbb{Q}_\alpha$ is $\leq^*$-dense in $\mathbb{Q}^*_\alpha$ which would then imply the Prikry property for $\mathbb{Q}_\alpha$.

\begin{definition}
Suppose that $\mathbb{Q}$ is one of the steps of the iteration without delays $\mathbb{P}^*$.
    We say that $\mathbb{Q}$ has \textit{the diagonalization property} if Good has a winning strategy in the following game of length $\kappa_\xi$: At even and limit stages, Good plays conditions $p_{2i}\leq^*p$. At odd stages, Bad plays  a $\xi$-step extension $q_{2i+1}\leq^* p_{2i}^\smallfrown \nu_i$ such that $\nu_i\neq \nu_j$ for all $j<i$. Good wins if the game continues for all $i<\kappa_\xi$ and produces a run $\langle p_{2i}, q_{2i+1}\mid i<\kappa_\xi\rangle$, and $p^*\leq^* p$ for all $i$, such that for every $\mu\in Ex_\xi(p^*)$, if  there is $i<\kappa_\xi$ such that $\mu\restriction \dom(f^{p_{2i}}_\xi)=\nu_i$, then $p^{*\smallfrown}\mu\leq^* q_{2i+1}$. 
    
\end{definition}
\begin{remark}\label{remark: vacously}
    Having played $\l p_{2i},q_{2i+1}\mid i<\kappa_\xi\r$ according to the rules of the game, we can form $D=\bigcup_{i<\kappa_\xi}\dom(f^{p_{2i}}_\xi)$, and let $A^*_\xi=\pi^{-1}_{D,\dom(f^p_\xi)}[A^p_\xi]\in E_\xi(D)$. 
    If $B^*_\xi=\{\nu\in A^*_\xi\mid \neg \exists i<\kappa_\xi, \ \nu\restriction D=\nu_i\}$, then shrink $A^*_\xi$ to $B^*_\xi$, and directly extend $p$ to $p^*$ by enlarging $\dom(f^p_\xi)$ to $D$ and restricting the measure-one set $A^p_\xi$ to $B^*_\xi$. It is clear that $p^*$ vacuously satisfies the diagonalization requirement. So we may assume that $\{\nu\in A^*\mid \exists i<\kappa_\xi, \ \nu\restriction D=\nu_i\}\in E_\xi(D)$.  
\end{remark}
Let us prove that the diagonalization property suffices to prove the pre-Strong Prikry Lemma:

\begin{definition}\label{dif:Pre-strong Prikry lemma}
We say the the pre-strong Prikry lemma holds for $\mathbb{Q}_\alpha$ if for any open with respect to $\leq^*$ subset $D$ of $\mathbb{Q}_\alpha$ and any increasing  $\vec{\xi}\in[ \omega_1]^{<\omega}$ disjoint from the support of $p$, there is a direct extension of $a^*\leq^* a$, such that either all of its $\vec{\xi}$-step extensions are in $D$, or none are. 
\end{definition}

\begin{lemma}\label{Lemma: pre-Prikry Lemma}
Let $\alpha\leq \kappa^{++}$ be any ordinal, and $a\in \mathbb{Q}_\alpha$ be a condition. If $\mathbb{Q}_\alpha$ has the diagonalization property, then $\mathbb{Q}_\alpha$ satisfy the pre-strong Prikry Lemma. 
\end{lemma}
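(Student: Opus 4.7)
The plan is to induct on $n = |\vec{\xi}|$. The base case $n=0$ is immediate: the only $\vec{\xi}$-step extension of $a^*$ is $a^*$ itself, so the dichotomy trivially holds for $a^* = a$. For the inductive step, write $\vec{\xi} = \langle \xi_1, \dots, \xi_n \rangle$ with $\xi_1 < \dots < \xi_n$, set $\xi := \xi_n$, and let $\vec{\xi}' := \langle \xi_1, \dots, \xi_{n-1}\rangle$. The idea is to play the diagonalization game at coordinate $\xi$ and delegate the smaller coordinates to the inductive hypothesis inside Bad's strategy. First, by Proposition~\ref{Prop: Bound the number of same projection to normal}, I directly extend $a$ so that $A^{a_0}_\xi$ contains at most $\kappa_\xi$ distinct $(\xi,\dom(f^{a_0}_\xi))$-objects, and enumerate them as $\langle \nu_i : i<\kappa_\xi\rangle$ (these are automatically pairwise distinct, as required by the game's rules).

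I now describe Bad's play against Good's winning strategy. At odd stage $2i+1$, given $p_{2i}\le^* a$ from Good, lift $\nu_i$ canonically to an object $\widetilde{\nu}_i\in A^{p_{2i}}_\xi$ (using that $p_{2i}\le^* a$ preserves the underlying objects up to enlarging the domain), and form $p_{2i}^\smallfrown\widetilde{\nu}_i$. Since $\xi_j<\xi$ and $\xi_j\notin\supp(a)$ for each $j<n$, the tuple $\vec{\xi}'$ is disjoint from $\supp(p_{2i}^\smallfrown\widetilde{\nu}_i)=\supp(a)\cup\{\xi\}$, so the inductive hypothesis applies to $p_{2i}^\smallfrown\widetilde{\nu}_i$ and $\vec{\xi}'$, yielding $q_{2i+1}\le^* p_{2i}^\smallfrown\widetilde{\nu}_i$ that realizes the $\vec{\xi}'$-dichotomy for $D$. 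Record a color $c(i)\in\{0,1\}$, with $c(i)=0$ when every $\vec{\xi}'$-step extension of $q_{2i+1}$ is in $D$, and $c(i)=1$ when none is; Bad plays $q_{2i+1}$.

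By the diagonalization property, Good wins with some $p^*\le^* p_{2i}$ for all $i$. After the preliminary shrinking of Remark~\ref{remark: vacously} (to avoid the vacuous case where no $\mu$ matches any $\nu_i$), we may assume that for every $\mu\in A^{p^*}_\xi$, $\mu$ restricts to some $\nu_{i(\mu)}$ and $p^{*\smallfrown}\mu\le^* q_{2i(\mu)+1}$. Then $E_\xi(\dom(f^{p^*}_\xi))$ selects a color $c\in\{0,1\}$ such that $B_c:=\{\mu\in A^{p^*}_\xi:c(i(\mu))=c\}$ is measure one; shrink $A^{p^*}_\xi$ to $B_c$ to obtain $a^*$. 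If $c=0$, then for any $\vec{\xi}$-step extension $(a^{*\smallfrown}\mu)^\smallfrown\vec{\rho}$, Corollary~\ref{Cor: properties of step extension}(2) supplies $\vec{\rho}'\in Ex_{\vec{\xi}'}(q_{2i(\mu)+1})$ with $(a^{*\smallfrown}\mu)^\smallfrown\vec{\rho}\le^* q_{2i(\mu)+1}^\smallfrown\vec{\rho}'\in D$; the $\le^*$-openness of $D$ then puts $(a^{*\smallfrown}\mu)^\smallfrown\vec{\rho}\in D$, so every $\vec{\xi}$-step extension of $a^*$ lies in $D$. The case $c=1$ is dual.

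The main obstacle will be the bookkeeping around the projection maps $w^{*p}_q$ and the shrinking of measure-one sets as the game progresses: verifying that the lifts $\widetilde{\nu}_i$ are well defined and remain pairwise distinct in the game's sense across the varying $A^{p_{2i}}_\xi$'s, that the inductive hypothesis applies uniformly at each stage, and that the dichotomy transfers cleanly from the $\vec{\xi}'$-step extensions of $q_{2i+1}$ to the $\vec{\xi}$-step extensions of $a^*$ through Corollary~\ref{Cor: properties of step extension} and the $\le^*$-openness of $D$ in both directions. In particular, the ``none'' branch requires using openness to push non-membership in $D$ from the $\vec{\xi}'$-step extensions of $q_{2i+1}$ along the direct extension $(a^{*\smallfrown}\mu)^\smallfrown\vec{\rho}\le^* q_{2i+1}^\smallfrown\vec{\rho}'$, which is the most delicate verification.
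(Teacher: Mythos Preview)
Your overall architecture---induct on $|\vec{\xi}|$, play the diagonalization game at one coordinate, invoke the induction hypothesis inside Bad's moves, then stabilize a color on a measure-one set---is the same as the paper's. But the argument as written has a genuine gap in the $c=1$ branch, and it is exactly the point you flagged as ``the most delicate verification.''

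The problem is that $\leq^*$-openness of $D$ pushes \emph{membership} downward, not non-membership. From the weak hypothesis ``no $\vec{\xi}'$-step extension of $q_{2i+1}$ lies in $D$'' you only know $q_{2i+1}^{\smallfrown}\vec{\rho}'\notin D$, and since $(a^{*\smallfrown}\mu)^{\smallfrown}\vec{\rho}\le^* q_{2i+1}^{\smallfrown}\vec{\rho}'$, openness gives you nothing: the stronger condition could perfectly well enter $D$. So the case $c=1$ is \emph{not} dual to $c=0$. The paper avoids this by carrying a stronger dichotomy through the induction: the negative alternative $(\dagger)$ is ``for every $\vec{\nu}\in Ex_{\vec{\xi}'}(q_{2i+1})$ and every $b\le^* q_{2i+1}^{\smallfrown}\vec{\nu}$, $b\notin D$.'' This does transfer along $\le^*$, and it is still trivially available at the base case (either some $b\le^* a$ is in $D$, and you take $a^*=b$, or none is). If you reformulate your induction hypothesis this way, your $c=1$ branch goes through and the strengthened conclusion persists to $a^*$.

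Two smaller points. First, your enumeration-and-lift scheme is not quite right: fixing an enumeration of $A^{a_0}_\xi$ once and choosing a single lift $\widetilde{\nu}_i\in A^{(p_{2i})_0}_\xi$ does not guarantee that every $\mu\in A^{(p^*)_0}_\xi$ restricts to one of the chosen $\widetilde{\nu}_i$'s on $\dom(f^{(p_{2i})_0}_\xi)$---the domains grow along the run, and many lifts of the same $\nu_i$ appear. The paper handles this with a dovetailing bijection $\phi\colon\kappa_\xi\to\kappa_\xi\times\kappa_\xi$ so that at each even stage the newly appearing objects get enumerated. Second, the paper peels off the \emph{smallest} coordinate $\xi_1$ rather than the largest; both orders work once the induction hypothesis is strengthened, but taking $\xi_1$ avoids the squishing of the remaining $A_{\xi_j}$'s and makes the transfer via Corollary~\ref{Cor: properties of step extension} cleaner.
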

\begin{proof}
By induction on $|\vec{\xi}|$. For $\vec{\xi}=\emptyset$, this is trivial (and does not require the diagonalization property). Suppose this is true for $n$ and let $\vec{\xi}=\l \xi_1,...,\xi_{n+1}\r$. Let  $\phi:\kappa_\xi\rightarrow\kappa_\xi\times \kappa_\xi$ be a bijection such that $\pi_1(\phi(i))\leq i$, enumerate $Ex_{\xi_1}((a)_0)$ by $\{\nu_i\mid i<\kappa_\xi\}$. We perform a run of the diagonalization game $\l b_{2i+1},a_{2i}\mid i<\kappa_\xi\r$ as follows. At stage $0$,  we set $a_0=a$ and $\mu_{\phi^{-1}(0,i)}=\nu_i$, and let $b_1\leq^* a^\smallfrown \mu_0$ be such that one of the following holds:  \begin{enumerate}
    \item [$(\star_{\mu_0})$]  $\forall\vec{\nu}\in Ex_{\vec{\xi}\setminus \{\xi_1\}}(b_1),\  b_1^\smallfrown \vec{\nu}\in D$.
    \item [$(\dagger_{\mu_0})$]  $\forall\vec{\nu}\in Ex_{\vec{\xi}\setminus \{\xi_1\}}(b_1)\text{ and all }b_1^\smallfrown\vec{\nu}\geq^* b, \ b\notin D.$ 
\end{enumerate} The existence of $b_1$ is justified by the induction hypothesis to the condition $a_0^\smallfrown\mu_0$. The winning strategy for the game at $\mathbb{Q}_\alpha$ gives $a_2\leq^* a_0$. Suppose we have defined $\l a_{2i},b_{2i+1}\mid i<j\r$. The winning strategy defines $a_{2j}$. Enumerate $Ex_\xi(a_{2j})\setminus \bigcup_{i<j}Ex_\xi(a_{2i})$ by $\{\nu_r\mid r<\kappa_\xi\}$ and set $\mu_{\pi^{-1}(j,r)}=\nu_r$. By the assumption about $\phi$, $\phi(j)$ is defined, if $\mu_j\notin Ex_{\xi}(a_{2j})$ such that $\nu\restriction \dom(f^{a_{2\cdot\pi_1(\phi(r))}}_{\xi})=\mu_r$, we do something trivial. Otherwise, we fix such a $\nu$, and find $b_{2j+1}\leq^* a_{2j}^\smallfrown \nu$ such that 
one of the following holds:  \begin{enumerate}
    \item [$(\star_{\mu_r})$]  $\forall\vec{\nu}\in Ex_{\vec{\xi}\setminus \{\xi_1\}}(b_{2j+1}),\  b_{2j+1}^\smallfrown \vec{\nu}\in D$.
    \item [$(\dagger_{\mu_r})$]  $\forall\vec{\nu}\in Ex_{\vec{\xi}\setminus \{\xi_1\}}(b_{2j+1})\text{ and all }b\leq^*  b_{2j+1}^\smallfrown\vec{\nu}, \ b\notin D.$ 
\end{enumerate}
Once again, the existence of $b_{2j+1}$ is justified by the induction hypothesis.
By the winning strategy, fix a condition $a^*\leq^* a$, which diagonalizes the $b_{2i+1}$'s. Denote by $S=\bigcup_{i<\kappa_\xi}\dom(f^{a_{2i}}_\xi)$ and let $\nu\in Ex_\xi(a^*)$ be any object and $\mu=\nu\restriction S$. Since $|\mu|<\kappa_\xi$, there is $i<\kappa_\xi$ such that $\dom(\mu)\subseteq \dom(f^{a_{2i}}_\xi)$. Since $a^*\leq^* a_{2i}$, $\nu=\mu\restriction \dom(f^{a_{2i}}_\xi)\in Ex_\xi(a_{2i})$. Hence $\nu$ has been enumerated as some $\mu_r$ for some $r\geq i$. It also follows that $\mu\restriction \dom(f^{a_{2r}}_\xi)=\nu\in Ex(a_{2r})$. By definition of $a^*$, it follows that $a^{*\smallfrown}\mu\leq^*b_{2r+1}$.  Note that for all $\mu^\smallfrown \vec{\mu}\in Ex_{\vec{\xi}}(a^*)$, $a^{*\smallfrown}\mu\leq^* b_i$ for some $i$ and thus $$a^{*\smallfrown}\mu^\smallfrown \vec{\mu}\leq^* b_i^\smallfrown w^{a^{*\smallfrown}\mu}_{b_i}(\vec{\mu}).$$
Hence $(\star_{\mu})$ and $(\dagger_\mu)$ are preserved when moving from $b_i$ to $a^{*\smallfrown}\nu$.
Finally, we shrink the measure-one set  $A^{a^*}_\xi\in Ex_{\xi}(a^*)$ to get a condition $a^{**}\leq^* a^*$ such that either for any $\nu_i \in Ex_\xi(a^{**})$, $(\star_i)$ holds, or for any $\nu_i\in Ex_\xi(a^{**})$, $(\dagger_i)$ holds. The condition $a^{**}$ is as wanted. 
\end{proof}
\begin{lemma}
    $\mathbb{Q}_1=\mathbb{Q}_1^*=\mathbb{P}_{\bar{E}}$ has the strategic-diagonalization porperty.
\end{lemma}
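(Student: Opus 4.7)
The plan is to exhibit a winning strategy for Good that produces a $\leq^*$-decreasing sequence $\langle p_{2i}\mid i<\kappa_\xi\rangle$ absorbing all the information that Bad has committed in the plays $q_{2j+1}$, and then to take $p^*$ to be the $\leq^*$-meet. The three tools available are: (a) the $\kappa_\xi$-completeness of each measure $E_r(d)$ for relevant $r<\omega_1$ and domains $d$; (b) the $\kappa_\xi^+$-closure of the $\mathbb{S}_{>\xi}$-part of $\mathbb{P}_{\bar E}$ under $\leq^*$, via Proposition \ref{Properties}(3); and (c) the combinatorial bound of Proposition \ref{Prop: Bound the number of same projection to normal} on the number of objects sharing a given normal coordinate. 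Since $\leq^*$ preserves support, $\supp(p_{2i})=\supp(p)$ throughout, so in particular $\xi\notin\supp(p_{2i})$ and the relevant closure/completeness figures make sense at all stages of length $<\kappa_\xi$.

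At stage $2i$, given the history $\langle p_{2j},q_{2j+1}\mid j<i\rangle$, Good constructs $p_{2i}\leq^* p_{2j}$ for every $j<i$ by absorbing Bad's plays componentwise. Above $\xi$, Good takes a $\leq^*$-lower bound of $\{q_{2j+1}\restriction\mathbb{S}_{>\xi}\mid j<i\}$ using the $\kappa_\xi^+$-closure of $\mathbb{S}_{>\xi}$. At coordinate $\xi$, Good enlarges the Cohen domain, setting $\dom(f^{p_{2i}}_\xi)\supseteq\bigcup_{j<i}\dom(f^{q_{2j+1}}_\xi)$ (which has size $\kappa_\xi$ and still fits inside the upper bound on $\xi$-domains), and then chooses the measure-one set $A^{p_{2i}}_\xi\in E_\xi(\dom(f^{p_{2i}}_\xi))$ to concentrate on objects $\mu$ such that whenever $\mu\restriction\dom(f^{p_{2j}}_\xi)=\nu_j$, the new coordinates of $\mu$ are compatible with the new Cohen values of $f^{q_{2j+1}}_\xi$; such a set is measure-one because it is a standard condition on the seed $\mathrm{mc}_\xi(\dom(f^{p_{2i}}_\xi))$. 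Finally, the $H$-functions of $p_{2i}$ at $\xi$ are defined so that $H^{r,p_{2i}}_\xi(\mu)\supseteq h^{r,q_{2j+1}}_\xi$ for $r=0,1,2$ whenever $\mu\restriction\dom(f^{p_{2j}}_\xi)=\nu_j$. This is where Proposition \ref{Prop: Bound the number of same projection to normal} is essential: for each normal coordinate $\mu(\kappa_\xi)=\nu$, at most $s_\xi(\nu)^{++}<\kappa_\xi$ many $\mu$'s project to $\nu$, so only boundedly many collapse-conditions must be combined into each $H^{r,p_{2i}}_\xi(\mu)$, a meet easily carried out within the closures of the corresponding Levy collapses. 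The analogous absorption is carried out at coordinates $r>\xi$ not in $\supp(p)$, using $\kappa_r>\kappa_\xi$-completeness of $E_r$ and the closure of the associated collapses.

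At the end of the game, Good forms $p^*$ as the $\leq^*$-meet of $\langle p_{2i}\mid i<\kappa_\xi\rangle$; it exists by the very same closure arguments iterated $\kappa_\xi$ times. Given any $\mu\in Ex_\xi(p^*)$ with $\mu\restriction\dom(f^{p_{2i}}_\xi)=\nu_i$ for some $i<\kappa_\xi$, the condition $p^{*\smallfrown}\mu$ then inherits, at coordinate $\xi$ and above, every refinement recorded by $q_{2i+1}$ in the measure-one sets and $H$-functions of each $p_{2j}$ with $j\ge i$, and hence $p^{*\smallfrown}\mu\leq^* q_{2i+1}$.

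The main obstacle is the bookkeeping at coordinate $\xi$, where a single $\mu\in A^{p^*}_\xi$ may simultaneously project to the $\nu_j$'s of many different stages and the absorbing $H$-values must be chosen coherently for all of them at once. This is precisely where the combinatorial input of Proposition \ref{Prop: Bound the number of same projection to normal} is used: the number of such $j$'s applicable to any given $\mu$ is strictly less than $\kappa_\xi$ because they all share the normal coordinate $\mu(\kappa_\xi)$, so the demand on the collapses is much weaker than the $\kappa_\xi$-length of the game itself.
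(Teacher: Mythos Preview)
Your outline has the right skeleton, but there are two concrete gaps.

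\textbf{The lower part below $\xi$.} When Bad plays $q_{2j+1}\leq^* p_{2j}^{\smallfrown}\nu_j$, the direct extension may change the coordinates in the interval $[\max(\supp(p)\cap\xi),\xi)$: the Cohen functions there have been squished by $\nu_j$, and $q_{2j+1}$ is free to enlarge them, shrink the measure-one sets, and extend the $H$-functions. Your proposal never explains how these changes are absorbed back into a condition that does \emph{not} have $\xi$ in its support. In the paper this is the most delicate part of the argument: one uses the $\kappa_\xi$-completeness of $E_\xi$ together with $|\mathbb{S}_{<\xi}|<\kappa_\xi$ to shrink the final measure-one set so that $q_{2i+1}\restriction\mathbb{S}_{<\xi}$ is a constant $l$ (and, shrinking further, so that $\nu_i\restriction\dom(f^p_r)$ is constant for each $r\in[\xi^*,\xi)$); only then can $l$ be ``unsquished'' to define $p'\restriction\xi$. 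Without such a stabilization, there is no way to arrange $p^{*\smallfrown}\mu\leq^* q_{2i+1}$ at coordinates below $\xi$.

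\textbf{The closure for $H^0_\xi$ and $H^1_\xi$.} You claim that Proposition~\ref{Prop: Bound the number of same projection to normal} lets you amalgamate the collapse conditions $h^{r,q_{2j+1}}_\xi$ over all relevant $j$ into $H^{r,p_{2i}}_\xi(\mu)$, for each $r=0,1,2$. This works only for $r=2$: the collapse $Col(s_\xi(\mu(\kappa_\xi))^{+3},<\kappa_\xi)$ is $s_\xi(\mu(\kappa_\xi))^{+3}$-closed, which beats the bound $s_\xi(\mu(\kappa_\xi))^{++}$. For $r=0$ the collapse $Col(\bar\kappa_\xi^+,<\mu(\kappa_\xi))$ is only $\bar\kappa_\xi^+$-closed, and for $r=1$ the collapse $Col(\mu(\kappa_\xi),s_\xi(\mu(\kappa_\xi))^+)$ is only $\mu(\kappa_\xi)$-closed; both are strictly smaller than $s_\xi(\mu(\kappa_\xi))^{++}$, so the union need not be a condition. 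The paper sidesteps this by \emph{not} amalgamating $H^0,H^1$ during the game: it only canonically shifts them, and at the final stage it shrinks the measure-one set so that each $\mu$ corresponds to a unique $\nu_i$, then simply assigns $H^{p'}_{r,\xi}(\mu)=h^{r,q_{2i+1}}_\xi$ for $r=0,1$ without any union. The amalgamation via Proposition~\ref{Prop: Bound the number of same projection to normal} is reserved for $H^2_\xi$, which depends only on the normal coordinate.
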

\begin{proof}
Fix a coordinate $\xi>\xi(p)$. For each $i$ we will construct conditions $p_{2i}, q_{2i+1}$, such that $p_{2i}\leq^* p$ is a $\leq^*$-decreasing sequence in $\mathbb{P}_{\bar{E}}$, and  $q_{2i+1}\leq^*p_{2i}^{\smallfrown}\nu_i$, such that $p_{2i}\restriction \xi=p\restriction\xi$. 
At successor steps $2i+2$, assume we have defined $p_{2i}$ and Bad has played $q_{2i+1}\leq^* p_{2i}^\smallfrown \nu_i$, let us define $p_{2i+2}$. First we set $p_{2i+2}\restriction \xi=p_{2i}\restriction \xi$ and $p_{2i+2}\restriction (\xi+1,\omega_1)=q_{2i+1}\restriction(\xi+1,\omega_1)$. Define $p_{2i+2,\xi}=\l f^{2i+2}_\xi, A^{2i+2}_\xi,H^{2i+2}_{0,\xi},H^{2i+2}_{1,\xi},H^{2i+2}_{2,\xi}\r$ as follows.  Note that $f^{q_{2i+1}}_\xi\supseteq f^{p_{2i}}_\xi+\nu_i$; let $$f^{p_{2i+2}}_\xi=\big(f^{q_{2i+1}}_\xi\restriction [\dom(f^{q_{2i+1}}_\xi)\setminus \dom(\nu_i)]\big)\cup f^{p_{2i}}_\xi\restriction \dom(\nu_i)$$
so $f^{p_{2i+2}}_\xi\supseteq f^{p_{2i}}_\xi$. Let $A^{2i+2}=\{\mu\mid \mu\restriction \dom(f^{p_{2i}}_\xi)\in A^{2i}\}$ be the canonical preimage of $A^{2i}$ of the projection of $E_\xi(\dom(f^{p_{2i+2}}_\xi))$ to $E_\xi(\dom(f^{p_{2i}}))$. $H^{2i+2}_{i,\xi}$ for $i=0,1,2$ is the canonical shift of $H^{2i}_{i,\xi}$ to the new domain except for $i=2$ which we add one more change at coordinate $\nu_i(\kappa_\xi)$ on which we define $H^{2i+2}_{2,\xi}(\nu_i(\kappa_\xi))=h^{q_{2i+1}}_{2,\xi}$.

For limit $i$, define $p^{2i}$ as follows, below $\xi$ we keep $p\restriction\xi$.  $p_{2i}\restriction (\xi+1,\omega_1)$ is a $\leq^*$-lower bound for the $p_{2j}\restriction (\xi+1,\omega_1)$ for $j<i$ and let us define $p_{2i,\xi}$. $f^{p_{2i}}_\xi=\bigcup_{j<i}f^{p_{2j}}_\xi$ and $A^{2i}$, $H^{2i}_{0,\xi},H^{2i}_{1,\xi}$ are the shifts to the new domain. Finally, set $$H^{2i}_{2,\xi}(\bar{\nu})=\bigcup_{j<i, \nu_j(\kappa_\xi)=\bar{\nu}}h^{q_{2j+1}}_\xi$$
Note that by Proposition \ref{Prop: Bound the number of same projection to normal}, there are at most $s_\xi(\bar{\nu})^{++}$-many $j<i$ such that $\nu_j(\kappa_\xi)=\bar{\nu}$, and therefore we have sufficient closure to ensure that $H^2_{2,\xi}(\bar{\nu})\in Col(s_\xi(\bar{\nu})^{+3},<\kappa_\xi)$. 

We are ready to define the condition $p'\leq^*p$ which will diagonalize the $q_{2i+1}$'s. First, let $f^{p'}_\xi=\bigcup_{i<\kappa_\xi}f^{p_{2i}}_\xi$.
each of the $q_{2i+1}$'s can be factored into a conditions in the product $$\mathbb{S}_{<\xi}\times Col(\underset{\tau}{\underbrace{s_\xi(f_\xi(\kappa_\xi))^{+3}}}, <\kappa_\xi)\times \mathbb{S}_{>\xi},$$ and recall that
 $\mathbb{S}_{<\xi}$ has size less $s_\xi(f_\xi(\kappa_\xi))^{+}$ and  $\mathbb{S}_{\xi}$ is $\kappa_\xi^+$-closed with respect to $\leq^*$. Note that inductively we maintained that the $\mathbb{S}_{>\xi}$ -part of the $q_{2i+1}$'s is decreasing. By passing to a measure one set of $E_\xi(\dom(f_\xi^{p'}))$  we can find a set $B$ such that either no $\nu\in B$ satisfy  $\nu\restriction D=\nu_i$ for $i<\kappa_\xi$ or for every $\nu\in B$ there is $i<\kappa_\xi$ such that $\nu\restriction D=\nu_i$ (and this $i$ is unique since $\nu_i\neq \nu_j$ for $i\neq j$). In the case where no $\nu\in B$ restricts on $D$ to some $\nu_i$, we set $A'_\xi=B$ and copy the information for the strategies from $a$ (in this case, the resulting condition vacuously satisfies the requirement of $a^*$). Otherwise, we can further shrink $B$ to $A'_\xi\in \dom(f^{p'}_\xi)$ so that for each $\nu=\nu_i\in A'_\xi$, $q_{2i+1}\restriction \mathbb{S}_{<\xi}$ is constantly $l$, and $A'_\xi:=\{\nu_i\mid i\in I\}\in E_\xi(\dom(f_\xi^{p'}))$. Shrink further so that for each $\mu\in A'_\xi$, and each $i<\kappa_\xi$, $\mu\restriction\dom(f^{p_{2i}}_\xi)\in A^{p_{2i}}_\xi$. This is possible as we maintained that measure one sets were just shifts to the new domains.

Note that $l$ is not a legitimate lower part for a condition $p'$ with $\xi$ not in his support, since the domain of the functions below $\xi$ in $l$ have been squished by some $\mu$ to have domain $s_\xi(\mu(\kappa_\xi))^{++}$. Letting $\xi^*=\max(\supp(p)\cap\xi\cup\{0\})$, set  $p'\restriction\xi^*=l\restriction \xi^*$, also $p'\restriction(\xi+1,\omega_1)$ be below each $q_{2i+1}\restriction (\xi+1,\omega_1)$. for each $r\in[\xi^*,\xi)$, we have that $l_r=\l f_r,A_r,H_{r,0},H_{r,1},H_{r,2}\r$, where $\dom(f_r)=\nu_i[\dom(f^p_r)]$ for every $i\in I$. By shrinking $A'_\xi$ even more, we may assume that for all $r\in [\xi^*,\xi)$, and for all $\nu_i\in A'_\xi$, $\nu_i\restriction\dom(f^p_r)$ is constant, and therefore there is a unique way to lift $l_r$ to $p'_r$ so that $\dom(f^{p'}_r)=\dom(f^p_r)$ using some (any) of the $\nu_i$ where $i\in I$.

It remains to define the $\xi$-coordinate coordinate of $p'$. We already defined $f^{p'}_\xi$ and $A^{p'}_\xi$, define $H^{p'}_{r,\xi}(\nu_i)=h^{q_{2i+1}}_{r,\xi}$ for $r=0,1$, and $H^{p'}_{2,\xi}(\bar{\nu})=\bigcup_{i<\kappa_\xi, \ \nu_i(\kappa_\xi)=\bar{\nu}}h^{q_{2i+1}}_{2,\xi}$. One can verify that for each $\nu\in Ex_{\xi}(p')$, if $\nu\restriction D=\nu_i$, then $p'{}^{\smallfrown}\nu_i\leq^* q_{2i+1}$.
\end{proof}
Now let us turn to the proof of the diagonalization property. The proof is by induction on $\beta\leq\kappa^+$, and also involves \ref{Lemma: lemma involved in induction with pre-strong Prikry}-\ref{remark: density in successor step} from the previous section.

\begin{theorem}\label{diagonalization property + density}
    For every $\alpha\leq\kappa^{++}$,  $\mathbb{Q}^*_\alpha,\mathbb{Q}_\alpha$ have the diagonalization property and $\mathbb{Q}_\alpha$ is $\leq^*$-dense in $\mathbb{Q}^*_\alpha$. In particular, $\mathbb{Q}_\alpha$ satisfies the pre-strong Prikry Lemma.  
\end{theorem}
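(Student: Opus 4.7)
The proof will proceed by simultaneous induction on $\alpha\le\kappa^{++}$, establishing at each stage three assertions: (a) the diagonalization property for $\mathbb{Q}^*_\alpha$; (b) the $\leq^*$-density of $\mathbb{Q}_\alpha$ in $\mathbb{Q}^*_\alpha$; and (c) the diagonalization property for $\mathbb{Q}_\alpha$. Lemma~\ref{Lemma: pre-Prikry Lemma} converts (c) into the pre-strong Prikry lemma at level $\alpha$, which is needed to feed (b) at later successor stages via Remark~\ref{remark: density in successor step}. The base case $\alpha=1$ is the preceding lemma (density is trivial because $\mathbb{Q}_1=\mathbb{Q}_1^*$).

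For the successor step $\alpha=\beta+1$, (b) is exactly the content of Remark~\ref{remark: density in successor step}, invoking Proposition~\ref{Prop: Unbounded} and the pre-strong Prikry lemma for $\mathbb{Q}_\beta$ from the inductive hypothesis. For (a), given $a=(q^a,\mathcal{S}^a)\in\mathbb{Q}^*_{\beta+1}$ and coordinate $\xi>\xi(a)$, Good's strategy runs the IH winning strategy on the underlying $q$-component to produce $q^{a_{2i}}$, and transports the accumulated strategy to $a_{2i+2}$ via the map $w^{*q^{a_{2i+2}}_0}_{q^{a_{2i}}_0}$ in the manner of Claim~\ref{Claim: frown yield a condition}. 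When Bad plays $b_{2i+1}\leq^* a_{2i}{}^\smallfrown\nu_i$ with weak strategy $\mathcal{S}^{b_{2i+1}}$, at the final stage we first apply IH diagonalization to obtain the amalgamating $q^{a^*}$ and then build $\mathcal{S}^{a^*}$ block-by-block, taking $l(\mathcal{S}^{a^*})=\sup_i l(\mathcal{S}^{b_{2i+1}})$ and using Lemma~\ref{Lemma: Closure of successor step of iteration}-style closure on each level. Coherence of the transported blocks is guaranteed because along the decreasing sequence the maps $w^*$ compose commutatively. Then (c) is obtained from (a) and (b) by running Good's strategy in $\mathbb{Q}^*_{\beta+1}$ and applying Corollary~\ref{Cor: Entering F above any condition} / Proposition~\ref{Prop: Unbounded} to $\leq^*$-strengthen the resulting $a^*$ into $\mathbb{Q}_{\beta+1}$ without disturbing the diagonalization condition (the latter is preserved under further $\leq^*$ extensions that fix $a^*\restriction\mathbb{S}_{\xi(a)}$).

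For the limit step, (b) is proved by coordinatewise recursion along $\beta<\alpha$, using the IH and the directed closure supplied by Lemma~\ref{lemma: Q^* degree of directed}. For (a), Good plays the game by running the IH winning strategies in each $\mathbb{Q}^*_\beta$ in parallel; at a limit $j<\kappa_\xi$, the closure of $(\mathbb{Q}^{\mathbb{S}_\xi}_{\alpha,\xi},\leq_{\mathbb{S}_\xi})$ (Corollary~\ref{Cor: specific directed degree of Q*}, whose proof adapts to $\mathbb{Q}^*$) produces a common lower bound, and the accumulated support stays of size $\leq\kappa_\xi<\kappa$. The final diagonalizer $a^*$ is assembled by pasting the coordinate-by-coordinate diagonalizers using the same closure, and (c) follows from (a) and (b) exactly as in the successor step.

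The main obstacle is preserving condition $(\theta)$ through amalgamation: each $\mathcal{S}^{b_{2i+1}}$ carries its own witnessing $\bar{\xi}_{i,\mathcal{S}^{b_{2i+1}}}<\omega_1$, and a priori these need not be bounded. As flagged in Remark~\ref{remark:minimal}, along the decreasing sequence produced by the game the witnesses are monotone and hence bounded below some $\bar{\xi}^*<\omega_1$, which then witnesses $(\theta)$ for the amalgamated weak strategy at each level. The more delicate issue for $\mathbb{Q}_{\beta+1}$-conditions is condition $(\eta)$, which demands that $\bar{\xi}=0$ at the final block; this is handled by the last call to Proposition~\ref{Prop: Unbounded}, which converts the $\mathbb{Q}^*$-diagonalizer into a $\mathbb{Q}$-diagonalizer by appending one more block containing an ordinal in $R$, using Corollary~\ref{Cor: Entering F above any condition} to arrange this uniformly across $Ex^*(a^*_0)$.
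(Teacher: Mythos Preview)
Your proposal contains a genuine gap at the heart of the delay-bounding argument, which recurs in both the successor and limit steps.

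In the successor step you write that ``along the decreasing sequence produced by the game the witnesses are monotone and hence bounded below some $\bar{\xi}^*<\omega_1$''. But the sequence $\langle b_{2i+1}\mid i<\kappa_\xi\rangle$ is \emph{not} decreasing: each $b_{2i+1}\leq^* a_{2i}{}^\smallfrown\nu_i$ for a \emph{different} $\nu_i$, so these conditions are pairwise incomparable. Remark~\ref{remark:minimal} explicitly flags this as the delicate point. When you amalgamate into $a^*$, the block $\mathcal{S}^{a^*}_\eta(\vec{\mu},c)$ at a pair $(\vec{\mu},c)$ with $\xi\in\dom(\vec{\mu})$ is copied from $\mathcal{S}^{b_{2i+1}}_\eta$ for the unique $i$ with $\mu_\xi\restriction d=\nu_i$; so the witnessing $\bar{\xi}$ for $(\theta)$ at level $\eta$ must dominate the delays of \emph{all} the $\mathcal{S}^{b_{2i+1}}_\eta$ simultaneously, and there are $\kappa_\xi$ of these with no a priori bound in $\omega_1$. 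The paper handles this by applying the Rowbottom-type Proposition~\ref{Prop: Rowbottom for object-measures}: one defines $F(\nu)$ to be the delay of $\mathcal{S}^{b_{2i+1}}_{l(\mathcal{S}^{b_{2i+1}})}$ and shrinks to a measure-one set $B_*$ on which $F$ is constantly some $\xi^*$. Only after this stabilization does the amalgamated strategy satisfy $(\theta)$.

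Your limit step suffers from the same problem, amplified. ``Running the IH winning strategies in each $\mathbb{Q}^*_\beta$ in parallel'' does not by itself bound the delays of the $\kappa_\xi$-many incomparable $b_{2i+1}$'s, and now there is no single-coordinate Rowbottom trick available because the support is unbounded in $\alpha$. The paper's route is substantially different from what you sketch: it first isolates an abstract Lemma (\ref{babydiag}) reducing diagonalization at $(a,\xi)$ to the density of conditions with a \emph{fixed} delay $\xi'$ below every $\xi$-step extension of $a$; it then establishes this density (Corollary~\ref{cor: stabilized delay}) by first pushing each $a^\smallfrown\nu$ down into $\mathbb{Q}_\alpha$ (Lemma~\ref{lemma: lemma1}, which itself requires a delicate $\omega$-iteration using the IH coordinatewise and $\cf(\alpha)<\kappa_\xi$), and then amalgamating these delay-$0$ conditions. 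The density of $\mathbb{Q}_\alpha$ in $\mathbb{Q}^*_\alpha$ at limit $\alpha$ is likewise not a simple coordinatewise recursion: the paper needs a separate $\omega$-iteration (via Lemma~\ref{Lemma: lemma2}) to arrange that all strategies share a common maximum $\delta_\omega$ with $\langle\delta_\omega,c\restriction\beta\rangle\in R$ for every $\beta\in\supp(c)$. The case $\cf(\alpha)=\kappa^+$ is handled separately using that supports are bounded. None of this machinery appears in your plan, and the closure arguments you cite do not substitute for it.
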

\begin{proof}
By induction on $\alpha$. For the successor stage, suppose that $\mathbb{Q}^*_\alpha$ has the diagonalization property and $\mathbb{Q}_\alpha$ is $\leq^*$-dense in $\mathbb{Q}^*_\alpha$; let us show it for $\mathbb{Q}^*_{\alpha+1}$. By the induction hypothesis $\mathbb{Q}_\alpha$ satisfies the pre-strong Prikry lemma and by remark \ref{remark: density in successor step}, it follows that $\mathbb{Q}_{\alpha+1}$ is $\leq^*$ dense in $\mathbb{Q}_{\alpha+1}$. Hence for the successor step it remains to prove that $\mathbb{Q}^*_{\alpha+1}$ satisfies the diagonalization property. Let $a\in\mathbb{Q}^*_{\alpha+1}$ be any condition and fix a coordinate $\xi\notin \supp((a)_0)$. Suppose we are in stage $i$ of the game and suppose that $a_{2j}$ and $b_{2j+1}$ for $j<i$ have been defined. Inductively we assume that $q^{a_{2j}}$ and $q^{b_{2j+1}}$ are played according to some winning strategy for Good which is guaranteed by the diagonalization property for $\mathbb{Q}^*_\alpha$. Define $a_{2i}=\l q,\mathcal{S}\r$ as follows, $q$ is given by the winning strategy of the corresponding run of the game in $\mathbb{Q}^*_\alpha$ and for the definition of $\mathcal{S}$ we split into cases, we take $l(\mathcal{S})=\overline{\bigcup_{j<i}l(\mathcal{S}^{b_{2j+1}})}$. We define $\mathcal{S}\restriction l(\mathcal{S}^a)+1$ to be the shift of $\mathcal{S}^a$ to $Ex^*((q)_0)$, and for $l(\mathcal{S}^a)< i\leq l(\mathcal{S})$, $\mathcal{S}_i=\mathcal{S}^a_{l(\mathcal{S}^a)}$. This way we maintain $a_{2i}\leq^* a_{2j}$ for all $j<i$ and also whenever the bad player plays $b_{2i+1}=(q^{2i+1},\mathcal{R})\leq^* a_{2i}^\smallfrown\nu_i$, then the strategy $\mathcal{R}$ up to $l(\mathcal{S})$ is determined completely by $a$. 


Let $p_*\leq^*q^{a_{2i}}$ for all $i<\kappa_\xi$ be the condition diagonalizing the $q^{b_{2i+1}}$'s given by the winning strategy. By remark \ref{remark: vacously}, if $B^*=\{\nu\in A^{p_*}_\xi\mid \neg\exists \nu\restriction \dom(f^{p_{2i}}_\xi)\neq \nu_i\}\in E_\xi(\dom(f^{p^*}_\xi))$, then we are done. Otherwise, for  each $\nu\in B^*$, we can define $F(\nu)<\omega_1$ to be the delay of $\mathcal{S}^{b^{2i+1}}_{l(\mathcal{S}^{b^{2i+1}})}$ where $\nu=\nu_i$. The by proposition \ref{Prop: Rowbottom for object-measures}, there is $B_*\subseteq B^*$, $B_*\in E_\xi(\dom(f^{p^*}_\xi))$ such that for each $\nu\in B_*$, $F(\nu)$ is constantly $\xi^*$.
Define $a'=\l p',\mathcal{S}'\r$ where $$l(\mathcal{S}')=\overline{\bigcup_{i<\kappa_\xi}l(\mathcal{S}^{q^{b_{2i+1}}})}$$
  Fix $\eta\leq l (\mathcal{S}')$, and $(\vec{\mu},c)\in Ex^*((p')_0)$. For the easy case, when $\xi\notin \dom(\vec{\mu})$,   we may simply copy the strategies from $a$, namely, $$\mathcal{S}'_\eta(\mu,c)=\mathcal{S}^a_{\min\{\eta,l(\mathcal{S}^a)\}}(w^{p'}_{a_0}(\vec{\mu},c))$$  If $\xi\in\dom(\vec{\mu})$, then since $\mu_\xi\in B^*$, for some unique $i<\kappa_\xi$,  $\mu_\xi\restriction d=\nu_i$ where $d=\dom(f^{a_{2i}}_\xi)$.  By Corollary \ref{Cor: properties of step extension} item (3),  there is a unique $(\mu',c')\in Ex^*(q^{b_{2i+1}})$ such that  $p'{}^\smallfrown(\vec{\mu},c)\leq^* (q^{b_{2i+1}})^\smallfrown(\mu',c')$. Now define $$\mathcal{S}'_\eta(\vec{\mu},c)=\mathcal{S}^{q^{b_{2i+1}}}_{\min\{\eta,l(\mathcal{S}^{q^{b_{2i+1}}})\}}(\vec{\mu}',c')$$
 Let us check that at each coordinate $\eta\leq l(\mathcal{S}')$ there is a well defined delay. If $\eta<l(\mathcal{S}')$ from the definition of the $a_{2i}$'s,  then there is a unique $i$ such that $l(\mathcal{S}^{a_{2i}})<\eta\leq l(\mathcal{S}^{b_{2i+1}})$, and so the delay of $\mathcal{S}'_\eta(\vec{\mu},c)$ is at most the delay of $\mathcal{S}^a_{\min\{\eta,l(\mathcal{S}^a)\}}$,   the delay of $\mathcal{S}^{b_{2i+1}}_\eta$, and $\xi^*$. If $\eta=l(\mathcal{S}')$, then the delays is at most the delay of $\mathcal{S}^a_{l(\mathcal{S}^a)}$ and $\xi^*$.

It follows that $a'\leq^* a$ and for every $\nu\in Ex_\xi(a')$, if $\nu\restriction d=\nu_i$, then $a'{}^\smallfrown\nu\leq^* b_{2i+1}$. 

Now suppose that $\alpha$ is limit and for all $\beta<\alpha$ the diagonalization property holds for $\mathbb{Q}^*_\beta$ and $\mathbb{Q}_\beta$ is $\leq^*$-dense in $\mathbb{Q}^*_\beta$.  First let us address the diagonalization property for $\mathbb{Q}^*_\alpha$. The argument will be similar to the one in the successor step, but we will have to insure somehow that the delyas at each element in the support of a condition are stabilizes. The next proposition shows that once we were able to do so, then we can diagonalize:
\begin{lemma}\label{babydiag}
    Suppose that $a\in\mathbb{Q}_\alpha^*$, $\xi>\xi(a)$ is such that for some fixed $\xi'<\omega_1$, we have the following density: for every $b\leq a$, with $\xi(b)=\xi$, there is $b'\leq^* b$ with delay $\xi'$. Then the diagonalization property holds at $a$ and $\xi$.
\end{lemma}
\begin{proof}
    The argument is similar to the one in the successor step except that no stabilization will be required in the end. We play according to a winning strategy of $\mathbb{Q}_0$ as follows: Suppose that $\l a_{2j},b_{2j+1},b'_{2j+1}\mid j<i\r$ have been played, where   $b'_{2j+1}\leq^* b_{2j+1}$ (such $b'_{2j+1}$ can be found using the density assumption). Assume that $\l (a_{2j})_0, (b'_{2j+1})_0\mid j<i\r$ are played according to a winning strategy. Define $a_{2i}$ as follows, $\supp(a_{2i})=\bigcup_{j<i}\supp(b'_{2j+1})$, and for each $\beta\in \supp(a_{2i})$, let $$l(\mathcal{S}^{a_{2i,\beta}})=\sup_{j, \ \beta\in \supp(b'_{2j+1})}l(\mathcal{S}^{b'_{2j+1}})$$
    Let $p^*$ diagonalize $\l (b'_{2i+1})_0\mid i<\kappa_\xi\r$, 
Let $a^*\in\mathbb{Q}_{\alpha}$ the following condition: $(a^*)_0=p^*$, $\supp(a^*)=\bigcup_{i<\kappa_\xi}\supp(b'_{2i+1})$. For every $\beta\in \supp(a^*)$, let us define $\mathcal{S}^{(a^*)_\beta}$. $$l(\mathcal{S}^{(a^*)_\beta})=\sup_{i<\kappa_\xi, \ \beta\in \supp(b'_{2i+1})}l(\mathcal{S}^{(b'_{2i+1})_\beta}).$$ 
Fix any $\eta\leq l(\mathcal{S}^{(a^*)_\beta})$, and $(\vec{\mu},c)\in Ex^*(p^*)$. Once again, let us split into cases. If $\xi\in\dom(\vec{\mu})$, let $i<\kappa_\xi$ be the unique such that $p^*{}^\smallfrown(\vec{\mu},c)\leq p^\smallfrown \nu_i$, then there is a unique $(\mu',c')\in Ex^*(p^\smallfrown\nu_i)$ such that  $p^*{}^\smallfrown(\vec{\mu},c)\leq^* (p^\smallfrown\nu_i)^\smallfrown(\mu',c')$. Now define $$\mathcal{S}^{(a^*)_\beta}_\eta(\vec{\mu},c)=
\mathcal{S}^{(b'_{2i+1})_\beta}_{\min(\eta,l(\mathcal{S}^{(b'_{2i+1})_\beta}))}(\vec{\mu}',c')$$
 If $\xi\notin\dom(\vec{\mu})$ we simply copy the strategies from $a$, namely, $$\mathcal{S}^{(a^*)_\beta}_\eta(\mu,c)=\mathcal{S}^{(a)_\beta}_{\min\{\eta,l(\mathcal{S})\}}(\vec{\mu},c)$$
If $\beta\notin \supp(a)$, then define $\mathcal{S}^{(a^*)_\beta}_\eta(\mu,c)=\emptyset$. Note that similar to the successor step, since we have fixed the delay to be $\xi'$ condition $(\theta)$ will still hold for every strategy of $a^*$.

\end{proof}

 First let us deal with the harder case where $\cf(\alpha)<\kappa$.
    The following proposition can be viewed as a limit version of Proposition \ref{Prop: Unbounded}.
\begin{proposition}
     For every $\beta<\alpha$, $b\in\mathbb{Q}^*_\beta$ and every $\delta<\kappa^+$ there is $b'\leq^* b$ such that $b'\in\mathbb{Q}_\beta$, and the maximums of all of the strategies in $b'$ is at least $\delta$. More precisely, for every $\gamma\in\supp(b')$, and $(\vec{\mu},c)\in Ex^*((b')_0)$, $\max(S^{b'}_{l(\mathcal{S}^{b^*, \gamma})}(\vec{\mu},c))\geq \delta$. 

     Moreover, we can arrange that $(b^*)_0\upharpoonright \xi(b) = (b)_0\upharpoonright \xi(b)$
\end{proposition}
\begin{proof}
We assume that $\beta$ is limit since otherwise the conclusion follows from \ref{Prop: Unbounded} and a suitable induction hypothesis regarding $\beta$. First note that by the inductive hypothesis, we have that $\mathbb{Q}_\beta$ is dense in $\mathbb{Q}^*_\beta$ and therefore $\mathbb{Q}_\beta$ satisfies the pre-strong Prikry property.

Fix such $b,\delta$. For each $\eta<\omega_1$, let $\dot{C}_\eta$ be a $\mathbb{Q}_{\beta,\eta}$-name for the intersection $\bigcap_{\gamma\in\supp(b)}\dot{C}_{\gamma,\eta}$. Since $\supp(b)$ has size at most $\kappa$, $\dot{C}_\eta$ is a name for a club. By an identical argument to the one in Lemma \ref{Lemma: lemma involved in induction with pre-strong Prikry}, we can find $\delta<\delta'<\kappa^+$ and $b^0\leq^*b$, such that for all $q\leq b^0$, $q\Vdash_{\mathbb{Q}_{\beta, \xi(q)}}\dot{C}_{\xi(q)}\cap(\delta, \delta')\neq\emptyset$. Moreover, we can find such $b^0$, so that $b^0\upharpoonright\xi(b)=b\upharpoonright \xi(b)$ and $b^0\in \mathbb{Q}_\beta$.

Since $\supp(b^0)$ might have points not in $\supp(b)$, we need do this process
$\omega$-many times. 
Namely, construct a $\leq^*$-decreasing sequence of conditions $\l b^n\mid n<\omega\r$ with $b^n\restriction \xi(b)=b\restriction \xi(b)$, and increasing $\l\delta_n\mid n<\omega\r$, such that for all $q\leq b^{n+1}$, $$q\Vdash_{\mathbb{Q}_{\beta, \xi(q)}}\bigcap_{\gamma\in\supp(b^n)}\dot{C}_{\gamma,\xi(q)}\cap(\delta^n, \delta^{n+1})\neq\emptyset.$$ Since $(\mathbb{Q}_\beta,\leq^*)$ is countably closed, we can find $b'\leq^*b^n$ for all $n$, and $\delta^* = \sup_n\delta_n$. Let $b''\leq^* b'$ be obtained by simply adding $\delta^*$ at the end of each strategy unless the maximum is already greater than $\delta^*$.

Then we have a conditions $b''\leq^*b$, with $b''\upharpoonright \xi(b) = b\upharpoonright \xi(b)$, such that for all $\gamma\in\supp b''$, the $\gamma$-strategies of $b''$ have maximums at least $\delta^*\geq \delta$. 
\end{proof}
\begin{lemma}\label{lemma: lemma1}
If $b\in\mathbb{Q}^*_\alpha$ and $\xi(b)$ is such that $\cf(\alpha)<\kappa_{\xi(b)}$, and every $\delta<\kappa^+$, then there is $b'\leq^*b$ such that $b'\in\mathbb{Q}_\alpha$ and all of its strategies have maximum above $\delta$.
\end{lemma}
\begin{proof}
    Let $\l\alpha_i\mid i<\cf(\alpha)\r$ be an increasing sequence with limit $\alpha$,   Using the previous proposition, build a coherent increasing sequence $\l b_i\mid i<\cf(\alpha)\r$, such that:
\begin{enumerate}
\item each $b_i\in\mathbb{Q}_{\alpha_i}$, $b_i\leq^*b\upharpoonright\alpha_i$, $(b_i)_0\upharpoonright\xi(b)=(b)_0\upharpoonright\xi(b)$.
\item 
for all $i<j$, $b_j\upharpoonright\alpha_i\leq^*b_i$,
\item 
all strategies of $b_i$ have maximums above $\delta$.
\end{enumerate}
Then we can construct $b^*\leq^* b$, such that for each $i$, ${b^*}\upharpoonright\alpha_i\leq^* b_i$. Here we use that below $\xi(b)$ the conditions do not change, and since $cf(\alpha)<\kappa_{\xi(b)}$ by \ref{lemma: closure of quotient} we can take $\leq^*$-lower bounds. Note that each since each $b_i\in\mathbb{Q}_{\alpha_i}$ it can also be viewed as a condition in $\mathbb{Q}_\alpha$.

\end{proof}
Note that the above lemma also shows the density of $\mathbb{Q}_\alpha$ in $\mathbb{Q}^*_\alpha$ for conditions $b$ such with $\kappa_{\xi(b)}>\cf(\alpha)$. The next corollary generalizes this.
\begin{corollary}\label{cor: stabilized delay}
    Let $a\in \mathbb{Q}^*_\alpha$, $\delta<\kappa^+$, and  let $\xi>\xi(a)$ be such that $\cf(\alpha)<\kappa_\xi$. Then there is $a'\leq^* a$, $a'\in \mathbb{Q}^*_\alpha$ with delay $\xi$, i.e. for each $\beta\in \supp(a')$, $\max(\mathcal{S}^{a'_\beta}_{l(\mathcal{S}^{a'_\beta})}(\vec{\mu},c))>\delta$ and the delay of $\mathcal{S}^{a'_\beta}_{l(\mathcal{S}^{a'_\beta})}$ is at most $\xi$.
\end{corollary}
\begin{proof}
  Construct $b_\nu\leq^*a^\smallfrown\nu$, for all $\xi$-extensions of a, such that
each $b_\nu\in \mathbb{Q}_\alpha$ (using Lemma \ref{lemma: lemma1}) and has max strategies all above $\delta$. Since all the $b_\nu$'s have delay $0$, we can diagonalize them using Lemma \ref{babydiag} to get a condition $a'\leq^* a$ in $\mathbb{Q}^*_\alpha$.    Since the delay for each $b_\nu$ is zero, we have that $a'$ has delay $\xi$ (or possibly better).
\end{proof}
\begin{corollary}\label{adultdiag}
    $\mathbb{Q}^*_\alpha$ satisfies the diagonalization property (and also the pre-strong Prikry lemma)
\end{corollary}
\begin{proof}
    Immediate from Lemma \ref{babydiag} and Corollary \ref{cor: stabilized delay}.
\end{proof}
It remains to complete the proof for the density of $\mathbb{Q}_\alpha$.
\begin{lemma}\label{Lemma: lemma2}
Let $a\in\mathbb{Q}^*_\alpha$ and $\bar{\delta}<\kappa^+$. Then there is $a'\leq^* a$, $a'\in\mathbb{Q}^*_\alpha$ and $\delta>\bar{\delta}$, such that for each $\beta\in \supp(a')$,  and all $\eta\geq \xi(a)$, for all $\eta$-extensions $r$ of ${a'}\upharpoonright\beta$, $r\Vdash_{\mathbb{Q}_{\beta,\eta}} \delta\in\dot{C}_{\beta,\eta}$.
\end{lemma}
\begin{proof}
Fix $a$ and let $\dot{C}_\eta$ be a $\mathbb{Q}^*_\alpha$-name for  $\bigcap_{\beta\in\supp(a)}\dot{C}_{\beta,\eta}$. Note that this is forced to be a club. Using the pre-strong Prikry property (once) for $\mathbb{Q}^*_\alpha$, we can find $\delta_0>\bar{\delta}$ and $a^0\leq^* a$, such that for all $r\leq a^0$, 
$r\Vdash (\bar{\delta},\delta_0)\cap\dot{C}_{\xi(r)}\neq\emptyset$. Using that $(\mathbb{Q}^*_\alpha,\leq^*)$ is $\sigma$-closed, we repeat this $\omega$-many times to find the desired $\delta$ and $a'$.

\end{proof}

Let $a'\in\mathbb{Q}^*_\alpha$, be want to find $a^*\leq a'$ such that $a^*\in\mathbb{Q}_\alpha$. Let $\delta_0$ be above all the max of the strategies of $a$ and $\xi>\xi(a)$. By Corollary \ref{cor: stabilized delay}, we can find $a'\leq^* a$ with delay $\xi$. By lemma \ref{Lemma: lemma2}, we can find $a^0\leq^* a'$  such that for some $\delta_0'>\delta_0$, for all $\beta\in \supp(a^0)$, all $\eta\geq \xi(a)$, and all $r$ $\eta$-extensions of $a^0\upharpoonright\beta$, $r\Vdash_{\mathbb{Q}_{\beta,\eta}} \delta'_0\in\dot{C}_{\beta,\eta}$.

Let $\delta_1>\delta'_0$ be above all the max of the strategies of  $a^0$.

Repeat this $\omega$-many times to get $\delta_n<\delta'_n<\delta_{n+1}$ and $\leq^*$-decreasing conditions $a^n, n<0$, such that:
\begin{enumerate}
\item each max of the strategies of $a^n$ is less than $\delta_{n+1}$
\item $a^n$ has delay $\xi$.
\item
for each $\beta\in \supp(a^n)$,  and all $\eta\geq \xi(a)$, for all $\eta$ extensions $r$ of ${a^n}\upharpoonright\beta$, $r\Vdash_{\mathbb{P}_{\beta,\eta}} \delta'_n\in \dot{C}_{\beta,\eta}$.
\end{enumerate}

Now, let $\delta_\omega = \sup_n\delta_n=\sup_n\delta'_n$ and let $c\leq^* a^n$ for each $n$ be the minimal lower bound. Then for each $\beta\in\supp(c)$, if $(\mu,d)\in Ex^*((c)_0)$, and $\xi\in\dom(\vec{\mu})$ then $\max(\mathcal{S}^{c_\beta}(\vec{\mu},d))=\delta_\omega$. This follows by item $(3)$. If $\xi\notin\dom(\vec{\mu})$, then by item $(1)$, $\max(\mathcal{S}^{c_\beta}(\vec{\mu},d))\leq\delta_{\omega}$.
  
Moreover, by construction, for any $\xi$-step extension $c^\smallfrown\nu$ of $c$, we have that $\langle\delta_\omega, {c^\smallfrown\nu}\upharpoonright\beta\rangle\in R$. This follows since the $a^n$'s have delay $\xi$.

\begin{claim} $\langle\delta_\omega, c\upharpoonright\beta\rangle\in R$.\end{claim}
\begin{proof} Since $\delta_\omega=\sup_n\delta'_n$, by (3) above we have that each $\eta$ and each $r$ which is an $\eta$-step exteion of $c\restriction\beta$,
$r\Vdash_{\mathbb{Q}_{\beta,\eta}}\delta_\omega\in\dot{C}_{\beta,\eta}$. 
\end{proof}
Finally, we construct the condition $c'\leq^* c$ by increasing the length of each strategy by one and adding $\delta_\omega$ at the end.

This ensures that $\max(\mathcal{S}^{c\restriction\beta}_{l(\mathcal{S}^{c\restriction\beta})}(\vec{\mu},d))$ is  $\delta_\omega$ for all $(\vec{\mu},d)\in Ex^*(c'_0)$, and in turn that $c'\in\mathbb{Q}_\alpha$.


 If $\cf(\alpha)=\kappa^+$, we use that the support is bounded in some $\beta$ and the inductive assumption for $\beta$. More precisely, density is immediate since $\mathbb{Q}_\alpha^*=\bigcup_{\beta<\alpha} \mathbb{Q}^*_{\beta}$ and so for the diagonalization property we can now use Lemma \ref{babydiag}. $\qedhere_{\text{ Theorem \ref{diagonalization property + density}}}$
\end{proof}

\begin{theorem}
    (The Strong Prikry Lemma) Let $\alpha\leq\kappa^{++}$, $a\in\mathbb{Q}_\alpha$, and suppose that $D\subset \mathbb{Q}_\alpha$ is a dense open set. Then there is $b\leq^* a$ and $\vec{\xi}\in[\omega_1]^{<\omega}$, such that all $\vec{\xi}$-step extensions of $b$ are in $D$.
\end{theorem}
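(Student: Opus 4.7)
The plan is to derive the Strong Prikry Lemma from the pre-strong Prikry Lemma (Lemma~\ref{Lemma: pre-Prikry Lemma}) by iterating it across all $\vec\xi \in [\omega_1\setminus\supp(a_0)]^{<\omega}$ and then appealing to the density of $D$. I first observe that the proof of Lemma~\ref{Lemma: pre-Prikry Lemma}, through the alternatives $(\star_{\mu_r})$ and $(\dagger_{\mu_r})$ inherited from its inductive hypothesis, delivers the following sharper dichotomy: for every $a'\in\mathbb{Q}_\alpha$ and every $\vec\xi$ disjoint from $\supp(a'_0)$, there is $a''\leq^* a'$ satisfying either (\emph{positive case}) every $\vec\xi$-step extension of $a''$ lies in $D$, or (\emph{negative case}) no $c\leq a''$ with $\supp(c_0)=\supp(a'_0)\cup\vec\xi$ lies in $D$. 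Both outcomes are preserved under further $\leq^*$-extensions: the positive case because $D$ is $\leq^*$-open and $c^\smallfrown\vec\mu\leq^*(a'')^\smallfrown w^c_{a''}(\vec\mu)$; the negative case because any $c'\leq c\leq^* a''$ automatically satisfies $c'\leq a''$.

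I then enumerate $[\omega_1\setminus\supp(a_0)]^{<\omega}=\{\vec\xi_i:i<\omega_1\}$ and construct a $\leq^*$-decreasing sequence $\langle a_i:i\leq\omega_1\rangle$ in $\mathbb{Q}_\alpha$ with $a_0=a$: at each successor stage, I apply the sharpened pre-strong Prikry to $a_i$ and $\vec\xi_i$ to obtain $a_{i+1}\leq^* a_i$ deciding $\vec\xi_i$; at each limit stage, I take a $\leq^*$-lower bound and, by Theorem~\ref{diagonalization property + density}, further $\leq^*$-strengthen it into $\mathbb{Q}_\alpha$. Set $b:=a_{\omega_1}$. By the inheritance noted above, $b$ decides every $\vec\xi_i$. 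If the decision is positive for some $\vec\xi_i$, we are done by taking $\vec\xi:=\vec\xi_i$. Otherwise, every decision is negative; but density furnishes $c\leq b$ with $c\in D$, and the finite set $\vec\eta:=\supp(c_0)\setminus\supp(a_0)\subseteq\omega_1\setminus\supp(a_0)$ equals some $\vec\xi_i$, so $c\leq b$ with $\supp(c_0)=\supp(a_0)\cup\vec\xi_i$ contradicts the negative decision at $\vec\xi_i$.

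The main obstacle is forming the $\leq^*$-lower bounds at the limit stages of this construction. Since $\supp(a_0)$ is finite and $\omega_1<\kappa_0\leq\kappa_{\xi(a)}$, the collapses occurring at each coordinate of $\supp(a_0)$ and the upper part $\mathbb{S}_{>\xi(a)}$ are all more than $\omega_1$-closed, while Lemmas~\ref{Lemma: Closure of successor step of iteration} and \ref{lemma: closure of quotient} supply the corresponding closure for the strategy components. The delicate point is to arrange the iteration of pre-strong Prikry so that the parts of the condition below $\xi(a)$ are only minimally altered along the sequence (as the proof of Lemma~\ref{Lemma: pre-Prikry Lemma} already accommodates); these closures then combine to give the $\omega_1$-directed closure required for the argument to go through.
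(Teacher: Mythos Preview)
Your proposal is correct and follows essentially the same route as the paper: iterate the pre-strong Prikry Lemma over all $\vec\xi\in[\omega_1\setminus\supp(a_0)]^{<\omega}$ to obtain a single $b\leq^* a$, then invoke density of $D$ to find the witnessing $\vec\xi$. Your write-up is in fact more careful than the paper's on two points the paper leaves implicit: (i) you spell out that the genuinely usable negative alternative is the stronger $(\dagger)$-type conclusion (``no $c\leq a''$ with $\supp(c_0)=\supp(a_0)\cup\vec\xi$ lies in $D$''), which is what the proof of Lemma~\ref{Lemma: pre-Prikry Lemma} actually delivers and what is needed to derive a contradiction from $c\in D$ with $c\leq^* b^\smallfrown\vec\nu$; and (ii) you address why the $\leq^*$-decreasing chain of length $\omega_1$ has lower bounds at countable limit stages. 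One minor correction: for $0\in\supp(a_0)$ the collapse $Col(\bar\kappa_0^+,{<}f_0(\kappa_0))=Col(\omega_1,{<}f_0(\kappa_0))$ is exactly $\omega_1$-closed rather than ``more than $\omega_1$-closed'', but that is precisely what is needed to handle limits of countable length.
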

\begin{proof}

Fix $a,D$ as in the statement. Let $p=(a)_0$.

Let $b\leq^*a$, $b\in \mathbb{Q}_\alpha$ be given by pre-strong Prikry Lemma, i.e. $b$ such that for every
increasing  $\vec{\xi}\in[ \omega_1]^{<\omega}$ disjoint from the support of $b$, either all of its $\vec{\xi}$-step extensions are in $D$, or none are. We apply the lemma $\omega_1$-many times (i.e. for every $\vec{\xi}\in[\omega_1]^{<\omega}$ to find a single $b$ that works for every $\vec{\xi}$.

Since $D$ is dense, let $c\leq b$ be a condition in $D$. Let  $\vec{\nu}$ be such that $c\leq^* b^{\smallfrown}\vec{\nu}$ and let $\vec{\xi}$ be the finite subset of $\omega_1$ corresponding to the coordinates of $\vec{\nu}$. It follows by the choice of $b$ that all of its $\vec{\xi}$-step extensions are in $D$. That concludes the proof of the Prikry lemma.
\end{proof}
\begin{corollary}[The Prikry Lemma]
    Suppose that $\phi$ is a sentence and $a$ is a condition, Then there is a direct extension of $a$ deciding $\phi$.
\end{corollary}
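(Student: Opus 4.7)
The plan is to derive the Prikry Lemma from the Strong Prikry Lemma (just proven) by a standard dense-open-set construction, followed by a Rowbottom-style homogenization using the completeness of the extender measures.

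First, I would let
\[
D = \{b \in \mathbb{Q}_\alpha : b \Vdash \phi \text{ or } b \Vdash \neg\phi\}.
\]
This set is dense (by ordinary forcing semantics) and open with respect to $\leq$, hence also open with respect to $\leq^*$. So the Strong Prikry Lemma applies to $a$ and $D$, producing $b \leq^* a$ and a finite increasing $\vec{\xi} = \langle \xi_1, \ldots, \xi_n\rangle \in [\omega_1]^{<\omega}$, disjoint from $\supp(b_0)$, such that every $\vec{\xi}$-step extension $b^\smallfrown \vec{\mu}$ lies in $D$ and therefore decides $\phi$.

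Next, I would homogenize the decision. Define $F : \prod_{k=1}^n A^{b_0}_{\xi_k} \to \{0,1\}$ by $F(\vec{\mu}) = 0$ if $b^\smallfrown \vec{\mu} \Vdash \phi$ and $F(\vec{\mu}) = 1$ otherwise; this is well-defined by the previous step. Since $|\{0,1\}| < \kappa_0$, Proposition \ref{Prop: Rowbottom for object-measures} yields measure-one sets $B_{\xi_k} \in E_{\xi_k}(\dom(f^{b_0}_{\xi_k}))$ with $B_{\xi_k} \subseteq A^{b_0}_{\xi_k}$ on whose product $F$ is constant. Shrinking the measure-one sets in $b_0$ at coordinates $\xi_1, \ldots, \xi_n$ to the $B_{\xi_k}$'s (and keeping all strategies and all other components unchanged) produces a direct extension $b^* \leq^* b$.

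Finally, by construction every $\vec{\xi}$-step extension of $b^*$ forces the same truth value of $\phi$, and such extensions form a pre-dense set below $b^*$ (as noted after Corollary \ref{Cor: properties of step extension}). Therefore $b^*$ itself decides $\phi$, giving the desired direct extension of $a$. I do not anticipate a real obstacle here: the Strong Prikry Lemma already localizes the decision to a fixed finite set of coordinates, and the Rowbottom-type proposition supplies exactly the pigeonhole needed to collapse the two-valued coloring to a constant on a measure-one product; the only minor point is that $b^*$ is obtained by a direct extension purely at the level of the underlying $\mathbb{P}_{\bar{E}}$-condition $b_0$, which trivially preserves all strategies and all coherence conditions.
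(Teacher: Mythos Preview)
Your proof is correct and follows essentially the same approach as the paper: apply the Strong Prikry Lemma to the dense open set of deciding conditions, then use Proposition~\ref{Prop: Rowbottom for object-measures} to homogenize the decision across $\vec{\xi}$-step extensions. The only organizational difference is in the final step: the paper takes $\vec{\xi}$ of minimal length and argues by contradiction that $|\vec{\xi}|=0$ (peeling off the top coordinate via the same pre-density observation you use), whereas you invoke pre-density of the $\vec{\xi}$-step extensions directly to conclude that $b^*$ decides $\phi$; your route is slightly more direct but the content is the same.
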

\begin{proof}
    To derive the usual Prikry property, let $\phi$ be any statement in the forcing language and $D$ be the dense open set of conditions deciding $\phi$. By further shrinking the measure one sets we may assume that if the $\vec{\xi}$-step extensions decide $\phi$, the decision in uniform across all $\vec{\xi}$-step extensions (this uses Proposition \ref{Prop: Rowbottom for object-measures}). Now take $\vec{\xi}$ as above of minimal size, and we claim that $\vec{\xi}=\emptyset$. 
    
    Otherwise, let $\vec{\xi}=\vec{\eta}^{\smallfrown}\{\max(\xi)\}$ and without loss of generality we may assume that the uniform decision for the $\vec{\xi}$-step extension is $\phi$. Let $\vec{\mu}$ be any $\vec{\eta}$-step condition. Consider the condition $b^*=b^{\smallfrown}\vec{\mu}$. We claim that $b^*\Vdash \phi$ which then implies that the $\vec{\eta}$-extensions will all decide the $\phi$, contradicting the minimality of $\vec{\xi}$. Otherwise, there is an extension of $b^*$ deciding $\neg\phi$. But this extension must be compatible with some extension of the form $b^{*\smallfrown}\nu$ for some $\max(\xi)$-step extension $\nu$, this is a contradiction, since to compatible elements cannot force contradictory information.

\end{proof}
\begin{corollary}
  Let $\phi$ be a sentence, $a\in \mathbb{Q}_\alpha$ is a condition and $\xi$ is in the support of $a$. Then there is a direct extension  $b\leq^*a$, such that $b\restriction\mathbb{S}_\xi=a\restriction\mathbb{S}_\xi$ that is $b\leq_{\mathbb{S}_\xi}a$ (see definition \ref{definition: the term space forcing}), and for all $c\leq b$ which decides $\phi$, we have that\footnote{Here $l^\smallfrown b\upharpoonright[\xi, \omega_1)$ means $b+l$ in the sense of definition \ref{def: term space forcing projection}.} ${c\upharpoonright\mathbb{S}_\xi}^{\smallfrown} b\upharpoonright[\xi, \omega_1)$ also decides $\phi$ (in the same way).
\end{corollary}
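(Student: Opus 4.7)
The plan is to pass below $a$ and exploit the factorization $\mathbb{P}_{\vec E}/a_0\simeq\mathbb{S}_\xi\times\mathbb{S}_{>\xi}$ provided by $\xi\in\supp(a)$ (Proposition~\ref{Properties}(3)). Let $\mathbb{R}$ denote $\mathbb{Q}_\alpha/a$ equipped with the ordering $\leq_{\mathbb{S}_\xi}$ that fixes the $\mathbb{S}_\xi$-projection, in the sense of Definition~\ref{definition: the term space forcing}. By the inductive template of Lemmas~\ref{lemma: Q^* degree of directed}--\ref{lemma: closure of quotient} applied below $a$, $\mathbb{R}$ is $\kappa_\xi^+$-directed closed, and by Definition~\ref{def: term space forcing projection} together with items (2)--(4) of the proposition following it, the map $(s,b')\mapsto b'+s$ is a forcing projection from $\mathbb{S}_\xi\times\mathbb{R}$ onto $\mathbb{Q}_\alpha/a$.

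Via this projection, pull back the $\mathbb{Q}_\alpha$-name $\dot{x}_\phi$ for the truth value of $\phi$ and curry it as an $\mathbb{R}$-name $\dot{\dot y}$ for an $\mathbb{S}_\xi$-name of an element of $\{0,1\}$. Any such $\mathbb{S}_\xi$-name is coded by a subset of $\mathbb{S}_\xi\times\{0,1\}$, which has size at most $|\mathbb{S}_\xi|\le\kappa_\xi$; so by the $\kappa_\xi^+$-closure of $\mathbb{R}$ every value of $\dot{\dot y}$ in $V^\mathbb{R}$ is already a ground-model $\mathbb{S}_\xi$-name. Hence the set of $b'\le_{\mathbb{S}_\xi} a$ forcing $\dot{\dot y}$ to equal the check of a particular $V$-name is dense below $a$. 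Fix one such $b$ together with an $\mathbb{S}_\xi$-name $\dot y\in V$ witnessing $b\Vdash_\mathbb{R}\dot{x}_\phi=\check{\dot y}$; this $b$ will be the desired direct extension.

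To verify the reduction property, let $c\le b$ decide $\phi$, say $c\Vdash\phi=v$, and set $s:=c\restriction\mathbb{S}_\xi$. By item~(4) of the proposition after Definition~\ref{def: term space forcing projection} there is a unique $c'\le_{\mathbb{S}_\xi} b$ with $c=c'+s$, and unfolding the projection yields $c'\Vdash_\mathbb{R} s\Vdash_{\mathbb{S}_\xi}\dot{x}_\phi=\check v$. Combined with $c'\Vdash_\mathbb{R}\dot{x}_\phi=\check{\dot y}$, this gives $c'\Vdash_\mathbb{R} s\Vdash_{\mathbb{S}_\xi}\dot y=\check v$. But ``$s\Vdash_{\mathbb{S}_\xi}\dot y=\check v$'' is a statement entirely about the ground-model forcing $\mathbb{S}_\xi$ and the ground-model name $\dot y$, and because $\mathbb{R}$ adds no new dense subsets of $\mathbb{S}_\xi$ (by $\kappa_\xi^+$-closure) this assertion is absolute between $V$ and $V^\mathbb{R}$, so it already holds in $V$. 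Re-lifting under $b$ and invoking $b\Vdash_\mathbb{R}\dot{x}_\phi=\check{\dot y}$ gives $b\Vdash_\mathbb{R} s\Vdash_{\mathbb{S}_\xi}\dot{x}_\phi=\check v$, i.e.\ ${c\restriction\mathbb{S}_\xi}^\smallfrown b\restriction[\xi,\omega_1)=b+s$ forces $\phi=v$ in $\mathbb{Q}_\alpha$. The ``same way'' clause is automatic, since $c\le b+s$ forces $c$'s decision to agree with that of $b+s$.

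The main thing that needs care is verifying that the factorization and projection $\mathbb{S}_\xi\times\mathbb{R}\twoheadrightarrow\mathbb{Q}_\alpha/a$ behave as a genuine product/projection pair at all levels of the iteration, and that the $\kappa_\xi^+$-directed closure of $\mathbb{R}$ carries through when $\xi(a)>\xi$; both reduce, via the inductive scheme of Section~\ref{section: projections}, to the ground-level factorization of $\mathbb{P}_{\vec E}/a_0$ and the transport of strategies under $+$ from Claim~\ref{Claim: frown yield a condition} and Proposition~\ref{Prop: minimal}, and are otherwise routine bookkeeping.
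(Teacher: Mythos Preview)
Your argument has a genuine gap: the poset $\mathbb{R}=(\mathbb{Q}_\alpha/a,\leq_{\mathbb{S}_\xi})$ is \emph{not} $\kappa_\xi^+$-directed closed. The order $\leq_{\mathbb{S}_\xi}$ only fixes the $\mathbb{S}_\xi$-projection of the first coordinate, and step-extensions at coordinates $i>\xi$ leave that projection unchanged; hence a $\leq_{\mathbb{S}_\xi}$-decreasing $\omega$-sequence whose supports above $\xi$ grow without bound has no lower bound. Put differently, the complement factor $\mathbb{S}_{>\xi}$ in Proposition~\ref{Properties}(3) is $\kappa_\xi^+$-closed only for $\leq^*$, not for the full order, so the hypothesis of Lemma~\ref{lemma: closure of quotient} (even localized below $a$) fails for $\mathbb{P}=\mathbb{S}_\xi$. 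Your term-space reduction would still go through if $\mathbb{R}$ were merely $\kappa_\xi^+$-distributive, but that is exactly what this corollary is used to establish in the paragraph immediately following it, so assuming it here is circular.

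The paper avoids this by inserting the Prikry property, which was just proven: for each lower part $l\in\mathbb{S}_\xi$ below $a\restriction\mathbb{S}_\xi$ one first finds $a_l\leq^* a+l$ deciding $\phi$ in the full forcing. Because these are \emph{direct} extensions, their supports coincide with $\supp(a_0)$, so the conditions $a_l+(a\restriction\mathbb{S}_\xi)$ form a $\leq^*$-decreasing sequence with fixed $\mathbb{S}_\xi$-part, and now the amalgamation over the $\kappa_\xi$-many lower parts only needs the $\kappa_\xi^+$-closure of $\leq^*$ on the upper part, which does hold. The Prikry lemma is doing real work here that your closure-only argument cannot replace.
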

\begin{proof}
Fix $a\in\mathbb{Q}_\alpha$ and $\xi\in \supp(a)$. Recall the poset $\mathbb{S}_\xi$ from Proposition \ref{Properties}(3). For any lower part $l\in\mathbb{S}_\xi$ below $a\upharpoonright\mathbb{S}_\xi$, let $a_l\leq^* l^{\smallfrown} a\upharpoonright[\xi, \omega_1)$ be given by the Prikry property. I.e. $a_l$ decides $\phi$.  The number of possible lower parts $l\in\mathbb{S}_\xi$ is $\kappa_\xi$ (note that the last part of $l$ is in $Col(s_\xi(f_\xi(\kappa_\xi))^{+3}, <\kappa_\xi)$). 
By \ref{lemma: closure of quotient}, fixing the lower part, we gain $\kappa_\xi^+$-closure of the upper part.  So by inductively maintaining that $a_l+(a\restriction\mathbb{S}_\xi)$ are decreasing in $\leq_{\mathbb{S}_\xi}$ we can find a lower bound. Finally let $b\leq^* a$ be such that $b\upharpoonright{\mathbb{S}_\xi}=a\upharpoonright{\mathbb{S}_\xi}$ and $b+l\leq_{\mathbb{S}_\xi} a_l$ for each $l$. Then $b$ is as desired.

\end{proof}
It follows from this corollary and again from \ref{lemma: closure of quotient}, that every $\mathbb{Q}_\alpha$-name of a subset of $\kappa_\xi$ can be reduced to a $\mathbb{S}_\xi$-name. In other words, the quotient forcing $\mathbb{Q}^{\mathbb{S}_\xi}_\alpha$ does not add bounded subsets to $\kappa_\xi^+$. This is used to show the preservation of all the other relevant cardinals.
\begin{corollary}\label{corollary: cardinal structure in Qalpha below kappa}
    For every $\xi<\omega_1$, $V^{\mathbb{Q}_\alpha}$ models that
$$ ``[\bar{\kappa}_\xi,\kappa_\xi]\cap card.=\{\bar{\kappa}_\xi,\bar{\kappa}^+_\xi,f^{p_0}_\xi(\kappa_\xi),s_\xi(f^{p_0}_\xi(\kappa_\xi))^{++},s_\xi(f^{p_0}_\xi(\kappa_\xi))^{+3},\kappa_\xi\}"$$
In particular, $\kappa=\kappa_{\omega_1}$.
\end{corollary}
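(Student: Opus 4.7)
The plan is to combine the preceding corollary with the projection $\pi_{\alpha,\mathbb{S}_\xi}\colon \mathbb{Q}_\alpha\to \mathbb{S}_\xi$. The projection gives $V^{\mathbb{S}_\xi}\subseteq V^{\mathbb{Q}_\alpha}$, while the preceding corollary asserts that $\mathbb{Q}^{\mathbb{S}_\xi}_\alpha$ adds no bounded subsets of $\kappa_\xi^+$, so $V^{\mathbb{Q}_\alpha}$ and $V^{\mathbb{S}_\xi}$ have the same subsets of any ordinal $\leq\kappa_\xi$. From this I get both inclusions on cardinals: any cardinal in $[\bar{\kappa}_\xi,\kappa_\xi]$ that gets collapsed in $V^{\mathbb{Q}_\alpha}$ must already be collapsed in $V^{\mathbb{S}_\xi}$, since the collapsing bijection is coded by a bounded subset of $\kappa_\xi^+$; conversely, any cardinal in the range that survives in $V^{\mathbb{S}_\xi}$ trivially survives in the larger $V^{\mathbb{Q}_\alpha}$. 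So the computation reduces to determining the cardinals of $V^{\mathbb{S}_\xi}$ in this range.

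For that computation, I would use the factorization
$$\mathbb{S}_\xi=\mathbb{P}\restriction\xi\times Col(\bar{\kappa}_\xi^+,{<}f_\xi(\kappa_\xi))\times Col(f_\xi(\kappa_\xi),s_\xi(f_\xi(\kappa_\xi))^+)\times Col(s_\xi(f_\xi(\kappa_\xi))^{+3},{<}\kappa_\xi)$$
from Proposition \ref{Properties}(3). Here $\mathbb{P}\restriction\xi$ has the $\bar{\kappa}_\xi$-c.c.\ and size less than $\kappa_\xi$, so it is irrelevant for cardinals in $[\bar{\kappa}_\xi,\kappa_\xi]$, while the three Levy collapses implement exactly the identifications $\bar{\kappa}_\xi^{++}=f_\xi(\kappa_\xi)$, $\bar{\kappa}_\xi^{+3}=s_\xi(f_\xi(\kappa_\xi))^{++}$, and $\bar{\kappa}_\xi^{+5}=\kappa_\xi$. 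This leaves precisely the six listed cardinals, matching the computation for $\mathbb{P}_{\vec{E}}$ in Proposition \ref{Properties}(6).

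For the ``in particular'' clause, I would induct on $\xi\leq\omega_1$ to show $\bar{\kappa}_\xi=\aleph_\xi^{V^{\mathbb{Q}_\alpha}}$ at every limit $\xi$. At successor stages $\xi+1$, the five cardinals strictly between $\bar{\kappa}_\xi$ and $\bar{\kappa}_{\xi+1}=\kappa_\xi$ identified above advance the $\aleph$-index by $5$; at limit $\xi$, the identity $\sup_{\beta<\xi}5(\beta+1)=\xi$ together with the induction hypothesis gives $\bar{\kappa}_\xi=\aleph_\xi^{V^{\mathbb{Q}_\alpha}}$. Setting $\xi=\omega_1$ yields $\kappa=\bar{\kappa}_{\omega_1}=\aleph_{\omega_1}^{V^{\mathbb{Q}_\alpha}}=\kappa_{\omega_1}$. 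The substantive content is really the preceding corollary; once it is in hand, there is no real obstacle, and the rest is bookkeeping.
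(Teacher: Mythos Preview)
Your approach is correct and matches the paper's: both use that $V^{\mathbb{S}_\xi}$ and $V^{\mathbb{Q}_\alpha}$ share the same bounded subsets of $\kappa_\xi^+$ (from the paragraph preceding the corollary), hence the same cardinals in $[\bar{\kappa}_\xi,\kappa_\xi]$, and then read off the answer from the known structure of $\mathbb{S}_\xi$ --- the paper routes this through Proposition~\ref{Properties}(6) for $\mathbb{P}_{\bar{E}}$ via the sandwich $V^{\mathbb{S}_\xi}\subseteq V^{\mathbb{P}_{\bar{E}}}\subseteq V^{\mathbb{Q}_\alpha}$, while you compute the $V^{\mathbb{S}_\xi}$-cardinals directly from the factorization, which amounts to the same thing.

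One slip in your write-up: your ``conversely'' clause actually restates the \emph{same} inclusion as the first clause (survives in $V^{\mathbb{S}_\xi}$ $\Rightarrow$ survives in $V^{\mathbb{Q}_\alpha}$), rather than the reverse. The direction that follows ``trivially'' from $V^{\mathbb{S}_\xi}\subseteq V^{\mathbb{Q}_\alpha}$ is that anything \emph{collapsed} in $V^{\mathbb{S}_\xi}$ is collapsed in $V^{\mathbb{Q}_\alpha}$ --- passing to a larger model can only destroy cardinals, not resurrect them. Swap the quantifier and you have both inclusions.
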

\begin{proof}
    Since $\mathbb{Q}_\alpha$ projects to $\mathbb{P}_{\bar{E}}$ which satisfy the above theorem, we need to prove that no further damage has been done.  As we mentioned in the paragraph before the Corollary, the extension from $V^{\mathbb{S}_\xi}$ to $V^{\mathbb{Q}_\alpha}$ does not add new  bounded subsets to $\kappa_\xi^+$. In particular, the cardinal structure of the interval $[\bar{\kappa}_\xi,\kappa_\xi]$ is not changed between these models.
\end{proof}

\begin{corollary}
    $\kappa$ is strong limit in $V^{\mathbb{Q}_\alpha}$
\end{corollary}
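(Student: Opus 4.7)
The plan is to combine the cardinal-structure corollary just proved with the observation recorded in the paragraph preceding it, namely that every $\mathbb{Q}_\alpha$-name of a subset of $\kappa_\xi$ reduces to an $\mathbb{S}_\xi$-name. Given an arbitrary $\lambda < \kappa$, I would first use the fact that $\kappa = \kappa_{\omega_1} = \sup_{\xi<\omega_1}\kappa_\xi$ to pick $\xi < \omega_1$ with $\lambda \le \kappa_\xi$. Since $(2^\lambda)^{V^{\mathbb{Q}_\alpha}} \le (2^{\kappa_\xi})^{V^{\mathbb{Q}_\alpha}}$, it suffices to verify $(2^{\kappa_\xi})^{V^{\mathbb{Q}_\alpha}} < \kappa$.

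Next, the reduction-of-names remark gives $\mathcal{P}(\kappa_\xi)^{V^{\mathbb{Q}_\alpha}} \subseteq V^{\mathbb{S}_\xi}$. Inside $V^{\mathbb{S}_\xi}$, I would bound $2^{\kappa_\xi}$ by a standard nice-names computation. In $V$, we have $|\mathbb{S}_\xi| \le \kappa_\xi$ (the collapse factor $Col(\tau^{+3},{<}\kappa_\xi)$ has size $\kappa_\xi$, while $\mathbb{S}_{<\xi}$ has size at most $\tau^{++} < \kappa_\xi$), and $\mathbb{S}_\xi$ satisfies the $\kappa_\xi$-c.c.\ because $\kappa_\xi$ is inaccessible in $V$ and the remaining factors have size below $\kappa_\xi$. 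Combined with GCH in $V$, counting nice names yields
\[
(2^{\kappa_\xi})^{V^{\mathbb{S}_\xi}} \le (\kappa_\xi^{\kappa_\xi})^V = (\kappa_\xi^+)^V.
\]

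Since $V^{\mathbb{S}_\xi} \subseteq V^{\mathbb{Q}_\alpha}$, we conclude
\[
(2^{\kappa_\xi})^{V^{\mathbb{Q}_\alpha}} = \bigl|\mathcal{P}(\kappa_\xi)^{V^{\mathbb{S}_\xi}}\bigr|^{V^{\mathbb{Q}_\alpha}} \le (\kappa_\xi^+)^V < \kappa_{\xi+1} \le \kappa,
\]
which gives strong limit. I do not expect a genuine obstacle here: the real work is already packed into Corollary \ref{corollary: cardinal structure in Qalpha below kappa} and the reduction-of-names remark preceding it, and the proof is a short deduction from those two facts together with GCH in the ground model.
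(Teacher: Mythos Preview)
Your argument is correct and follows essentially the same route as the paper: push the power-set computation down to $V^{\mathbb{S}_\xi}$ via the reduction-of-names remark and then bound it there using $|\mathbb{S}_\xi|\le\kappa_\xi$ together with GCH. The only cosmetic difference is that the paper works with $\bar\kappa_\xi$ and bounds $(2^{\bar\kappa_\xi})^{V^{\mathbb{S}_\xi}}\le\kappa_\xi$, whereas you work with $\kappa_\xi$ itself and bound $(2^{\kappa_\xi})^{V^{\mathbb{S}_\xi}}\le(\kappa_\xi^+)^V$; both are immediate from the same size/chain-condition count.
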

\begin{proof}
For each $\xi<\omega_1$, since  $V^{\mathbb{Q}_\alpha}$ does not add bounded subsets in $\kappa_\xi^+$ to $V^{\mathbb{S}_\xi}$, we have $(P(\bar{\kappa}_\xi))^{V^{\mathbb{Q}_\alpha}}=(P(\bar{\kappa}_\xi))^{V_{\mathbb{S}_\xi}}$ and $(2^{\bar{\kappa}_\xi})^{V_{\mathbb{S}_\xi}}\leq \kappa_\xi$.
\end{proof}
\begin{corollary}
    $\kappa^+$ is preserved by $\mathbb{Q}_{\alpha}$.
\end{corollary}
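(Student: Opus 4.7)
The plan is to realize $\mathbb{Q}_\alpha$ as a projection of the product $\mathbb{P}_{\bar{E}}\times\mathbb{Q}^+_\alpha$, and then argue that this product already preserves $\kappa^+$. Applying the construction of Definition \ref{def: term space forcing projection} with $\mathbb{P}:=\mathbb{P}_{\bar{E}}$ (and the complementary factor trivial) produces the projection $(p,a)\mapsto a+p$ from $\mathbb{P}_{\bar{E}}\times\mathbb{Q}^+_\alpha$ onto $\mathbb{Q}_\alpha$. This yields $V^{\mathbb{Q}_\alpha}\subseteq V^{\mathbb{P}_{\bar{E}}\times\mathbb{Q}^+_\alpha}$, so any collapse of $(\kappa^+)^V$ witnessed in $V^{\mathbb{Q}_\alpha}$ would already be witnessed in the larger model; it therefore suffices to show that $\mathbb{P}_{\bar{E}}\times\mathbb{Q}^+_\alpha$ preserves $\kappa^+$.

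To do this, I would force with $\mathbb{Q}^+_\alpha$ first. By Lemma \ref{lemma: closure of quotient} this forcing is $\kappa^+$-directed closed, so in $V_1:=V[\mathbb{Q}^+_\alpha]$ there are no new ${<}\kappa^+$-sequences of ordinals and $\kappa^+$ is preserved. Crucially, each condition $p\in\mathbb{P}_{\bar{E}}$ has hereditary cardinality at most $\kappa<\kappa^+$: the Cohen parts $f_i$ have size $\kappa_i$; the measure-one sets $A_i\subseteq OB_i(\dom f_i)$ have size at most $\kappa_i^+$ by GCH; the collapse parts are bounded by $\kappa_i$; and the extender measures $E_i(d)$ have size bounded by $2^{\kappa_i^+}=\kappa_i^{++}<\kappa$. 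Hence the whole forcing $\mathbb{P}_{\bar{E}}$, including its orderings, is the same object in $V$ and in $V_1$.

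Working now in $V_1$, I would invoke the same $\kappa^+$-preservation proof for $\mathbb{P}_{\bar{E}}$ that established Proposition \ref{Properties}(2) in $V$. That proof depends only on internal combinatorial features of the forcing, namely the $\kappa^{++}$-Knaster property furnished by an Engelking--Kar\l owicz coloring into $H(\kappa^+)$, the strong Prikry lemma, and the $\kappa_\xi^+$-closure of $\mathbb{P}_{>\xi}$ in the direct extension order below any condition with $\xi$ in its support. Each of these is a first-order property of the forcing and involves only objects of size ${<}\kappa^+$, so each persists in $V_1$. Applied to a $\mathbb{P}_{\bar{E}}$-name in $V_1$ for a function $\mu\to\kappa^+$ with $\mu<\kappa^+$, the standard Prikry-style argument (fixing a coordinate $\xi$ with $\kappa_\xi>\mu$, enumerating the ${<}\kappa$ many lower parts in $\mathbb{S}_{\le\xi}$, and invoking $\kappa_\xi^+$-closure of $\mathbb{P}_{>\xi}$ to amalgamate the direct extensions produced by strong Prikry) then produces a direct extension forcing the range of the name into a bounded subset of $\kappa^+$.

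The main obstacle is the absoluteness claim in the previous paragraph: one must check that $\mathbb{Q}^+_\alpha$ introduces no ``bad'' $\mathbb{P}_{\bar{E}}$-names that escape the preservation argument of $V$. This reduces to checking that the Knaster coloring and the strong Prikry lemma transfer to $V_1$, which they do because they rely only on objects of hereditary size ${<}\kappa^+$, all of which remain in $V$ after the $\kappa^+$-closed forcing $\mathbb{Q}^+_\alpha$.
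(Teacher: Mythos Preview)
Your approach is correct and genuinely different from the paper's. The paper does not factor through $\mathbb{P}_{\bar{E}}\times\mathbb{Q}^+_\alpha$ at all; having just established the Strong Prikry Lemma for $\mathbb{Q}_\alpha$ itself, it runs the standard Prikry-type argument directly inside $\mathbb{Q}_\alpha$: assume some $a$ forces $\dot f\colon\theta\to(\kappa^+)^V$ is cofinal with $\theta<\kappa$, pick $\xi$ with $\kappa_\xi>\theta$, build a $\leq^*$-decreasing sequence $\langle a_i\mid i<\theta\rangle$ fixing $a\restriction\mathbb{S}_\xi$ so that for each $i$ and each lower part $l$ some finite step-extension of $a_i+l$ decides $\dot f(i)$, take a lower bound using Lemma~\ref{lemma: closure of quotient}, and count. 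This uses only machinery already developed for $\mathbb{Q}_\alpha$ and avoids any absoluteness discussion.

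Your route trades that directness for a reduction to the known case of $\mathbb{P}_{\bar{E}}$, at the cost of having to re-verify the Prikry analysis of $\mathbb{P}_{\bar{E}}$ in $V_1=V^{\mathbb{Q}^+_\alpha}$. That re-verification does work, but your stated justification is imprecise: the strong Prikry lemma is \emph{not} a first-order property of the poset involving only objects of size ${<}\kappa^+$, since it quantifies over dense open subsets of $\mathbb{P}_{\bar{E}}$ (a poset of size $\kappa^{++}$), and $V_1$ may well contain new such subsets. What actually transfers is the \emph{proof}: the diagonalization argument for $\mathbb{P}_{\bar{E}}$ uses only that the $E_i(d)$ are $\kappa_i$-complete ultrafilters on sets of size ${\leq}\kappa_i$ and that certain $\leq^*$-suborders are ${<}\kappa$-closed; both survive $\kappa^+$-closed forcing because $V_1$ has no new subsets of any $\kappa_i$ and no new ${<}\kappa^+$-sequences. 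You should phrase the absoluteness in those terms rather than invoking a size bound on the statement. (The Knaster property is a red herring here; it is not used in the $\kappa^+$-preservation argument.)
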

\begin{proof}
    Otherwise, $\theta=cf^{V^{\mathbb{Q}_\alpha}}((\kappa^+)^V)<\kappa$ and there is an unbounded function $f:\theta\rightarrow (\kappa^+)^V$ unbounded witnessing this and $a\in\mathbb{Q}_\alpha$ that forces that $\dot{f}$ is unbounded in $(\kappa^+)^V$. Pick any $\xi<\omega_1$ such that $\kappa_\xi>\theta$. Now as we did before, we can construct a $\leq^*$-decreasing sequence $\l a_i\mid i<\theta\r$ that fixes $a\restriction\mathbb{S}_\xi$ and for each $i<\theta$ and each $l\leq a\restriction\mathbb{S}_\xi$, there is $\vec{\xi}_{i,l}\in[\omega_1]^{<\omega}$ such that every $\vec{\mu}\in Ex_{\vec{\xi}}(a_i)$, $a_i^\smallfrown\vec{\mu}+l$ decides the value of $\dot{f}(i)$. Taking $a^*$ to be a lower bound for the $a_i$'s
 (which is possible by \ref{lemma: closure of quotient}), we can now define a function $g$ in $V$ from a set of size $\kappa$, namely, the set of lower parts times all possible $\vec{\mu}\in Ex_{\xi_i}(a^*)$ times $\theta$ to $\kappa^+$, defined by $g(l,\vec{\mu},i)=\gamma$ if $l\leq a^*\restriction\vec{\xi}$ and $a^{*\smallfrown}\vec{\mu}+\vec{\mu}\Vdash \dot{f}(i)=\gamma$. Note that $g$ is unbounded in $\kappa^+$ as $a^*$ forces that $\dot{f}$ is unbounded, and each possible value $\dot{f}(i)$ is recorded in one of the values of $g$. This produces a contradiction. \end{proof}
\begin{corollary}\label{Cor: Failure of SCH}
    $V^{\mathbb{Q}_\alpha}\models 2^\kappa>\kappa^+$.
\end{corollary}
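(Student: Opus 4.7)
The plan is to leverage the fact that $\mathbb{P}_{\bar{E}}$ is a factor of $\mathbb{Q}_\alpha$ together with cardinal preservation to transfer the failure of SCH from $V^{\mathbb{P}_{\bar{E}}}$ to $V^{\mathbb{Q}_\alpha}$. Recall from Proposition \ref{Properties}(7) that $V^{\mathbb{P}_{\bar{E}}} \models 2^{\kappa}=\kappa^{++}$, so already in this intermediate model there is a sequence $\langle X_\eta \mid \eta<\kappa^{++}\rangle$ of pairwise distinct subsets of $\kappa$.

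First I would recall that the map $p\mapsto p_0$ projects $\mathbb{Q}_\alpha$ onto $\mathbb{P}_{\bar{E}}$ (a straightforward verification from the definition of the iteration at level $0$). Hence $V^{\mathbb{P}_{\bar{E}}}\subseteq V^{\mathbb{Q}_\alpha}$, and in particular the sequence $\langle X_\eta \mid \eta<\kappa^{++}\rangle$ remains a sequence of pairwise distinct subsets of $\kappa$ in $V^{\mathbb{Q}_\alpha}$. So $|P(\kappa)|^{V^{\mathbb{Q}_\alpha}}\geq (\kappa^{++})^V$.

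Next I would argue that $(\kappa^{++})^V$ remains a cardinal above $\kappa^+$ in $V^{\mathbb{Q}_\alpha}$. We have already established that $\kappa$ and $\kappa^+$ are preserved (the previous two corollaries), and that $\mathbb{Q}_\alpha$ is $\kappa^{++}$-Knaster and hence $\kappa^{++}$-c.c.\ (the proposition at the start of Section \ref{section: Prikry}), so every cardinal $\theta\geq \kappa^{++}$ is preserved. Combining these, $(\kappa^{++})^V=(\kappa^{++})^{V^{\mathbb{Q}_\alpha}}$, and in particular $(\kappa^{++})^V>(\kappa^+)^{V^{\mathbb{Q}_\alpha}}$.

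Putting the two ingredients together yields
\[
V^{\mathbb{Q}_\alpha}\models 2^{\kappa} \geq (\kappa^{++})^V > \kappa^+,
\]
as desired. There is no real obstacle here: the only non-trivial inputs, namely preservation of $\kappa$, $\kappa^+$, and all cardinals $\geq \kappa^{++}$, and the projection onto $\mathbb{P}_{\bar{E}}$, have already been proved in earlier parts of Sections \ref{section: projections} and \ref{section: Prikry}.
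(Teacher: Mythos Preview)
Your proposal is correct and follows essentially the same approach as the paper: use the projection $\mathbb{Q}_\alpha\to\mathbb{P}_{\bar{E}}$ to inherit the $\kappa^{++}$-many subsets of $\kappa$, and invoke the cardinal preservation results (for $\kappa$, $\kappa^+$, and $\kappa^{++}$ via the chain condition) to conclude $2^\kappa>\kappa^+$ in $V^{\mathbb{Q}_\alpha}$. The paper's proof is slightly terser, simply noting that the cardinals of $V^{\mathbb{Q}_\alpha}$ agree with those of $V^{\mathbb{P}_{\bar{E}}}$ and that the projection gives $(2^\kappa)^{V^{\mathbb{P}_{\bar{E}}}}\leq(2^\kappa)^{V^{\mathbb{Q}_\alpha}}$, but the content is the same.
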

\begin{proof}
    Thus far we have shown that the cardinals of $V^{\mathbb{Q}_\alpha}$ are identical to those of $V^{\mathbb{P}_{\bar{E}}}$. Hence, since $\mathbb{P}_{\bar{E}}$ is a projection of $\mathbb{Q}_\alpha$, $\kappa^+<(2^\kappa)^{\mathbb{P}_{\bar{E}}}\leq (2^{\kappa})^{\mathbb{Q}_\alpha}$.
\end{proof}
\section{Non-reflecting is fragile}\label{Section: reflection} 
\begin{lemma}\label{Lemma: Reflectionholds in the product}
     Suppose that the sequence of $\kappa_\xi$'s are indestructibly supercompact cardinals, then $V^{\mathbb{S}_\xi\times\mathbb{Q}^+_{\alpha,\xi}}\models \text{Refl}(E^{\kappa^+}_{\leq\bar{\kappa}_\xi},E^{\kappa^+}_{<\kappa_\xi})$.
\end{lemma}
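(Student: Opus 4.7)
The plan is to adapt the proof of Lemma~\ref{Lemma: generic lift} almost verbatim, exploiting that by Corollary~\ref{Cor: specific directed degree of Q*} the additional factor $\mathbb{Q}^+_{\alpha,\xi}$ is $\kappa_\xi^+$-directed closed, so by the indestructibility hypothesis on the $\kappa_i$'s it can be absorbed into the base model before the supercompact embedding is chosen. Concretely, I would fix a generic $G = G_0 \times H$ for $\mathbb{S}_\xi \times \mathbb{Q}^+_{\alpha,\xi}$ over $V$ and a stationary $S \subseteq E^{\kappa^+}_{\leq \bar{\kappa}_\xi}$ in $V[G]$, produce a lifted $\kappa_{\xi+1}$-supercompact embedding $j^* \colon V[G] \to M^*$ definable in a mild forcing extension of $V[G]$, and then apply the standard reflection argument.

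The key steps, in order, are as follows. First, since $\mathbb{S}_\xi \times \mathbb{Q}^+_{\alpha,\xi}$ projects onto $\mathbb{Q}_{\alpha,\xi}$, which projects onto $\mathbb{P}_\xi$, and since $\mathbb{Q}^+_{\alpha,\xi}$ preserves the relevant cardinals, one has $|(\kappa^+)^V| = \kappa_{\xi+1} = \kappa_\xi^{++}$ in $V[G]$, so $S$ is a stationary subset of $\kappa_\xi^{++}$ concentrating on cofinalities $\leq \bar{\kappa}_\xi$; by Shelah \cite[Lemma 4.4]{Shelah1991} it lies in $I[\kappa_\xi^{++}]$. Second, I would work in $V[H]$: by the indestructibility of $\kappa_\xi$ under $\kappa_\xi$-directed closed forcing, $\kappa_\xi$ remains $\kappa_{\xi+1}$-supercompact there, yielding $j \colon V[H] \to M$ with critical point $\kappa_\xi$. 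Third, I would factor $\mathbb{S}_\xi = \mathbb{S}_{<\xi} \times Col(\tau^{+3}, <\kappa_\xi)$ with $\tau = s_\xi(f_\xi(\kappa_\xi))$ and $|\mathbb{S}_{<\xi}| < \kappa_\xi$, and lift $j$ through $\mathbb{S}_{<\xi}$ by the standard small-forcing argument and then through $Col(\tau^{+3}, <\kappa_\xi)$ using the $\tau^{+3}$-closure of the tail $Col(\tau^{+3}, <j(\kappa_\xi))/Col(\tau^{+3}, <\kappa_\xi)$ in an appropriate generic extension, exactly as in the proof of Lemma~\ref{Lemma: generic lift}. This yields $j^* \colon V[G] \to M^*$ definable in $V[G][K]$ for some generic $K$ for a $\tau^{+3}$-closed forcing. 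Finally, since $\tau^{+3} > \bar{\kappa}_\xi^+$, by Shelah \cite{SHELAH1979357} the stationarity of the approachable set $S$ is preserved to $V[G][K]$, whence $j^*(S) \cap \sup (j^*)''\kappa_{\xi+1}$ is stationary in $M^*$ at an ordinal of cofinality $\kappa_{\xi+1} < j^*(\kappa_\xi)$; elementarity then gives that $S$ reflects in $V[G]$ at a point of cofinality $< \kappa_\xi$.

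The step I expect to require the most care is the third one: the small-forcing and tail-collapse lifting arguments must be rechecked with $V[H]$ (rather than $V$) as the base model. This should be harmless because $\mathbb{Q}^+_{\alpha,\xi}$ is $\kappa_\xi^+$-directed closed, so $V[H]$ and $V$ agree on all bounded subsets of $\kappa_\xi$, and in particular on the forcing $\mathbb{S}_{<\xi}$; and the required closure of the tail quotient $Col(\tau^{+3}, <j(\kappa_\xi))/Col(\tau^{+3}, <\kappa_\xi)$ in $M$ is unaffected by the presence of $H$ in the base. No new conceptual ideas beyond those in Lemma~\ref{Lemma: generic lift} should be needed.
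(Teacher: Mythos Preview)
Your proposal is correct and follows essentially the same approach as the paper, which simply refers back to Lemma~\ref{Lemma: generic lift} and notes that the indestructibility of $\kappa_\xi$ together with the $\kappa_\xi^+$-directed closure of $\mathbb{Q}^+_{\alpha,\xi}$ allows one to run the identical lifting-and-reflection argument over the base model $V^{\mathbb{Q}^+_{\alpha,\xi}}$. You have in fact spelled out more detail than the paper does; no new ideas are needed beyond those in Lemma~\ref{Lemma: generic lift}, exactly as you anticipated.
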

\begin{proof}
The argument is the same as in Lemma \ref{Lemma: generic lift}, the point is that $\kappa_\xi$ remains supercompact in $V^{\mathbb{Q}^+_{\alpha,\xi}}$ (by indestructibility), then generically lifting a supercompact embedding from $V^{\mathbb{Q}^+_{\alpha,\xi}}$ to $V^{\mathbb{S}_\xi\times\mathbb{Q}^+_{\alpha,\xi}}$ is identical to \ref{Lemma: generic lift} and so is the rest of the argument.
\end{proof}
\begin{theorem}\label{Thm: stationary sets preservation passing to product}
 Every stationary set in $V^{\mathbb{Q}_{\alpha,\xi}}$ is stationary in $V^{\mathbb{S}_\alpha\times\mathbb{Q}^+_{\alpha,\xi}}$.
\end{theorem}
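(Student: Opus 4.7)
The plan is to argue by contradiction, combining a product-factorization reduction with a trace analysis modelled on Theorem~\ref{Thm: reflection in magidor}. By product commutativity, $V^{\mathbb{S}_\xi\times\mathbb{Q}^+_{\alpha,\xi}}=V^{\mathbb{Q}^+_{\alpha,\xi}}[G']$ where $G'$ is $\mathbb{S}_\xi$-generic over $V^{\mathbb{Q}^+_{\alpha,\xi}}$. Since $|\mathbb{S}_\xi|=\kappa_\xi$ has the $\kappa_\xi$-cc and $\mathbb{Q}^+_{\alpha,\xi}$ is $\kappa_\xi^+$-directed closed by Corollary~\ref{Cor: specific directed degree of Q*}, Easton's lemma ensures $\mathbb{S}_\xi$ retains its $\kappa_\xi$-cc over $V^{\mathbb{Q}^+_{\alpha,\xi}}$. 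Supposing for contradiction that $T\in V^{\mathbb{Q}_{\alpha,\xi}}$ is stationary in $\kappa_{\xi+1}$ yet $T\cap C=\emptyset$ for some club $C\in V^{\mathbb{S}_\xi\times\mathbb{Q}^+_{\alpha,\xi}}$, the first step is to apply the standard $\kappa$-cc club-reduction (using $\operatorname{cf}(\kappa_{\xi+1})=\kappa_{\xi+1}>\kappa_\xi$) to extract a subclub $C^{*}\in V^{\mathbb{Q}^+_{\alpha,\xi}}$ with $C^{*}\cap T=\emptyset$.

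To push the failure back to $V^{\mathbb{Q}_{\alpha,\xi}}$, I would reduce by Fodor inside $V^{\mathbb{Q}_{\alpha,\xi}}$ to $T\subseteq E^{\kappa_{\xi+1}}_\theta$ for some fixed $\theta<\kappa_\xi$, and then for each $s$ extending $\pi_{\mathbb{S}_\xi}(a_*)$ define the ground-model trace
\[
T_s:=\{\alpha<\kappa_{\xi+1}:\exists\, b\in\mathbb{Q}^+_{\alpha,\xi},\ b+s\Vdash_{\mathbb{Q}_{\alpha,\xi}}\alpha\in\dot T\}\in V.
\]
Since $T\subseteq\bigcup_{s\in G'}T_s$ and $|G'|\leq|\mathbb{S}_\xi|=\kappa_\xi<\operatorname{cf}(\kappa_{\xi+1})$, pigeon-holing in $V^{\mathbb{Q}_{\alpha,\xi}}$ yields some $T_{s_0}$ (with $s_0\in G'$) stationary in $V^{\mathbb{Q}_{\alpha,\xi}}$, hence also in $V$ by downward absoluteness of stationarity. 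As $T_{s_0}$ concentrates on $E^{\kappa_{\xi+1}}_\theta$ with $\theta<\kappa_\xi^+$, Shelah's preservation of $E^\mu_{<\lambda}$-stationary sets under $\lambda$-closed forcing shows $T_{s_0}$ remains stationary in $V^{\mathbb{Q}^+_{\alpha,\xi}}$; in that model we therefore find $\alpha\in T_{s_0}\cap C^{*}$.

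The hard part, and the main obstacle, is the last step: promoting $\alpha\in T_{s_0}\cap C^{*}$ to $\alpha\in T\cap C^{*}$, which would contradict $T\cap C^{*}=\emptyset$. The trace supplies some witness $b_\alpha$ with $b_\alpha+s_0\Vdash\alpha\in\dot T$, but this $b_\alpha+s_0$ need not be in $\tilde G$. Following the Magidor-style resolution after Theorem~\ref{Thm: reflection in magidor}, one tries to amalgamate these witnesses by directed closure into a single $b^{*}\leq_{\mathbb{S}_\xi}b_\alpha$ for $\alpha$ ranging over a stationary $T^{*}_{s_0}\subseteq T_{s_0}\cap C^{*}$, so that $b^{*}+s_0\Vdash T^{*}_{s_0}\subseteq\dot T$ and then $T^{*}_{s_0}\cap C^{*}\subseteq T\cap C^{*}$ in any generic through $b^{*}+s_0$. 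A tension arises: $|T^{*}_{s_0}|=\kappa_{\xi+1}=\kappa_\xi^{++}$ exceeds the $\kappa_\xi^+$-directed closure of $\mathbb{Q}^+_{\alpha,\xi}$, while upgrading to the $\kappa_{\xi+1}^+$-directed closed $\mathbb{Q}^{\mathbb{P}}_{\alpha,\xi}$ for $\mathbb{P}=\mathbb{S}_\xi\times Add(\kappa_\xi^+,\kappa^{++})\times Col(\kappa_\xi^+,<\kappa_{\xi+1})$ from Corollary~\ref{Cor: specific directed degree of Q*}(2) in turn inflates the pigeon-holing base to $|\mathbb{P}|=\kappa^{++}\geq\operatorname{cf}(\kappa_{\xi+1})$. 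I expect the resolution is a careful fusion that refines the traces to track the extra Cohen/collapse parameters in $\mathbb{P}$ while still using only $\kappa_\xi$-many effective lower parts at each stage, so that the $\kappa_{\xi+1}^+$-directed closure of $\mathbb{Q}^{\mathbb{P}}_{\alpha,\xi}$ can be brought to bear without destroying the pigeonhole.
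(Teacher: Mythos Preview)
Your proposal has a genuine gap precisely where you yourself flag it. The amalgamation step---finding a single $b^*\leq_{\mathbb{S}_\xi} b_\alpha$ for stationarily many $\alpha$---cannot be carried out with the closure available, and the ``careful fusion'' you gesture at is not an argument. The difficulty is real: you need to cover $\kappa_{\xi+1}=\kappa_\xi^{++}$ many witnesses with only $\kappa_\xi^+$-directed closure, and passing to $\mathbb{Q}^{\mathbb{P}}_{\alpha,\xi}$ with the larger $\mathbb{P}$ destroys the pigeonhole, exactly as you say. There is no fusion that threads this needle, because the Cohen and collapse coordinates in $\mathbb{P}$ genuinely contribute $\kappa^{++}$ many distinct lower parts.

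The paper's route is quite different and avoids the amalgamation entirely. It inserts an intermediate factorization through $\mathbb{S}_{<\xi}\times\mathbb{Q}'_{\alpha,\xi}$, where $\mathbb{Q}'_{\alpha,\xi}=\mathbb{Q}^{\mathbb{S}_{<\xi}}_{\alpha,\xi}$ fixes only the truly small part $\mathbb{S}_{<\xi}$ (of size ${<}\tau$) rather than all of $\mathbb{S}_\xi$. The passage from $\mathbb{S}_{<\xi}\times\mathbb{Q}'_{\alpha,\xi}$ up to $\mathbb{S}_\xi\times\mathbb{Q}^+_{\alpha,\xi}$ is handled by showing the relevant quotient is $\tau$-closed and invoking Shelah's preservation for approachable sets. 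The hard step---preservation from $V^{\mathbb{Q}_{\alpha,\xi}}$ to $V^{\mathbb{S}_{<\xi}\times\mathbb{Q}'_{\alpha,\xi}}$---is done by a mutual-genericity trick: force with a finite-support product of $\omega$ many copies of $\mathbb{S}_{<\xi}$ over $V^{\mathbb{Q}'_{\alpha,\xi}}$, so that the union of the generics is all of $\mathbb{S}_{<\xi}$; then the union $T=\bigcup_i S_i$ of the interpretations of $\dot S$ along the induced $\mathbb{Q}_{\alpha,\xi}$-generics coincides with $\bigcup_{q\in\mathbb{S}_{<\xi}} S^*_q$ and, by a two-name argument, actually lives in the smaller model $V[F]$ (where $F$ is generic for $Col(\tau,{<}\kappa_\xi)\times\mathbb{S}_{>\xi}$). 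Since the quotient $\mathbb{Q}'_{\alpha,\xi}/F$ is $\tau$-closed, $T$ stays stationary in $V^{\mathbb{Q}'_{\alpha,\xi}}$, contradicting $T\cap C^*=\emptyset$. This completely sidesteps the closure-versus-cardinality mismatch that blocks your approach.
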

\begin{proof}

First we recall that $\mathbb{S}_\xi=\mathbb{S}_{<\xi}\times Col(\tau,<\kappa_\xi)$. Denote\footnote{ Recall that $\mathbb{Q}^{\mathbb{S}_{<\xi}}_{
\alpha,\xi}$ is the forcing with underlining set $\mathbb{Q}_{\alpha,\xi}$ ordered by $a\leq'_\xi b$ iff $a_0\restriction \mathbb{S}_{<\xi}=b_0\restriction \mathbb{S}_{<\xi}$, $a\leq b$.} by $\mathbb{Q}'_{\alpha,\xi}=\mathbb{Q}^{\mathbb{S}_{<\xi}}_{
\alpha,\xi}$. By Lemma \ref{lemma: Q^* degree of directed}, $\mathbb{Q}'_{\alpha,\xi}$ is $\tau^+$-directed closed, and as we have seen $\mathbb{S}_{<\xi}\times \mathbb{Q}'_{\alpha,\xi}$ projects onto $\mathbb{Q}_{\alpha,\xi}$ via $(s,a)\mapsto a+s$. We define a projection from $\pi_0:\mathbb{S}_\xi\times \mathbb{Q}^+_{\alpha,\xi}\rightarrow \mathbb{S}_{<\xi}\times \mathbb{Q}'_{\alpha,\xi}$ as follows $$\pi_0(s,a)=(s\restriction\mathbb{S}_{<\xi},a+s\restriction Col(\tau,<\kappa_\xi))$$
where $a+s\restriction Col(\tau,<\kappa_\xi)$ is obtained by extending the $Col(\tau,<\kappa_\xi)$ part of $a_0$ to $s\restriction Col(\tau,<\kappa_\xi)$, then taking the minimal condition in the sense of Remark \ref{remark:minimal}. The following claim is more of the same arguments we have seen so far.
 \begin{claim}
     \begin{enumerate}
         \item $\pi_0$ is a projection.
         \item $\pi_0(s,a)=(s',a')$ then $a+s=a'+s'$.
     \end{enumerate}
 \end{claim} 
 \qed
 \[\begin{tikzcd}
{\mathbb{S}_{\xi}\times \mathbb{Q}^+_{\alpha,\xi}} \arrow[d, "\pi_0"] \arrow[dd, "{(s,a)\mapsto a+s}"', bend right=60] \\
{\mathbb{S}_{<\xi}\times \mathbb{Q}'_{\alpha,\xi}} \arrow[d, "{(s',a')\mapsto a'+s'}"]                                      \\
{\mathbb{Q}_{\alpha,\xi}}                                                                                              
\end{tikzcd}\]
Note that $\pi_0$ is not an isomorphism, since the strategies in $(s,a)$ depends on the fixed collapse in $p$, while the strategy $\pi_0((s,a))$ depends on the varying collapse $s\restriction Col(\tau,<\kappa_\xi)$.
 The plan is to  show that stationary sets in $V^{\mathbb{Q}_{\alpha,\xi}}$ remain stationary in $V^{\mathbb{S}_{<\xi}\times \mathbb{Q}'_{\alpha,\xi}}$ (Lemma \ref{Lemma: Second stationary pres}) and that stationary sets in $V^{\mathbb{S}_{<\xi}}\times \mathbb{Q}'_{\alpha,\xi}$ remain stationary in $V^{\mathbb{S}_{\xi}}\times \mathbb{Q}^+_{\alpha,\xi}$ (Lemma \ref{Lemma: First stationary pres}).
\begin{lemma}\label{Lemma: First stationary pres}
    If $S\subseteq E^{\kappa^+}_{\leq\bar{\kappa}_\xi}$is stationary in $V^{\mathbb{S}_{<\xi}\times \mathbb{Q}'_{\alpha,\xi}}$, then it is stationary in $V^{\mathbb{S}_{\xi}\times \mathbb{Q}^+_{\alpha,\xi}}$
\end{lemma}
\begin{proof}[Proof of Lemma.] First note that since $\mathbb{S}_{<\xi}$ has small cardinality (less than $\tau<\kappa_\xi$ in this case), there is $S'\subseteq S$ stationary such that $S'\in V^{\mathbb{Q}'_{\alpha,\xi}}$. Since the cardinality in $V^{\mathbb{Q}'_{\alpha,\xi}}$ of $(\kappa^+)^V$ is $\kappa_\xi^{++}$, and since $S'$ concentrates on cofinality less than $\bar{\kappa}_\xi$, by Shelah, $S'$ is approachable.  Note that $\mathbb{Q}'_{\alpha,\xi}$ is a projection of $Col(\tau,<\kappa_\xi)\times \mathbb{Q}^+_{\alpha,\xi}$ via the projection $\pi_2(s,a)= a'$, where $a'$ is determined by extending $a'_0=a_0^\smallfrown s$ and taking the minimal strategies determined by the fact that $a_0^\smallfrown s\leq a_0$ as in remark \ref{remark:minimal}. We would like to use Shelah again, to say that the quotient of that projection is $\tau$-closed and therefore stationarity of $S'$ is preserved.  Indeed, let $G$ be $V$-generic for $\mathbb{Q}'_{\alpha,\xi}$ and let $\l (s_i,a_i)\mid i<\sigma\r\in V[G]$ for $\sigma<\tau$, be
a decreasing sequence of conditions such that $\pi_2(s_i,a_i)=a_i'\in G$. Since the forcing is $\tau$-closed, the sequence of conditions is in the ground model, and therefore a simple density argument  (which uses $\tau$-closure again) shows that the set of all $q\in\mathbb{Q}'_{\alpha,\xi}$ such that either there is $i<\sigma$ such that $q\bot \pi_2(s_i,a_i)$ or for every $i<\sigma$, $\pi_2(s_i,a_i)\geq q$ is dense. So we can find $q\in G$ such that $q\leq \pi_2(s_i,a_i)$ for every $i<\sigma$. We can assume that $(q)_0$ is the least upper bound of the $(a_i')_0$'s and that the length of the strategies of each $(q)_i$ are exactly $\sup_{j<\sigma}l(\mathcal{S}^{(a'_j)_i})=\sup_{j<\sigma}l(\mathcal{S}^{(a_j)_i})$. Let us define a condition $(s,a)$ as follows:
\begin{itemize}
    \item $s=(q)_0\restriction Col(\tau,<\kappa_{\xi})$.
    \item $(a)_0=(a_0)_0\restriction \mathbb{S}_\xi$ and $(a)_0\restriction \mathbb{S}_{>\xi}=(q)_0\restriction\mathbb{S}_{>\xi}$.
    \item $(a)_i$ is defined recursively, at successor steps, we define $(a)_{i+1}=((a)_i,\mathcal{S})$ so that $l(\mathcal{S})=l(\mathcal{S}^{(q)_{i+1}})$ and for every $\rho\leq l(\mathcal{S})$, if there is $j<\sigma$ such that $\rho\leq l(S^{a_j})$ we define $S_\rho=S^{a_j}_\rho\restriction Ex^*((a)_0)$ (this is well defined as $(a)_0\leq^* (a_j)$ and so $E^*((a)_0)\subseteq E^*(a_j)$.) if $\sup_{j<\sigma}l(\mathcal{S}^{(a_j)_{i+1}})=\rho= l(\mathcal{S}^{(q)_{i+1}})$ is a limit ordinal and we define $$\mathcal{S}_\rho(\vec{\mu},c)=\overline{\bigcup_{j<\sigma}\mathcal{S}^{(a_j)_{i+1}}_{l(\mathcal{S}^{(a_j)_{i+1}})}(\vec{\mu},c)}$$
    Note again that this is well a well-defined  $(a)_i$ strategy.
    \item At limit steps, the support is the same as the support of $q$ so form a legitimate condition.
\end{itemize}
It remains to note that $\pi(s,a)=q\in G$ which is not hard to check and therefore $(s,a)$ is a lower bound of the $(s_i,a_i)$ in the quotient forcing.


It follows that $S'$ remains stationary in $V^{Col(\tau,<\kappa_\xi)}\times \mathbb{Q}^+_{\alpha,\xi}$. Finally, the quotient from $\mathbb{S}_\xi\times \mathbb{Q}^+_{\alpha,\xi}$ to $Col(\tau,<\kappa_\xi)\times \mathbb{Q}^+_{\alpha,\xi}$ is just $\mathbb{S}_{<\xi}$. The forcing $\mathbb{S}_{<\xi}$ is a small forcing (and in particular has a good chain condition), and thus preserves the stationarity of $S'$. 
\end{proof}
Now let us move to the second preservation lemma:
\begin{lemma}\label{Lemma: Second stationary pres}
    If $S\subseteq E^{\kappa^+}_{\leq\bar{\kappa}_\xi}$is stationary in $V^{ \mathbb{Q}_{\alpha,\xi}}$, then it is stationary in $V^{\mathbb{S}_{<\xi}\times \mathbb{Q}'_{\alpha,\xi}}$
\end{lemma}
\begin{proof}[Proof of Lemma.]
Suppose otherwise, and let $S=(\dot{S})_G\in V^{\mathbb{Q}_{\alpha,\xi}}=V[G]$ be a  stationary set which is no longer stationary in $V^{\mathbb{S}_{<\xi}\times\mathbb{Q}'_{\alpha,\xi}}=V[G^*]$, $\pi_{\mathbb{S}_{<\xi}*}(G^*)=G$.  Hence there is a club $C\in V[G^*]$ such that $S\cap C=\emptyset$. WLOG supposet that $1\Vdash_{\mathbb{S}_{<\xi}\times \mathbb{Q}_{\alpha,\xi}}\dot{S}\cap \dot{C}=\emptyset$. Since $\mathbb{S}_{<\xi}$ is a small forcing, $C$ contains a club $C^*\in V^{\mathbb{Q}'_{\alpha,\xi}}=V[G_*]$,
where $G_*=\pi_3(G^*)$, so that $G^*=H\times G_*$ where $\pi_3((s,a))=a$ and $H\subseteq \mathbb{S}_{<\xi}$ is $V[G_*]$-generic. Consider the projection $\pi_4:\mathbb{Q}'_{\alpha,\xi}\rightarrow Col(\tau,<\kappa_\xi)\times\mathbb{S}_{>\xi}$, and recall that $\mathbb{S}_{>\xi}$ is the upper $\kappa_\xi^+$-directed part of the forcing $\mathbb{P}_{\bar{E}}$. Let $F$  be the generic for $Col(\tau,<\kappa_\xi)\times \mathbb{R}$ induced from $G_*$ or equivalently from $G$ (using the projection $\pi$ from $\mathbb{Q}_{\alpha,\xi}$ to $Col(\tau,<\kappa_\xi)\times \mathbb{R}$ through $\mathbb{P}_\xi$- see the diagram below).
\[\begin{tikzcd}
	& {C\in V^{\mathbb{S}_{<\xi}\times\mathbb{Q}'_{\alpha,\xi}}=V[H\times G_*]} \\
	{S\in V^{\mathbb{Q}_{\alpha,\xi}}=V[G]} && {C^*\in V^{\mathbb{Q}'_{\alpha,\xi}}=V[G_*]} \\
	{V^{\mathbb{P}_\xi}} \\
	& {V^{Col(\tau,<\kappa_\xi)\times\mathbb{S}_{>\xi}}=V[F]}
	\arrow["{a+s}"', from=1-2, to=2-1]
	\arrow["\pi_3", from=1-2, to=2-3]
	\arrow[from=2-1, to=3-1]
	\arrow["\pi_4", from=2-3, to=4-2]
	\arrow[from=3-1, to=4-2]
    \arrow["\pi", from=2-1, to=4-2]
\end{tikzcd}\]

In $V[G_*]$, we consider the finite support product of $\omega$-many copies of $\mathbb{S}_{<\xi}$ and let $H^\omega$ be $V[G_*]$-generic for this forcing. Then $H^\omega$ induces $\l H_i\mid i<\omega\r$ such that each $H_i$ is $V[G_*]$-generic for $\mathbb{S}_{<\xi
}$ and the $H_i$'s are mutually generic.
\begin{fact}\label{fact: union of generics is the poset}
    $\mathbb{S}_{<\xi}=\bigcup_{i<\omega}H_i$
\end{fact}
\begin{proof}
    Follows from a simple density argument.
\end{proof}
\begin{claim}
    Let $G_i=\pi_{\mathbb{S}_{<\xi},*}(H_i\times G_*)$, then $\prod_{i<\omega}^{\text{fin}}G_i$ is $\prod^{\text{fin}}_{i<\omega}(\mathbb{Q}_{\alpha,\xi}/F)$ generic.
\end{claim}
\begin{proof}
    Let $D\subseteq \prod^{\text{fin}}_{i<\omega}(\mathbb{Q}_{\alpha,\xi}/F)$ be a dense open set in $V[F]$,. In $V[G_*]$,  let $$D^*=\{\vec{s}\in \prod^{\text{fin}}_{i<\omega}\mathbb{S}_{<\xi}\mid \exists a\in G_*, \ \vec{s}+a\in D\}$$
    where $\vec{s}+a$ is defined by adding $a$ to each $s$ in the support of $\vec{s}$. By the commutativity of the diagram, $\vec{s}+a\in \prod^{\text{fin}}_{i<\omega}(\mathbb{Q}_{\alpha,\xi}/F)$ whenever $a\in G^*$ and $\vec{s}\in \prod^{\text{fin}}_{i<\omega}\mathbb{S}_{<\xi}$. To see that $D^*$ is dense in $\prod^{\text{fin}}_{i<\omega}\mathbb{S}_{<\xi}$, consider any $\vec{s}$, and let us proceed by a density argument in $\mathbb{Q}'_{\alpha,\xi}/F$. Let $a$ be any element such that $\pi_4(a)\in F$. Then $\vec{s}+a\in\prod^{\text{fin}}_{i<\omega}\mathbb{Q}_{\alpha,\xi}/F$. Since $D$ is dense, there is $\vec{s}\geq\vec{t}=\l t_1,...,t_n\r\in D$. We can assume without loss of generality that for each $i,j$, $\pi_4(a)\geq\pi(t_i)=\pi(t_j)=f\in F$. It follows that for some $a'\leq a$, $\pi_4(a')=f$ (again without loss of generality). Then $a'$ has the property that for  $\vec{s}^*=\l t_1\restriction \mathbb{S}_{<\xi},...,t_n\restriction \mathbb{S}_{<\xi}\r\leq \vec{s}$, $a'+\vec{s}^*\in D$. Note that this dense set depends on $\vec{s}$ and $F$ and is therefore definable in $V[F]$. We conclude that there is some $a_*\in G_*$ for which there is $\vec{s}^*\leq \vec{s}$ such that $a_*+\vec{s}^*\in D$. In particular, $\vec{s}^*$ is an extension of $\vec{s}$ in $D^*$. We conclude that $D^*$ dense. By genericity there is some $\vec{s}\in \prod_{i<\omega}^{\text{fin}}H_i$ such that for some $a^*\in G_*$, $\vec{s}+a^*\in D$. But also $\vec{s}+a^*\in \prod^{\text{fin}}_{i<\omega}G_i$, as wanted.  
\end{proof}
Let $S_i$ be $(\dot{S})_{G_i}$ and for each $q\in\mathbb{S}_{<\xi}$ in $V[G_*]$ we let $$S^*_q=\{\gamma\mid \exists a\in G_*, \ a+q\Vdash \gamma\in \dot{S}\}.$$
\begin{claim}
    $\bigcup_{i<\omega}S_i=\bigcup_qS^*_{q\in\mathbb{S}_{<\xi}}$
\end{claim}
\begin{proof}
Let $\gamma\in \bigcup_{i<\omega}S_i$, then there is $i<\omega$ and $a\in G_i$ such that $a\Vdash_{
\mathbb{Q}_{\alpha,\xi}} \gamma\in \dot{S}$, then $a=\pi_*((s,b))$ for some $s\in H_i$ and $b\in G_*$, which implies that $\gamma\in S^*_s$. In the other direction, let $\gamma\in \bigcup_qS^*_{q\in\mathbb{S}_{<\xi}}$, then there is $q\in \mathbb{S}_{<\xi}$ such that for some $a\in G_*$, $a+q\Vdash \gamma\in\dot{S}$. By Fact \ref{fact: union of generics is the poset}, there is $i<\omega$ such that $q\in H_i$ and therefore $(a,q)\in H_i\times G_*$. It follows that $a+q\in G_i$ and thus $\gamma\in (\dot{S})_{G_i}=S_i$.
    
\end{proof}
Let $T=\bigcup_{i<\omega}S_i=\bigcup_{q\in\mathbb{S}_{<\omega}}S^*_q$, then $T\in V[\prod^{\text{fin}}_{i<\omega}G_i]\cap V[G_*]$. 
\begin{claim}
    $T\cap C^*=\emptyset$
\end{claim}
\begin{proof}
    Suppose not, and let $\gamma\in T\cap C^*$. Then there is $a_*\in G_*$ such that $a_*\Vdash \gamma\in \dot{C}^*$. We may assume that there is $q\in\mathbb{S}_{<\xi}$ such that $a_*+q\Vdash \gamma\in \dot{S}$. But then $(q,a_*)\in H_i\times G_*$ for some $i$ and $(q,a_*)\Vdash \gamma\in \dot{S}\cap \dot{C}$, contradiction.
\end{proof}
So $T\in V[G]$ is a stationary set (as any of the $S_i$'s is) which is not stationary in $V[G_*]$. On the other hand, we will show now that $T\in V[F]$ is stationary (stationarity is downwards absolute) and the extension from $V[F]$ to $V[G_*]$ preserves stationery sets. This will lead us to the desired contradiction. 
\begin{claim}
    $T\in V[F]$.
\end{claim}
\begin{proof}
    In $V[F]$ we have $\sigma$ and $\tau$, a $\prod_{i<\omega}^{\text{fin}}\mathbb{Q}_{\alpha,\xi}/F$-name and a $\mathbb{Q}'_{\alpha,\xi}/F$-name respectively, such that $(\sigma)_{\prod^{\text{fin}}_{i<\omega}G_i}=(\tau)_{G_*}=T$. Let $(\vec{s},a)\in \prod_{i<\omega}^{\text{fin}}H_i\times G_*$ be a condition which forces $\sigma=\tau$ (via the natural interpretation of $\sigma,\tau$ as $(\prod^{\text{fin}}_{i<\omega}\mathbb{S}_{<\omega})\times \mathbb{Q}'_{\xi,\alpha}$-names). We claim that (for example) $a$ already decides the statements ``$\check{\gamma}\in \tau$" for every ordinal $\gamma$. Otherwise, pick any extension $\vec{s}'\leq\vec{s}$ which decides the statement ``$\check{\gamma}\in \sigma$", and suppose without loss of generality that $\vec{s}'\Vdash \check{\gamma}\in \sigma$. Since $a$ does not decide the statement ``$\check{\gamma}\in \tau$", there is $a'\leq a$ such that $a'\Vdash \check{\gamma}\notin \tau$. But then $(\vec{s}',a')\leq (\vec{s},a)$ forces that $\tau\neq\gamma$, contradiction. 
\end{proof}
\begin{claim}
    The extension from $V[F]$ to $V[G_*]$ preserves the stationarity of $T$.
\end{claim}
\begin{proof}
    Let us prove that the quotient $\mathbb{Q}'_{\alpha,\xi}/F$ is $\tau$-closed, and since the stationary set $T$ is an approachable stationary set which consists of point of cofinality $\bar{\kappa}_\xi$, and $\bar{\kappa}_\xi<\tau$, by Shelah, the stationarity of $T$ will be preserved when passing from $V[F]$ to $V[G_*]$. Let $\l a_i\mid i<\sigma\r$ be a decreasing sequence of conditions in $\mathbb{Q}'_{\alpha,\xi}/F$, where $\sigma<\tau$. Let us find a lower bound for that sequence. Similar to previous argument, we can find $b\in F$ such that $b\leq \pi_4(a_i)$ for every $i<\sigma$. Define $(a^*)_0$ be defined by $(a^*)_0\restriction \mathbb{S}_{<\xi}=(a_0)_0$ and $(a^*)_0\restriction Col(\tau,<\kappa_\xi)\times\mathbb{S}_{>\xi}=b$. Note that by definition of the order on $\mathbb{Q}'_{\alpha,\xi}$ and by definition of the projection $\pi_4$, $(a^*)_0\leq^* (a_i)_0$ for all $i<\sigma$ (and in fact the part below $\xi$ is fixed). Now we define the strategies as in Lemma \ref{Lemma: First stationary pres} to obtain the condition $a^*$ such that $a^*\leq_{\mathbb{Q}_{\alpha,\xi}} a_i$ for all $i<\sigma$ and $\pi_4(a^*)=b\in F$.
\end{proof}
This concludes the proof that stationary sets are preserved from $V^{\mathbb{Q}_{\alpha,\xi}}$ to $V^{\mathbb{S}_{<\xi}\times\mathbb{Q}_{\alpha,\xi}'}$, and therefore conclude the Theorem.
    
\end{proof}
\end{proof}
\begin{corollary}
    $V^{\mathbb{Q}_{\alpha,\xi}}\models \text{Refl}(E^{\kappa^+}_{\leq\bar{\kappa}_\xi},E^{\kappa^+}_{<\kappa_\xi})$
\end{corollary}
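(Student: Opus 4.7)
The plan is to combine the two results immediately preceding the corollary. Let $G$ be $V$-generic for $\mathbb{Q}_{\alpha,\xi}$, and suppose $S\subseteq E^{\kappa^+}_{\leq\bar{\kappa}_\xi}$ is stationary in $V[G]$. I want to produce $\gamma<\kappa^+$ with $\cf^{V[G]}(\gamma)<\kappa_\xi$ such that $S\cap\gamma$ is stationary in $\gamma$ in $V[G]$.

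First I would pass to the larger model. Since $\mathbb{S}_\xi\times\mathbb{Q}^+_{\alpha,\xi}$ projects onto $\mathbb{Q}_{\alpha,\xi}$ via $(s,a)\mapsto a+s$ (as established at the beginning of the proof of Theorem \ref{Thm: stationary sets preservation passing to product}), let $G^+\supseteq G$ be generic for $\mathbb{S}_\xi\times\mathbb{Q}^+_{\alpha,\xi}$ lifting $G$. By Theorem \ref{Thm: stationary sets preservation passing to product}, $S$ remains stationary in $V[G^+]=V^{\mathbb{S}_\xi\times\mathbb{Q}^+_{\alpha,\xi}}$.

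Now I would apply Lemma \ref{Lemma: Reflectionholds in the product}, which gives $V[G^+]\models \text{Refl}(E^{\kappa^+}_{\leq\bar{\kappa}_\xi},E^{\kappa^+}_{<\kappa_\xi})$. Hence there exists $\gamma<\kappa^+$ with $\cf^{V[G^+]}(\gamma)<\kappa_\xi$ such that $S\cap\gamma$ is stationary in $\gamma$ as computed in $V[G^+]$.

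Finally I would pull the reflection point back to $V[G]$. Stationarity of $S\cap\gamma$ in $\gamma$ is downward absolute from $V[G^+]$ to the inner model $V[G]$: any club $C\subseteq\gamma$ in $V[G]$ is also a club in $V[G^+]$, so meets $S\cap\gamma$. For the cofinality of $\gamma$, note that $\cf^{V[G]}(\gamma)\leq\cf^{V[G^+]}(\gamma)\cdot\aleph_0<\kappa_\xi$ since any cofinal map from $V[G^+]$ still witnesses an upper bound on the $V[G]$-cofinality (and in any case $\cf^{V[G]}(\gamma)$ is at most $\cf^{V[G^+]}(\gamma)$ whenever the latter is regular, which it is). Therefore $\gamma$ witnesses reflection of $S$ in $V[G]$, as required. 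There is no real obstacle here — both ingredients have been set up precisely for this application, and the only subtlety is the downward absoluteness of stationarity and of the cofinality bound, both of which are routine. \qed
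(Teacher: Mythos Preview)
Your overall approach matches the paper's: combine Lemma~\ref{Lemma: Reflectionholds in the product} and Theorem~\ref{Thm: stationary sets preservation passing to product}, then pull the reflection point back via downward absoluteness of stationarity. That part is fine.

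The cofinality step, however, is written with the inequality reversed. Since $V[G]\subseteq V[G^+]$, the larger model has \emph{more} cofinal sequences, so $\cf^{V[G]}(\gamma)\geq\cf^{V[G^+]}(\gamma)$, not $\leq$. A cofinal map that lives only in $V[G^+]$ witnesses nothing about $V[G]$-cofinality, and the parenthetical claim is likewise backwards. So as written the last step does not go through.

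The repair is short but uses a specific fact, not just absoluteness. The projection $(s,a)\mapsto a+s$ satisfies $\pi_{\mathbb{S}_\xi}(a+s)=s$, so $G$ and $G^+$ induce the \emph{same} $\mathbb{S}_\xi$-generic, giving $V^{\mathbb{S}_\xi}\subseteq V[G]\subseteq V[G^+]$. Now $\mathbb{Q}^+_{\alpha,\xi}$ is $\kappa_\xi^+$-directed closed (Corollary~\ref{Cor: specific directed degree of Q*}) and $|\mathbb{S}_\xi|\leq\kappa_\xi$, so by Easton's lemma $\mathbb{Q}^+_{\alpha,\xi}$ is ${<}\kappa_\xi^+$-distributive over $V^{\mathbb{S}_\xi}$. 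Hence the cofinal sequence of length $\cf^{V[G^+]}(\gamma)<\kappa_\xi$ already lies in $V^{\mathbb{S}_\xi}\subseteq V[G]$, and therefore $\cf^{V[G]}(\gamma)<\kappa_\xi$ as required.
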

\begin{proof}
    This is a straightforward application of Lemma \ref{Lemma: Reflectionholds in the product} and Theorem \ref{Thm: stationary sets preservation passing to product}.
\end{proof}

Finally, let us show that in $V^{\mathbb{Q}_{\alpha}}$, non-reflecting stationary sets are fragile.
\begin{theorem}\label{non reflecting is fragile}
    Suppose that $p\Vdash_{\mathbb{Q}_\alpha}\dot{S}$ is a non-reflecting stationary set then $\dot{S}$ is fragile.
\end{theorem}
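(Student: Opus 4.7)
The strategy is to mirror the proof of Theorem~\ref{Theore: nonreflecting implies fragile for Q0} (which established the analog for $\mathbb{P}_{\bar{E}}$), now leveraging the reflection corollary and product decomposition for $\mathbb{Q}_{\alpha,\xi}$ developed in Sections~\ref{section: projections} and~\ref{Section: reflection}. Suppose toward a contradiction that $p \Vdash_{\mathbb{Q}_\alpha}$ ``$\dot{S}$ is a non-reflecting stationary set'' but $\dot{S}$ is not $p$-fragile. By shrinking we may assume $p$ forces $\dot{S}$ to concentrate on some fixed cofinality $\theta<\kappa$. Non-$p$-fragility lets us extend $p$ so that $\xi(p)=\xi$ for some $\xi<\omega_1$ with $\theta<\bar{\kappa}_\xi$ and $p\Vdash_{\mathbb{Q}_{\alpha,\xi}}$ ``$\dot{S}_\xi$ is stationary''; just as in Theorem~\ref{Theore: nonreflecting implies fragile for Q0}, we then have $p\Vdash_{\mathbb{Q}_{\alpha,\xi}}$ ``$\dot{S}_\xi\subseteq E^{\kappa^+}_\theta$'', so that $\dot{S}_\xi$ is forced to be a stationary subset of $E^{\kappa^+}_{\leq\bar{\kappa}_\xi}$.

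Invoking the corollary that $V^{\mathbb{Q}_{\alpha,\xi}}\models\text{Refl}(E^{\kappa^+}_{\leq\bar{\kappa}_\xi},E^{\kappa^+}_{<\kappa_\xi})$, we may (after further extending $p$) fix $\gamma<\kappa^+$ with $\cf^V(\gamma)<\kappa_\xi$ and a $\mathbb{Q}_{\alpha,\xi}$-name $\dot{A}$ such that $p\Vdash_{\mathbb{Q}_{\alpha,\xi}}$ ``$\dot{A}\subseteq\dot{S}_\xi\cap\gamma$ is stationary in $\gamma$ of order-type $\cf(\gamma)$''. Recall from Section~\ref{section: projections} that $(s,a)\mapsto a+s$ projects $\mathbb{S}_\xi\times\mathbb{Q}^+_{\alpha,\xi}/p$ onto $\mathbb{Q}_{\alpha,\xi}/p$. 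Since $|\mathbb{S}_\xi|<\kappa_\xi$ and $(\mathbb{Q}^+_{\alpha,\xi},\leq_\xi)$ is $\kappa_\xi^+$-directed closed by Corollary~\ref{Cor: specific directed degree of Q*}, Easton's Lemma yields that $\mathbb{Q}^+_{\alpha,\xi}$ is $\kappa_\xi^+$-distributive over $V^{\mathbb{S}_\xi}$. Since $|\dot{A}|=\cf(\gamma)<\kappa_\xi$, we may reduce $\dot{A}$ to an $\mathbb{S}_\xi$-name, and moreover $s_*:=\pi_{\alpha,\mathbb{S}_\xi}(p)$ forces in $\mathbb{S}_\xi$ that $\dot{A}$ is a stationary subset of $\gamma$.

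Enumerate $\dot{A}=\{\dot{\beta}_i\mid i<\cf(\gamma)\}$ as $\mathbb{S}_\xi$-names. Working in $V$, we construct $r^*\in\mathbb{Q}^+_{\alpha,\xi}/p$ with $r^*\leq_\xi p$ by recursion of length $\cf(\gamma)\cdot|\mathbb{S}_\xi|<\kappa_\xi^+$, using the $\kappa_\xi^+$-directed closure of $(\mathbb{Q}^+_{\alpha,\xi},\leq_\xi)$ to take bounds, so that for every $i<\cf(\gamma)$ and every $s\leq_{\mathbb{S}_\xi}s_*$ there exists $s'\leq s$ with $r^*+s'\Vdash_{\mathbb{Q}_\alpha}\dot{\beta}_i\in\dot{S}$. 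This is possible because $p\Vdash_{\mathbb{Q}_{\alpha,\xi}}$ ``$\dot{A}\subseteq\dot{S}_\xi$'', and unpacking $\dot{S}_\xi$ provides, for each $(i,s)$, a condition in $\mathbb{Q}_{\alpha,\xi}$ below the current bound witnessing $\dot{\beta}_i\in\dot{S}$, which factors through $+$ as $r'+s'$. Then $r^*+s_*\leq p$ in $\mathbb{Q}_\alpha$ forces $\dot{A}\subseteq\dot{S}$; and $\kappa_\xi^+$-distributivity guarantees that no new subsets of $\gamma$ (of size $\cf(\gamma)<\kappa_\xi$) appear between $V^{\mathbb{S}_\xi}$ and $V^{\mathbb{Q}_\alpha}$, so $\dot{A}$ stays stationary. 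Hence $r^*+s_*$ forces $\dot{S}$ to reflect at $\gamma$, contradicting non-reflection.

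The principal obstacle is the inductive construction of $r^*$: one must navigate among $\mathbb{Q}_{\alpha,\xi}$, $\mathbb{Q}^+_{\alpha,\xi}$ and $\mathbb{S}_\xi$ through the projection map $+$ and verify that $r^*+s'\Vdash_{\mathbb{Q}_\alpha}\dot{\beta}_i\in\dot{S}$ is indeed achievable at each stage. This combines the $\kappa_\xi^+$-closure of $(\mathbb{Q}^+_{\alpha,\xi},\leq_\xi)$ with the characterization of $+$ in Definition~\ref{def: term space forcing projection}, particularly item (4) which lets us factor arbitrary extensions of $r^*+s$. A secondary subtlety is the promotion of $\dot{A}$'s stationarity from $V^{\mathbb{Q}_{\alpha,\xi}}$ to $V^{\mathbb{Q}_\alpha}$, which is again handled by distributivity of $\mathbb{Q}^+_{\alpha,\xi}$ over $V^{\mathbb{S}_\xi}$ since clubs in $\gamma$ have size $\cf(\gamma)<\kappa_\xi$.
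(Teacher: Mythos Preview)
Your argument is correct and follows the same template as the paper's proof, with one inessential difference in the factoring: you split $\mathbb{Q}_{\alpha,\xi}$ as a projection of $\mathbb{S}_\xi\times\mathbb{Q}^+_{\alpha,\xi}$, whereas the paper uses $\mathbb{S}_{<\xi}\times\mathbb{Q}^u_{\alpha,\xi}$ with $\mathbb{Q}^u_{\alpha,\xi}=Col(\tau,{<}\kappa_\xi)\times\mathbb{Q}^+_{\alpha,\xi}$. Your choice is arguably cleaner, since $\mathbb{Q}^+_{\alpha,\xi}$ is $\kappa_\xi^+$-closed and this comfortably dominates both $\cf(\gamma)<\kappa_\xi$ and $|\mathbb{S}_\xi|$.

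Two small corrections. First, $|\mathbb{S}_\xi|=\kappa_\xi$, not ${<}\kappa_\xi$ (the factor $Col(\tau^{+3},{<}\kappa_\xi)$ already has size $\kappa_\xi$); this is harmless because the recursion has length $\cf(\gamma)\cdot|\mathbb{S}_\xi|\leq\kappa_\xi<\kappa_\xi^+$, which is still within the closure of $\mathbb{Q}^+_{\alpha,\xi}$. Second, and more substantively, your last step---that $\dot{A}$ remains stationary in $V^{\mathbb{Q}_\alpha}$---cannot be justified by the distributivity of $\mathbb{Q}^+_{\alpha,\xi}$ over $V^{\mathbb{S}_\xi}$, since that only tells you about $V^{\mathbb{S}_\xi\times\mathbb{Q}^+_{\alpha,\xi}}$ (equivalently $V^{\mathbb{Q}_{\alpha,\xi}}$), not about the full $V^{\mathbb{Q}_\alpha}$. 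The correct reference is the consequence of the Prikry Lemma recorded just before Corollary~\ref{corollary: cardinal structure in Qalpha below kappa}: the quotient of $\mathbb{Q}_\alpha$ over $\mathbb{S}_\xi$ adds no bounded subsets of $\kappa_\xi^+$, so clubs of $\gamma$ are unchanged and $A$ stays stationary. The paper invokes exactly this point (``$\mathbb{Q}_\alpha/H$ does not add new subsets of $\cf(\gamma)$'').
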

\begin{proof}
Suppose that $\dot{S}$ is a name for a non-fragile stationary subset of $\kappa^+$. By shrinking if necessary, we may assume that it concentrates on some fixed cofinality below $\kappa$. By the non-fragility, there is some $\xi<\omega_1$ with $\max(\supp(p_0))=\xi$ and $p\in\mathbb{Q}_{\alpha,\xi}$ such that $p\Vdash_{\mathbb{Q}_{\alpha,\xi}}\dot{S}_{\xi}$ is stationary. By Theorem \ref{Thm: stationary sets preservation passing to product}, for every $(s,a)\in \mathbb{S}_{\xi}\times \mathbb{Q}^+_{\alpha,\xi} $, such that $a+s\leq p$, $(a+s)\Vdash_{\mathbb{S}_{\xi}\times \mathbb{Q}^+_{\alpha,\xi}} \dot{S}_{\xi}$ is stationary.
Increasing $\xi$ if necessary, we may assume that the points in $S$ have cofinality below $\bar{\kappa}_\xi$.
By the above lemma, $p\Vdash_{\mathbb{Q}_{\alpha,\xi}}\dot{S}_{\xi}$ reflects at a point $\gamma$ of cofinality less than $\bar{\kappa}_{\xi}$.  

As above, $\mathbb{Q}_{\alpha,\xi}$ is a projection of $\mathbb{S}_{<\xi}\times Col(\tau,<\kappa_\xi)\times\mathbb{Q}^+_{\alpha,\xi}$, where $\tau=s_\alpha(f_\xi(\kappa_\xi))^{+3}$. Recall that $\mathbb{S}_{<\xi}$ has size less than $\tau$ and $\mathbb{Q}^+_{\alpha,\xi}$ is $\kappa_\xi^+$-closed. 
Let $G_\xi$ be  $\mathbb{Q}_{\alpha,\xi}$-generic and let $H$ be the induced $\mathbb{S}_{<\xi}$-generic. 
Note that $\mathbb{Q}_{\alpha,\xi}^u:=Col(\tau,<\kappa_\alpha)\times\mathbb{Q}^+_{\alpha,\xi}$ is $\tau$-closed in $V$ and by Easton's Lemma, $<\tau$-distributive over $V[H]$.

In $V[G_\xi]$, let $S^*=(\dot{S}_\xi)_{G_\xi}$ and let $A\subset S^*\cap\gamma$ be stationary in $\gamma$ of order type $\cf(\gamma)$. Since $\cf(\gamma)<\bar{\kappa}_{\xi}<\tau$, by its distributivity, $\mathbb{Q}_{\alpha,\xi}^u$ cannot have added $A$, so $A\in V[H]$.  

Next,
we construct a condition $r^*\in\mathbb{Q}_{\alpha,\xi}$, such that $r^*\Vdash_{\mathbb{Q}_{\alpha,\xi}/H} \dot{A}\subset \dot{S}_\xi$. We do this as follows.

Enumerate $A=\{\beta_i\mid i<\cf(\gamma)\}$, and suppose that this is forced by $1_{\mathbb{S}_{<\xi}}$ and that $1_{\mathbb{Q}_{\alpha,\xi}}\Vdash \dot{A}\subseteq \dot{S}_\xi$. Working in $V$, for all $(s,r)\in\mathbb{S}_{<\xi}\times\mathbb{Q}_{\alpha,\xi}^u$ and $i<\cf(\gamma)$, if for some $\beta<\gamma$, $s\Vdash \beta=\dot{\beta}_i$, then $(s,r)\Vdash \beta\in \dot{S}_\xi$ and therefore there is $(s',r')\leq (s,r)$ such that $\l \beta,(s',r')\r\in\dot{S}_\xi$ which in turn implies (by the definition of $\dot{S}_\xi$) that $(s',r')\Vdash_{ \mathbb{Q}_\alpha} \beta\in \dot{S}$. Using that the closure of $\mathbb{Q}_{\alpha,\xi}^u$ is greater than $\max(\cf(\gamma), |\mathbb{S}_{<\xi}|)$,  we can do this inductively for all $i<\cf(\gamma)$, and all conditions in $\mathbb{S}_{<\xi}$ to construct a condition  $r^*\in\mathbb{Q}_{\alpha,\xi}$, such that $r^*\Vdash_{\mathbb{Q}_{\alpha}/H} \dot{A}\subset \dot{S}$. 

Finally, since $\mathbb{Q}_\alpha/H$ does not add new subsets of $\cf(\gamma)$,  it preserves the stationarity of $A$. This proves that $r^*\Vdash_{\mathbb{Q}_\alpha} \dot{S}$ reflects.

\end{proof}

\section{The end game}\label{Section: end game}
We are ready to prove the main theorem of this paper:
\begin{theorem}
    Assume GCH and suppose that there are uncountable many supercompact cardinals. Then it is consistent that 
    $2^{\aleph_{\omega_1}}>\aleph_{\omega_1+1}$ and reflection holds at $\aleph_{\omega_1+1}$.
\end{theorem}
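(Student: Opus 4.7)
The plan is to carry out the iteration $\mathbb{Q}_{\kappa^{++}}$ of Section~\ref{Section: iteratio} with a bookkeeping at successor stages designed to kill every potential non-reflecting stationary subset of $\kappa^+$. Two of the required properties follow directly from what has already been proved: Corollary~\ref{Cor: Failure of SCH} gives $2^\kappa>\kappa^+$ in $V^{\mathbb{Q}_{\kappa^{++}}}$, while Corollary~\ref{corollary: cardinal structure in Qalpha below kappa}, combined with the $\kappa^{++}$-cc and the Prikry property, establishes that $\kappa=\aleph_{\omega_1}$ is a strong limit and that $\kappa^+=\aleph_{\omega_1+1}$ is preserved. So the substantive task is to arrange stationary reflection at $\kappa^+$.

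For the bookkeeping, fix in $V$ a function $F\colon\kappa^{++}\to\kappa^{++}\times\kappa^{++}$ such that $F(\beta)\in\beta\times\beta$ for every $\beta$ and every pair is attained cofinally often. Along the iteration, we maintain enumerations $\langle(\dot T^\gamma_\eta,p^\gamma_\eta)\mid \eta<\kappa^{++}\rangle\in V$ of all pairs consisting of a nice $\mathbb{Q}_\gamma$-name for a subset of $\kappa^+$ together with a condition in $\mathbb{Q}_\gamma$; such enumerations exist because GCH, the $\kappa^{++}$-cc, and the support bound force $|\mathbb{Q}_\gamma|\leq\kappa^{++}$. At stage $\beta+1$, write $F(\beta)=(\gamma,\eta)$ and examine $(\dot T^\gamma_\eta,p^\gamma_\eta)$. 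If $p^\gamma_\eta\Vdash_{\mathbb{Q}_\gamma}\text{``}\dot T^\gamma_\eta\text{ is a stationary subset of }\kappa^+\text{''}$ and $\dot T^\gamma_\eta$ is $p^\gamma_\eta$-fragile, we work below (the canonical image in $\mathbb{Q}_\beta$ of) $p^\gamma_\eta$ and set $\mathbb{Q}_{\beta+1}=\mathbb{A}(\mathbb{Q}_\beta,\dot T^\gamma_\eta)$ there, taking a trivial extension above $p^\gamma_\eta$; otherwise $\mathbb{Q}_{\beta+1}$ extends $\mathbb{Q}_\beta$ trivially.

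Now suppose toward a contradiction that $S\in V^{\mathbb{Q}_{\kappa^{++}}}$ is a non-reflecting stationary subset of $\kappa^+$, witnessed by a condition $p$ and a name $\dot S$. By Theorem~\ref{non reflecting is fragile}, $\dot S$ is $p$-fragile in $\mathbb{Q}_{\kappa^{++}}$. By the $\kappa^{++}$-cc, we may take $\dot S$ and $p$ to live in some $\mathbb{Q}_\gamma$ with $\gamma<\kappa^{++}$. The essential claim to verify is the \emph{preservation of fragility along the tail}: for every $\gamma'\in[\gamma,\kappa^{++})$, the name $\dot S$ remains $p$-fragile when regarded as a $\mathbb{Q}_{\gamma'}$-name. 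Granting this, the bookkeeping eventually presents $(\dot S,p)$ at some stage $\beta\geq\gamma$, so that $\mathbb{Q}_{\beta+1}=\mathbb{A}(\mathbb{Q}_\beta,\dot S)$ below $p$; Corollary~\ref{Cor: killing stationary sets} then produces a club $\mathcal C$ in $\kappa^+$, added below $p$ at stage $\beta+1$, disjoint from $(\dot S)_{G_\beta}$. Since $\mathcal C$ persists to $V^{\mathbb{Q}_{\kappa^{++}}}$, this contradicts the stationarity of $S$.

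The main obstacle is the preservation-of-fragility claim, which is the iteration-level analog of the preservation arguments in Section~\ref{Section: reflection}. Fragility of $\dot S$ at stage $\gamma$ is a statement about the slices $(\mathbb{Q}_\gamma)_{\xi(r)}$ for $r\leq p$ with $\xi(r)$ past a cofinite tail $\xi<\omega_1$; at stage $\gamma'$ we must show the same for $(\mathbb{Q}_{\gamma'})_{\xi(r)}$. The strategy is to factor the tail quotient via the term-space projections of Section~\ref{section: projections} (Definition~\ref{definition: the term space forcing}), exploit the closure provided by Lemma~\ref{lemma: closure of quotient}, and adapt the proof of Theorem~\ref{Thm: stationary sets preservation passing to product} to show that no $r\leq p$ with $\xi(r)$ past the delay can force the new trace $\dot S_{\xi(r)}$ to be stationary; crucially, $\dot S$ concentrates on cofinality below $\bar\kappa_\xi<\tau$, so Shelah's approachability and stationary-preservation theorems apply uniformly to the tail. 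Once this preservation is in place, the chain of implications above closes, and the theorem follows.
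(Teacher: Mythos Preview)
Your overall architecture matches the paper's: fix a bookkeeping device, at each successor stage kill the presented fragile set with $\mathbb{A}(\mathbb{Q}_\beta,\dot T)$, and argue by contradiction that a non-reflecting stationary $\dot S$ in $V^{\mathbb{Q}_{\kappa^{++}}}$ must have been killed along the way. The paper's proof is extremely terse --- it simply invokes Theorem~\ref{non reflecting is fragile} and asserts that $\dot S$ is fragile at the stage where the bookkeeping presents it --- so your proposal is actually more careful than the paper in flagging the passage from ``$p$-fragile at $\kappa^{++}$'' to ``$p$-fragile at the intermediate stage $\beta$'' as something needing justification.

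That said, the route you sketch for this step (adapt Theorem~\ref{Thm: stationary sets preservation passing to product} to show the tail quotient $(\mathbb{Q}_{\kappa^{++}})_\xi/(\mathbb{Q}_{\gamma'})_\xi$ preserves stationary subsets of $\kappa^+$) is considerably heavier than what is needed, and it is not clear that the specific factorization used in Theorem~\ref{Thm: stationary sets preservation passing to product} adapts to this quotient, whose structure involves the later club-shooting steps rather than a product of a small piece and a closed piece. There is a much shorter argument implicit in the paper. Suppose $\dot S$ (a $\mathbb{Q}_\alpha$-name) were \emph{not} $p$-fragile at some $\beta\geq\alpha$. Run the proof of Theorem~\ref{non reflecting is fragile} at stage $\beta$: it produces $r^*\leq p$, an ordinal $\gamma<\kappa^+$ with $\cf(\gamma)<\bar\kappa_\xi$, and a set $A\in V^{\mathbb{S}_{<\xi}}$ stationary in $\gamma$ with $r^*\Vdash_{\mathbb{Q}_\beta}A\subseteq\dot S$. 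Since $\dot S$ is already a $\mathbb{Q}_\alpha$-name, $r^*$ (as a $\mathbb{Q}_{\kappa^{++}}$-condition) forces $A\subseteq\dot S$; and since $V^{\mathbb{Q}_{\kappa^{++}}}$ and $V^{\mathbb{S}_{<\xi}}$ have the same subsets of $\cf(\gamma)$ (by the Prikry/closure analysis in Corollary~\ref{corollary: cardinal structure in Qalpha below kappa} and the fact that $\cf(\gamma)<\bar\kappa_\xi<\tau$), $A$ remains stationary in $\gamma$ in $V^{\mathbb{Q}_{\kappa^{++}}}$. Thus $r^*$ forces $\dot S$ to reflect in $V^{\mathbb{Q}_{\kappa^{++}}}$, contradicting the hypothesis on $p$. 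This closes your ``main obstacle'' in one paragraph and replaces the appeal to Theorem~\ref{Thm: stationary sets preservation passing to product}.
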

\begin{proof}
    We start by making the uncountable many supercompacts indestructible. Fix a surjection $\phi:\kappa^{++}\rightarrow H(\kappa^{++})$ such that for every $h\in H(\kappa^{++})$ $\phi^{-1}(h)$ is unbounded in $\kappa^{++}$. We define the iteration $\mathbb{Q}_\alpha$ as before, such that at stage $\alpha$, if $\phi(\alpha)$ happens to be a $\mathbb{Q}_\alpha$-name for a fragile stationery set, we use $\mathbb{A}(\mathbb{Q}_\alpha,h(\alpha))$, or otherwise, we just take the trivial forcing at successor steps. In the extension by $\mathbb{Q}_{\kappa^{++}}$, $2^\kappa=2^{\aleph_{\omega_1}}>\aleph_{\omega_1+1}=\kappa^+$ by Corolarry \ref{Cor: Failure of SCH}. To see that stationary reflection holds at $\aleph_{\omega_1+1}$, suppose otherwise and let $S\subseteq \kappa^+$ be a name for a non-reflecting stationary set. Then, there is a name $\dot{S}\in H(\kappa^{++})$ for $S$ such that $p\Vdash\dot{S}$ is not reflecting. By the small support and the $\kappa^{++}$-c.c, there is $\alpha<\kappa^{++}$ such that $\dot{S}$ is a $\mathbb{Q}_\alpha$-name, and $p\in \mathbb{Q}_\alpha$. By Theorem \ref{non reflecting is fragile}, $\dot{S}$ is fragile. Let $\beta\in \phi^{-1}(h)$ be above $\alpha$, then at stage $\beta$ of the iteration, we are facing a $\mathbb{Q}_\beta$-name (clearly every $\mathbb{Q}_\alpha$ can be identified with a $\mathbb{Q}_\beta$-name) for a fragile stationary set, so by Corollary \ref{Cor: killing stationary sets}, in $V^{\mathbb{Q}_{\beta+1}}$, $\dot{S}$ is non-stationary. By absoluteness, $\dot{S}$ is non-stationary in $V^{\mathbb{Q}_{\kappa^{++}}}$, contradiction. 
\end{proof}
 \bibliographystyle{amsplain}
\bibliography{ref}
\end{document}